\documentclass[12pt,a4paper,reqno]{amsart}
\usepackage{preemble}
\usepackage{tikz,enumerate,mathrsfs}
\usepackage[toc,page]{appendix}

\usepackage[margin=2cm]{geometry}

\usepackage{setspace}

\usepackage{fancyhdr}
\usepackage[dvipsnames]{xcolor}
\usepackage{tcolorbox}

\newtcolorbox{commentbox}{
  colback=yellow!10,
  colframe=red!50!black,
  boxrule=0.5pt,
  arc=2pt,
  left=4pt,right=4pt,top=2pt,bottom=2pt
}

\definecolor{MutedBlue}{RGB}{75, 105, 135}

\usepackage[backend=biber,style=numeric,sorting=nyt,giveninits=true,maxbibnames=99]{biblatex}
\usepackage[
    colorlinks=true,
    linkcolor=MutedBlue,
    citecolor=MutedBlue,
    urlcolor=MutedBlue,
    filecolor=trueblue,
    linktoc=all,
    pdfstartview=FitH,
    pdfpagemode=UseNone
]{hyperref}

\usepackage{cleveref}

\renewcommand{\Qm}{\mathcal{QM}}
\newcommand{\qm}{\textnormal{Qm}}
\newcommand{\Fix}{\textnormal{Fix}}
\newcommand{\Per}{\textnormal{Per}}
\newcommand{\ov}{\overline}
\newcommand{\oo}{\infty}
\newcommand{\Dil}{\textnormal{Dil}}
\newcommand{\scrD}{\mathscr{D}}
\newcommand{\Qq}{\mathfrak{Q}}

\newcommand{\MC}{\textnormal{MC}}

\newcommand{\dd}{\operatorname{d}}
\newcommand{\Minv}{\mathcal M(\sigma)}

\author{Stephen Cantrell}
\address{Mathematical Institute, University of St Andrews, North Haugh, St Andrews, KY16 9SS, Scotland}
\email{sjc33@st-andrews.ac.uk}

\author{Pablo D. Carrasco}
\address{ICEx-UFMG, Avda. Presidente Antonio Carlos 6627, Belo Horizonte-MG, BR31270-90}
\email{pdcarrasco@gmail.com}

\author{Emilio Corso}
\address{Penn State, 203 McAllister Building, University Park, State College, PA16802}
\email{corsoemilio2@gmail.com}

\author{Federico Rodr\'{i}guez-Hertz}
\address{Penn State, 227 McAllister Building, University Park, State College, PA16802}
\email{hertz@math.psu.edu}

\date{\today. 
\\
2020 \textit{Mathematics Subject Classification}. Primary 37D35; Secondary 20F67, 37B10. 
\\
\textit{Key words and phrases}.
Symbolic dynamics}

\title[The moduli space of quasimorphisms]{The moduli space of quasimorphisms on subshifts and its pressure geometry}

\begin{document}
\begin{abstract} 
We introduce a metric geometry on the space of positive, linearly growing quasimorphisms on a mixing subshift of finite type. Two such quasimorphisms are identified when their homogenizations agree up to multiplicative scaling. The resulting moduli space carries a natural symmetrised Thurston-type metric, defined from the extremal ratios of their periodic-orbit lengths. We prove that this is a genuine metric and determine several global features of the resulting space: it is unbounded, separable, incomplete, and has geodesics that are canonically generated by thermodynamic pressure functions. We then describe the endpoints of these geodesics and develop a boundary theory for our moduli space.

We apply this  theory to metric structures on right-angled Artin and Coxeter groups. In particular we construct isometric embeddings of geometric moduli spaces into our quasimorphism moduli spaces. As a consequence, Manhattan curves for metrics on these groups are $C^1$, with equality cases characterised by rough similarity of the underlying metrics. This provides insights into Manhattan curves in settings where neither classical thermodynamic formalism for H\"older potentials nor Patterson–Sullivan theory apply. The same construction recovers the corresponding results for hyperbolic groups and places these examples within a common dynamical-geometric framework.
\end{abstract}

\maketitle

\tableofcontents

\section{Introduction}
\label{sec:Introduction}
A recurring theme in geometry is the construction of moduli spaces whose points encode geometric structures. Such spaces are themselves often equipped with a metric that measures the `distortion' required to move from one structure to another. Classical examples include the Teichm\"uller space of a surface and the Culler-Vogtmann Outer space, both of which can be equipped with Thurston-type metrics; for the latter moduli space we refer to the original article~\cite{Culler-Vogtmann}, whereas for the former we recommend the comprehensive introduction supplied in Farb-Margalit's monograph~\cite{Farb-Margalit}). In these settings, the metric on the moduli space is a central object of study: it reveals dynamical information, rigidity phenomena, and allows one to study the large-scale geometry of the space.

The purpose of this paper is to develop a geometric moduli space for symbolic dynamical systems. We construct a moduli space of length functions on subshifts of finite type, represented by almost additive potentials, and equip it with a natural Thurston-type metric. The resulting space admits canonical geodesics arising from thermodynamic pressure functions and is sufficiently flexible that it can be used to encode many geometric moduli spaces. For example, we will prove that many well-known geometric moduli spaces, including Teichm\"uller space and Outer space, embed isometrically into our dynamical moduli space (see Remark \ref{rem.out}), and thereby we provide a dynamical framework in which we can compare different geometries.

The fundamental objects in our construction are equivalence classes of quasimorphisms on subshifts of finite type.
We now briefly introduce the basic definitions so that we can present the construction of our symbolic moduli space; see \textsection\ref{subsec:quasimorphisms} for a more extensive discussion of definitions and results, and \textsection\ref{sec:prelim} for the required preliminaries.\\

\subsection{The symbolic moduli space}
\label{subsec:modulispace}
 Let $k\geq 1$ be an integer, $(\Sigma_A, \sigma)$ a mixing one-sided subshift of finite type on the alphabet $\{1, \ldots, k\}$ with transition matrix $A$. We write $\W$ for the collection of all finite non-empty words allowed by $A$: $\W = \bigcup_{n \ge 1} \W_n$ where
\[
  \W_n =  \{ (x_1,\ldots,x_n)\in \{1,\dots,k\}^n : A_{x_i,x_{i+1}} =1 \text{ for } i =1, \ldots, n-1\}
\]
for all $n\geq 1$. Given a word $w \in \W$ we write $|w|$ for its length, that is, $|w| = n$ if $w\in\W_n$. 

\begin{definition}[Quasimorphism]
\label{def:quasimorphism}
A function $\p \colon \W \to \R$ is called a quasimorphism if there exists $D>0$ such that
\begin{align}\label{eq:defquasimorphism}
|\p(w_1w_2) - \p(w_1) - \p(w_2)| \le D
\end{align}
for all pairs of words $w_1, w_2 \in \W$ with $w_1w_2 \in \W$. The least $D$ realising the property is denoted by $D_{\p}$, and is called the \emph{defect} of $\p$.
\end{definition}
 Since $\Sigma_A$ is fixed in what follows, we suppress it from the notation and write $\qm$ for the set of quasimorphisms on $\Sigma_A$, which is naturally endowed with the structure of an $\R$-vector space. 

Let $\qm^+ \subset \qm$ denote the collection of quasimorphisms that are bounded below, and define $\qm^{++}$ to be the subset of $\qm^+$ consisting of those quasimorphisms $\p\colon \W\to \R$ with linear growth\footnote{It is immediate, by subadditivity and finiteness of the alphabet $\{1,\dots,k\}$, that any quasimorphism $\psi$ has \emph{at most} linear growth: there exist $\tau',C'>0$ such that $\psi(w)\leq \tau'|w|+C'$ for all $w\in \W$.}: there exist $ C,\tau>0$ such that 
\begin{align}\label{eq:defqm++}
\p(w) \ge \tau |w| - C
\end{align}
for all $w\in \W$. Equivalently, $\p$ belongs to $\qm^{++}$ if and only if $\{ w \in \W : \p(w) \leq T\}$ is a finite set for every $T>0$; see Proposition \ref{prop.growthrate}.
 
A word $w\in \W$ is periodic if the product $ww$ belongs to the language $\W$ of the subshift. Given a periodic word $w$ we use $\{w\}$ to denote the corresponding \emph{periodic orbit}, by which we mean, abusing terminology, the multiset that contains all cyclic permutations of $w$ with multiplicity. If $w \in \W_n$ we say that $\{w\}$ has length $n$. The set of periodic orbits of length $n$ is denoted by $\Per_n$, and $\Per=\bigsqcup_{n\geq 1} \Per_n$ is the collection of all periodic orbits.

The homogeneization of a quasimorphism $\p$ is defined as the function
\begin{align}\label{eq:defhomogenization}
\ov\p : \Per \to \R\;, \{w\} \mapsto\lim_{n\to\infty} \frac{\p(w^n)}{n}
\end{align}
where the limit exists by almost subadditivity (coming from the quasimorphism property). We use the homogenizations to define the dilation, intuitively the largest stretch factor, between two quasimorphisms. Given $\phi, \p \in \qm^{++}$ let 
\begin{align}\label{eq:defDil}
\Dil(\phi,\p) = \sup_{w \in \Per} \frac{\ov{\phi}\{w\}}{\ov{\p}\{w\}}\;
\end{align}
which is well defined since $\ov{\p}$ is strictly positive for $\p \in \qm^{++}$.

\begin{definition}[Roughly similar quasimorphisms]\label{def:eqqm}
We say that two quasimorphisms $\phi, \p$ are roughly similar, and write $\phi \sim \p$, if there exists $C,\tau >0$ such that
\begin{align}\label{eq:defeqqm}
|\p(w) - \tau \phi(w)| \leq C
\end{align}  
for all $w \in \W$.
\end{definition}

This clearly defines an equivalence relation on $\qm$, for which $\qm^+$ and $\qm^{++}$ are saturated sets. This equivalence relation is natural from the dynamical perspective: it characterises when $\p$ and  $\phi$ have the same periodic orbit data, see Corollary \ref{cor.od}. We are now ready to define our moduli space.

\begin{definition*}[Moduli space of quasimorphisms]
Let $(\Sigma_A,\sigma)$ be a mixing subshift of finite type. Define the moduli space of quasimorphisms $\Qm^{++}$ to be the space of rough similarity equivalence classes of quasimorphisms in $\qm^{++}$. We equip $\Qm^{++}$ with the function $\Qq : \Qm^{++} \times \Qm^{++} \to \R$ defined by
\[
\Qq([\phi], [\psi]) = \log \left( \Dil(\phi,\p) \, \Dil(\p, \phi) \right)\;.
\]
Here $[\phi], [\p]$ denote the equivalence classes of  $\phi, \p \in \qm^{++}$.
\end{definition*}

\begin{remark}
1) The definition of this space was inspired by the moduli space of hyperbolic metrics studied by Reyes in~\cite{Oregon-Reyes} and by the first author with Reyes in~\cite{cantrell.reyes.man}. \\
2) If $\phi_i\sim\p_i, i=1,2$, then $\overline{\phi_i}=\tau_i\overline{\p_i}$, and $\Dil(\phi_1,\phi_2)=\frac{\tau_1}{\tau_2}\Dil(\p_1,\p_2)$, and
$\Dil(\phi_2,\phi_1)=\frac{\tau_2}{\tau_1}\Dil(\p_2,\p_1)$. It follows that $\Qq$ is well defined on rough classes.\\
3) It is also possible to equip $\Qm^{++}$ with the asymmetric Thurston metric: $\widetilde{\Qq}([\phi], [\psi]) = \log \left( \Dil(h(\phi) \phi, h(\p)\p\right)$ where $h(\phi), h(\p)$ are the growth rates of $\phi,\p$, see \eqref{eq:hquasi}. Most of the results of this paper carry over to the asymmetric metric (with the appropriate modifications if necessary).
\end{remark}

One of our main results can be loosely stated as follows; see Theorem~\ref{thm.spaceprops} for a more accurate formulation.

\begin{theorem*} 
The function $\Qq$ is a metric on $\Qm^{++}$. Furthermore, the metric space $(\Qm^{++}, \Qq)$ is contractible, unbounded, separable and is not complete.
\end{theorem*}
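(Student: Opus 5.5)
We first show that $\Qq$ is a metric, then establish the four global properties in turn.

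\emph{Metric axioms.} Write $\ov\phi,\ov\p$ for the homogenizations. From \eqref{eq:defqm++} and the footnote's upper bound we get $\tau|w|\le \ov\p\{w\}\le \tau'|w|$ on periodic orbits. Hence $0<\Dil(\phi,\p)<\infty$.

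\begin{itemize}
\item \emph{Nonnegativity.} For any periodic orbit $\{w\}$,
\[
\Dil(\phi,\p)\Dil(\p,\phi)\ \ge\ \frac{\ov\phi\{w\}}{\ov\p\{w\}}\cdot\frac{\ov\p\{w\}}{\ov\phi\{w\}}=1,
\]
so $\Qq\ge 0$.
\item \emph{Symmetry.} This is built into the definition.
\item \emph{Triangle inequality.} It follows from submultiplicativity $\Dil(\phi,\chi)\le \Dil(\phi,\p)\Dil(\p,\chi)$, which comes from writing each ratio $\ov\phi/\ov\chi$ as a product of two ratios and taking suprema.
\item \emph{Non-degeneracy.} If $\Qq=0$, then $\sup \ov\phi/\ov\p=\inf\ov\phi/\ov\p$, so $\ov\phi=\lambda\ov\p$ on $\Per$. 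The quasimorphism $\phi-\lambda\p$ then has vanishing homogenization on all periodic orbits. By Corollary~\ref{cor.od} (periodic orbit data determine the rough similarity class), $\phi\sim\p$. This is the only substantial input for the axioms. If one had to prove it, the route would be: bounded quasimorphisms are characterised by vanishing homogenization on the mixing SFT. To see this, use specification to approximate any word $w$ by a periodic word $wu$ with $|u|$ bounded, and compare $\phi(w)$ with $\ov\phi\{wu\}$ up to a constant multiple of the defect, using the standard estimate $|\ov\phi\{v\}-\phi(v)|\le D_\phi$ for periodic $v$.
\end{itemize}

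\emph{Global properties.} Contractibility, as in the precise Theorem~\ref{thm.spaceprops}, will come from convexity. For fixed $[\p_0]$, consider the straight-line homotopy $t\p+(1-t)\p_0$ of normalised representatives. Sums of elements of $\qm^{++}$ stay in $\qm^{++}$. The dilation of a convex combination is controlled by the maxima of the dilations, which gives continuity in $t$ and in $[\p]$, and hence a contraction to $[\p_0]$. The main care needed is choosing representatives continuously, for example normalised by $h(\p)=1$ or by $\ov\p$ on a fixed orbit.

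\emph{Unboundedness.} Take $\p_0(w)=|w|$ and $\p_n(w)=|w|+n\cdot\#\{i: w_i=1\}$, where $\#\{i:w_i=1\}$ is an additive count of a symbol. Comparing the fixed/periodic orbit avoiding the symbol $1$ (if one exists; otherwise use a potential weighting a cylinder) with an orbit rich in $1$'s makes the product of dilations grow like $n$. More generally, one can take Birkhoff sums of locally constant functions with widely different orbit averages.

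\emph{Separability.} Approximate an arbitrary $\p\in\qm^{++}$ by Birkhoff sums of locally constant potentials with rational values. The natural candidate is $\frac1m\p$ restricted to blocks, i.e. $f_m(x)=\frac1m\p(x_1\dots x_m)$. Using the defect, one gets $\ov{S f_m}\{w\}=\ov\p\{w\}(1+O(1/m))$ uniformly in $|w|$, since $\ov\p\{w\}\ge\tau|w|$ absorbs the additive error $O(D_\p|w|/m)$. Hence $\Qq([S f_m],[\p])\to0$, and the countable family of rational locally constant potentials is dense.

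\emph{Incompleteness.} Build a Cauchy sequence whose only possible limit fails linear growth. For example, take $\p_n=S(f+\frac1n)$ with $f\ge0$ locally constant and vanishing on a fixed point's orbit (so $\ov{Sf}=0$ there). A direct computation of $\Dil(\p_n,\p_m)$ on that orbit and elsewhere gives $\Qq\to0$ at a rate making the sequence Cauchy. This is the step I expect to be the main obstacle: ensuring the log-dilations actually tend to zero, which requires $f$ to be cohomologous to something comparable to the distance to the fixed point. Then any limit $\p$ would have $\ov\p$ proportional to $\ov{Sf}$, which vanishes on an orbit, contradicting $\p\in\qm^{++}$.
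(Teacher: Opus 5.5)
Your treatment of the metric axioms and of contractibility follows the paper. For unboundedness and separability you take different, more elementary routes than the paper. You use additive symbol counts instead of word quasimorphisms, and explicit block averages $f_m$ instead of Cuneo's theorem followed by uniform approximation. Both routes go through. On $k\ge 2$ symbols some periodic orbit always avoids some symbol. The block decomposition gives $|\ov\p\{w\}-S_{|w|}f_m(x_w)|\le |w|D_\p/m$, after which rounding $f_m$ to rational values is harmless.

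\textbf{The incompleteness step fails, and not for a technical reason: your sequence is not Cauchy.} Write $a(w)=S_{|w|}f(x_w)/|w|$ and $a_{\max}=\sup_{\Per}a>0$. Then $\ov{\p_n}\{w\}=|w|\,(a(w)+1/n)$, and for $n>m$
\[
\Qq([\p_n],[\p_m])=\log\Bigl(\frac{n}{m}\cdot\frac{a_{\max}+1/n}{a_{\max}+1/m}\Bigr)\longrightarrow\infty\quad(n\to\infty).
\]
The factor $n/m$ is $\Dil(\p_m,\p_n)$, attained at the fixed point where $a=0$. The sequence escapes to infinity towards the class of $Sf\in\qm^+\setminus\qm^{++}$, a boundary point in the sense of \textsection\ref{sec.boundary}. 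No choice of $f$, cohomologous or not, repairs this, since only orbit averages enter.

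More generally, for fixed $\{u\},\{v\}\in\Per$ the map $[\p]\mapsto\log(\ov\p\{u\}/\ov\p\{v\})$ is $1$-Lipschitz for $\Qq$. This is because $\frac{\ov\p\{u\}}{\ov\phi\{u\}}\cdot\frac{\ov\phi\{v\}}{\ov\p\{v\}}\le\Dil(\p,\phi)\Dil(\phi,\p)$. So along any Cauchy sequence normalised at a fixed orbit, the homogenizations converge uniformly in ratio to a length function $L$ with $L\{w\}\ge c|w|$. A limit that fails linear growth therefore never occurs.

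The missing idea is that incompleteness must come from a limiting length function that is not the homogenization of any quasimorphism at all, because the defects blow up. The paper realises this as follows.
\begin{enumerate}
\item Take a continuous $F>0$ with more than one equilibrium state, and approximate it uniformly by H\"older $F_m$.
\item The associated $\p_{F_m}$ are Cauchy: if $F_m,F_k\ge\epsilon$ then $\Qq([\p_{F_m}],[\p_{F_k}])\le 2\log(1+\|F_m-F_k\|_\infty/\epsilon)$.
\item A limit $[\p]\in\Qm^{++}$ would, after scaling, satisfy $\ov\p\{w\}=S_{|w|}F(x_w)$. By density of periodic measures this gives $\mu(\p)=\int F\,d\mu$ for all $\mu\in\Minv$.
\item Then the equilibrium states of $F$ would coincide with that of the quasimorphism $\p$, which is unique by Theorem~\ref{thm:propiedadesmedidamul}. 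This contradicts the choice of $F$.
\end{enumerate}
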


\begin{remark}
1) To prove this theorem we develop thermodynamic results for quasimorphisms; see \textsection\ref{subsec:quasimorphisms}. \\
2) We show that the non-completeness occurs due to the defect diverging. If we make restrictions on the defect, we recover a complete subspace inside $\Qm^{++}$ (see Theorem \ref{thm.def}).\\
3) We also show that the subspace of equivalence classes in $\Qm^{++}$ that are naturally associated to locally constant (or H\"older) functions is dense in $\Qm^{++}$.
\end{remark}

After proving this result we turn to understanding geodesics in the space of quasimorphisms $(\Qm^{++},\Qq)$. For a single quasimorphism $\p \in \qm$ there is a natural notion of a pressure curve $t \mapsto \Pr(t\p)$ associated to $\p$.  Given a pair $\phi,\p \in \qm^{++}$; for each $t \in \R$ the implicit solution in $s$ to the equation $\Pr(t\phi - s\p) = 0$ determines the pressure function $\Pr(\phi,\p)(t)$ of the pair $\phi, \p$ (cf.~Definition~\ref{def:pressurefunction}). We show the following:

\begin{theorem*}[{cf.~Theorem~\ref{thm.geo}}]
Each pair of points $[\phi], [\p] \in \Qm^{++}$ can be connected by the bi-infinite $\Qq$ geodesic $t\mapsto [\Pr(\p,\phi)(t) \phi - t\p]$. 
\end{theorem*}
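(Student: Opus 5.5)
The plan is to reduce everything to a one-dimensional projective computation on periodic-orbit data. Write $s(t)=\Pr(\p,\phi)(t)$, so that $\Pr(t\p - s(t)\phi)=0$, and set $\theta_t = s(t)\phi - t\p$, so that $\Pr(-\theta_t)=0$.

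\textbf{Step 0: the curve passes through $[\phi]$ and $[\p]$.} First I check that the curve hits both points. At $t=0$ we get $s(0)=h(\phi)$, hence $\theta_0=h(\phi)\phi\sim\phi$. At the parameter $t_\p=-h(\p)$ we have $\Pr(t_\p\p)=0$, so $s(t_\p)=0$ and $\theta_{t_\p}=h(\p)\p\sim\p$. Here I use the identification $\Pr(-u\p)=0 \iff u=h(\p)$ for $\p\in\qm^{++}$ from \textsection\ref{subsec:quasimorphisms}.

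\textbf{Step 1: each $\theta_t$ lies in $\qm^{++}$.} Introduce the ratio function $r\{w\}=\ov\p\{w\}/\ov\phi\{w\}$ on $\Per$. Put $r_-=\inf r = 1/\Dil(\phi,\p)$ and $r_+=\sup r=\Dil(\p,\phi)$, both finite and positive. Then $\ov{\theta_t}\{w\}=\ov\phi\{w\}\,(s(t)-t\,r\{w\})$, and membership in $\qm^{++}$ amounts to $s(t)-tr$ being bounded away from $0$ (positively) on $[r_-,r_+]$. I would use the characterisation of linear growth via invariant measures, equivalently periodic orbits, proved in \textsection\ref{subsec:quasimorphisms}. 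By convexity and real-analyticity/strict monotonicity of pressure, $t\mapsto s(t)$ is a convex $C^1$ curve with $s'(t)\in(r_-,r_+)$. Moreover, the tangent-line/variational principle gives $s(t)-t\,r\ge$ a positive quantity for every $r$ attained by an orbit. The strictness is where mixing enters: if $\ov{\theta_t}$ vanished on some orbit, the corresponding zero-entropy measure would force $\Pr(-\theta_t+\epsilon\phi)$ to stay $\ge 0$ in a way that contradicts strict decrease of pressure in $s$. I expect this positivity to be the main obstacle, together with the asymptotics in Step 3. To handle it I would first try the Manhattan-curve picture: the region $\{(t,s):\Pr(t\p-s\phi)\le 0\}$ is convex, and its boundary has asymptotic slopes $r_\pm$ as $t\to\mp\infty$. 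These slopes come from ergodic optimisation, with the maximising/minimising ratios approximated by periodic orbits.

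\textbf{Step 2: compute the distance as a log cross-ratio.} For parameters $a,b$ we have
\[
\frac{\ov{\theta_a}\{w\}}{\ov{\theta_b}\{w\}}=\frac{s(a)-a\,r\{w\}}{s(b)-b\,r\{w\}},
\]
which is a Möbius function of $r$ with no pole on $[r_-,r_+]$, hence monotone there. Therefore its sup and inf over $\Per$ are its values at $r_\pm$; these are limits, which are enough because the orbit ratios accumulate at $r_\pm$. Consequently
\[
\Qq([\theta_a],[\theta_b])=\left|\log\frac{(s(a)-ar_+)(s(b)-br_-)}{(s(a)-ar_-)(s(b)-br_+)}\right|.
\]
This is the Hilbert-type distance on $\mathbb{RP}^1$ between the points $p_t=[s(t):t]$ relative to the arc determined by $r_-,r_+$. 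By Step 1 the points $p_t$ avoid the closed arc, and by strict convexity of $s$ the map $t\mapsto p_t$ is monotone.

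\textbf{Step 3: additivity and bi-infiniteness.} Log cross-ratios of monotonically ordered points are additive, so $\Qq([\theta_a],[\theta_c])=\Qq([\theta_a],[\theta_b])+\Qq([\theta_b],[\theta_c])$ for $a<b<c$. Together with continuity this makes $t\mapsto[\theta_t]$ a geodesic, after reparametrisation by arclength. Bi-infiniteness follows from the asymptotics in Step 1: as $t\to\pm\infty$, the point $p_t$ tends to the endpoints of the arc determined by $r_\mp$, so the cross-ratio blows up in both directions. I would also show, via the same asymptotics, that the parametrisation is injective, so the geodesic is a genuine bi-infinite line through $[\phi]$ and $[\p]$.
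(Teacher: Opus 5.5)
\textbf{Gap in Step~1.} Steps~2 and~3 both rest on Step~1, and Step~1 is not proved. You need the uniform bound $s(t)-t r_+>0$ for $t\ge 0$ and $s(t)-t r_->0$ for $t\le 0$. Since $s(t)-tr$ is affine in $r$, this is the same as positivity on all of $[r_-,r_+]$. This bound is what gives $\theta_t\in\qm^{++}$ via Lemma~\ref{lem.pos}, and it is exactly the ``no pole on $[r_-,r_+]$'' hypothesis of Step~2. None of the mechanisms you propose delivers it:
\begin{itemize}
\item The variational principle applied to a periodic-orbit measure has zero entropy, so it gives only $s(t)-t\,r\{w\}\ge 0$, not a positive quantity.
\item Your $\epsilon$-perturbation yields only $\Pr(-\theta_t+\epsilon\phi)\ge\epsilon\,\delta_{\{w\}}(\phi)$. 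This is compatible with strict monotonicity of pressure, so no contradiction arises.
\item The correct pointwise mechanism is uniqueness and full support of the equilibrium state of $-\theta_t$ (Theorem~\ref{thm:propiedadesmedidamul}). But it gives strictness only orbit by orbit, and $r_\pm$ need not be attained by any orbit.
\item Convexity of $s$ with asymptotic slopes $r_\pm$ does not by itself stop $s(t)-tr_+$ from turning negative.
\end{itemize}

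\textbf{How to close it.} The variational principle with orbit measures gives $s(t)\ge t\,r\{w\}$ for every $\{w\}\in\Per$ and every $t$. Hence $g(t):=s(t)-tr_+\ge 0$ on $[0,\infty)$. If $[\phi]\neq[\p]$, Corollary~\ref{cor.line} makes $s$ strictly convex. Together with $s(t)/t\to r_+$ as $t\to+\infty$ (Lemma~\ref{lem.asympgrad}), this forces $s'<r_+$ everywhere. So $g$ is strictly decreasing and nonnegative on $[0,\infty)$, hence strictly positive there. The case $t\le 0$ is symmetric with $r_-$. The paper instead obtains this positivity from the large deviation principle in Lemma~\ref{lem.post}.

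\textbf{Steps 2--3 versus the paper.} With Step~1 in place, Steps~2--3 are correct and cleaner than the paper's route. The paper computes dilations only against the base points $t=-h(\p)$ and $t=0$ (Lemma~\ref{lem.compu}), verifies multiplicativity there, and then appeals to a reparametrisation. Your identity $\Qq([\theta_a],[\theta_b])=|u(a)-u(b)|$, with $u(t)=\log\frac{s(t)-tr_+}{s(t)-tr_-}$, handles all triples at once.

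\textbf{Smaller corrections.}
\begin{itemize}
\item Monotonicity of $u$ does not follow from strict convexity alone: for $s(t)=t^2+1$ the direction $[s(t):t]$ is not monotone. It follows from $u'=\frac{s'-r_+}{s-tr_+}-\frac{s'-r_-}{s-tr_-}\le 0$, using $s'\in[r_-,r_+]$ and Step~1.
\item As $t\to+\infty$, $s(t)/t\to r_+$, not $r_-$.
\item Only $C^1$ regularity of pressure is available here, not real-analyticity; $C^1$ suffices.
\item Bi-infiniteness needs $r_-<r_+$, i.e.\ $[\phi]\neq[\p]$; otherwise the curve is constant.
\end{itemize}
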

\noindent We shall make use of this theorem in order to define and study a natural boundary for the space $\Qm^{++}$.

One of the principal motivations for introducing this abstract moduli space (and studying its geometry) is that it contains concrete geometric examples. In the second part of the paper we consider metric structures on right-angled Artin groups, right-angled Coxeter groups and hyperbolic groups (see \textsection\ref{sec.geoapp}). These groups admit natural symbolic codings, and their metric structures give rise to almost additive potentials on the associated subshifts. The geometric moduli spaces associated to these groups then admit natural embeddings into the quasimorphism moduli space. 

More precisely, for the classes of metrics considered below, the embedding is compatible with the respective equivalence relations. In particular, the metric $\Qq$ restricts to the symmetrised Thurston metric on the geometric moduli space. The  coarse geometry then naturally sits within the pressure geometry of the ambient symbolic moduli space. This point of view also clarifies the relationship with Manhattan curves. For a pair of geometric metrics in our considered geometric moduli spaces, their  Manhattan curve is in fact the pressure function associated to the two corresponding quasimorphisms. That is, under the embedding constructed in this paper, the Manhattan curve becomes precisely a pressure curve in the ambient moduli space. Thus regularity, convexity, and rigidity properties of Manhattan curves can be understood as manifestations of the general pressure geometry developed here.

Finally, the same framework yields counting and large-deviation results. For suitable observables and metrics, the distribution of geometric lengths of group elements is governed by the pressure function of the associated symbolic system. The corresponding rate functions are obtained from the Fenchel-Legendre transform of the pressure, giving a unified explanation for the large-deviation phenomena that arise in the geometric group-theoretic setting. See Theorem \ref{thm.ldp}.

We end this section by noting that there are several conceptual consequences of this framework. First, it provides a common language for thermodynamic formalism and geometric moduli spaces: asymptotic length functions on a symbolic system can be treated as points of a metric space, rather than merely as observables on a fixed dynamical system. Second, it shows that pressure has an intrinsic geometric meaning, since its zero sets generate geodesics in the moduli space. Third, the construction gives a natural setting in which geometric structures on non-hyperbolic groups can be studied using thermodynamic methods.

Before we state our results precisely, we present the aforementioned applications to metrics on right-angled Artin/Coxeter groups.

\subsection{An application: coarse geometries for RAAGs and RACGs}\label{sec.geoapp}

\subsubsection{Manhattan curves}
Suppose that $\G$ is a group and that $d_1, d_2$ are metrics on $\G$. There is a natural curve, called the Manhattan curve, that interpolates between these metrics. It is obtained as the boundary of the convex set
\[
\mathcal{C}_M(d_1,d_2) = \left\{ (a,b) \in \R^2 : \sum_{x \in \G} e^{-ad_1(o,x) - b d_2(o,x)} < \infty \right\} \subset \R^2
\]
(which could be empty). These curves were introduced by Burger \cite{burger} for pairs of metrics coming from actions of a surface group on rank $1$ symmetric spaces. Since their conception, they have proved a useful tool for establishing counting, rigidity and statistical results: we refer, among others, to the works of Kao~\cite{Kao}, Tanaka and the first author~\cite{cantrell.tanaka.1}, \cite{SharpManhattan}, Bianchi and He~\cite{Bianchi-He}, \cite{bray2022counting}. It is of particular interest to establish good regularity properties of Manhattan curves as one obtains multiple corollaries from such results. There are two methods that have been employed to establish regularity of Manhattan curves. The first is thermodynamic formalism for potentials with sufficiently good (i.e., H\"older) regularity~\cite{SharpManhattan,Kao,cantrell.tanaka.2} and the second is Patterson-Sullivan theory~\cite{cantrell.tanaka.1,Reyes-Wang}. We are able to use new thermodynamic results to establish  $C^1$ regularity and rigidity properties of Manhattan curves in settings where neither thermodynamic formalism for H\"older potentials nor Patterson-Sullivan theory are immediately applicable. For now we present examples for right-angled Artin groups and Coxeter groups. See \textsection\ref{sec.apps} for further examples.

Suppose that $\Gamma$ is a right-angled Artin group or right-angled Coxeter group that is not virtually cyclic and does not split as a direct product. Let $d$ denote the word metric on $\G$ associated to the standard vertex generating set.

Given a possibly asymmetric, $\G$ invariant metric  $d_\ast : \G \times \G \to \R$, let $(\cdot|\cdot)_{d_\ast}$ denote (the possibly asymmetric Gromov product)
\begin{align}\label{eq:defGromovproduct}
(x|y)_{d_\ast} = \frac{1}{2} \left( d_\ast(x,o) + d_\ast(o,y) - d_\ast(x,y) \right)
\end{align}
where $x,y \in \G$ and $o \in \G$ is the identity. The Gromov product for an arbitrary function $\p : \G \times \G \to \R$ is defined analogously. 
\begin{definition}
Let $\mathcal{H}_\G^d$ denote the collection of (possibly asymmetric) functions $\p : \G \times \G \to \R$ that are $\G$-invariant and satisfy the following: there exists $c_1, c_2 >0$ such that
\[
|(x|y)_{\p}|\le c_1(x|y)_d + c_2
\]
for all $x,y \in \G$.  We write $\mathcal{D}_\G^d \subset \mathcal{H}_\G^d$ for the collection of (possibly asymmetric) metrics in $\mathcal{H}_\G^d$ that are quasi-isometric to $d$.
\end{definition}
\begin{example}
The space $\mathcal{H}_\G^d$ contains many functions of interest. For example it contains:
\begin{enumerate}
\item all quasimorphisms $\phi : \G \to \R$, i.e. set $\p(x,y) = \phi(x^{-1}y)$;
\item the function $\#_v : \G \to \R$ that counts the number of occurrences of a fixed vertex $v^{\pm1}$ in the shortest word representation of a group element, i.e. $\p(x,y) = \#_v(x^{-1}y)$; and,
\item the action of $\G$ on the contact graph, i.e. if $\mathcal{C} X$ is the contact graph with metric $d_\mathcal{C}$ set $\p(x,y) = d_\mathcal{C}(x \cdot v_0, y \cdot v_0)$ where $v_0 \in  \mathcal{C} X$ is a fixed base point vertex (see \textsection\ref{sec.con}).
\end{enumerate}
\end{example}

\begin{remark}\label{rem.dg}
When $\G$ is a hyperbolic group and $d$ is a word metric on $\G$, the collection of symmetric metrics in $\mathcal{D}_\G^d$ is precisely the space $\Dc_\G \cup \partial_M\Dc_\G$: the union of hyperbolic, $\G$-invariant metrics quasi isometric to a word metric (defined by Furman in \cite{furman}) with the Manhattan boundary $\partial_M\Dc_\G$ (defined by Reyes and the first author in~\cite{cantrell.reyes.man}).  Furman initially proved interesting rigidity results for metrics in $\Dc_\G$ \cite{furman}. The $C^1$ regularity and rigidity properties for Manhattan curves were later established by the first author in joint work with Tanaka~\cite{cantrell.tanaka.1}, when quasi-isometric to a word metric, and with Reyes~\cite{cantrell.reyes.man} for general metrics in this space. In \textsection\ref{sec.apps} we show how to use our new theory to recover the results from~\cite{furman}, \cite{cantrell.tanaka.1} and \cite{cantrell.reyes.man}.
\end{remark}

We say that two functions $\p_1, \p_2 \in \mathcal{H}_\G^d$ are roughly similar if there exists $\tau, C>0$ such that
\[
|\p_1(x,y) - \tau \p_2(x,y)|< C
\]
for all $x,y \in \G$. Let $\mathscr{D}_\G^d$ and $\mathscr{H}_\G^d$ denote the rough similarity classes of $\mathcal{D}_\G^d$ and $\mathcal{H}_\G^d$ respectively. We can think of $\mathscr{D}_\G^d$ as the collection of genuinely different metrics that fellow travel $d$. The collection $\mathscr{D}_\G^d$ (hence $\mathscr{H}_\G^d$) is uncountable, see Lemma \ref{lem.uncount}.
﻿
Given two functions $\p_1,\p_2 \in \mathcal{H}_\G^d$ we can define the Manhattan curve as above. We can then extend the main results of \cite{cantrell.tanaka.1} beyond the hyperbolic setting to $\mathcal{H}_\G^d$.

\begin{theorem}\label{thm.rags1}
Suppose that two metrics $\p_1, \p_2 \in \mathcal{H}_\G^d$ have non-empty Manhattan curve $\theta_{\p_1/\p_2}$. Then the Manhattan curve $\theta_{\p_2/\p_1}$ is $C^1$ and is a straight line if and only if $\p_1$ and $\p_2$ are roughly similar.
\end{theorem}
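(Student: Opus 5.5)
The plan is to transport the problem to the symbolic setting, where the Manhattan curve becomes a pressure function for a pair of quasimorphisms, and then use the general regularity and rigidity of pressure functions developed for $\qm^{++}$.

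\textbf{Coding.} First fix the standard geodesic (ShortLex) automatic structure for $\G$ with respect to the vertex generating set. Since $\G$ is a RAAG or RACG that is not virtually cyclic and not a direct product, this structure can be arranged so that its recurrent part is a single mixing component (after passing to a power or using the standard aperiodicity tricks), giving a mixing subshift of finite type $(\Sigma_A,\sigma)$. Each $g \in \G$ then corresponds, bijectively, to a path $w_g\in\W$ with $|w_g| = d(o,g)$ up to bounded error. For $\p\in\mathcal{H}_\G^d$ define $\hat\p(w_g) = \p(o,g)$. If $w_g = w_1w_2$ is a concatenation of geodesic pieces $g=g_1g_2$, then $\hat\p(w_1w_2)-\hat\p(w_1)-\hat\p(w_2) = -2(g_1^{-1}|g_2)_\p$ after using $\G$-invariance. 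The Gromov product hypothesis bounds this by $2c_1(g_1^{-1}|g_2)_d+2c_2$, and $(g_1^{-1}|g_2)_d=0$ because the concatenation is geodesic. Hence $\hat\p$ is a quasimorphism. If $\p$ is moreover a metric quasi-isometric to $d$ (or, more generally, linearly growing), then $\hat\p\in\qm^{++}$. Rough similarity of $\p_1,\p_2$ is equivalent to rough similarity of $\hat\p_1,\hat\p_2$. One direction is trivial; for the converse, use that every pair $x,y$ is recovered, up to bounded error, through $x^{-1}y$ and invariance.

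\textbf{Manhattan curve as pressure.} Next, rewrite $\sum_{g}e^{-a\p_1(o,g)-b\p_2(o,g)}$ as a sum over $\W$ of $e^{-a\hat\p_1(w)-b\hat\p_2(w)}$, up to the finitely many non-recurrent components. By the thermodynamic results for quasimorphisms (the pressure $\Pr(\cdot)$ of a quasimorphism is the exponential growth rate of these partition sums), this series converges when $\Pr(-a\hat\p_1-b\hat\p_2)<0$ and diverges when it is $>0$. Hence $\theta_{\p_2/\p_1}$ is the zero set of $\Pr(-a\hat\p_1 - b\hat\p_2)$, which is exactly the pressure function $\Pr(\hat\p_1,\hat\p_2)$ of Definition~\ref{def:pressurefunction}, up to sign conventions. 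The nonemptiness hypothesis is used here to ensure that the relevant function lies where the pressure is finite and the curve is defined, for instance that $\hat\p_1$ or $\hat\p_2$ lies in $\qm^{++}$.

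\textbf{Regularity and rigidity.} Finally, invoke the paper's theorem that for $\phi,\p\in\qm^{++}$ the pressure function $t\mapsto \Pr(\phi,\p)(t)$ is $C^1$ and convex. It is a straight line exactly when $\ov\phi$ and $\ov\p$ are proportional on $\Per$, which by Corollary~\ref{cor.od} holds iff $\phi\sim\p$. Combined with the coding step, this gives the equivalence with rough similarity of $\p_1,\p_2$.

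\textbf{Main obstacle.} I expect the hardest step to be the coding. The ShortLex automaton for RAAGs and RACGs need not be a single mixing component: several maximal components may carry full growth, and accept states complicate matters. The first approach I would try is to use the irreducibility consequences of not splitting as a product (e.g.\ the contracting/rank-one elements available there) to show that a unique maximal component dominates the growth. The non-maximal components then contribute strictly smaller exponential growth and do not affect the critical line. Where this is not available, the fallback is to handle each maximal component separately. Each one is a mixing SFT after a power, and I would argue that they all yield the same pressure zero set.
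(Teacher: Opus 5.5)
Your architecture is the paper's: code $\G$ by a geodesic automaton, and turn $\p\in\mathcal{H}_\G^d$ into a quasimorphism using the Gromov-product bound along geodesic concatenations. Then identify the Manhattan curve with the zero set of $(a,b)\mapsto\Pr(-a\hat\p_1-b\hat\p_2)$, i.e.\ with a pressure function, and finish with Theorem~\ref{thm.rigidity}/Corollary~\ref{cor.line} and the transfer of rough similarity. The gap is the coding step. You assert it (``can be arranged so that its recurrent part is a single mixing component'') and then yourself flag it as doubtful. Passing to a power of the shift repairs periodicity, not reducibility. What the argument needs is \emph{one} mixing subshift in which every group element is read, up to bounded error. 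The paper does not derive this; it imports it as Theorem~\ref{thm.graph} (the Hermiller--Meier automaton together with the Gekhtman--Taylor--Tiozzo result). For a RAAG or RACG that is not virtually cyclic and not a direct product, deleting the initial states leaves a single strongly connected aperiodic graph, and $\ev$ is a bijection from paths leaving the initial states onto $\G$. That is exactly what makes $\sum_{x\in\G}e^{-s\p_1(o,x)-t\p_2(o,x)}$ converge iff the corresponding sum over $\W(A')$ does, hence $\theta_{\p_2/\p_1}(t)=\Pr(F(\p_2),F(\p_1))(-t)$.

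Your fallbacks do not replace this input. The claim that non-maximal components ``contribute strictly smaller exponential growth and do not affect the critical line'' is false in general. Smaller topological entropy compares pressures only at the parameter corresponding to word length. At other points $(a,b)$ a lower-entropy component can carry the larger pressure of $-a\hat\p_1-b\hat\p_2$; think of a component reading words that avoid a vertex $v$, with $\p_2=\#_v$ and $b$ large. With several components the convergence region is an intersection, so the Manhattan curve is governed by a maximum of several pressure functions. Such a maximum can have corners, which is precisely the failure of $C^1$ you want to exclude. Likewise, showing that all maximal components ``yield the same pressure zero set'' is the substantive point, not a routine check. In the hyperbolic case the paper gets it from growth quasi-tightness: every element is read in the chosen component after bounded multiplication on both sides (see the proof of Theorem~\ref{thm.D_Gembed}).

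Two smaller points. First, define $\hat\p(w)=\p(o,\ev(w))$ on all words of the recurrent component, not only on paths from the start state. Second, nonemptiness of the convergence region does not put $\hat\p_1$ or $\hat\p_2$ in $\qm^{++}$ (e.g.\ $\#_v$ and $d-\#_v$ on a RAAG). Theorem~\ref{thm.rigidity} only needs the reference quasimorphism $\hat\p_1$ in $\qm^{++}$; the other may be any unbounded quasimorphism. So you should not require both to be in $\qm^{++}$, since $\p_2$ may be e.g.\ $d_{\mathcal{C}}$. The paper's own argument also tacitly assumes $F(\p_1)\in\qm^{++}$.
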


We also have the following rigidity result. In the statement, $h(d_1), h(d_2)$ correspond to the exponential growth rates of $d_1, d_2$ respectively:
\[
h(d_j) = \limsup_{T\to\infty} \frac{1}{T} \log \#\{ x \in \G: d_j(o,x) < T\}
\]
for $j=1,2$.

\begin{theorem}\label{thm.rags2}
Suppose that $d_1,d_2 \in \Dc_\G^d$. Then the following limit exists
\[
\tau(d_2/d_1) := \lim_{T\to\infty} \frac{1}{\#\{ x \in \G: d_1(o,x) < T\}}  \sum_{d_1(o,x) < T} \frac{d_2(o,x)}{T},
\]
 satisfies $\tau(d_2/d_1) \ge h(d_1)/h(d_2)$ and is equal to the derivative of $-\theta_{d_2/d_1}$ at $0$. Furthermore, the following are equivalent:
\begin{enumerate}
\item $\tau(d_2/d_1) = h(d_1)/h(d_2)$;
\item $d_1$ and $d_2$ are roughly similar; and,
\item $\theta_{d_2/d_1}$ is a straight line.
\end{enumerate}
\end{theorem}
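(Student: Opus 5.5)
The plan is to transfer the statement to the symbolic setting and read it off from the pressure theory for quasimorphisms developed later in the paper. First, I would use the coding of $\G$ by a finite-state automaton for geodesic normal forms in the standard generators, available for RAAGs and RACGs. Because $\G$ is not virtually cyclic and not a direct product, the relevant component of the automaton should give a mixing subshift of finite type $\Sigma_A$. Its words correspond, up to a uniformly bounded-to-one and coarsely surjective map, to group elements: each $w\in\W$ maps to $x_w\in\G$ with $|x_w|_d=|w|$. For $d_j\in\mathcal{D}^d_\G$, set $\p_j(w)=d_j(o,x_w)$.

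The next step is to show that $\p_j$ is a quasimorphism lying in $\qm^{++}$. Write $x_{w_1w_2}=x_{w_1}x_{w_2}$. Then
\[
\p_j(w_1w_2)-\p_j(w_1)-\p_j(w_2)=-2(x_{w_1}^{-1}|x_{w_2})_{d_j}.
\]
By the defining inequality of $\mathcal{H}^d_\G$, this is bounded by $c_1(x_{w_1}^{-1}|x_{w_2})_d+c_2$. The word-metric Gromov product is $0$ here, since $w_1w_2$ is a geodesic word. Hence $\p_j\in\qm$, and the quasi-isometry to $d$ gives linear growth.

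Next I would relate the group sums to the symbolic ones. The Poincaré series $\sum_{x}e^{-ad_1(o,x)-bd_2(o,x)}$ is comparable, up to bounded multiplicative constants and the finitely many non-maximal components, to $\sum_{w\in\W}e^{-a\p_1(w)-b\p_2(w)}$. Its abscissa is governed by $\Pr(-a\p_1-b\p_2)$. So $\theta_{d_2/d_1}$ is the pressure function $\Pr(\p_2,\p_1)$ of Definition~\ref{def:pressurefunction}, suitably reparametrised. The non-maximal components need care: I would check that their growth, measured in $d_1,d_2$, is strictly smaller. This should follow from the irreducibility assumptions, and I expect it to be the main technical obstacle.

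With that identification, the existence of $\tau(d_2/d_1)$ and its equality with $-\theta'_{d_2/d_1}(0)$ follow from the $C^1$ regularity of pressure for quasimorphisms, as in Theorem~\ref{thm.rags1}. The derivative equals $\int\ov{\p_2}\,d\mu$ for the unique equilibrium state $\mu$ of $-h(d_1)\p_1$. I would establish this via a large deviation or counting argument (Theorem~\ref{thm.ldp}), averaging $\p_2$ over $\{\p_1<T\}$.

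The inequality $\tau\ge h(d_1)/h(d_2)$ comes from convexity of $\theta$. The curve passes through $(h(d_1),0)$ and $(0,h(d_2))$, so the slope at the first point is at least the chord slope. For the equivalences: (2)$\Rightarrow$(3) is immediate because the series is then a rescaling. (3)$\Rightarrow$(1) is trivial. For (1)$\Rightarrow$(2), equality forces $\theta$ to be a straight line by strict convexity unless $\p_2-\tau\p_1$ has zero pressure derivative. I would then invoke the rigidity result: $\Pr(t(\p_2-\tau\p_1))$ affine forces $\ov{\p_2}=\tau\ov{\p_1}$ on $\Per$. By Corollary~\ref{cor.od}, this gives $\p_2\sim\p_1$. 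Finally I would transfer rough similarity back to the group, using that every $x\in\G$ is $x_w$ for some $w$ and that group elements are conjugate to cyclically reduced periodic words.
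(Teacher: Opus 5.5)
Your overall architecture matches the paper's. You code $\Gamma$, set $\varphi_j=F(d_j)$ (your Gromov-product computation of the defect is exactly the paper's), identify $\theta_{d_2/d_1}(t)=\Pr(\varphi_2,\varphi_1)(-t)$, get $\tau$ from G\"artner--Ellis, and read the rest off the symbolic rigidity theory. Two steps, however, do not go through as written.

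\textbf{The coding.} The obstacle you single out, non-maximal components of the automaton, is not settled by an appeal to ``irreducibility''. You would need a strict growth gap for every weight $a d_1+b d_2$ along the curve, and nothing you say supplies it. The paper never proves such a gap. It avoids the issue with the specific coding of Theorem~\ref{thm.graph}:
\begin{itemize}
\item directed paths from the initial states are in bijection with $\Gamma$;
\item every path is a $d$-geodesic;
\item the graph with the initial states deleted is a single mixing component.
\end{itemize}
Deleting the initial vertex then gives $d_j(o,x)=\varphi_j(w_x)+O(1)$, with $x\mapsto w_x$ at most $k$-to-one. The paper then compares \emph{annulus} sums in both directions:
\begin{itemize}
\item $\sum_{|d_1(o,x)-n|\le R}e^{t d_2(o,x)}$ is bounded above by a constant times the symbolic sum over $|\varphi_1(w)-n|<R'$;
\item it is bounded below by the symbolic sum restricted to a single cylinder $[x_0]$, since paths from a fixed vertex inject into $\Gamma$.
\end{itemize}
Lemma~\ref{lem.expgrow} and Remark~\ref{rem.expgrow} then give growth $e^{n\Pr(\varphi_2,\varphi_1)(t)}$. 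G\"artner--Ellis is applied directly to the uniform measures on $d_1$-balls in $\Gamma$. You need this two-sided comparison, because Theorem~\ref{thm.ldp} on its own averages over $\{w:\varphi_1(w)\le T\}$ in the subshift, which is not the set in the statement. A minor point: by Corollary~\ref{cor:derivativeofpressure} the derivative is $\mu_0(\varphi_2)/\mu_0(\varphi_1)$, not $\int\ov{\varphi_2}\,d\mu$.

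\textbf{The step (1)$\Rightarrow$(2).} You invoke the wrong rigidity statement. If $t\mapsto\Pr(t(\varphi_2-\tau\varphi_1))$ were affine, this would only force $\ov{\varphi_2}-\tau\ov{\varphi_1}$ to be proportional to the period on $\Per$, not to vanish. Moreover, nothing in (1) makes that function affine in the first place.

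The correct argument, which is what the paper's appeal to Theorem~\ref{thm.rigidity} amounts to, uses plain convexity. Suppose $-\theta'_{d_2/d_1}(0)=h(d_1)/h(d_2)$. Then the tangent line to $\theta_{d_2/d_1}$ at $0$ passes through $(h(d_2),0)$, which lies on the graph. A convex function that meets a supporting line at two points coincides with it in between, so $\Pr(\varphi_2,\varphi_1)$ contains a segment. Corollary~\ref{cor.line} then gives $\varphi_1\sim\varphi_2$. Here $h(\varphi_j)=h(d_j)$, which also follows from the identification of the curves.

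Transferring back needs no conjugacy argument. You have $d_2(o,x)=\varphi_2(w_x)+O(1)=c\,\varphi_1(w_x)+O(1)=c\,d_1(o,x)+O(1)$, and $\Gamma$-invariance handles general pairs $x,y$.
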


\begin{remark}
1) If $d_1 \in \Dc_\G^d$ and $\p_2 \in \mathcal{H}_\G^d$ then the above limit exists. Furthermore, in this case, if $\p_2 \in \mathcal{H}_\G^d$ is positive and is not bounded, then $\tau(\p_2/d_1)$ is strictly positive.\\
2) The above two results are inspired by the main results in \cite{cantrell.tanaka.1} for hyperbolic groups.
\end{remark}
To the authors' knowledge, these are the first results establishing regularity and rigidity of Manhattan curves in a setting for which neither Patterson-Sullivan theory nor (classical) thermodynamic formalism is available to be applied.\\

Our methods can also be used to study functions in $\mathcal{H}_\G^d$ allowing us to prove precise statistical results for quasimorphisms on $\G$, see Theorem \ref{thm.ldpraags}. 

Again, to the authors' knowledge, this is the first example of the counting large deviation theory/principle for quasimorphisms being established outside of the hyperbolic setting.

\subsubsection{Space of metric structures}

We can also extend the main results from \cite{cantrell.reyes.man} to RAAGs and RACGs. That is, we can describe the geometry of the moduli space of metric structures $\scrD_\G^d$ equipped with the symmetrised Thurston distance.
Let $\D : \scrD_\G^d \times \scrD_\G^d \to \R$ be defined by
\[
\D([d_1],[d_2]) = \log (\Dil(d_1, d_2) \, \Dil(d_2,d_1))
\]
where $\Dil(d_1,d_2), \Dil(d_2,d_1)$ denote the dilations of $d_1$ and $d_2$, see \textsection\ref{sec.rags}.
Then we prove the following.

\begin{theorem}\label{thm.ragsspace}
The function $\D$ determines a metric on $\scrD_\G^d$. Furthermore, given $[d_1], [d_2] \in \scrD_\G^d$, for any $t\in\R$ there exists a metric $d_t \in \Dc_\G^d$ and $C_t >0$ such that
\[
|d_t(x,y) - (\theta_{d_2/d_1}(t) d_1(x,y) + td_2(x,y))| < C_t \ \ \text{ for all $x,y \in \G$}
\]
and  the curve $t \mapsto [d_t]$ determines a bi-infinite geodesic in $(\scrD_\G^d, \D)$. In particular, $(\scrD_\G^d, \D)$ is unbounded.
\end{theorem}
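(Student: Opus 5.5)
The plan is to transfer everything to the symbolic moduli space $(\Qm^{++},\Qq)$ and use the main results stated there: $\Qq$ is a metric (Theorem~\ref{thm.spaceprops}), and pressure curves give bi-infinite geodesics (Theorem~\ref{thm.geo}).

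\textbf{Step 1: coding.} I would use the standard geodesic automatic structure for RAAGs and RACGs (shortlex normal forms with respect to the standard vertex generators). This gives a finite directed graph whose paths biject with group elements. Since $\G$ is not virtually cyclic and does not split as a direct product, I expect to be able to pass to a mixing component (or a mixing subshift of finite type $\Sigma_A$) that carries the growth, together with a map $w\mapsto \g_w$. Given $d_\ast\in\Dc_\G^d$, I set $\p_{d_\ast}(w)=d_\ast(o,\g_w)$. For concatenations $w_1w_2$ one has
\[
\p(w_1w_2)-\p(w_1)-\p(w_2) = -2(\g_{w_1}^{-1}|\g_{w_2})_{d_\ast}.
\]
The hypothesis $|(x|y)_{d_\ast}|\le c_1(x|y)_d+c_2$, combined with the fact that concatenated normal forms are geodesic for $d$ (so $(x|y)_d=0$ along the coding), gives the quasimorphism bound. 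Quasi-isometry to $d$ gives linear growth, so $\p_{d_\ast}\in\qm^{++}$.

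\textbf{Step 2: the map is a well-defined injective isometry.} Rough similarity of metrics clearly gives rough similarity of the $\p$'s. For the converse and for dilations, I need $\ov{\p_{d_\ast}}\{w\}$ to equal the stable translation length $\ell_{d_\ast}(\g_w)$. Moreover, the supremum over periodic orbits should equal the supremum over all conjugacy classes. The key point is that every element is, up to bounded error, conjugate to an element represented by a periodic word in the mixing component; here the non-splitting hypothesis and contracting/rank-one elements enter. This yields $\Dil(\p_{d_1},\p_{d_2})=\Dil(d_1,d_2)$. Injectivity then follows from Corollary~\ref{cor.od}: equal periodic data up to scaling gives rough similarity on words, which I would lift back to $\G\times\G$ via invariance and the Gromov product bound. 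Hence $\D$ is the pullback of $\Qq$ and is a metric.

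\textbf{Step 3: geodesics.} Applying Theorem~\ref{thm.geo} gives the geodesic $t\mapsto[\Pr(\p_2,\p_1)(t)\p_1+ t\p_2]$ after sign adjustments. I would identify the pressure function with the Manhattan curve $\theta_{d_2/d_1}$ by comparing the Poincaré series $\sum_x e^{-ad_1-bd_2}$ with the symbolic partition sums, which agree up to constants because of the bijective coding. The remaining task is to realise the combination $\theta(t)d_1+td_2$ by a genuine metric. For $t$ in the range where both coefficients are nonnegative this is immediate. Otherwise, I would take $d_t(x,y)=\sup_{n}$ of averaged chains, or more simply symmetrise and add $\epsilon\, d$-corrections. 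Positivity of the combination on translation lengths, which holds because it lies in $\qm^{++}$, together with linear growth, should let one check that it is roughly a metric quasi-isometric to $d$. The Gromov product condition is inherited by linearity. Unboundedness follows since the geodesic is bi-infinite.

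\textbf{Main obstacle.} I expect the hardest part to be Step 2's comparison of suprema: showing that periodic orbits in the chosen mixing component detect all translation length ratios. This is where non-hyperbolicity bites. My first attempt would use that, for these groups, each element is conjugate to a cyclically reduced element whose powers have normal forms that are eventually periodic in the coding.
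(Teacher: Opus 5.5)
Your overall architecture matches the paper's: code $\Gamma$ by a mixing subshift, send $d_\ast$ to $w\mapsto d_\ast(o,\mathrm{ev}(w))$, identify the Manhattan curve with the pressure curve through the Poincar\'e series, and pull back the pressure geodesic of Theorem~\ref{thm.geo} along an isometric embedding. There are, however, two gaps.

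\textbf{Step 2.} The mechanism you propose for what you call the main obstacle is false, and it is also unnecessary. Take commuting standard generators $a,b$ in a RAAG that does not split, for instance adjacent vertices of the path on four vertices, whose complement is connected. Then $ab$ is the only cyclically reduced element of its conjugacy class. For $a<b$ the shortlex normal form of $(ab)^n$ is $a^nb^n$, which is not of the form (bounded prefix)(fixed word)$^k$(bounded suffix). So its translation-length ratio is not read off a periodic orbit in the way you intend.

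The paper never compares suprema over conjugacy classes directly:
\begin{enumerate}
\item Lemma~\ref{lem.comp} closes an arbitrary word into a periodic one by a connecting word of bounded length; this is where mixing enters. It yields $\varphi_1(w)\le \Dil(\varphi_1,\varphi_2)\varphi_2(w)+C$ for \emph{all} words $w$.
\item Every $x\in\Gamma$ is coded, up to the initial state, by a word of the mixing subshift. Hence $d_1(o,x)\le \Dil(\varphi_1,\varphi_2)\,d_2(o,x)+C'$ on all of $\Gamma$.
\item Passing to stable lengths gives $\ell_{d_1}\le\Dil(\varphi_1,\varphi_2)\,\ell_{d_2}$ on every conjugacy class, rank-one or not.
\item The reverse inequality is immediate from $\overline{\varphi_i}\{w\}=\ell_{d_i}[\mathrm{ev}(w)]$ (Lemma~\ref{lem.comprag}).
\end{enumerate}
Both this and your Poincar\'e series comparison need the input of Theorem~\ref{thm.graph}: the whole graph minus the initial states is a single mixing component. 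A component that merely carries the growth would not give the bijective coding you invoke.

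\textbf{Step 3.} The real content of the statement is realising $\psi_t=\theta_{d_2/d_1}(t)d_1+td_2$ by a metric, and your sketch does not do this. Adding $\epsilon d$ changes $\psi_t$ by an unbounded amount, and so does symmetrising when the $d_i$ are asymmetric. The result is then no longer within $C_t$ of $\psi_t$. Positivity of stable lengths together with linear growth are asymptotic facts. They do not yield the rough triangle inequality $\psi_t(x,z)\le\psi_t(x,y)+\psi_t(y,z)+C$.

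What is needed is a uniform lower bound $(x|y)_{\psi_t}\ge -C_t$. Given that, $\psi_t+C_t'$ off the diagonal is a (possibly asymmetric) metric, quasi-isometric to $d$ by linear growth, and the upper Gromov product bound is inherited by linearity. The paper obtains this lower bound in Proposition~\ref{prop.curve}. It uses that $\varphi_t$, hence $\psi_t$, is bounded below with controlled constant (Corollary~\ref{cor.lamb}, resting on Lemma~\ref{lem.post}), together with a connecting-word argument in the coding.

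A transparent way to see the lower bound: metrics in $\mathcal{H}_\Gamma^d$ are roughly additive along $d$-geodesics, and the Cayley graph of a RAAG or RACG is a median graph. With $m=m(o,x,y)$ one therefore has $2(x|y)_{\psi_t}=\psi_t(m,o)+\psi_t(o,m)+O(1)$, with constants depending on $t$, and the right-hand side is bounded below.
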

The geodesics constructed in this theorem provide us with a natural notion of a boundary for $\scrD_\G^d$: given $[d_1], [d_2] \in \scrD_\G^d$ as in the previous theorem, the limit
\[
\p_{\pm \infty}(x,y) := \pm \lim_{t \to \pm \infty} \frac{1}{t} (\theta_{d_2/d_1}(t) d_1(x,y) + td_2(x,y))
\]
for $x,y \in \G$ determines a function in $\Hc_\G^d$. We show that $\p_{\pm \infty}$ are within uniformly bounded distance of metrics $d_{\pm \infty} \in \Hc_\G^d$ (see Proposition \ref{prop.inftymetric}). We think of these metrics as the metrics at infinity of the Manhattan curve joining $d_1, d_2$.

\begin{definition}
Define $\partial_M \scrD_\G^d \subset \mathscr{H}_\G^d$ to be the collection of rough similarity classes containing a metric of the form $d_{\pm \infty}$ as constructed above.
\end{definition}

We then describe these endpoint metrics up to rough similarity.

\begin{theorem}\label{thm.raagboundary}
The class $\partial_M \scrD_\G^d$ consists precisely of equivalence classes of metrics that contain a metric $d_\ast \in \mathcal{H}_\G^d$ that satisfy the following properties:
\begin{enumerate}
\item $d_\ast$ is $\G$-invariant;
\item there exists $c_1, c_2 >0$ such that 
\[
(x|y)_{d_\ast}\le c_1(x|y)_d + c_2 
\]
for all $x,y \in \G$; and,
\item $d_\ast$ is not quasi-isometric to $d$.
\end{enumerate}
\end{theorem}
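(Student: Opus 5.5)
The proof splits into two inclusions. Throughout, I transfer everything to the symbolic side using the embedding of \textsection\ref{sec.rags}. Choose a coding of $\G$ by a mixing subshift $\Sigma_A$ whose words correspond to geodesic (normal-form) words for $d$. The point is the Gromov product condition: $|(x|y)_\p|\le c_1(x|y)_d+c_2$ says precisely that when $x^{-1}y$ is read as a concatenation of geodesic words, $\p$ is additive up to bounded error. Thus each $\p\in\mathcal H_\G^d$ induces a quasimorphism on $\W$, and rough similarity on $\mathcal H_\G^d$ corresponds to rough similarity in $\qm$. Metrics in $\Dc_\G^d$ land in $\qm^{++}$, since a quasi-isometry to $d$ gives the linear lower bound \eqref{eq:defqm++}. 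Functions that are bounded below but not quasi-isometric to $d$ land in $\qm^+\setminus\qm^{++}$. By Proposition \ref{prop.growthrate}, this last condition is equivalent to the sublevel sets $\{w:\p(w)\le T\}$ being infinite for some $T>0$.

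\emph{Inclusion $\partial_M\scrD_\G^d\subseteq\{\ldots\}$.} Take $d_{\pm\infty}$ as in Proposition \ref{prop.inftymetric}.
\begin{enumerate}
\item $\G$-invariance and property (2) pass to the limit. The rescaled functions $\frac1t(\theta(t)d_1+td_2)$ satisfy the Gromov product bound with constants controlled by $|\theta(t)/t|$ and $1$. Since $\theta$ is convex with finite asymptotic slopes, these constants stay bounded as $t\to\pm\infty$.
\item For (3), I would argue via the geodesic theorem (Theorem \ref{thm.geo}) and the boundary theory for $\Qm^{++}$. Suppose $d_{+\infty}$ were quasi-isometric to $d$. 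Then it would define a point of $\Qm^{++}$ at finite $\Qq$-distance from $[d_1]$.
\item But $t\mapsto[d_t]$ is a bi-infinite geodesic whose classes converge, after normalisation, to $[d_{+\infty}]$. Its distance from $[d_1]$ tends to infinity. Dilations are continuous under this convergence when the limit has linear growth, which gives the contradiction.
\item Concretely, $\theta'(t)\to$ the extremal slope $-\inf_w \ov{d_2}\{w\}/\ov{d_1}\{w\}$, which is an endpoint of the rotation interval. So $\ov{d_{+\infty}}$ vanishes on some invariant measure, equivalently has zero infimum over normalised periodic orbits. This forces failure of linear growth.
\end{enumerate}

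\emph{Inclusion $\{\ldots\}\subseteq\partial_M\scrD_\G^d$.} Given $d_\ast$ satisfying (1)--(3), I need $d_1,d_2\in\Dc_\G^d$ with $d_{+\infty}\sim d_\ast$. Take $d_1=d$ and $d_2=d+d_\ast$.
\begin{enumerate}
\item $d_2$ is quasi-isometric to $d$, because $d_\ast$ is bounded below and at most linear.
\item $d_2$ is a genuine metric, and its Gromov product is controlled by (2) together with the bound $(x|y)_d\ge 0$.
\item Along the pressure geodesic $\theta(t)d_1+td_2$, compute the limit as $t\to-\infty$, or as $+\infty$ depending on sign conventions. The limit function is $\lambda d_1 - d_2$ up to sign, where $\lambda=\sup\ov{d_2}/\ov{d_1}$ or $\inf\ov{d_2}/\ov{d_1}$ is the extremal slope.
\item We have $\inf_w \ov{d_2}\{w\}/\ov{d_1}\{w\}=1+\inf_w\ov{d_\ast}/\ov d=1$, since $d_\ast$ has sublinear directions by (3). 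So the limit is $d_2-d_1=d_\ast$ up to bounded error, as desired.
\end{enumerate}

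The main obstacle is making (3) match the degeneracy of the limit. Failure of quasi-isometry must mean exactly that the infimum of $\ov{d_\ast}\{w\}/|w|$ over periodic orbits is $0$. This requires a closing argument: a sequence of elements with $d_\ast(o,x)/d(o,x)\to0$ must produce periodic orbits, or invariant measures, with small normalised $\ov{d_\ast}$. I would handle this with the quasimorphism thermodynamics of \textsection\ref{subsec:quasimorphisms}: the minimal ergodic average of an almost additive potential is approximated by periodic orbits via the specification property of mixing SFTs. In the other direction, a positive infimum forces linear growth on all words, again by specification and bounded defect.
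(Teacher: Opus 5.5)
Your proposal is correct and follows essentially the paper's route: the paper also takes the pair $d+d_\ast$ and $d$ (with the labels swapped relative to yours), uses non-quasi-isometry to force the relevant dilation to equal $1$, and reads off $d_\ast$ as the endpoint $\Dil(d_2,d_1)\,d_1-d_2$; the forward inclusion is delegated to Proposition~\ref{prop.inftymetric} together with the symbolic fact that pressure-geodesic endpoints lie in $\qm^+\setminus\qm^{++}$. The step you flag as the main obstacle is exactly the comparison inequality of Lemma~\ref{lem.comprag} (obtained from Lemma~\ref{lem.comp}), which needs only a bounded closing word supplied by mixing, not specification or approximation of invariant measures.
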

\noindent In \textsection\ref{sec.con} we show that the action of $\G$ on the contact graph induces a point in $\partial_M \scrD_\G^d$.\\

These results show that the metric properties of $(\scrD_\G, \D)$ studied  in~\cite{cantrell.reyes.man} (see Remark \ref{rem.dg}) hold for non-hyperbolic groups. In fact we will establish the following embedding theorem that illustrates how to relate dynamical and geometrical moduli spaces. In the following result, $\Pr$ denotes the pressure function associated to a pair of quasimorphisms, see \textsection\ref{sec.rigid}.

\begin{theorem}
Let $(\mathscr{D}_\G^d, \D)$ be the space of metrics on $\G$ as above. Then there exist a mixing subshift of finite type $\Sigma_A$ and function $F : \Dc_\G^d \to \qm^{++}(\Sigma)$ that descends, via quotienting by the equivalence relations, to a well-defined, isometric embedding $\ov{F} : (\mathscr{D}_\G^d, \D) \to (\Qm^{++}(\Sigma), \Qq)$, i.e. $\ov{F}$ is injective and preserves distances. Furthermore $F$ satisfies 
\[
\theta_{d_2/d_1}(t) = \Pr(F(d_2), F(d_1))(-t)
\]
for each $t \in \R$  (where $\theta_{d_2/d_1}$ is the Manhattan curve)
and so
\[
\ov{F}[td_2 + \theta_{d_2/d_1}(t)d_1] = [-t F(d_2) + \Pr(F(d_2), F(d_1))(-t) F(d_1)]
\]
for each $t \in \R$.
\end{theorem}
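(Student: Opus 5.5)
We take $\Sigma_A$ to be a mixing subshift of finite type that codes geodesics in $\G$ for the standard generating set. For RAAGs and RACGs this comes from the standard geodesic automaton: normal forms, e.g.\ shortlex, are recognised by a finite state automaton. Since $\G$ is not virtually cyclic and does not split as a direct product, we expect to be able to pass to a mixing component, possibly a power or a strongly connected piece, that captures the growth. Each allowed word $w$ then reads a geodesic path, and hence a group element $\g_w$ with $|\g_w|_d=|w|$. Given $d_*\in\Dc_\G^d$, we define $F(d_*)(w)=d_*(o,\g_w)$.

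The first step is to show that $F(d_*)$ is a quasimorphism. For concatenated words, $\g_{w_1w_2}=\g_{w_1}\g_{w_2}$, so
\[
F(d_*)(w_1w_2)-F(d_*)(w_1)-F(d_*)(w_2)=-2(\g_{w_1}^{-1}|\g_{w_2})_{d_*}.
\]
This term is bounded by $c_1(\g_{w_1}^{-1}|\g_{w_2})_d+c_2$. The $d$-Gromov product vanishes, since the concatenation is a $d$-geodesic. Hence the defect is at most $c_2$. Linear growth then follows from quasi-isometry to $d$, so $F(d_*)\in\qm^{++}$. It is clear that rough similarity of metrics implies rough similarity of the images, so $\ov F$ is well defined.

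The second step identifies homogenizations. For a periodic $w$, $\ov{F(d_*)}\{w\}$ is the stable translation length $\ell_{d_*}[\g_w]$. We then need $\Dil(d_1,d_2)$, defined in \textsection\ref{sec.rags} as a supremum over (stable lengths of) conjugacy classes, to equal $\Dil(F(d_1),F(d_2))$, a supremum over periodic orbits. This is the main obstacle. Every periodic orbit gives a conjugacy class. Conversely, not every conjugacy class is represented by a cyclically reduced periodic word in the chosen component. To handle this, I would use mixing and cyclic reduction in RAAGs/RACGs. Every cyclically reduced element can be conjugated so that, up to bounded connecting words, its powers are concatenated in $\Sigma_A$. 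By the Gromov product control, inserting bounded connectors $u$ into $(gu)^n$ changes the stable length ratio by an arbitrarily small amount after a further approximation. This gives a density argument: the ratios realised by periodic orbits are dense in those realised by conjugacy classes, so the suprema agree. Hence $\Qq(\ov F[d_1],\ov F[d_2])=\D([d_1],[d_2])$.

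Injectivity then follows from the metric property: $\D$ is a metric by Theorem~\ref{thm.ragsspace}, or alternatively we can use Corollary~\ref{cor.od}. Indeed, distance zero forces proportional periodic data, and the Gromov product bound upgrades this to rough similarity of metrics.

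For the Manhattan curve identity, we compare
\[
\sum_{x\in\G}e^{-a d_1(o,x)-bd_2(o,x)}
\]
with the partition sums $\sum_{w\in\W_n}e^{-aF(d_1)(w)-bF(d_2)(w)}$. The coding is a bijection, or bounded-to-one finite modification, onto $\G$ via normal forms. The off-component words contribute with strictly smaller growth, and this is where the non-splitting hypothesis enters. So convergence of the group series is governed by the sign of the quasimorphism pressure $\Pr(-aF(d_1)-bF(d_2))$, using the thermodynamic results of \textsection\ref{subsec:quasimorphisms} for almost additive potentials. Matching the defining relation $\Pr(tF(d_2)-sF(d_1))=0$ with the boundary $b=-t$, $a=\theta_{d_2/d_1}(t)$ gives $\theta_{d_2/d_1}(t)=\Pr(F(d_2),F(d_1))(-t)$. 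The final displayed formula then follows from linearity of $F$ on rough classes.
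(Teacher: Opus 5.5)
Your outline follows the paper: evaluate $d_*$ along coded geodesics, bound the defect by the Gromov product, identify homogenizations with stable lengths, and compare Poincar\'e series with partition sums. The gap is that the structural input every later step depends on is missing.

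The paper uses Theorem~\ref{thm.graph} (Hermiller--Meier, Gekhtman--Taylor--Tiozzo). When $\Gamma$ does not split as a direct product, the geodesic automaton with its initial states removed is a \emph{single} strongly connected aperiodic graph. Moreover, $\mathrm{ev}$ is a bijection from paths starting at an initial state onto $\Gamma$. Hence every $x\in\Gamma$ is coded, after deleting one initial symbol, by a word of the mixing subshift $\Sigma_{A'}$.

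You only say you ``expect'' to pass to a mixing component ``that captures the growth''. Your later steps then treat a multi-component coding, and there they do not go through.
\begin{itemize}
\item For the Manhattan curve you must control $\sum_{x} e^{-a d_1(o,x)-b d_2(o,x)}$ along the whole curve, including $b<0$. Knowing that off-component words are fewer is an unweighted count and says nothing about these weighted sums. A small component carrying elements with large $d_2/d_1$ dominates once $-b$ is large.
\item In your density argument, nothing guarantees that the geodesic words for powers $g^m$ of an arbitrary element lie in your chosen component. In the hyperbolic case (Theorem~\ref{thm.D_Gembed}) the paper needs growth quasi-tightness, plus a separate result on powers of conjugacy classes, precisely to get around this.
\end{itemize}

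Once you have the single-component coding, the step you call the main obstacle is short. It needs no cyclic reduction and no connectors at the group level.
\begin{itemize}
\item Lemma~\ref{lem.comp}, where the connector argument already lives, gives $\varphi_1(w)\le \Dil(\varphi_1,\varphi_2)\varphi_2(w)+C$ for all words.
\item Surjectivity of the coding turns this into $d_1(o,x)\le\Dil(\varphi_1,\varphi_2)\,d_2(o,x)+C'$ for every $x\in\Gamma$.
\item Applying this to $x^n$ gives $\Dil(d_1,d_2)\le\Dil(\varphi_1,\varphi_2)$, which is Lemma~\ref{lem.comprag}.
\end{itemize}
The same surjectivity converts $\varphi_1\sim\varphi_2$ into rough similarity of $d_1,d_2$. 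Note that your appeal to Theorem~\ref{thm.ragsspace} for injectivity is circular, since the paper deduces that theorem from this one. Surjectivity also makes the group series literally a sum over initial-state words, which converges exactly when the $\Sigma_{A'}$-series does. The non-splitting hypothesis enters only through Theorem~\ref{thm.graph}.

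Three smaller points.
\begin{itemize}
\item With $\theta_{d_2/d_1}(t)$ defined as the abscissa in $s$ of $\sum_x e^{-td_2(o,x)-sd_1(o,x)}$, the matching boundary point is $b=t$, not $b=-t$.
\item Linearity of $F$ yields $[tF(d_2)+\Pr(F(d_2),F(d_1))(-t)F(d_1)]$, as in Theorem~\ref{thm.RAAGembed}. The $-t$ in the displayed statement is a sign slip, so linearity does not give that formula as written.
\item You need Proposition~\ref{prop.curve} to know that $[td_2+\theta_{d_2/d_1}(t)d_1]$ is a point of $\mathscr{D}_\Gamma^d$ at all.
\end{itemize}
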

This result forms a concrete link between the geometric and dynamical moduli spaces which we highlight in the following table.
\begin{table}[ht]
\centering
\caption{A dictionary between dynamical and geometric moduli spaces.}
\label{tab:symbolic-group-dictionary}
\begin{tabular}{ll}
\toprule
\textbf{Dynamical } & \textbf{Geometric} \\
\midrule
$\qm$ & $\Hc_\G^d$ \\[1mm]
$\Qm^{++}$ &  $\mathscr{D}_\G^d$ \\[1mm]
Periodic orbit & Conjugacy class in $\G$ \\[1mm]
Homogenization $\ov{\p}$ & Stable translation length $\ell_{d_\ast}$  \\[1mm]
Pressure curve & Manhattan curve \\[1mm]
$\Qq$ & $\Delta$ \\[1mm]
Pressure geodesics & Manhattan geodesics \\[1mm]
\bottomrule
\end{tabular}
\end{table}

\section{Statements of results}
\subsection{Quasimorphisms and their pressure}
\label{subsec:quasimorphisms}

We now precisely state our results on the thermodynamic formalism of quasimorphisms. For more details on the definitions and the notation used throughout this introduction, as well as for historical comments about the various notions we make use of, we refer to \textsection\ref{sec:prelim}.

Let $(\Sigma_A,\sigma)$ be a mixing one-sided subshift of finite type, as in the foregoing section; we retain terminology and notation from the latter. In particular, recall from \textsection\ref{subsec:modulispace} that $\qm^+$ denotes the collection of quasimorphisms on $\W$ that are bounded below, and that $\qm^{++}$ is the subcollection of those which have linear growth; both $\qm^+$ and $\qm^{++}$ are convex cones in $\qm$. The topological pressure of a quasimorphism is defined as the real number
\[P(\p)=\lim\limits_{n\to\infty}\frac{1}{n}\log \sum_{w\in \W_n}e^{\p(w)}\;,\]
where the limit exists and is finite by a standard argument involving both subadditivity and superadditivity.

\begin{definition}[Pressure function]
\label{def:pressurefunction}
Let $\phi \in \qm$ and $\p \in \qm^{++}$. The pressure function $\Pr(\phi,\p)\colon \R\to \R$ is defined by setting $\Pr(\phi,\p)(t)$ to be the unique $s\in \R$ satisfying
\begin{equation}
\label{eq:implicit}
\Pr(t\phi - s\p) = 0\;,
\end{equation}
for every $t\in \mathbb{R}$.
\end{definition}
\noindent In \textsection\ref{subsec:pressurefunction} we show that $P(\phi,\p)$ is $C^1$. Lemma~\ref{lem.expgrow} gives an equivalent, explicit description of $P(\phi,\p)$: for all sufficiently large $R>0$, depending on $\phi$ and $\p$, 
\[
\Pr(\phi,\p)(t) = \lim_{n\to\infty} \frac{1}{n} \log \sum_{w\in \W,\;|\p(w) - n| < R} e^{t\phi(w)\;.}
\]

In \textsection\ref{subsec:pressurefunction}, we will prove that the pressure function is convex and, similarly to~\cite[Section 4.1]{Barreira_2009}, that it is of class $\mathrm{C}^1$;  we further develop the theory by establishing the following rigidity and counting results.

\subsection{Rigidity and counting for quasimorphisms}\label{sec.rigid}
Throughout, the notation $|A|$ stands for the cardinality of a finite set $A$. Also, for $\psi \in \qm^{++}$ and $T>0$, we set 
\[\W_{\psi}(T)=\{w \in \W: \psi(w)\leq T\}\;;\]
it is easy to see that the limit 
\begin{align}\label{eq:hquasi}
h(\p) \coloneq  \lim_{T \to\infty} \frac{1}{T} \log |\W_\p(T)|
\end{align}
exists and is strictly positive.

We first establish rigidity and counting results for quasimorphisms that will be crucial in our study of $\Qm^{++}$.
Our first theorem links the asymptotic average of pairs of quasimorphisms with their growth rates and pressure curve.

\begin{theorem}\label{thm.rigidity2}
For $\phi\in \qm$ and $\p\in \qm^{++}$, the limit
\[
\tau(\phi/\p) := \lim_{T\to\infty} \frac{1}{|\W_\p(T)|} \sum_{w \in \W_\p(T)} \frac{\phi(w)}{T} 
\]
exists and is equal to the derivative of $\Pr(\phi,\p)$ at $0$. If $\phi \in \qm^{++}$, then the limit satisfies $\tau(\phi/\p) \ge h(\p)/h(\phi)$. Furthermore, in the latter case, the following are equivalent:
\begin{enumerate}
\item $\tau(\phi/\p) = h(\p)/h(\phi)$;
\item $\Pr(\p,\phi)$ is affine;
\item $\Pr(\p,\phi)$ is not strictly convex on $\mathbb{R}$; and,
\item $\phi \sim \p$.
\end{enumerate}
\end{theorem}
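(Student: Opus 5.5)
We prove the three parts of Theorem~\ref{thm.rigidity2} in order: existence of the limit, the inequality, and the equivalences. Throughout we use the convexity and $C^1$ regularity of $\Pr(\phi,\p)$ from \textsection\ref{subsec:pressurefunction}, and the description of the pressure function in Lemma~\ref{lem.expgrow}.

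\emph{Existence of the limit.} We follow the standard large-deviation route. By Lemma~\ref{lem.expgrow}, $\Pr(\phi,\p)(t)$ is the exponential growth rate in $n$ of $\sum_{|\p(w)-n|<R} e^{t\phi(w)}$. Taking $t=0$ gives $\Pr(\phi,\p)(0)=h(\p)$. Consider the counting measures $\mu_n$ on the values $\phi(w)/n$ for words in the annulus $\{|\p(w)-n|<R\}$. Since $\Pr(\phi,\p)$ is $C^1$, the Gärtner--Ellis theorem gives a large deviation principle for $\mu_n$. Its rate function is $I(s)=h(\p)-\Pr^*(s)$, where $\Pr^*$ is the Legendre transform, and $I$ vanishes only at $s_0=\Pr(\phi,\p)'(0)$. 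Hence the proportion of annulus words with $|\phi(w)/n-s_0|>\epsilon$ decays exponentially. The sets $\W_\p(T)$ are unions of annuli, and their mass is concentrated near $n\approx T$ because $|\W_\p(T)|$ grows like $e^{h(\p)T}$. Together with the linear bound $|\phi(w)|\le C|w|\le C'\p(w)$, this gives $\frac{1}{|\W_\p(T)|}\sum_{w \in \W_\p(T)}\phi(w)/T\to s_0$.

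\emph{The inequality.} For $\phi\in\qm^{++}$ we have $\Pr(\p,\phi)(t)=s$ if and only if $\Pr(t\p-s\phi)=0$, so the two pressure functions are inverse reparametrisations of one another. Set $f=\Pr(\phi,\p)$. Then $f(0)=h(\p)$, and $f(t)=0$ exactly when $\Pr(t\phi-h(\p)\cdot 0\ldots)$; more precisely, the zero of $f$ occurs at $t=-h(\phi)$, since $\Pr(-h(\phi)\phi)=0$. Indeed $h(\phi)$ is the zero of $s\mapsto\Pr(-s\phi)$; this follows by comparing $\W_\phi(T)$ with the length-$n$ sums, again via Lemma~\ref{lem.expgrow}. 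By convexity of $f$, the tangent line at $0$ lies below $f$. Evaluating at $t=-h(\phi)$ gives $0=f(-h(\phi))\ge h(\p)-h(\phi)f'(0)$, that is, $\tau(\phi/\p)\ge h(\p)/h(\phi)$.

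\emph{The equivalences.} Equality in the inequality forces $f$ to agree with its tangent line at the points $0$ and $-h(\phi)$. By convexity, $f$ is then affine on $[-h(\phi),0]$, so (1)$\Rightarrow$(3). Because of the inverse relation, this transfers to $\Pr(\p,\phi)$ failing to be strictly convex. The implications (4)$\Rightarrow$(2)$\Rightarrow$(1) are direct computations: if $\p=\tau\phi+O(1)$ then $\Pr(t\p-s\phi)=\Pr((t\tau-s)\phi)$, so $\Pr(\p,\phi)(t)=\tau t+ h(\phi)$, which is affine, and then $\tau(\phi/\p)=1/\tau=h(\p)/h(\phi)$.

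\emph{The main obstacle} is (3)$\Rightarrow$(4): showing that affinity on some interval forces rough similarity. The plan has three steps.
\begin{enumerate}
\item Reduce to pressure. If $\Pr(\p,\phi)$ is affine with slope $a$ on an interval, then $t\mapsto\Pr(t(\p-a\phi)-c\phi)$ has vanishing second-order behaviour. Translating through the implicit function, the quasimorphism $\eta=\p-a\phi$ has $t\mapsto\Pr(t\eta+\cdots)$ affine, so equilibrium states are shared along the segment.
\item Identify the potentials. Approximate $\phi,\p$ by their associated additive sequences. Using the Feng--Käenmäki/Barreira almost-additive variational principle and the uniqueness of Gibbs equilibrium states for almost-additive potentials on mixing SFTs, the equilibrium state $\mu$ for $t\p-s\phi$ is the same for all $t$ in the interval. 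Then Gibbs bounds give $\p(w)-a\phi(w)=c|w|+O(1)$ for all $w$.
\item Use periodic orbits. Pass to homogenizations: $\ov\p=a\ov\phi+c|w|$. Comparing with the affine formula, the intercept forces $c=0$ (I expect the zero of pressure, $\Pr(-h(\p)\p)=0$, pins this down). Corollary~\ref{cor.od}, which states that equal periodic data up to scaling implies rough similarity, then yields $\phi\sim\p$.
\end{enumerate}
The delicate points are the uniqueness of equilibrium states for quasimorphisms, which are merely almost additive, and obtaining the uniform $O(1)$ Gibbs constant. Here we would rely on the bounded-defect property $D_\p<\infty$.
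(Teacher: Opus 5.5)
Your existence-of-limit argument, the inequality, and (1)$\Rightarrow$(3) are essentially the paper's. The limit comes from the G\"artner--Ellis theorem with limiting log-moment generating function $\Pr(\phi,\psi)(t)-h(\psi)$. With the paper's convention $f^*(s)=\sup_t(st-f(t))$, the rate function is $\Pr(\phi,\psi)^*+h(\psi)$, not $h(\psi)-\Pr(\phi,\psi)^*$; with your sign it would not vanish at $s_0$. The inequality is the same tangent-line argument, using $\Pr(\phi,\psi)(0)=h(\psi)$ and $\Pr(\phi,\psi)(-h(\phi))=0$.

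You genuinely differ in (3)$\Rightarrow$(4).
\begin{itemize}
\item \emph{The paper's route.} It takes $u<v$ on an affine segment of $\Pr(\psi,\phi)$ with midpoint $m$. The Gibbs property bounds $\mu_u[x_0\cdots x_{n-1}]\,\mu_v[x_0\cdots x_{n-1}]/\mu_m[x_0\cdots x_{n-1}]^2$ above and below. A differentiation theorem then shows neither $\mu_u$ nor $\mu_v$ is singular to $\mu_m$, and ergodicity forces $\mu_u=\mu_v=\mu_m$.
\item \emph{Your route.} You write $\Pr(\psi,\phi)(t)=at+b$ on $[u,v]$, so $t\mapsto\Pr(t\eta-b\phi)$, with $\eta=\psi-a\phi$, vanishes identically there. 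You then get a common equilibrium state from the variational principle plus uniqueness.
\end{itemize}
Your route works, but you should spell out this step, since all the content is there. Let $\mu$ be the equilibrium state of $t_0\eta-b\phi$ for an interior $t_0$. Theorem~\ref{thm:variationalprinciple} gives $0=\Pr(t\eta-b\phi)\geq h_\mu(\sigma)+t\mu(\eta)-b\mu(\phi)=(t-t_0)\mu(\eta)$ for all $t\in[u,v]$. Taking $t$ on both sides of $t_0$ gives $\mu(\eta)=0$. So $\mu$ attains the supremum for every $t\in[u,v]$, and Theorem~\ref{thm:propiedadesmedidamul}(3) identifies it with each $\mu_t$.

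What each route buys: yours is convex-analytic and avoids the differentiation theorem and the singular-versus-equal dichotomy for ergodic measures. The paper's needs only Gibbs bounds and ergodicity, with no variational characterisation at interior points.

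Your Step~3 is unnecessary, and the mechanism you guess would not work: $\Pr(-h(\psi)\psi)=0$ alone does not force $c=0$. Each $t\psi-\Pr(\psi,\phi)(t)\phi$ has zero pressure. So comparing the Gibbs bounds \eqref{eq:gibbs1fs} at $t=u$ and $t=v$ for the common measure gives directly $(u-v)(\psi-a\phi)(w)=O(1)$, and no $c|w|$ term appears. Equivalently, $\Pr(tc|\cdot|-b\phi)=tc+\Pr(-b\phi)$ vanishing on an interval forces $c=0$. Finally, $a>0$ because $\overline\psi,\overline\phi>0$, so boundedness of $\psi-a\phi$ is rough similarity by definition, without Corollary~\ref{cor.od}.
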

\noindent As the notation indicates, $\tau(\phi/\psi)$ depends only on the quotient function $\phi/\psi$; this will become apparent in Theorem~\ref{thm.drift}.  
Theorem~\ref{thm.rigidity2} can be seen as a symbolic analogue of~\cite[Theorem 1.2]{cantrell.tanaka.1}.

We will then establish this result from a more general result that has a large deviation theorem.
\begin{theorem}\label{thm.rigidity}
Let $(\Sigma_A,\sigma)$ be a mixing subshift of finite type, $\phi \in \qm$, $\p \in \qm^{++}$.  Assume that $\phi$ is unbounded
Then the following are equivalent:
\begin{enumerate}
\item $\Pr(\phi,\p)$ is not affine;
\item $\Pr(\phi,\p)$ is strictly convex on $\R$; and,
\item $\p$ and $\phi$ are not equivalent.
\end{enumerate}
Furthermore, the following large deviation bound holds: there exists $\tau(\phi/\p) \in \R$ such that, for every $\epsilon >0$,
\begin{equation}\label{eq:ldbound}
\limsup_{T\to\infty} \frac{1}{T} \log \left( \frac{1}{|\W_\p(T)|} \biggl|\left\{ w \in \W_\p(T) : \left|\frac{\phi(w)}{T} - \tau(\phi/\p)\right| \geq \epsilon \right\}\biggr|\right) < 0\;.
\end{equation}
If $\phi\in \qm^+$, then $\tau(\phi/\p) > 0$ unless $\phi$ is bounded. 
\end{theorem}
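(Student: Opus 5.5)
The plan is to derive everything from the convexity and $C^1$ regularity of $\Pr(\phi,\p)$ proved in \textsection\ref{subsec:pressurefunction}, from the shell formula of Lemma~\ref{lem.expgrow}, and from the thermodynamic formalism for almost additive potentials (Barreira). The latter gives, for each real pair $(t,s)$, a unique equilibrium state $\mu_{t,s}$ of $t\phi-s\p$. This state has the Gibbs property
\[
\mu_{t,s}[w]\asymp \exp\bigl(t\phi(w)-s\p(w)-|w|\,\Pr(t\phi-s\p)\bigr)
\]
with uniform constants, and it satisfies the variational principle $\Pr(t\phi-s\p)=\sup_\mu\bigl(h_\mu+t\ov\phi(\mu)-s\ov\p(\mu)\bigr)$, where $\ov\phi(\mu)=\lim\frac1n\int\phi(x_1\dots x_n)\,d\mu$. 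Write $s(t)=\Pr(\phi,\p)(t)$ and $\mu_t=\mu_{t,s(t)}$. Differentiating the implicit equation \eqref{eq:implicit} gives $s'(t)=\ov\phi(\mu_t)/\ov\p(\mu_t)$. Here $\ov\p(\mu)\ge\tau>0$ because $\p\in\qm^{++}$.

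For the equivalences, (2)$\Rightarrow$(1) is trivial. For (1)$\Rightarrow$(3), suppose $\p\sim\phi$, so $\phi=c\p+O(1)$. Then $\Pr(t\phi-s\p)=\Pr((tc-s)\p)$, which vanishes exactly when $s-tc=h(\p)$. Hence $\Pr(\phi,\p)(t)=ct+h(\p)$ is affine. The substantive direction is (3)$\Rightarrow$(2), which I prove in contrapositive form. Suppose $s$ is affine on some interval $[t_1,t_2]$ with slope $m$. Then $m=s'(t_2)=\ov\phi(\mu_{t_2})/\ov\p(\mu_{t_2})$. Evaluating the $(t_1,s(t_1))$-free energy at $\mu_{t_2}$ gives
\[
h_{\mu_{t_2}}+t_1\ov\phi(\mu_{t_2})-s(t_1)\ov\p(\mu_{t_2})
=-(t_2-t_1)\bigl(\ov\phi(\mu_{t_2})-m\,\ov\p(\mu_{t_2})\bigr)=0 .
\]
So $\mu_{t_2}$ is also an equilibrium state at $t_1$, and uniqueness forces $\mu_{t_1}=\mu_{t_2}$. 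Comparing the two Gibbs estimates for the same measure then yields $|(t_2-t_1)\phi(w)-(s(t_2)-s(t_1))\p(w)|\le C$ for all $w\in\W$, that is, $\phi=m\p+O(1)$.

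Since $\phi$ is unbounded, $m\neq 0$, and for $m>0$ this is exactly $\phi\sim\p$. The case $m<0$ is impossible when $\phi\in\qm^{+}$, because then $\phi$ would be bounded below and above by $-|m|\p+O(1)$, hence unbounded below. For general $\phi\in\qm$ I interpret \emph{equivalent} as rough proportionality with nonzero constant, and I expect this sign bookkeeping to be the fiddliest point. The main obstacle overall is importing the almost-additive Gibbs and uniqueness theory, together with the derivative formula for $s'(t)$, in exactly this setting.

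For the large deviation bound I take $\tau(\phi/\p):=s'(0)$ and use Lemma~\ref{lem.expgrow}. Summing the shells $\{|\p(w)-n|<R\}$ over $n\le T$ gives, for every $\lambda\in\R$,
\[
\lim_{T\to\infty}\frac1T\log\frac{1}{|\W_\p(T)|}\sum_{w\in\W_\p(T)}e^{\lambda\phi(w)}=s(\lambda)-s(0)
\]
when $\lambda\ge 0$ (for $\lambda<0$ one gets $\max(s(\lambda),0)-s(0)$, which is again differentiable at $0$). A Chernoff bound then controls $\{\phi(w)/T\ge\tau+\epsilon\}$ by $\exp\bigl(T(s(\lambda)-s(0)-\lambda(\tau+\epsilon))\bigr)$. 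Since $s$ is differentiable at $0$ with $s'(0)=\tau$, the exponent is negative for small $\lambda>0$. The lower tail is handled symmetrically with $\lambda<0$, which proves \eqref{eq:ldbound}.

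Finally, let $\phi\in\qm^+$ with $\phi\ge -C$. For $t<0$ we have $e^{t\phi}\le e^{|t|C}$, so the shell sums give $s(t)\le s(0)$, and $\tau=s'(0)\ge 0$. If $\tau=0$, convexity of $s$ makes $0$ a minimum, so $s\ge s(0)$ on $(-\infty,0]$ as well. Thus $s$ is constant there, in particular not strictly convex. By the argument above $\phi=0\cdot\p+O(1)$, so $\phi$ is bounded; that is, $\tau(\phi/\p)>0$ unless $\phi$ is bounded.
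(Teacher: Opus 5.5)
Your proof is correct, but it handles the two substantive steps differently from the paper.

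\textbf{Rigidity.} The paper takes an affine segment $[u,v]$. It uses the Gibbs property at $u$, $v$ and the midpoint to bound the product of the cylinder ratios $\mu_u/\mu_{(u+v)/2}$ and $\mu_v/\mu_{(u+v)/2}$. It then needs a differentiation-of-measures result and ergodicity to force $\mu_u=\mu_v$. You instead identify the slope with $\mu_{t_2}(\phi)/\mu_{t_2}(\p)$ via Corollary \ref{cor:derivativeofpressure}. You then observe that $\mu_{t_2}$ attains the variational supremum for $t_1\phi-s(t_1)\p$, so uniqueness in Theorem \ref{thm:propiedadesmedidamul} gives $\mu_{t_1}=\mu_{t_2}$ immediately. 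This is shorter and stays within the variational principle. If you work at an interior point of the segment, the supporting-line inequality alone would even replace differentiability.

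\textbf{Sign of the constant.} Your caveat here is a genuine correction, not bookkeeping. The quasimorphism $\phi=-\p$ is unbounded and $\Pr(\phi,\p)(t)=h(\p)-t$ is affine, yet $\phi$ is not roughly similar to $\p$ in the sense of Definition \ref{def:eqqm}. So for general $\phi\in\qm$, condition (3) must allow a nonzero constant of either sign. The paper's proof of Corollary \ref{cor.line} does not address this.

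\textbf{Large deviation bound.} The paper obtains \eqref{eq:ldbound} from the full principle of Theorem \ref{thm.ldp} via G\"artner--Ellis. Your Chernoff bound needs only differentiability of $s$ at $0$, and it is also more robust. As you say, over the balls $\W_\p(T)$ the limiting log-moment generating function is $\max(s(\lambda),0)-s(0)$. It is not $\Pr(\phi,\p)(\lambda)-h(\p)$, as the proof of Theorem \ref{thm.ldp} asserts. The two differ wherever the pressure function is negative: for example when $\lambda<-h(\phi)$ and $\phi\in\qm^{++}$, and possibly for large $\lambda>0$ when $\phi$ takes negative values. So your ``$\lambda\ge 0$'' formula should also carry the $\max$. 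This is harmless, since only small $|\lambda|$ is used.

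\textbf{Positivity of $\tau(\phi/\p)$.} This step is essentially the paper's argument in Theorem \ref{thm.posdrift}. The only difference is that you get $s(t)\le s(0)$ for $t<0$ from the shell sums rather than from $\mu_t(\phi)\ge 0$.
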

﻿
\begin{remark}
The large deviation estimate~\eqref{eq:ldbound} will be strengthened in Theorem~\ref{thm.ldp} to a precise large deviation principle. 
\end{remark}

\subsubsection{The drift}

We further give an interpretation of the quantity $\tau(\phi/\p)$ as drift of $\phi$ with respect to $\p$, in the following sense. Let us denote by $\Minv$ the set of $\sigma$-invariant Borel probability measures on $\Sigma_A$.
Every quasimorphism $\p\colon \W\to \mathbb{R}$ determines a distinguished $\sigma$-invariant measure $\mu_{\p}$ on $\Sigma_A$, called the Gibbs state associated to $\p$; it is characterized by the property that
\[\Pr(\p)=h_{\mu_{\p}}(\sigma)+\mu_{\p}(\p)>h_{\mu}(\sigma)+\mu(\p)\]
for all $\mu_{\p}\neq\mu\in \Minv$, where $h_{\mu}(\sigma)$ denotes the Kolmogorov-Sinai entropy of $\sigma$ with respect to $\mu$; we refer to Theorem~\ref{thm:propiedadesmedidamul} and the surrounding discussion for further details. 

\begin{theorem}\label{thm.drift}
Let $\p\in \qm^{++}$, $\mu_{-h(\p)\p}$ the Gibbs state associated to $-h(\p)\p$. For every $\phi \in \qm$, the quantity $\tau(\phi/\p)$ is the drift of $\phi$ with respect to $\p$: that is, for $\mu_{-h(\p)\p}$-almost every $(x_k)_{k\geq 0}\in\Sigma_A$,
\[
\frac{\phi(x_0,\dots,x_{n-1})}{\p(x_0,\dots, x_{n-1})} \longrightarrow \tau(\phi/\p) 
\]
as $n\to\infty$. 
\end{theorem}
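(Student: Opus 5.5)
Write $\mu=\mu_{-h(\p)\p}$ and $\tau=\tau(\phi/\p)$. The plan is to show that both $\phi$ and $\p$ satisfy an almost sure linear law along $\mu$-typical points, and then to identify the ratio of the two rates with $\tau$ through the derivative of the pressure function.

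\emph{Ergodic averages.} Set $a_n(x)=\phi(x_0,\dots,x_{n-1})$. The quasimorphism property gives $|a_{n+m}(x)-a_n(x)-a_m(\sigma^n x)|\le D_\phi$. Hence $a_n$ is almost additive with respect to $\sigma$, and the same holds for $b_n(x)=\p(x_0,\dots,x_{n-1})$. The Gibbs state $\mu$ is ergodic (Theorem~\ref{thm:propiedadesmedidamul}), so Kingman's theorem applied to $a_n+D_\phi$ and to $-a_n+D_\phi$ shows that
\[
\frac{a_n}{n}\to \mu(\phi):=\lim_n \frac1n\int a_n\,d\mu \qquad \mu\text{-a.e.}
\]
Likewise $b_n/n\to\mu(\p)$ almost everywhere. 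By \eqref{eq:defqm++} we have $\mu(\p)\ge\tau_\p>0$, so the ratio converges $\mu$-a.e. to $\mu(\phi)/\mu(\p)$. It remains to show that $\mu(\phi)/\mu(\p)=\tau$.

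\emph{Identifying the limit.} By Theorem~\ref{thm.rigidity2}, $\tau=\Pr(\phi,\p)'(0)$. The function $s(t)=\Pr(\phi,\p)(t)$ is defined by $P(t\phi-s(t)\p)=0$. At $t=0$ we get $s(0)=h(\p)$, since $P(-s\p)=0$ exactly when $s=h(\p)$ by the usual comparison of the partition function with $|\W_\p(T)|$. To differentiate the implicit equation we use the variational principle for almost additive potentials, $P(\Phi)=\sup_\nu\{h_\nu+\nu(\Phi)\}$, together with uniqueness of the equilibrium state $\mu_\Phi$. From these two facts the directional derivative of $P$ at $-h(\p)\p$ in the direction of $\phi$ equals $\mu(\phi)$; this is the same input that underlies the $C^1$ statement of \textsection\ref{subsec:pressurefunction}. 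Differentiating $P(t\phi-s(t)\p)=0$ at $t=0$ then gives $\mu(\phi)-s'(0)\mu(\p)=0$, so $s'(0)=\mu(\phi)/\mu(\p)$. Combined with the previous paragraph, this completes the proof.

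\emph{Main obstacle.} The delicate step is the derivative formula $\frac{d}{dt}P(\Phi+t\phi)\big|_{t=0}=\mu_\Phi(\phi)$ for almost additive $\Phi$. Convexity of $t\mapsto P(\Phi+t\phi)$ and the variational principle give $\mu_\Phi(\phi)$ as a subgradient. If there were a corner, the one-sided derivatives would be realised by weak-$*$ limits of the equilibrium states $\mu_{\Phi+t\phi}$ as $t\to0^\pm$. Upper semicontinuity of entropy on a subshift, and continuity of $\nu\mapsto\nu(\phi)$ (an infimum/limit of continuous functionals, which is controlled by the defect), show that each such limit is an equilibrium state for $\Phi$. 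Uniqueness then forces it to equal $\mu_\Phi$, so the one-sided derivatives agree. If this route is already packaged in \textsection\ref{subsec:pressurefunction}, I would simply cite it there.
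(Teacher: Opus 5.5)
Your proposal is correct and is essentially the paper's argument: Proposition \ref{prop.agree} and Theorem \ref{thm.posdrift} at $t=0$, where $\mu_0=\mu_{-h(\p)\p}$, give almost-sure linear growth of $\phi$ and $\p$ along $\mu_0$ and identify the limiting ratio $\mu_0(\phi)/\mu_0(\p)$ with $\Pr(\phi,\p)'(0)$, and Theorem \ref{thm.rigidity2} matches this with $\tau(\phi/\p)$. The differences are minor. You apply Kingman's subadditive theorem directly, whereas the paper passes to Birkhoff sums of bounded Bowen functions (Theorem \ref{pro:potencialloogeneral}) and uses Birkhoff's theorem. Your implicit-differentiation step would need joint differentiability of $(t,s)\mapsto P(t\phi-s\p)$, which convexity supplies, but it is unnecessary: Corollary \ref{cor:derivativeofpressure} already gives $\Pr(\phi,\p)'(0)=\mu_0(\phi)/\mu_0(\p)$ directly.
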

This result establishes positive drift for positive, unbounded quasimorphisms.

\subsection{The space of quasimorphisms}
We now define our moduli space of quasimorphisms in greater detail.

\begin{definition}[Moduli space of quasimorphisms]
Let $(\Sigma_A,\sigma)$ be a subshift of finite type. We define the moduli space of quasimorphisms $\Qm^{++}$ as the space of equivalence classes of quasimorphisms in $\qm^{++}$ for the equivalence relation introduced in Definition~\ref{def:eqqm}.

Given $\p \in \qm^{++}$, we write $[\p]$ for its class in $\Qm^{++}$.
\end{definition}

A word $w\in \W$ is periodic if the product $ww$ belongs to the language $\W$ of the subshift; this clearly implies that $w^n\in \W$ for all $n\geq 1$. Given a periodic word $w$ we use $\{w\}$ to denote the corresponding periodic ``orbit", by which we mean the multiset that contains all cyclic permutations of $w$ with multiplicity. We consider multisets so that, given $m,n \ge 1$,  $\{w^m\}$ and $\{w^n\}$ are considered different unless $m=n$. If $w \in \W_n$ we say that $\{w\}$ has length $n$. The set of periodic orbits of length $n$ is denoted by $\Per_n$, and $\Per=\bigsqcup_{n\geq 1} \Per_n$ stands for the collection of all periodic orbits. For mixing subshifts of finite type, there is an abundance of periodic $\sigma$-orbits, and thus of periodic words in the language of the subshift: their exponential growth rate is known to match the topological entropy of $\sigma$, which is strictly positive in this case (see the comprehensive treatment in Lind-Marcus's textbook~\cite[Chapter 4]{Lind-Marcus}).

 Recall that the homogenization of $\p \in \qm$ is denoted by $\ov\p : \Per \to \R$.
By sub/super-additivity, $\ov \p$ is well defined and 
\[\ov{\p}\{w\} = \inf_{n \ge 1} \frac{\p(w^n) + D_\p}{n} = \sup_{n\ge 1}\frac{\p(w^n) - D_\p}{n}\;.\]

\begin{remark}
The counting results in Theorem \ref{thm.rigidity} and Theorem \ref{thm.rigidity2} also hold when we count over $\Per$ with respect to the homogenization; see \textsection\ref{sec.ldp}.
\end{remark}

Given $\phi, \p \in \qm^{++}$ let 
\[
\Dil(\phi,\p) = \sup_{w \in \Per} \frac{\ov{\phi}\{w\}}{\ov{\p}\{w\}}\;,
\]
which is a strictly positive real number. Observe that 
\[\Dil(\phi,\p)\Dil(\p,\phi)\geq \sup_{w\in \Per}\frac{\ov{\phi}\{w\}}{\ov{\p}\{w\}}\frac{\ov{\p}\{w\}}{\ov{\phi}\{w\}}=1\;;\]
we set
\[
\Qq([\phi], [\psi]) = \log \left( \Dil(\phi,\p) \, \Dil(\p, \phi) \right)\;,
\]
which is easily seen to be well defined. We prove that this assignment defines a distance function, and explore the metric properties of the resulting metric space.

\begin{theorem}\label{thm.spaceprops}
The function $\Qq$ is a metric on $\Qm^{++}$. Further, the metric space $(\Qm^{++}, \Qq)$ is contractible, unbounded, separable and is not complete.
\end{theorem}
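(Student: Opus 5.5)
Five things must be checked: metric axioms, contractibility, unboundedness, separability and incompleteness. I take them in that order.

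\emph{Metric axioms.} Well-definedness and symmetry are immediate from the definition, and nonnegativity was observed above. For the triangle inequality, use $\Dil(\phi,\chi)\le \Dil(\phi,\p)\Dil(\p,\chi)$, which holds because $\frac{\ov\phi}{\ov\chi}=\frac{\ov\phi}{\ov\p}\cdot\frac{\ov\p}{\ov\chi}$ pointwise on $\Per$; applying it in both directions and taking logarithms gives the inequality. The real content is definiteness. Suppose $\Qq([\phi],[\p])=0$. Then $\Dil(\phi,\p)\Dil(\p,\phi)=1$, so $\ov\phi=\tau\ov\p$ on all of $\Per$ with $\tau=\Dil(\phi,\p)$. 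I then need that equal homogenizations up to scaling imply $\phi\sim\p$, i.e.\ that periodic orbit data determines the rough class; this is Corollary~\ref{cor.od}. If I had to prove it directly, I would consider $\chi=\phi-\tau\p\in\qm$, which has $\ov\chi\equiv0$. Using Theorem~\ref{thm.rigidity} together with its periodic-orbit counting version from \S\ref{sec.ldp}, $\Pr(\chi,\p)$ is affine, and then the equivalence in Theorem~\ref{thm.rigidity} forces $\chi$ to be bounded. Equivalently, a mixing-SFT closing argument works: each word $w$ closes up to a periodic word $wu$ with $|u|$ bounded, so $|\chi(w)|\le |\ov\chi\{wu\}|+O(D_\chi)$, and this is bounded.

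\emph{Contractibility.} Fix $[\p_0]$ and use the straight-line homotopy $H([\p],s)=[(1-s)\p+s\p_0]$. This is well defined because $\qm^{++}$ is a convex cone, although the representative must be chosen canonically. I would normalise by $h(\p)=1$, which makes the normalised representative unique on periodic data; choose the interpolation on $\ov{\p}$ itself. Continuity in $\Qq$ follows from the elementary estimate $\Dil((1-s)\p+s\p_0,(1-s')\p'+s'\p_0)$ being controlled by $\max(\Dil(\p,\p'),\ldots)$ and $|s-s'|$, using bounds $a|w|\le\ov\p\{w\}\le b|w|$.

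\emph{Unboundedness.} Apply the geodesic Theorem~\ref{thm.geo}, or do it by hand. Take $\p=$ word length and $\phi$ a locally constant positive potential that equals $1$ on a fixed symbol and $\epsilon$ elsewhere. Periodic orbits that avoid, or mostly use, that symbol give $\Dil(\phi,\p)\Dil(\p,\phi)\ge 1/\epsilon\to\infty$.

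\emph{Separability.} Take as a countable set the classes of locally constant positive functions with rational values, i.e.\ Birkhoff sums $S_n f$ over the cylinders of depth $m$. Density is the remark that the H\"older/locally constant classes are dense. To prove it I would approximate $\p\in\qm^{++}$ by $f_m(x)=\frac1m\p(x_0\cdots x_{m-1})$. The quasimorphism property gives $|\ov{S f_m}\{w\}-\ov\p\{w\}|\le C D_\p|w|/m$, and combined with $\ov\p\{w\}\ge\tau|w|$ this makes the ratios uniformly close to $1$. Finally I rationalise the values of $f_m$.

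\emph{Incompleteness.} Using the same $f_m$, the sequence $[S f_m]$ is Cauchy but may fail to converge in $\Qm^{++}$. I would build $\p$ with $\ov\p$ a limit of periodic data that is not realised by any quasimorphism with finite defect, for example $\p=\sum_k 2^{-k}\chi_k$ with $\chi_k$ having defect growing like $4^k$. The Cauchy partial sums converge on $\Per$ to a function that no quasimorphism homogenises to. This step, exhibiting a limit outside $\qm^{++}$ and verifying it cannot be roughly similar to anything, I expect to be the main obstacle together with definiteness. The natural tool is Theorem~\ref{thm.def}: completeness holds under bounded defect, so divergent defect must be exploited.
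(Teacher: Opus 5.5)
The gap is in the non-completeness step, which you yourself flag as the main obstacle. Your opening sentence cannot produce a counterexample. If $f_m$ is built from some $\p\in\qm^{++}$, your own separability estimate shows $[Sf_m]\to[\p]$.

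For the series $\sum_k 2^{-k}\chi_k$ you give no reason why its limiting periodic data is not $\ov\p$ for some $\p\in\qm^{++}$. Defect growth cannot supply one, because $\Qq$ only sees homogenizations. For instance, $\chi_k=\chi+b_k$ with $b_k$ bounded and $D_{b_k}\sim 4^k$ gives limiting data $\ov\chi$. Reading "defect" as minimal defect over each rough class does not help either. If $|\ov{\chi_m}|\le\ov\p$, then $[\p+\frac1m\chi_m]\to[\p]$ with $\Qq\le\log\frac{1+1/m}{1-1/m}$, no matter how large the defects of the $\chi_m$ are. Theorem \ref{thm.def} only shows that unbounded defect is \emph{necessary} for a Cauchy sequence to escape $\Qm^{++}$; it does not give you one.

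What is missing is a property that every class in $\Qm^{++}$ has and that some $\Qq$-limit lacks. The paper uses uniqueness of equilibrium states.
\begin{itemize}
\item Suppose $\p\in\qm^{++}$ is twin to a continuous $F$. Then $\mu(\p)=\int F\,d\mu$ (suitably normalised) on periodic measures, hence on all of $\Minv$ by continuity and density (Remark \ref{rem:integralhomogeneous}).
\item By the variational principle, $t\p$ and $tF$ then have the same equilibrium states. Those of $t\p$ are unique by Theorem \ref{thm:propiedadesmedidamul}.
\item Now take a positive continuous $F$ such that some $tF$ has two equilibrium states, and H\"older $F_m\to F$ uniformly with $F_m\ge\epsilon$. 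Then $\Qq([\p_{F_m}],[\p_{F_k}])\le 2\epsilon^{-1}\|F_m-F_k\|_\infty$ (Lemma \ref{lem.unifconv}), so the sequence is Cauchy.
\item If it converged to some $[\p]\in\Qm^{++}$, then $S_nF(x_w)/\ov\p\{w\}$ would be constant on $\Per$. So $F$ would be twin to a multiple of $\p$, which contradicts the previous two points.
\end{itemize}

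The rest of your proposal is fine.
\begin{itemize}
\item Definiteness via Corollary \ref{cor.od} is correct and slightly more direct than the paper's route through Corollary \ref{cor.line}.
\item Your contractibility argument is the paper's.
\item Unboundedness follows from Theorem \ref{thm.geo} once two distinct classes exist. Your hand example gives a lower bound $s/\epsilon$, where $s$ is the largest frequency of the symbol, not $1/\epsilon$. It also needs a symbol that some periodic orbit avoids.
\item Your separability argument with $f_m=\frac1m\p(x_0\cdots x_{m-1})$ is correct. It is more elementary than the paper's, which obtains a continuous twin from Cuneo's theorem and then approximates it uniformly.
\end{itemize}
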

\begin{remark}
To show separability we introduce the notion of a `word quasimorphism' which is a dynamical analogue of word metrics on groups; see \textsection\ref{sec.wq}.
\end{remark}

\subsubsection{Subsets in $\Qm^{++}$}
The non-completeness of $(\Qm^{++},\Qq)$ is caused by the divergence of the defect; for every real number $D\geq 0$, let $\Qm_D^{++} \subset \Qm^{++}$ denote the subcollection of classes that contain a quasimorphism $\p$ with $D_\p \le D$ and $h(\p) = 1$. \footnote{The normalization requirement is clearly needed for the definition to be meaningful, since any rescaling of the quasimorphism does not change the corresponding element of $\Qm^{++}$.}

\begin{theorem}\label{thm.def}
For each $D >0$ the metric space $(\Qm_D^{++}, \Qq)$ is complete.
\end{theorem}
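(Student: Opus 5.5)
Fix $D>0$ and let $([\p_n])_{n\geq 1}$ be a $\Qq$-Cauchy sequence in $\Qm_D^{++}$. By definition we may pick representatives $\p_n$ with $D_{\p_n}\le D$ and $h(\p_n)=1$. The plan is to normalise these representatives, extract a pointwise limit $\p$ by compactness, and then show three things: $\p\in\qm^{++}$, $[\p]\in \Qm_D^{++}$, and $\Qq([\p_n],[\p])\to 0$.

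\emph{Normalisation.} First I would show that, after a bounded rescaling, the homogenizations are uniformly comparable. Let $d_{nm}=\Qq([\p_n],[\p_m])$. Then for every periodic orbit,
\[
\Dil(\p_m,\p_n)^{-1}\,\ov{\p_m}\{w\}\le \ov{\p_n}\{w\}\le \Dil(\p_n,\p_m)\,\ov{\p_m}\{w\}.
\]
Since $h(\p_n)=h(\p_m)=1$, each of $\Dil(\p_n,\p_m)$ and $\Dil(\p_m,\p_n)$ should be at least $1$; otherwise one would dominate the other strictly on all orbits and the growth rates would differ. I expect this to follow from the counting over $\Per$ with respect to the homogenization (\textsection\ref{sec.ldp}), which gives $h(\p)$ as the growth rate of $\{\{w\}: \ov\p\{w\}\le T\}$. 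Consequently both dilations lie in $[1,e^{d_{nm}}]$. Thus $\ov{\p_n}/\ov{\p_m}$ is uniformly in $[e^{-d_{nm}},e^{d_{nm}}]$, and the sequence of functions $\ov{\p_n}$ is Cauchy uniformly in ratio.

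\emph{Uniform control and extraction.} Next I would pass from homogenizations to the quasimorphisms themselves. With $D_{\p_n}\le D$, the key claim is that $\p_n$ is determined up to a bounded error $C(D,A)$ by $\ov{\p_n}$. Concretely, for each word $w$, mixing of $\Sigma_A$ gives a connecting word $u$ of bounded length $\le M$ with $wu$ periodic. Then $\p_n(wu)$ and $\ov{\p_n}\{wu\}$ differ by at most $D$, by the inf/sup formula for the homogenization. In addition, $|\p_n(u)|\le \max_{|v|\le M}|\p_n(v)|$, which is bounded via $\p_n(v)\le \tau'|v|+C'$.

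To make that last bound uniform, I need uniform upper and lower linear growth of the $\p_n$. I would obtain it from $h(\p_n)=1$ together with the comparison to $\p_1$: the homogenizations are comparable to $\ov{\p_1}$ with constants $e^{\pm \sup_n d_{1n}}$, and $D_{\p_n}\le D$ converts this to $|\p_n(w)|\le K|w|+K$ and $\p_n(w)\ge \tau|w|-K$ uniformly in $n$.

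Given these uniform bounds, a diagonal argument over the countable set $\W$ yields a subsequence with $\p_n(w)\to\p(w)$ for every word. The inequality $|\p(w_1w_2)-\p(w_1)-\p(w_2)|\le D$ passes to the limit, and so does the uniform linear lower bound. Hence $\p\in\qm^{++}$ and $D_\p\le D$.

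\emph{Convergence and membership.} Since the defects are uniform, I can commute the limits: $\ov{\p}\{w\}=\lim_n\ov{\p_n}\{w\}$. This uses $|\p_n(w^k)/k-\ov{\p_n}\{w\}|\le D/k$ uniformly in $n$. It follows that $\ov{\p}/\ov{\p_n}\in[e^{-\epsilon_n},e^{\epsilon_n}]$ with $\epsilon_n=\limsup_m d_{nm}\to0$, and therefore $\Qq([\p_n],[\p])\le 2\epsilon_n\to 0$. Because the original sequence is Cauchy, convergence of a subsequence gives convergence of the whole sequence. Finally, $h(\p)=1$ should follow because $h$ is determined by $\ov\p$ and is continuous under uniform ratio bounds: if $e^{-\epsilon}\ov\p\le\ov{\p_n}\le e^{\epsilon}\ov\p$, then $h(\p)\in[e^{-\epsilon},e^{\epsilon}]$. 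Hence $[\p]\in\Qm_D^{++}$.

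\emph{Main obstacle.} The hardest step is the uniform lower linear bound $\p_n(w)\ge\tau|w|-K$, i.e.\ ruling out degeneration in which the limit has some orbit with $\ov\p\{w\}=0$. I would derive it from the uniform comparison with $\ov{\p_1}$ and the fact that $\ov{\p_1}\{w\}\ge\tau_1|w|$. The passage from orbits to arbitrary words then uses the closing-up argument above with the uniform defect bound, which is exactly where the hypothesis $D_\p\le D$ is essential.
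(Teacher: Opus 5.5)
Your outline matches the paper's proof: uniform two-sided control of the representatives, a diagonal pointwise limit, passing the defect bound to the limit, convergence of homogenizations (your $D/k$ commutation), and then $\Qq([\psi_n],[\psi])\le 2\epsilon_n$ and $h(\psi)=1$. Those later steps are fine.

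\textbf{The gap is the uniform control.} It is a substantive failure, not a missing detail.

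First, your derivation is circular. Closing $w$ up to a periodic $wu$ with $|u|\le M$ only gives $\psi_n(w)=\overline{\psi_n}\{wu\}-\psi_n(u)+O(D)$. So you need $\sup_n\max_{|u|\le M}|\psi_n(u)|<\infty$, and that is exactly what you then try to extract from the same closing-up argument.

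Second, your ``key claim'' is false: given $D_{\psi_n}\le D$, the quasimorphism $\psi_n$ is \emph{not} determined by $\overline{\psi_n}$ up to an error $C(D,A)$. Here is a counterexample.
\begin{itemize}
\item Take the alphabet $\{x,y,z\}$ with allowed transitions $x\to y$, $y\to x$, $y\to z$, $z\to z$, $z\to x$. This subshift is mixing.
\item Let $b(w)=\#_x(w)-\#_y(w)$. Every $x$ is followed by $y$ and every $y$ is preceded by $x$, so $b(w)=[w \text{ ends in } x]-[w\text{ starts with } y]$.
\item Hence $b$ is exactly additive (defect $0$), bounded, and $\overline{b}=0$.
\item If $h(\psi)=1$ and $D_\psi\le D$, then every $\psi+nb$ is an admissible representative of the same class, with the same defect and $h(\psi+nb)=1$.
\item Yet $(\psi+nb)(y)=\psi(y)-n\to-\infty$.
\end{itemize}
So for the constant sequence with these representatives there is no uniform lower linear bound and no pointwise convergent subsequence.

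The paper's Lemma~\ref{lem.bound} hides the same dependence. The constant in Remark~\ref{rem.indep} involves $\|\psi\|_{\le M}$ and $\min_{\mathcal{W}_M}\psi$, which depend on the representative, not only on $B$ and $E$.

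\textbf{The missing idea is to renormalise the representatives} by something that changes neither the defect, nor the class, nor $h$.
\begin{itemize}
\item Use bounded, exactly additive functions $b(w)=\sum_i\beta(w_i)$, where $\beta$ is a function on the alphabet summing to zero over every cycle of the transition graph. Such $b$ is bounded: $b(wu)=0$ for a closing word $u$, so $|b(w)|\le M\max|\beta|$.
\item For a simple cycle $\gamma=a_1\cdots a_k$ you have $\bigl|\sum_i\psi_n(a_i)-\overline{\psi_n}\{\gamma\}\bigr|\le kD$.
\item Also $\overline{\psi_n}\{\gamma\}\in[e^{-d_{1n}},e^{d_{1n}}]\,\overline{\psi_1}\{\gamma\}$, which is bounded uniformly in $n$, and there are only finitely many simple cycles.
\item The cycle-sum map is injective on a complement of its kernel. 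So for letters $a$ you can write $\psi_n(a)=\alpha_n'(a)+\beta_n(a)$, with $\beta_n$ in the kernel and $\sup_n\|\alpha_n'\|_\infty<\infty$.
\item Replace $\psi_n$ by $\psi_n'(w)=\psi_n(w)-\sum_i\beta_n(w_i)$. Then $D_{\psi_n'}=D_{\psi_n}\le D$, $[\psi_n']=[\psi_n]$, $h(\psi_n')=1$, and $|\psi_n'(w)|\le(\sup_n\|\alpha_n'\|_\infty+D)|w|$.
\item Your closing-up argument now does give a uniform bound $\psi_n'(w)\ge\tau|w|-K$.
\end{itemize}
With these representatives the rest of your proof goes through, and the limit still has defect at most $D$.
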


Given $\p \in \qm^{++}$ we say that $\p$ is \textit{twin} to a rational valued, locally constant function $f : \Sigma_A \to \Q$ if for any periodic $\{w\} \in \Per_n$ we have that
\[
S_n f(x_w) = \ov\p\{w\}
\] 
where $x_w \in \Sigma_A$ is the point obtained from repeating $w$, $x_w = (w,w,\ldots)$ (see Example \ref{ex.holder}).

Let $\Qm^{++}_{lc, \Q} \subset \Qm^{++}$ denote the collection of elements in $\Qm^{++}$ that are twin to a rational valued, locally constant function. 
\begin{theorem}\label{thm.lc}
The class $\Qm^{++}_{ lc, \Q}$ is dense in $(\Qm^{++}, \Qq)$. 
\end{theorem}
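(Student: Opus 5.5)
We must show that $\Qm^{++}_{lc,\Q}$ is dense in $(\Qm^{++},\Qq)$. Fix $\p\in\qm^{++}$ and $\epsilon>0$. The plan is to approximate $\ov\p$ multiplicatively on all periodic orbits by a rational locally constant cocycle, that is, to find a rational locally constant $f$ and a $\psi_\epsilon\in\qm^{++}$ twin to $f$ with
\[
(1+\epsilon)^{-1}\ov\p\{w\}\le \ov{\psi_\epsilon}\{w\}\le (1+\epsilon)\ov\p\{w\}\quad\text{for all }\{w\}\in\Per .
\]
Then $\Dil(\psi_\epsilon,\p),\Dil(\p,\psi_\epsilon)\le 1+\epsilon$, so $\Qq([\psi_\epsilon],[\p])\le 2\log(1+\epsilon)$.

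\textbf{Construction.} Use block approximation. For $m\ge1$, define $g_m(x)=\p(x_0\cdots x_{m-1})/m$, which depends only on the first $m$ symbols. For a periodic orbit $\{w\}$ with $|w|=n$, consider $S_{nm}g_m(x_w)$, computed along $w^m$. It equals $\frac1m\sum_{j=0}^{nm-1}\p(\text{$m$-block starting at }j)$.

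Group these blocks by residue class $j \bmod m$. For each residue, concatenating the consecutive disjoint $m$-blocks tiles $w^m$ cyclically up to a bounded boundary piece. By the quasimorphism inequality, each residue sum equals $\p(w^{n})+O(nD_\p)$ up to bounded error; the counting is that there are $n$ blocks per residue, each gluing costing at most $D_\p$. Summing over the $m$ residues and dividing by $m$ should give
\[
m\,S_n g_m(x_w)=\ov\p\{w^m\}+O(n D_\p)\;,
\]
that is, $S_ng_m(x_w)=\ov\p\{w\}+O(nD_\p/m)$. I would redo this bookkeeping carefully: the key point is that the error is $O(|w|D_\p/m)$, linear in $|w|$ with a small constant. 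Since $\p\in\qm^{++}$ gives $\ov\p\{w\}\ge\tau|w|$ (this follows by homogenizing \eqref{eq:defqm++}), the relative error is $O(D_\p/(m\tau))$, which is below $\epsilon$ once $m$ is large.

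\textbf{Rationality and twinning.} Next replace $g_m$ by a rational-valued $f$ with $\|f-g_m\|_\infty<\epsilon\tau$, again keeping the relative error small. Then define $\psi_\epsilon(v)=S_{|v|}f$ evaluated on the word $v$, extended by a fixed admissible continuation to handle the last $m-1$ symbols (for instance, sum only over windows contained in $v$). This $\psi_\epsilon$ is a quasimorphism with defect bounded by $2(m-1)\|f\|_\infty$. Its homogenization is $S_nf(x_w)$, so it is twin to $f$. By the two-sided comparison it is bounded below linearly, so $\psi_\epsilon\in\qm^{++}$.

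\textbf{Main obstacle.} The main obstacle is the uniform error estimate of order $|w|D_\p/m$ over all periodic orbits, including very short ones and non-primitive orbits $\{w^k\}$. Both sides are homogeneous under powers, so it suffices to verify the estimate on $w^k$ for large $k$ and divide by $k$; this handles all cases uniformly.
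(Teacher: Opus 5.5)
Your proposal is correct, but it takes a different route from the paper.

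\textbf{The paper's route.} It first invokes Cuneo's theorem on almost additive sequences to obtain (Proposition~\ref{prop.rep}) a continuous function $F$ that is an \emph{exact} twin of $\psi$, i.e.\ $S_nF(x_w)=\overline{\psi}\{w\}$ on every periodic orbit. It then approximates $F$ uniformly by a rational locally constant $r$. From there it finishes as you do: relative error via $\overline{\psi}\{w\}\ge\tau|w|$, the Birkhoff-sum quasimorphism of Example~\ref{ex.holder} as the twin of $r$, and Lemma~\ref{lem.pos} for membership in $\qm^{++}$.

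\textbf{Your route.} You skip the exact continuous twin and build approximate twins directly. The block averages $g_m$ satisfy
\[
|S_ng_m(x_w)-\overline{\psi}\{w\}|\le |w|D_\psi/m
\]
for every periodic orbit; this is essentially the elementary core of Cuneo's construction. Your bookkeeping is right except for one slip. For the residue class $r$, the $n$ disjoint $m$-blocks concatenate to a cyclic shift $(w')^m$ of $w^m$, not to $w^n$. So each residue sum equals $m\overline{\psi}\{w\}$ up to an error of at most $nD_\psi$: $(n-1)D_\psi$ from gluing, plus $D_\psi$ from $|\psi(u^m)-m\overline{\psi}\{u\}|\le D_\psi$. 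This bound already holds for every $n\ge 1$, so short or non-primitive orbits need no separate powers argument. Your rounding step, the window-sum quasimorphism (defect at most $(m-1)\|f\|_\infty$, homogenization exactly $S_nf(x_w)$), and the linear lower bound are all fine.

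\textbf{What each buys.} Your argument is self-contained, avoids the external theorem, and is quantitative: a rational approximant depending on $m$ coordinates lies within $\Qq$-distance $O(D_\psi/(m\tau))$ of $[\psi]$, plus the rounding error. The paper's argument instead yields the stronger structural fact that every element of $\qm^{++}$ has an exact continuous twin. That fact is of independent interest and matches the viewpoint of the non-completeness section, where classes are represented by continuous twins (Lemma~\ref{lem.unifconv}).
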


\begin{remark}
1) Note that locally constant functions are H\"older continuous and so, equivalence classes that are twins to H\"older functions are dense in $(\Qm^{++},\Qq)$.

\noindent 2) Since $\Qm^{++}_{ lc, \Q}$ is countable we deduce that $\Qm^{++}$ is separable. 
\end{remark}

\subsubsection{Pressure geodesics}
Inspired by~\cite{cantrell.reyes.man}, we further study the geodesic properties of $(\Qm^{++}, \Qq)$. We prove the following, see

\begin{theorem}\label{thm.geo}
For any $[\phi], [\p] \in \Qm^{++}$ and $t \in \R$, the function $w \mapsto \p_t(w):= \Pr(\p,\phi)(t) \phi(w) - t\p(w)$ determines a quasimophism $\p_t \in \qm^{++}$.
 Furthermore, the path
\[
\mathbb{R}\ni t \mapsto [\p_t] \in \Qm^{++}
\]
is a bi-infinite geodesic for the $\Qq$ metric: that is, for any $r, s,t \in \R$ with $s < r<t$ we have that
\[
\Qq([\p_r],[\p_t]) = \Qq([\p_r],[\p_s]) + \Qq([\p_s], [\p_t])
\]
and $\lim_{t\to\pm \infty} \Qq([\p_0],[\p_t]) = \infty$.
\end{theorem}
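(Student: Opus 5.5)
The plan is to reduce everything to the one-variable convex function $P(t):=\Pr(\p,\phi)(t)$ and to the set of ratios
\[
R=\left\{ \ov\p\{w\}/\ov\phi\{w\} : \{w\}\in\Per\right\}\subset(0,\infty),
\qquad a=\inf R=\Dil(\phi,\p)^{-1},\quad b=\sup R=\Dil(\p,\phi).
\]
Since the homogenization is linear, $\ov{\p_t}=P(t)\ov\phi-t\ov\p$. Every dilation between points of the path is therefore a supremum over $x\in R$ of a ratio of affine functions of $x$.

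\textbf{Step 1: $\p_t$ lies in $\qm^{++}$.}
The function $\p_t$ is a quasimorphism, being a linear combination of two. By the defining equation \eqref{eq:implicit}, $\Pr(-\p_t)=0$. I would use the variational principle for (almost additive) quasimorphisms underlying Theorem~\ref{thm:propiedadesmedidamul}. Writing $\mu(\chi)$ for the asymptotic average of $\chi$ under $\mu\in\Minv$, it gives, for every $\mu$,
\[
\mu(\p_t)=P(t)\mu(\phi)-t\mu(\p)\ \ge\ h_\mu(\sigma)\ \ge\ 0 .
\]
Equality $\mu(\p_t)=0$ would make $\mu$ an equilibrium state for $-\p_t$ with zero entropy. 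That contradicts uniqueness of the Gibbs state, which has full support and positive entropy. Hence $\mu(\p_t)>0$ for all $\mu$. The map $\mu\mapsto\mu(\p_t)$ is weak-$*$ continuous for almost additive sequences and $\Minv$ is compact, so $\inf_\mu \mu(\p_t)>0$. A standard argument then converts uniform positivity of the averages over all invariant measures into $\p_t(w)\ge c|w|-C$.

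The same variational formula identifies the tangent data of $P$. With $\mu_t$ the equilibrium state,
\[
P'(t)=\frac{\mu_t(\p)}{\mu_t(\phi)},\qquad P(t)-tP'(t)=\frac{h_{\mu_t}(\sigma)}{\mu_t(\phi)}>0 .
\]
Moreover the slopes of $P$ at $\pm\infty$ are $a$ and $b$, since $P(t)/t$ tends to the extreme values of $\mu(\p)/\mu(\phi)$, and these coincide with $\inf R$ and $\sup R$ by density of periodic measures. Positivity of $\ov{\p_t}$ on every orbit gives $P(t)-tx>0$ for all $x\in\ov R=[a,b]$ up to the endpoints. In particular $P(t)-ta\ge0$ and $P(t)-tb\ge0$; Step 1 in fact gives strict positivity.

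\textbf{Step 2: compute the distance.}
For $r,t$ fixed, the map $x\mapsto (P(r)-rx)/(P(t)-tx)$ is a Möbius map with no pole on $[a,b]$, hence monotone there. Its supremum over $R$ therefore equals its value at $a$ or at $b$, and the reciprocal ratio takes its supremum at the other endpoint. Set
\[
f(t)=\log\frac{P(t)-tb}{P(t)-ta}.
\]
Multiplying the two dilations then gives $\Qq([\p_r],[\p_t])=|f(r)-f(t)|$. If $a=b$, then $\ov\p$ and $\ov\phi$ are proportional, so $\phi\sim\p$ by Corollary~\ref{cor.od}, the path is constant, and this case is set aside. Assume $a<b$.

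\textbf{Step 3: monotonicity and unboundedness of $f$.}
Differentiating, $f'(t)$ has the sign of
\[
(P'-b)(P-ta)-(P'-a)(P-tb)=(a-b)\bigl(P(t)-tP'(t)\bigr),
\]
which is strictly negative by Step 1. So $f$ is strictly decreasing, and $|f(r)-f(t)|$ is additive along $s<r<t$ (in whichever order the statement is read). For the endpoints, as $t\to+\infty$ we have $P(t)-ta\sim t(b-a)\to\infty$, while $P(t)-tb$ decreases to a finite limit because $P'\uparrow b$. Hence $f\to-\infty$, and symmetrically $f\to+\infty$ as $t\to-\infty$. This gives $\Qq([\p_0],[\p_t])\to\infty$ in both directions.

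The main obstacle is Step 1. It requires a variational principle, continuity of $\mu\mapsto\mu(\chi)$, and the positive-entropy and uniqueness properties of Gibbs states, all for quasimorphisms rather than Hölder potentials. It also requires identifying the asymptotic slopes of $P$ with $\inf R$ and $\sup R$, which rests on approximating invariant measures by periodic ones. These are exactly the facts I expect the thermodynamic part of the paper to supply, and I would verify them first.
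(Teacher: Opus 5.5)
Your proof is correct, but it is organised differently from the paper's.

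\textbf{The paper's route.} It first proves $\Pr(\p,\phi)(t)-t\Dil(\p,\phi)>0$ for $t\ge 0$, and the analogue with the slope at $-\infty$ for $t\le 0$, by a large-deviation argument (Lemma~\ref{lem.post}). It then deduces $\p_t\in\qm^{++}$ from Lemma~\ref{lem.pos}. For the geodesic property it computes dilations only against the two anchor points $[\p_{-h(\p)}]=[\p]$ and $[\p_0]=[\phi]$. There each ratio is an affine function of $\ov\phi/\ov\p$, or the reciprocal of one, so the suprema are read off from $\Dil(\p,\phi)$ and $\Dil(\phi,\p)$ (Lemma~\ref{lem.compu}). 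It checks multiplicativity of dilations for a point between these anchors (Corollary~\ref{cor.geo}). A general triple is handled by noting that the pressure curve of any two points on the path is a reparametrisation of the same curve.

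\textbf{Your route.} You get positivity from the variational principle, uniqueness of the equilibrium state of $-\p_t$, and compactness of $\Minv$. This gives the strict endpoint inequalities $P(t)-ta>0$ and $P(t)-tb>0$ directly, without the large-deviation step. Monotonicity of M\"obius maps away from their pole then gives the closed formula $\Qq([\p_r],[\p_t])=|f(r)-f(t)|$ for all $r,t$.

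\textbf{What each buys.} Your formula yields several things at once:
\begin{itemize}
\item the geodesic identity for every ordered triple, without the reparametrisation step that the paper only sketches;
\item the fact that $f$ is a strictly decreasing bijection of $\R$, so after reparametrising by $f$ the path is an isometric embedding of the line, and in particular injective;
\item divergence at both ends as an immediate consequence;
\item a clean isolation of the degenerate case $[\phi]=[\p]$, which the statement and the paper's limit computation tacitly exclude.
\end{itemize}
The paper's route stays closer to the explicit dilations and only needs the affine special case of your M\"obius argument.

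\textbf{One point to make explicit.} You should say why the Gibbs state of $-\p_t$ has positive entropy. By Theorem~\ref{thm:propiedadesmedidamul} it is Bernoulli with full support on the infinite space $\Sigma_A$, so its entropy cannot be zero. Once you have the endpoint inequalities, you do not need entropy a second time to get $P(t)-tP'(t)>0$. Indeed $P'(t)\in[a,b]$, so $P-tP'\ge P-tb$ for $t>0$, $P-tP'\ge P-ta$ for $t<0$, and $P(0)=h(\phi)>0$.
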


We then study a natural boundary for $\Qm^{++}$: the collection of equivalence classes of quasimorphisms that correspond to the endpoints of a pressure geodesics from Theorem \ref{thm.geo}. We show that this boundary is in fact equal to $\Qm^{+}\backslash \Qm^{++}$: the collection of equivalence classes of positive quasimorphisms that do not have linear growth. See \textsection\ref{sec.boundary} for the precise results.

Before concluding the introduction we make the following remark.
\begin{remark}
By recent results of Cantrell, Martínez-Granado and Reyes~\cite{C.D-M.R}, the symmetric reparameterizations of the geodesic flow on a closed, negatively curved Riemannian Manifold, can be identified with a natural moduli space that embeds in $\Qm^{++}(\Sigma_A)$ for some subshift $A$, see \textsection\ref{sec.d_G}. This provides motivation for our study of $\Qm^{++}$ as the length spectra (i.e. the periodic orbit data) of such repartameterizations are therefore realised as the periodic data of quasimorphisms on subshifts.
\end{remark}

\subsection{Other works}

The study of the space of quasimorphisms on a subshift of finite type was introduced in recent work of the second and fourth authors \cite{TermoEqSt}. There, this framework is used to describe the space of all (genuine) quasimorphisms of the fundamental group of a closed hyperbolic manifold and, as a consequence, its second bounded cohomology group. In that setting, a different notion of equivalence is considered: two quasimorphisms are said to be cohomologous if they are at bounded distance from each other. It is shown that each cohomology class admits a unique representative given by a Gibbs measure, which can equivalently be described by a Livšic cohomology class of a weak Bowen function. In particular, Example~\ref{ex.holder} essentially captures all quasimorphisms.

In the present article, we adopt a different viewpoint. We regard the space of quasimorphisms of linear growth (modulo the appropriate equivalence relation) as a moduli space, endowed with a geometry of independent interest.\\

\subsection*{Acknowledgements}

We kindly thank Mark Hagen for suggesting the proof of Proposition \ref{prop.conbound}.\\

P.D. Carrasco was partially supported by CNPq (Produtividade em Pesquisa) 305197/2025-8, Projeto Universal CNPq 401737/2025-0 and FAPEMIG APQ-02338-24. 

E. Corso gratefully acknowledges support by the Swedish Research Council under grant no. 2021-06594, while he was in residence at Institut Mittag-Leffler in Djursholm, Sweden during the program "Interactions between fractal geometry, harmonic analysis, and dynamical systems".

F. Rodriguez-Hertz was partially supported by NSF grant DMS-2453688.

\subsection*{Disclosure}
 
AI was not used for this project, neither to assist with the writing nor to obtain mathematical statements/proofs.


\section{Preliminaries}\label{sec:prelim}
\subsection{Subshifts of finite type}
For the required background in ergodic theory and dynamical systems, we refer to the textbook~\cite{Brin_2002} by Brin and Stuck; much more specifically geared towards symbolic dynamics is the book~\cite{Lind-Marcus} by Lind and Marcus. Let $k\geq 1$ be an integer, $\Sigma=\{1,\dots,k\}^{\mathbb{N}}$, endowed with the product of the discrete topologies on each factor. The topology can be induced by any of the distance functions 
\begin{equation*}
d_{\beta}((x_n),(y_n))=\beta^{\inf \{n\geq 0:x_n\neq y_n\}  }
\end{equation*}
for $0<\beta <1$. For a real-valued function $\varphi$ defined on a subset of $\Sigma$, the notion of Hölder continuity, without specification of the exponent, refers to any of the metrics $d_{\beta}$, and does not depend on the particular choice of $\beta$.

 Let $A=(A_{i,j})_{1\leq i,j\leq k}$ be a square matrix of size $k$ with entries in $\{0,1\}$. The set
\[\Sigma_{A}=\{(x_n)_{n\geq 0}\in \Sigma: A_{x_n, x_{n+1}}=1 \text{ for all }n\in \mathbb{N}  \}\]
is a closed subset of $\Sigma$ which is forward-invariant under the left shift map $\sigma:\Sigma\to\Sigma$, $(x_n)_{n\geq 0}\mapsto (x_{n+1})_{n\geq 0}$; the pair $(\Sigma_A,\sigma)$, where $\sigma$ is the restriction to $\Sigma_A$ of the left shift map, is called the one-sided subshift of finite type on the alphabet $\{1,\dots,k\}$ with transition matrix $A$. 

 If $\sigma$ is topologically mixing on $\Sigma_A$, we say that $(\Sigma_A,\sigma)$ is a mixing subshift of finite type. It is well known that this occurs if and only if $A$ is primitive: there exists an integer $N\geq 1$ such that for all $n\geq N$, all entries of $A^n$ are strictly positive. 

We view finite words in the alphabet $\{1,\dots,k\}$ as elements of the free semigroup generated by $\{1,\dots,k\}$, and employ the notation $w_1w_2$ for the product of two such words. For every $n\geq 1$, let
\[
\W_n = \{(x_0,\dots, x_{n-1})\in \{1,\dots,k\}^n: \text{ there is $\underline{y}=(y_n)_{n\geq 0}\in \Sigma_A$ with $y_0=x_0,\dots,y_{n-1}=x_{n-1}$} \}
\;.
\] 
The exponential growth rate of $|\W_n|$, which is well defined by an easy-to-check submultiplicativity, is known to match the topological entropy of $\sigma$ on $\Sigma_A$, which is strictly positive in the mixing case. We let then $\W = \bigsqcup_{n\ge 1} \W_n$ be the set of all finite nonempty words appearing as initial strings of elements of $\Sigma_A$\footnote{Equivalently, since $\Sigma_A$ is shift-invariant, $\W$ is the set of all finite nonempty words appearing inside elements of $\Sigma_A$ \emph{in any position}.}. We say that a word $w\in \W_n$ has length $n$. Given two words $w_1 = x_0 \cdots x_{n-1}$ and $w_2 = y_0 \cdots y_{m-1}$ in $\W$, we say that $w_1$ and $w_2$ are composable if $A_{x_{n-1}, y_0}= 1$, or equivalently, if the product $w_1w_2$ is an element of $\W$.

Every finite word $(x_0,\dots,x_{n-1})\in \mathcal{W}_n$ determines a cylinder 
\[[x_0,\dots,x_{n-1}]=\{(y_n)_{n\geq 0}\in \Sigma_A:y_0=x_0,\dots,y_{n-1}=x_{n-1}  \}\;;\]
we say in this case that the cylinder has size (or length) $n$.
The set of all cylinders of size $n$ is denoted by $C_n$; the collection $\bigcup_{n\geq 1} C_n$ forms a basis of the topology of $\Sigma_A$.

Throughout the paper we fix  a subshift of finite type $(\Sigma_A,\sigma)$. 

\subsection{Thermodynamic formalism for a single quasimorphism}
\label{subsec:thforquasim}
The notion of quasimorphism on the language of a subshift of finite type, introduced here in Definition~\ref{def:quasimorphism}, was recently studied thoroughly by the second and fourth author in~\cite[Section 4]{TermoEqSt}, where the study of an associated thermodynamic formalism was developed extensively. Such a notion is a natural adaptation of the homonymous notion for groups, which made its first appearance in Ulam's collection of mathematical problems~\cite[Chapter VI]{Ulam}\footnote{Ulam's \emph{stability problem} famously asks how close are quasimorphisms (with values in arbitrary metric groups) to genuine group morphisms; for recent developments around this question, the reader is referred to~\cite{Burger-Ozawa-Thom,Fujiwara-Kapovich,Chapman-Lubotzky}.}, and attracts nowadays widespread attention for its central role in the theory of bounded cohomology; the interested reader may consult Frigerio's monograph~\cite{Frigerio}, in addition to~\cite{TermoEqSt}. In our context, a central idea is that quasimorphisms are equivalent to certain invariant measures for the underlying dynamics, which in our case is given by the shift map, at least modulo some bounded error; this point of view was proposed by the second and the fourth author in \cite{TermoEqSt,thermodynamiclattice}, and gives a convenient framework to use techniques of thermodynamic formalism for the study of quasimorphisms.

A bounded function $\W\to \R$ is plainly a quasimorphism, and so is any function at bounded uniform distance from a quasimorphism. Less trivially, H\"older-continuous functions on $\Sigma_A$ give rise naturally to quasimorphisms.

\begin{example}[Hölder potentials as quasimorphisms]\label{ex.holder}
Let $r\ \colon\ \Sigma_A \to \R$ be a H\"older-continuous function. For every integer $n\geq 1$, let 
\[S_nr\ \colon\ \Sigma_A\to \mathbb{R}\;, \quad \underline{x}\mapsto \sum_{i=0}^{n-1}r\circ \sigma^i(\underline{x})\]
be the $n$-th Birkhoff sum of $r$. Define a function $\p_r \colon \W \to \R$ as follows\footnote{Here we abuse notation in that $\psi_r$ does not depend just on $r$.}: if $(x_0, \ldots, x_{n-1}) \in \W_n$, choose $\underline{x}_w \in \Sigma_A$ belonging to the cylinder $[x_0, \ldots, x_{n-1}]$ and set
\[
\p_r(w) = S_nr(\underline{x}_w).
\]
A different choice of the elements $\underline{x}_w$ produces a function within bounded uniform distance of $\p_r$, as can be directly checked from the H\"older property of $r$; using this fact, it is easy to verify that $\psi_r$ is a quasimorphism. Observe that, if $r$ takes strictly positive values, then $\p_r$ belongs to the cone $\qm^{++}$.

More generally, the construction is easily seen to apply to any $r \colon \Sigma_A\to \R$ with the \emph{Bowen property} (cf.~\cite[Lemma 1.15]{EquSta}): there is $C>0$ such that, for all $n\geq 1$ and all $\underline{x},\underline{x}'\in \Sigma_A$ in the same cylinder $[x_0,\dots,x_{n-1}]$, $|S_nr(\underline{x})-S_nr(\underline{x}')|\leq C$. 

Notice that, for a Hölder-continuous function $r$, the resulting quasimorphism $\psi_r$ is bounded if and only if $r$ is a continuous coboundary, namely if and only if there is a continuous function $u\colon \Sigma_A\to \R$ such that $r=u-u\circ \sigma$; this follows at once from Livšic's celebrated theorem~\cite{Livsic} on the cohomology of hyperbolic systems. The assignment $r\mapsto \psi_r$ induces thus a linear injective map from the quotient vector space of Hölder-continuous functions $\Sigma_A\to \mathbb{R}$ modulo coboundaries to the quotient vector space of $\qm$ modulo bounded quasimorphisms. 
\end{example}

Quasimorphisms also appear in the literature under the guise of locally constant almost additive sequences of potentials: given a quasimorphism $\psi\colon \W\to \mathbb{R}$, the sequence of locally constant functions $(\varphi_n)_{n\geq 1}$ defined as $\varphi_n((x_k)_{k\geq 0})=\psi(x_0,\dots,x_{n-1})$ is almost additive in the sense of Barreira~\cite[Definition 10.1.1]{Barreira_book}, namely there exists $C>0$ such that 
\[
  \varphi_{m}\circ \sigma^n (x)+\varphi_n(x)-C\leq \varphi_{n+m}(x)\leq \varphi_{m}\circ \sigma^n (x)+\varphi_n(x)+C
\]
for all $x\in \Sigma_A$ and all $m,n\geq 1$. For almost additive sequences, there has recently been much interest in developing a non-additive thermodynamic formalism, chiefly with a view towards potentials arising from matrix cocycles; we refer, for instance, to the works of Cao, Feng and Huang~\cite{Cao_2008}, Barreira and Doutor~\cite{Barreira_2009}, and Feng~\cite{Feng_2011}. We remark that the pressure of a quasimorphism, introduced in Definition~\ref{def:pressurefunction}, coincides with the  topological pressure of the associated almost additive sequence of locally constant functions. The latter was introduced by Falconer~\cite{Falconer}, and then more thoroughly developed by Barreira~\cite{Barreira_nonadditive}.

\begin{remark}
  In~\cite[Section 6]{TermoEqSt}, the notion of pressure for a quasimorphism is defined summing over the periodic words. Using that the subshift is topologically mixing, it is easy to see that this notion is equivalent to Definition~\ref{def:pressurefunction}; see \textsection\ref{ssec:Gibbsconstants} below.
\end{remark}

We will be interested in the properties of the pressure function. Key technical tools for its study are the following theorems.

Recall that the Kolmogorov--Sinai entropy of $\sigma$ with respect to a measure $\mu\in \Minv$ is 
\[
  h_{\mu}(\sigma)=\lim_{n\to\infty}-\frac{1}n\sum_{w\in\W_n}\mu([w])\log\mu([w]).
\]
Given $\p\in \qm$, the sequence with general term $a_n=\int \psi_n \dd \mu$ verifies
\[
  \forall n,m,\ |a_{n+m}-a_n-a_m|<C
\]
for some constant $C$, therefore
\[
  \exists \mu(\psi)\coloneq \lim_{n}\frac{a_n}{n}=\lim_{n}\frac{1}{n}\int \psi_n \dd \mu.
\]

\begin{theorem}[Variational principle]\label{thm:variationalprinciple}
Let $\p\colon \W\to \mathbb{R}$ be a quasimorphism. Then 
\[
\Pr(\p)=\sup\{h_{\mu}(\sigma)+\mu(\p):\mu\in \Minv\}.
\]
\end{theorem}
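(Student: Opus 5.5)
We prove the two inequalities separately, using throughout the locally constant functions $\psi_n(x)=\p(x_0,\dots,x_{n-1})$ and their integrals $a_n(\mu)=\int\psi_n\,\dd\mu$. Since $|a_{n+m}-a_n-a_m|\le D_\p$, the limit $\mu(\p)=\lim_n a_n/n$ exists. Moreover $\mu(\p)\le (a_n+D_\p)/n$ for every $n$, and $\mu(\p)\ge (a_n-D_\p)/n$.

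\emph{Upper bound: $h_\mu(\sigma)+\mu(\p)\le \Pr(\p)$ for every $\mu\in\Minv$.}

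1. Fix $n$. The elementary inequality $\sum_i p_i(-\log p_i+c_i)\le\log\sum_i e^{c_i}$, applied to the cylinders $[w]$, $w\in\W_n$, with $p_w=\mu([w])$ and $c_w=\p(w)$, gives
\[
H_\mu(\xi_n)+a_n(\mu)\le \log\sum_{w\in\W_n}e^{\p(w)},
\]
where $\xi_n$ is the partition of $\Sigma_A$ into cylinders of size $n$.

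2. Divide by $n$ and let $n\to\infty$. Since $\frac1n H_\mu(\xi_n)\to h_\mu(\sigma)$ and $a_n/n\to\mu(\p)$, we obtain $h_\mu(\sigma)+\mu(\p)\le\Pr(\p)$.

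\emph{Lower bound: an invariant measure nearly achieving $\Pr(\p)$.}

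3. Fix $m$ and set $Z_m=\sum_{w\in\W_m}e^{\p(w)}$. Define a Bernoulli-like probability on words by $p_w=e^{\p(w)}/Z_m$.

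4. Handle composability. By primitivity of $A$, choose for each ordered pair of letters a connecting word of length exactly $N$. Build a Markov (block) measure on $\Sigma_A$ as follows: successive blocks $w\in\W_m$ are drawn independently according to $p_w$, and consecutive blocks are joined by the deterministic connector of length $N$. This yields a $\sigma^{m+N}$-invariant measure $\nu$. Averaging over shifts, $\mu=\frac1{m+N}\sum_{j<m+N}\sigma^j_*\nu$, gives a $\sigma$-invariant measure.

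5. Estimate the entropy. We have $h_\mu(\sigma)=\frac{1}{m+N}h_\nu(\sigma^{m+N})\ge\frac{1}{m+N}\bigl(-\sum_w p_w\log p_w\bigr)$.

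6. Estimate the energy. Let $L=k(m+N)$. A word of length $L$ sampled from $\nu$ starting at a block boundary is a concatenation $w_1c_1w_2c_2\cdots$. Applying the quasimorphism inequality $2k$ times gives $\psi_L=\sum_i\p(w_i)+O\bigl(k(D_\p+\max_{|c|=N}|\p(c)|)\bigr)$. For the shifted measures $\sigma^j_*\nu$, the bounds $\mu(\p)\ge(a_n-D_\p)/n$ together with the uniform bound on $\p$ over words of bounded length show that the start offset changes $a_L$ only by $O(m+N)$. Therefore
\[
\mu(\p)\ge\frac{1}{m+N}\Bigl(\sum_w p_w\p(w)-C_1\Bigr),
\]
with $C_1$ independent of $m$.

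7. Combine. By the Gibbs identity, $-\sum p_w\log p_w+\sum p_w\p(w)=\log Z_m$. Hence
\[
h_\mu(\sigma)+\mu(\p)\ge\frac{\log Z_m-C_1}{m+N}\longrightarrow\Pr(\p)\quad\text{as } m\to\infty.
\]

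The main obstacle is the bookkeeping in step 6: showing that $\mu(\p)$, which is defined through a limit over the averaged non-ergodic measure, is correctly controlled by the block sums. The key point is that the errors are $O(1)$ per block, hence $O(1/(m+N))$ per unit length, and so vanish in the limit. The upper bound is standard once $a_n/n\to\mu(\p)$ is available.
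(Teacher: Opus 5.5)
Your proof is correct. It cannot be compared step by step with the paper, because the paper gives no argument for this statement: it quotes the variational principle and delegates the proof to \cite{TermoEqSt}. (The statement is also an instance of the variational principle for almost additive sequences, since $\Pr(\p)$ is the nonadditive pressure of $\psi_n(x)=\p(x_0,\dots,x_{n-1})$.)

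Your upper bound, via the Gibbs inequality on $n$-cylinders, is the standard one. Your lower bound builds approximate maximizers by concatenating i.i.d.\ $\p$-weighted blocks of length $m$ with connectors of one fixed length $N$. What makes this work is that $N$, and hence $C_1=2D_\p+\max_{|c|=N}|\p(c)|$, does not depend on $m$.

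Within the paper's framework, the lower bound could instead be read off from the Gibbs state. If $\mu_\p$ is $\sigma$-invariant and satisfies \eqref{eq.gibbs}, then $-\log\mu_\p([w])=n\Pr(\p)-\p(w)+O(1)$ for $w\in\W_n$. Hence $H_{\mu_\p}(\xi_n)+a_n(\mu_\p)=n\Pr(\p)+O(1)$, and the supremum is even attained. That route gives more, namely an equilibrium state. However, it presupposes the construction, invariance and ergodicity of $\mu_\p$, which the paper only sketches and also takes from \cite{TermoEqSt}. Your argument is elementary, uses only the defect bound and primitivity, and is independent of the Gibbs-state theory of Theorem \ref{thm:propiedadesmedidamul}.

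Two small bookkeeping points:
\begin{itemize}
\item In step 5, the identity $h_\mu(\sigma)=\frac{1}{m+N}h_\nu(\sigma^{m+N})$ uses affinity of $\mu\mapsto h_\mu(\sigma^{m+N})$ together with $h_{\sigma^j_*\nu}(\sigma^{m+N})=h_\nu(\sigma^{m+N})$. Alternatively, argue directly by concavity of $H_\cdot(\xi_n)$.
\item In step 6, the offset estimate is simply $|\psi_L\circ\sigma^j-\psi_{L+m+N}|\le 2D_\p+2\max_{|u|\le m+N}|\p(u)|$. This is $O(m+N)$ and vanishes after dividing by $L$. The inequality $\mu(\p)\ge(a_n-D_\p)/n$ you invoke there is not needed, and it does not apply to the non-invariant measures $\sigma^j_*\nu$ anyway.
\end{itemize}
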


\smallskip

\begin{theorem}\label{thm:propiedadesmedidamul}
Let $\p \colon \W\to \mathbb{R}$ be a quasimorphism. There exists $\mu_{\p}\in \Minv$ with full support satisfying the following properties:
\begin{enumerate}
\item the measure-preserving system $(\Sigma_A,\sigma,\mu_{\p})$ has the Bernoulli property, and in particular $\mu_{\p}$ is ergodic for $\sigma$; 
\item there exists $E>0$ such that, for every $n\geq 1$ and $w \in \W_n$,
\begin{align}\label{eq:gibbs1fs}
 E^{-1}&\leq \frac{\mu_{\p}([w])}{\exp\left(\p(w)-n\Pr(\p)\right)} \leq  E\;;
  \end{align}
\item $\mu_{\p}$ is the unique element $\mu$ of $\Minv$ satisfying 
\begin{equation*}
\Pr(\p)=h_{\mu}(\sigma)+\mu(\p)\;.
\end{equation*}
\end{enumerate}
If $\phi$ and $\p$ are two quasimorphisms, then equality $\mu_{\phi}=\mu_{\p}$ holds if and only if the quasimorphism $\p-\phi$ is bounded. 
\end{theorem}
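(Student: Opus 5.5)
The plan is to follow Bowen's construction of Gibbs states, replacing the Bowen/Hölder property throughout by the quasimorphism inequality \eqref{eq:defquasimorphism}. Let $N$ be such that $A^n>0$ for all $n\ge N$. Set $Z_n=\sum_{w\in\W_n}e^{\p(w)}$.

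\textbf{Partition sums.} Subadditivity gives $Z_{n+m}\le e^{D_\p}Z_nZ_m$. For the reverse inequality I would insert connecting words of length $N$: each pair $w\in\W_n$, $v\in\W_m$ is joined by some $u\in\W_N$ with $wuv\in\W$, and $\p(u)$ is bounded since $\W_N$ is finite. This gives $Z_{n+m+N}\ge c\,Z_nZ_m$ with $c>0$ independent of $n,m$. By the standard sub/super-multiplicative argument it follows that $Z_n\asymp e^{n\Pr(\p)}$ uniformly in $n$.

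\textbf{Construction and Gibbs bounds.} For each $n$ choose a point $\underline x_w\in[w]$ for every $w\in\W_n$, and let
\[
\nu_n=Z_n^{-1}\sum_{w\in\W_n}e^{\p(w)}\delta_{\underline x_w}.
\]
Let $\mu_\p$ be a weak-$*$ limit of the Cesàro averages $\frac1n\sum_{j<n}\sigma^j_*\nu_n$, which is $\sigma$-invariant. Since indicators of cylinders are continuous, $\mu_\p([w])$ is a limit of sums of the form $Z_n^{-1}\sum e^{\p(awb)}$ over words $a$, $b$ of complementary lengths. Splitting $\p(awb)=\p(a)+\p(w)+\p(b)+O(D_\p)$, and using the $N$-gap trick again to handle adjacency constraints in the lower bound, the sum is comparable to $e^{\p(w)}Z_{|a|}Z_{|b|}/Z_n\asymp e^{\p(w)-|w|\Pr(\p)}$. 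This gives \eqref{eq:gibbs1fs}, and in particular full support.

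\textbf{Variational principle and uniqueness.} Applying \eqref{eq:gibbs1fs} to $-\frac1n\sum\mu_\p([w])\log\mu_\p([w])$ gives $h_{\mu_\p}+\mu_\p(\p)=\Pr(\p)$. For an arbitrary $\mu\in\Minv$, Jensen's inequality over $\W_n$ gives $h_\mu+\mu(\p)\le\Pr(\p)$, which proves Theorem~\ref{thm:variationalprinciple}.

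The Gibbs bounds also yield quasi-independence: $\mu_\p([w]\cap\sigma^{-(|w|+k)}[v])\ge c\,\mu_\p[w]\mu_\p[v]$ for $k\ge N$, again by inserting connecting words. This implies ergodicity, and with matching upper bounds, mixing. Uniqueness follows Bowen's argument. Suppose $\mu\neq\mu_\p$ is an ergodic equilibrium state. Then $\mu\perp\mu_\p$, so we can choose a union of $n$-cylinders $S_n$ with $\mu(S_n)\to1$ and $\mu_\p(S_n)\to0$. Combining \eqref{eq:gibbs1fs} with the elementary inequality $\sum p_i(a_i-\log p_i)\le \log\sum e^{a_i}$, applied separately on $S_n$ and on its complement, gives
\[
n\bigl(h_\mu+\mu(\p)-\Pr(\p)\bigr)\le C+\log\bigl(\mu_\p(S_n)^{\mu(S_n)}\mu_\p(S_n^c)^{\mu(S_n^c)}\bigr)+\log 2\to-\infty,
\]
which is a contradiction. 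The non-ergodic case reduces to this via the ergodic decomposition.

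\textbf{Bernoulli property: the main obstacle.} Bowen's proof that Gibbs states are Bernoulli uses ratio estimates that tend to $1$, and those come from the Hölder property; here the ratios are only bounded. I would first try to realise $\mu_\p$ as the equilibrium state of a function $r$ with the Bowen property, for instance $r=\log$ of the limiting conditional densities, or via the correspondence in~\cite{TermoEqSt}. For such $r$, Bernoullicity is known (Walters/Bowen). Failing that, I would prove the weak Bernoulli property directly from the two-sided quasi-independence estimates.

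\textbf{Last claim.} If $\p-\phi$ is bounded, then $\Pr(\p)=\Pr(\phi)$ and $\mu(\p)=\mu(\phi)$ for every $\mu\in\Minv$. Hence $\mu_\phi$ is an equilibrium state for $\p$, and uniqueness gives $\mu_\phi=\mu_\p$. Conversely, if $\mu_\phi=\mu_\p$, dividing the two Gibbs bounds gives
\[
|\p(w)-\phi(w)-|w|(\Pr(\p)-\Pr(\phi))|\le 2\log E.
\]
So $\p-\phi$ is bounded modulo the additive term $c|w|$. The statement therefore has to be read either with pressures normalised to zero, or with equivalence understood up to adding $c|w|$, since $\p$ and $\p+c|w|$ have the same Gibbs state. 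I would make this normalisation explicit in the proof.
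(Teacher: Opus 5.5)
Apart from item (1), your argument is correct. It follows the same Bowen-type scheme the paper sketches in \S\ref{ssec:Gibbsconstants}: partition-sum estimates with connecting words, Gibbs bounds for weak-$*$ limits, ergodicity, uniqueness. The paper builds the measure from weighted periodic orbits rather than Ces\`aro averages of cylinder-weighted point masses, and it defers items (1), (3) and the last assertion to \cite{TermoEqSt}. Your Jensen bound and Bowen's singular-set argument therefore supply the variational characterisation (3), which the sketch omits.

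You are also right that the last assertion is false as literally stated. For $\phi=\psi+c|w|$ one has $\Pr(\phi)=\Pr(\psi)+c$ and identical normalised Gibbs bounds, so $\mu_\phi=\mu_\psi$ even though $\psi-\phi$ is unbounded. The correct statement is that $\mu_\phi=\mu_\psi$ if and only if $\psi-\phi-c|w|$ is bounded for some constant $c$, necessarily $c=\Pr(\psi)-\Pr(\phi)$. This is exactly what dividing the two Gibbs bounds proves.

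The genuine gap is item (1): you assert the Bernoulli property but do not prove it, and neither of your fallbacks works as stated.
\begin{itemize}
\item \textbf{First fallback.} The natural candidate is $f_\psi=\lim_k f_\psi^k$ from Theorem~\ref{pro:potencialloo}. It lies only in $L^\infty(\mu_\psi)$ and satisfies the Bowen bound almost everywhere, so no classical theorem for continuous potentials applies to it. Even for a continuous $r$ with the Bowen property, its equilibrium state is precisely the Gibbs state of the quasimorphism $\psi_r$ of Example~\ref{ex.holder}. Bernoullicity there is thus a special case of what you want, not an available input. Bowen's and Walters' proofs use H\"older or summable-variation regularity precisely to obtain ratio estimates tending to $1$, which is the step you identify as missing.
\item \textbf{Second fallback.} Two-sided quasi-independence with constants $c<1<C$ does not, by direct computation, give the weak Bernoulli condition $\sum_{w,v}|\mu_\psi([w]\cap\sigma^{-(|w|+k)}[v])-\mu_\psi[w]\mu_\psi[v]|\to0$ uniformly in $|w|,|v|$.
\item \textbf{Mixing.} Bounded ratios do not even give mixing directly, so your claim that ``matching upper bounds give mixing'' also needs an argument. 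Mixing does follow from exactness: your lower bound, extended to Borel sets, gives $\mu_\psi(A\cap T)\ge c\,\mu_\psi(A)\mu_\psi(T)$ for every tail event $T$, and taking $A=T^c$ forces $\mu_\psi(T)\in\{0,1\}$.
\end{itemize}

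To close the gap you need a genuinely additional step. It must upgrade the uniform bounds $c\,\mu(A)\mu(B)\le\mu(A\cap B)\le C\,\mu(A)\mu(B)$, for $A$ depending on coordinates $<n$ and $B$ on coordinates $\ge n+N$, to ratios tending to $1$ as the gap grows. One possible tool is Bradley's results on $\psi'$- and $\psi^*$-mixing for stationary mixing sequences. Once that is done, weak Bernoulli follows, and Friedman--Ornstein gives the Bernoulli property of the natural extension. Otherwise you have to import (1) from \cite{TermoEqSt}, as the paper does.
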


We refer to the measure $\mu_{\p}$ of the theorem as the Gibbs state or Gibbs measure associated to $\p$, a nomenclature which in the literature points to the property~\eqref{eq:gibbs1fs}, or as the equilibrium measure or equilibrium state associated to $\p$, a terminology used routinely to refer to the uniqueness property of Theorem~\ref{thm:propiedadesmedidamul}(3).

Define for $x\in \Sigma, k\in\N_{>0}$
\begin{align}
f_\p^k(x)=\log \frac{\mu_\p([x_0\cdots x_{k-1}])}{\mu_\p([x_1\cdots x_{k-1}])}.
\end{align}

\begin{theorem}\label{pro:potencialloo}
The sequence $(f_\p^n)_{n}$ converges both $\mu_\p$ a.e. and in $L^p, \forall p\in[1,+\infty)$ to some function $f_\p\in L^\infty(\mu_\p)$. 
Moreover, there exists $C >0$ such that for every $n\in \N$ and $\mu_\p$ almost every $x\in\Sigma_A$, it holds 
\[
  |S_nf_\p(x)+n\Pr(\p)-\p(x_{(n)})|\leq C.
\]
\end{theorem}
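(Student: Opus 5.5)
The plan is to identify $f_\p$ with the logarithm of a conditional probability (the inverse Jacobian of $\sigma$ for $\mu_\p$). Martingale convergence then gives the first part of the statement. For the second part I avoid comparing $f^{n-i}$ with $f_\p$ termwise, since that would need a summable convergence rate we do not have. Instead I compare $\exp S_nf_\p$ directly with a conditional probability of an $n$-cylinder, which the Gibbs property~\eqref{eq:gibbs1fs} controls uniformly.

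\emph{Step 1 (uniform bounds).} Let $\mathcal F_m$ be the $\sigma$-algebra generated by cylinders of length $m$. Then
\[
f^k_\p(x)=\log \mu_\p\bigl([x_0]\,\big|\,\sigma^{-1}\mathcal F_{k-1}\bigr)(x),
\]
since on the atom $\sigma^{-1}[x_1\cdots x_{k-1}]$ the conditional probability of $[x_0]$ is exactly $\mu_\p([x_0\cdots x_{k-1}])/\mu_\p(\sigma^{-1}[x_1\cdots x_{k-1}])$, and $\mu_\p$ is $\sigma$-invariant. More generally, take an allowed word $w\in\W_n$ and a cylinder $[v]$ of any length $m$ with $wv\in\W$. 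By~\eqref{eq:gibbs1fs} and the quasimorphism inequality~\eqref{eq:defquasimorphism},
\[
\frac{\mu_\p([wv])}{\mu_\p([v])}\in \bigl[E^{-2}e^{-D_\p},\,E^{2}e^{D_\p}\bigr]\cdot e^{\p(w)-n\Pr(\p)} .
\]
With $n=1$ and using $\max_a|\p(a)|<\infty$, this shows that every $f^k_\p$ is bounded in absolute value by a constant independent of $k$, at $\mu_\p$-almost every point.

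\emph{Step 2 (convergence).} The $\sigma$-algebras $\sigma^{-1}\mathcal F_{k-1}$ increase to $\sigma^{-1}\mathcal B$, where $\mathcal B$ is the Borel $\sigma$-algebra. By the martingale convergence theorem, $\mu_\p([a]\mid\sigma^{-1}\mathcal F_{k-1})\to\mu_\p([a]\mid\sigma^{-1}\mathcal B)$ almost everywhere, for each symbol $a$. Step 1 keeps these quantities in a compact subinterval of $(0,\infty)$ on the relevant set $[a]$, so taking logarithms is continuous there. Hence
\[
f^k_\p\to f_\p:=\log\mu_\p([x_0]\mid\sigma^{-1}\mathcal B)\quad \mu_\p\text{-a.e.},\qquad f_\p\in L^\infty(\mu_\p).
\]
Uniform boundedness and dominated convergence then give convergence in every $L^p$, $p<\infty$.

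\emph{Step 3 (Birkhoff sums).} Use the standard chain rule for conditional probabilities along the filtration $\sigma^{-n}\mathcal B\subset\cdots\subset\sigma^{-1}\mathcal B$, which rests on invariance of $\mu_\p$. It gives, almost everywhere,
\[
\exp S_nf_\p(x)=\mu_\p\bigl([x_0\cdots x_{n-1}]\,\big|\,\sigma^{-n}\mathcal B\bigr)(x).
\]
Apply martingale convergence once more, now with $\sigma^{-n}\mathcal F_m\uparrow\sigma^{-n}\mathcal B$ as $m\to\infty$. The approximants are the ratios $\mu_\p([wv])/\mu_\p([v])$ with $w=x_{(n)}$ and $v$ the next $m$ symbols of $x$. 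By the display in Step 1, each of these lies within the multiplicative factor $E^{2}e^{D_\p}$ of $e^{\p(w)-n\Pr(\p)}$, uniformly in $n$ and $m$, so the limit does too. Taking logarithms yields
\[
|S_nf_\p(x)+n\Pr(\p)-\p(x_{(n)})|\le 2\log E+D_\p
\]
for every $n$ on a full-measure set (a countable intersection over $n$).

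I expect the main care to be needed in Step 3. There one must justify the chain-rule identity for the iterated conditional probabilities, i.e.\ that the Jacobian of $\sigma^n$ is the product of the one-step Jacobians along the orbit. One must also check that the uniform Gibbs bounds pass to the limit conditional probability rather than holding only at each finite stage.
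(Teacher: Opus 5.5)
The paper gives no proof of this statement; it imports it from \cite{TermoEqSt}, so there is no in-text argument to compare with. Judged on its own, your proof is correct. The first part follows from four ingredients you have in place. First, the identification $f^k_\psi=\log\mu_\psi([x_0]\mid\sigma^{-1}\mathcal F_{k-1})$. Second, the two-sided estimate $\mu_\psi([wv])/\mu_\psi([v])\in[E^{-2}e^{-D_\psi},E^{2}e^{D_\psi}]\,e^{\psi(w)-n\Pr(\psi)}$, which comes from the Gibbs property plus the defect (read the empty-$v$ case as the plain Gibbs bound). Third, L\'evy's upward theorem. Fourth, dominated convergence. For the second part, $C=2\log E+D_\psi$ is the right constant.

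Step 3, however, is heavier than it needs to be, and the obstacle you cite is not real. A convergence rate would only be needed if you compared $S_nf_\psi$ with $\sum_{i<n}f^{n-i}_\psi\circ\sigma^i$. Instead, fix $n$ and let $N>n$.

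\begin{itemize}
\item The sum $\sum_{i=0}^{n-1}f^{N-i}_\psi(\sigma^i x)$ telescopes exactly to $\log\bigl(\mu_\psi([x_0\cdots x_{N-1}])/\mu_\psi([x_n\cdots x_{N-1}])\bigr)$.
\item This is precisely your martingale approximant with $v=x_n\cdots x_{N-1}$. So by the Step 1 estimate it lies within $2\log E+D_\psi$ of $\psi(x_0\cdots x_{n-1})-n\Pr(\psi)$.
\item As $N\to\infty$, apply Step 2 at the finitely many points $x,\sigma x,\dots,\sigma^{n-1}x$. By $\sigma$-invariance this is a full-measure condition on $x$, and it sends the left side to $S_nf_\psi(x)$.
\item The bound passes to the limit.
\end{itemize}

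This removes both the chain rule for the Jacobian of $\sigma^n$ and the second martingale argument, which are the two points you flagged as delicate. Your version is still valid. Along the way it yields the extra identity $e^{S_nf_\psi}=\mu_\psi([x_0\cdots x_{n-1}]\mid\sigma^{-n}\mathcal B)$, which the statement does not require.
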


For arbitrary invariant measures we have the following.

\begin{theorem}\label{pro:potencialloogeneral}
  Fix $\mu \in \Minv$. Then, there exists $f_{\mu,\p}\in L^{\oo}(\mu)$ so that for $\mu$-almost every point, for every $n\in\N$,
\[
 |S_nf_{\mu,\p}(\underline x)-\p(x_0\ldots x_{n-1})|\leq D(\p).
\]
\end{theorem}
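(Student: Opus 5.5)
The plan is to build $f_{\mu,\p}$ as a weak-$*$ limit in $L^\infty(\mu)$ of bounded "increment" functions whose Birkhoff sums telescope to the required quantity. Write $\varphi_N(\underline x)=\p(x_0\cdots x_{N-1})$, with the convention $\varphi_0=0$. The key elementary fact is that for $N>n$, splitting the word $x_0\cdots x_{N-1}$ as $(x_0\cdots x_{n-1})(x_n\cdots x_{N-1})$ and applying the definition of the defect gives
\[
|\varphi_N(\underline x)-\varphi_{N-n}(\sigma^n\underline x)-\varphi_n(\underline x)|\le D(\p)
\]
for every $\underline x\in\Sigma_A$. This bound has exactly the constant $D(\p)$ and nothing more.

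\emph{Increments and their sums.} Set $f_N=\varphi_N-\varphi_{N-1}\circ\sigma$ for $N\ge 1$. By the same splitting with $n=1$, $\|f_N\|_\infty\le \max_a|\p(a)|+D(\p)$ uniformly in $N$. The Birkhoff sums telescope when the index is shifted along the orbit:
\[
\sum_{i=0}^{n-1}f_{N-i}(\sigma^i\underline x)=\varphi_N(\underline x)-\varphi_{N-n}(\sigma^n\underline x).
\]
Hence this sum is within $D(\p)$ of $\varphi_n(\underline x)$ whenever $N>n$. To replace the shifted indices by a single function, take Cesàro averages $g_M=\frac1M\sum_{N=1}^M f_N$. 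Then
\[
S_ng_M-\frac1M\sum_{N=1}^M\bigl(\varphi_N-\varphi_{N-n}\circ\sigma^n\bigr)=\sum_{i<n}\frac1M\sum_{N}\bigl(f_N-f_{N-i}\bigr)\circ\sigma^i.
\]
The inner average telescopes to at most $2i$ boundary terms, so the right-hand side is $O(n^2/M)$ in sup norm. The terms with $N\le n$ contribute a further $O(n/M)$. Consequently $\|S_ng_M-\varphi_n\|_\infty\le D(\p)+O_n(1/M)$.

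\emph{Passing to the limit.} The family $(g_M)$ is bounded in $L^\infty(\mu)=L^1(\mu)^*$. Since $L^1(\mu)$ is separable, Banach--Alaoglu gives a subsequence converging weak-$*$ to some $g\in L^\infty(\mu)$. Composition with $\sigma$ is weak-$*$ continuous on $L^\infty(\mu)$, because it is the adjoint of the transfer operator on $L^1(\mu)$, which is well defined since $\mu$ is $\sigma$-invariant. Therefore $S_ng_{M_j}\to S_ng$ weak-$*$ for each fixed $n$. The set $\{h\in L^\infty(\mu):|h-\varphi_n|\le D(\p)+\varepsilon\ \mu\text{-a.e.}\}$ is convex and weak-$*$ closed, so $|S_ng-\varphi_n|\le D(\p)$ $\mu$-a.e. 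Intersecting over the countably many $n$ yields a single full-measure set on which the inequality holds for every $n$, and we set $f_{\mu,\p}=g$.

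\emph{Main obstacle.} The only delicate point is obtaining the sharp constant $D(\p)$ rather than some larger constant. The naive choice $S_nf_N$ with a fixed $N$ accumulates an error of order $nD$ in the telescoping. The Cesàro averaging is what converts the index-shifted telescoping identity into an identity for a single function, at a cost that vanishes as $M\to\infty$. Once that is in place, the weak-$*$ compactness step is routine.
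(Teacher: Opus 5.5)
The paper gives no argument for this statement; it refers to \cite{TermoEqSt}. Your proposal therefore has to stand on its own, and it does.

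The steps check out. The increments $f_N$ are locally constant with $\|f_N\|_\infty\le B:=\max_a|\psi(a)|+D(\psi)$. Comparing $S_ng_M$ with $\frac1M\sum_{N=n}^{M}(\varphi_N-\varphi_{N-n}\circ\sigma^n)$ costs at most $n(n-1)B/M$. That average is within $D(\psi)+(n-1)\|\varphi_n\|_\infty/M$ of $\varphi_n$. Composition with $\sigma$ is weak-$*$ continuous, being the adjoint of the $\mu$-transfer operator. So the estimate passes to the limit with the sharp constant.

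What you have written is essentially the invariant-mean (amenability of $\mathbb{N}$) argument for trivialising a bounded quasi-cocycle with values in the dual module $L^\infty(\mu)$. The only construction the paper describes is the one behind Theorem~\ref{pro:potencialloo}. There, for the Gibbs state $\mu_\psi$, the function is the a.e.\ limit of the cylinder log-ratios $f^k_\psi$. That gives a canonical, pointwise-defined cocycle (after adding $\Pr(\psi)$), but it relies on the Gibbs property and comes with an unspecified constant.

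Your route needs no mixing or Gibbs input and works for every $\mu\in\mathcal M(\sigma)$. It gives exactly the constant $D(\psi)$. It also gives $\|f_{\mu,\psi}\|_\infty\le\max_a|\psi(a)|+D(\psi)$ uniformly in $\mu$, which is precisely what Remark~\ref{rmk:Mindependentmu} asserts. The price is that your $f_{\mu,\psi}$ is not canonical, since it depends on the chosen subsequence, and it is only an $L^\infty$ class.

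Two small tidy-ups:
\begin{itemize}
\item Your displayed identity involves $\varphi_{N-n}$ with negative index when $N<n$. Restrict that sum to $n\le N\le M$ from the outset, which is what your error count actually does.
\item The weak-$*$ closedness of $\{h:|h-\varphi_n|\le D(\psi)+\varepsilon\ \mu\text{-a.e.}\}$ comes from writing it as the intersection of the half-spaces $\{h:\pm\int(h-\varphi_n)\rho\,d\mu\le(D(\psi)+\varepsilon)\int\rho\,d\mu\}$ over $0\le\rho\in L^1(\mu)$. Convexity plays no role.
\end{itemize}
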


See the recent \cite{TermoEqSt} for the proof of these results.

\begin{remark}\label{rem:sameintegral}
  If $f_{\mu,\p}\in L^{\oo}(\mu)$ is as in the previous theorem, then it follows in particular that
  \[
  \mu(\p)=\int f \dd\mu. 
  \]
\end{remark}

\subsubsection{The equilibrium state and the Gibbs property}\label{ssec:Gibbsconstants}
Let us give an outline of the construction of the equilibrium measure referred to in Theorem \ref{thm:propiedadesmedidamul}. More details can be found in \cite[Section 6]{TermoEqSt}.

In the following we will write $\Fix_n$ for the set of periodic words of length $n$ and use $\Fix$ to denote the collection of all periodic words.
\begin{definition}
For $\p \in \qm$ set
\[
P_n(\p) = \sum_{w \in \Fix_n} e^{\p(w)} \ \ \text{ and } \ \ \widetilde{P}_n(\p) = \sum_{w \in \W_n} e^{\p(w)}
\]
for each $n \ge 1$.
\end{definition}

Let $M \ge 1$ be the mixing constant (that means $A^k$ has positive entries, for $k\ge M$) and define 
\begin{align}\label{eq:normM}
\|\p\|_{\le M} = \max\{\p(w) : |w|\le M\}.
\end{align}
Clearly, $P_n(\p)\leq \widetilde{P}_n(\p)$, and for every $n\geq 1$,
\begin{align}\label{eq:basicPP}
e^{-\|\p\|_{\le M}-D_{\p}}\widetilde{P}_n(\p)\le\widetilde{P}_{n+M}(\p)\le e^{\|\p\|_{\le M}+D_{\p}}\widetilde{P}_n(\p).
\end{align}

\begin{lemma}\label{lem.basic}[Basic relation between $P, \widetilde{P}$]

The following holds for each $n\ge 1$:
\[
  P_{n+M}(\p)\le\widetilde{P}_n(\p)\le e^{\|\p\|_{\le M} + D_\p}P_{n+M}(\p)\le  e^{2\|\p\|_{\le M} + 2D_\p}\widetilde P_{n}(\p)
\]
\end{lemma}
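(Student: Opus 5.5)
The plan is to prove the three inequalities separately. The second and third come from a ``closing'' injection and from \eqref{eq:basicPP}. The first is a truncation estimate, and I expect it to be the main obstacle, for reasons explained at the end. Throughout I read $\|\p\|_{\le M}$ as $\max\{|\p(w)| : |w|\le M\}$, since a two-sided bound on $\p$ over short words is what the argument uses.

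\emph{Middle inequality.} Since $A^{M+1}$ has positive entries, for every $w\in\W_n$ there is a word $u_w$ of length $M$ such that $w u_w w\in \W$. Concretely, $u_w$ is the interior of a path of length $M+1$ from the last letter of $w$ to the first letter of $w$. Then $w u_w$ is periodic of length $n+M$, so $w u_w\in\Fix_{n+M}$. The map $w\mapsto w u_w$ is injective, because $w$ is the length-$n$ prefix of $w u_w$. The quasimorphism inequality gives
\[
\p(w u_w)\ \ge\ \p(w)+\p(u_w)-D_\p\ \ge\ \p(w)-\|\p\|_{\le M}-D_\p .
\]
Summing over $w\in\W_n$ and using injectivity, we get $\widetilde P_n(\p)\le e^{\|\p\|_{\le M}+D_\p}P_{n+M}(\p)$.

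\emph{Last inequality.} We have $\Fix_{n+M}\subset \W_{n+M}$, so $P_{n+M}(\p)\le \widetilde P_{n+M}(\p)$. By the right-hand side of \eqref{eq:basicPP}, $\widetilde P_{n+M}(\p)\le e^{\|\p\|_{\le M}+D_\p}\widetilde P_n(\p)$. Multiplying by $e^{\|\p\|_{\le M}+D_\p}$ gives the third inequality of the chain.

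\emph{First inequality (main obstacle).} The natural approach is truncation. Write $v\in\Fix_{n+M}$ as $v=w u$ with $w\in\W_n$ and $|u|=M$. Then $\p(v)\le \p(w)+\|\p\|_{\le M}+D_\p$, and each $w$ has at most $|\W_M|$ preimages under $v\mapsto w$. This only yields
\[
P_{n+M}(\p)\le |\W_M|\,e^{\|\p\|_{\le M}+D_\p}\,\widetilde P_n(\p),
\]
which is weaker than the stated inequality. The constant $1$ cannot be obtained by any argument of this kind, because it fails in general. Take $\p(w)=c|w|$, which has defect $0$. Then the first inequality reads $|\Fix_{n+M}|e^{cM}\le|\W_n|$, which fails once $c$ is large. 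So the first inequality with constant $1$ needs an additional hypothesis, such as a normalisation of $\p$ or a constant-weight comparison, and I do not have such a hypothesis available. What the later arguments actually use is that $P_n$ and $\widetilde P_n$ agree up to bounded multiplicative constants and a bounded shift in $n$. That comparison is fully supplied by the middle and last inequalities proved above, together with the truncation bound with its explicit constant.
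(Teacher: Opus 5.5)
Your middle inequality is correct, and it is the same closing argument the paper uses. Your absolute-value reading of $\|\p\|_{\le M}$ is what that argument needs, since it uses $\p(u_w)\ge -\|\p\|_{\le M}$. You are also right that the first inequality is false. The paper's proof never addresses it, and already $\p\equiv 0$ on the full shift on $k\ge 2$ symbols gives $P_{n+M}(\p)=k^{n+M}>k^n=\widetilde P_n(\p)$.

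The gap is in your proof of the last inequality. Like the paper's proof, which says only that it ``is a consequence of \eqref{eq:basicPP}'', you rely on the upper half of \eqref{eq:basicPP}. That bound is false as displayed. A word $w\in\W_n$ can have up to $|\W_M|$ admissible extensions $wu$ with $|u|=M$, and this multiplicity cannot be dropped. It is the same multiplicity you found in your truncation argument. For $\p\equiv 0$ on the full shift the bound would say $k^{n+M}\le k^n$.

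In fact the third inequality of the chain is itself false, and your own example shows it. For $\p(w)=c|w|$ with $c\ge 0$ one has $D_\p=0$ and $\|\p\|_{\le M}=cM$. The third inequality then reduces to $|\Fix_{n+M}|\le|\W_n|$, independently of $c$, and this fails on every full shift with $k\ge 2$ symbols. So you should not claim the last inequality.

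The correct replacement for both the first and the third inequalities is your truncation bound
$P_{n+M}(\p)\le |\W_M|\,e^{\|\p\|_{\le M}+D_\p}\,\widetilde P_n(\p)$.
Together with the middle inequality, this gives a two-sided comparison of $P_{n+M}(\p)$ and $\widetilde P_n(\p)$ up to multiplicative constants. That is what is used downstream, for instance to identify $\lim_n \frac1n\log P_n(\p)$ with $\lim_n\frac1n\log \widetilde P_n(\p)$. The constants in the subsequent lemmas simply pick up a harmless factor $|\W_M|$.
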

\begin{proof}
For each $w_1\in \W_n$ we can find $w_2\in \W_M$ so that $w_1w_2\in \Fix_{n+M}$. Therefore
\[
  P_{n+M}(\p)\geq \sum_{w_1\in\W_n} e^{\p(w_1w_2)}\geq e^{-|\p\|_{\le M} - D_\p}\widetilde{P}(\p). 
\]
The other inequality is a consequence of \eqref{eq:basicPP}.
\end{proof}

\begin{lemma}\label{lem.boundPP}
We have that
\[
P_n(\p) \le \widetilde{P}_n(\p) \le e^{2\|\p\|_{\le M} + 2D_\p}P_{n}(\p)  
\]
for all $n\ge M$.
\end{lemma}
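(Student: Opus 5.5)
Since Lemma~\ref{lem.basic} already gives $P_n(\p)\le \widetilde P_n(\p)$, the first inequality is immediate (every periodic word is in particular an allowed word). The plan for the second inequality is to compare $\widetilde P_n$ with $P_n$ at the \emph{same} length $n$, rather than at length $n+M$. To do this, I will close up a word of length $n-M$ into a periodic word of length $n$ using a connecting word of length $M$, and I will use the quasimorphism property to control the cost of truncating and re-extending.

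Concretely, fix $n\ge M$. Suppose first that $n>M$, and take $w\in\W_n$. Write $w=uv$ with $u\in\W_{n-M}$ and $v\in\W_M$. Then
\[
\p(w)\le \p(u)+\p(v)+D_\p\le \p(u)+\|\p\|_{\le M}+D_\p .
\]
Each $u\in\W_{n-M}$ arises as a prefix of at most $|\W_M|$ words of $\W_n$. I want to avoid picking up this multiplicative factor, so instead I will bound $\widetilde P_n(\p)$ against $\widetilde P_{n-M}(\p)$ directly via \eqref{eq:basicPP} applied with $n-M$ in place of $n$. This gives
\[
\widetilde P_n(\p)\le e^{\|\p\|_{\le M}+D_\p}\widetilde P_{n-M}(\p).
\]
Then I apply the middle inequality of Lemma~\ref{lem.basic} with $n-M$ in place of $n$:
\[
\widetilde P_{n-M}(\p)\le e^{\|\p\|_{\le M}+D_\p}P_{n}(\p).
\]
Combining the two displays yields $\widetilde P_n(\p)\le e^{2\|\p\|_{\le M}+2D_\p}P_n(\p)$, as required.

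The boundary case $n=M$ needs separate care, because Lemma~\ref{lem.basic} is stated for indices $\ge1$ and $\widetilde P_0$ is not defined. Here I would argue directly. Every periodic word of length $M$ has $\p$-value at most $\|\p\|_{\le M}$, so $\widetilde P_M(\p)\le |\W_M|e^{\|\p\|_{\le M}}$. For the matching lower bound on $P_M(\p)$, I would use the primitivity of $A$: $A^M$ has positive diagonal, so $\Fix_M$ is nonempty.

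The main obstacle is this degenerate index. Obtaining exactly the stated constant there may require checking the conventions, in particular interpreting $\|\p\|_{\le M}$ as $\max|\p|$ over short words. If that interpretation does not suffice, I would reduce to $n>M$ by redoing the gluing step in the proof of Lemma~\ref{lem.basic} with a connecting word of length $M$ taken inside a word of length $n$. The key point is that the closing argument from Lemma~\ref{lem.basic} is simply shifted by $M$, and the two exponential factors multiply to give the stated constant.
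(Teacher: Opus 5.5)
Your chain $\widetilde P_n\le e^{\|\p\|_{\le M}+D_\p}\widetilde P_{n-M}\le e^{2\|\p\|_{\le M}+2D_\p}P_n$ is exactly the paper's argument (its displayed chain is meant to begin with $\widetilde P_n$). However, the first step is false, and you had already spotted why. The factor $|\W_M|$, which counts the length-$M$ continuations of $u\in\W_{n-M}$, is genuinely there. Citing \eqref{eq:basicPP} does not remove it, because the upper bound in \eqref{eq:basicPP} is itself wrong without that factor. For $\p\equiv0$ (so $D_\p=\|\p\|_{\le M}=0$) it reads $|\W_{n+M}|\le|\W_n|$, which fails for large $n$ whenever $\sigma$ has positive entropy.

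Consequently the lemma with the stated constant is false. Take the golden mean shift: alphabet $\{1,2\}$ with $22$ forbidden, so $A$ has a zero entry and $A^2>0$, giving $M=2$. Let $\p\equiv 0$ and $n=3$. Then $\W_3=\{111,112,121,211,212\}$, so $\widetilde P_3(\p)=5$. But $212\notin\Fix_3$ because $212212$ contains $22$, so $P_3(\p)=4$, and the lemma would assert $5\le 4$. Your inconclusive treatment of $n=M$ is the same phenomenon: there too you were left with a factor $|\W_M|$ that cannot be removed.

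What your argument does prove, once the factor is kept, is
\[
\widetilde P_n(\p)\le |\W_M|\,e^{2\|\p\|_{\le M}+2D_\p}\,P_n(\p)\qquad (n\ge M).
\]
For $n>M$, writing $w=uv$ with $u\in\W_{n-M}$ and $v\in\W_M$ gives $\widetilde P_n\le|\W_M|e^{\|\p\|_{\le M}+D_\p}\widetilde P_{n-M}$. The middle inequality of Lemma~\ref{lem.basic} then finishes. For $n=M$, combine your bound $\widetilde P_M\le|\W_M|e^{\|\p\|_{\le M}}$ with $P_M(\p)\ge e^{-\|\p\|_{\le M}}$ (valid since $\Fix_M\neq\emptyset$) to get the same.

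Your reading $\|\p\|_{\le M}=\max_{|w|\le M}|\p(w)|$ is also the right one. Without the absolute value, even the middle inequality of Lemma~\ref{lem.basic} fails (try $\p(w)=-K|w|$ with $K$ large). With it, that inequality is correct, since $w_1\mapsto w_1w_2$ is injective.

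The lemma is only used to compare $P_n$ and $\widetilde P_n$ up to multiplicative constants (equality of growth rates, Gibbs bounds). So the corrected constant suffices for everything downstream.
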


\begin{proof}
Write $n=(n-M)+M$ and use the previous inequality to get 
\begin{align*}
P_n(\p) \le e^{\|\p\|_{\le M} + D_\p} \widetilde P_{n-M}(\p)\le e^{2\|\p\|_{\le M} + 2D_\p}P_{n}(\p)
\end{align*}
\end{proof}

\begin{lemma}\label{lem.subsup}
For all $n,m \ge M$,
\[
e^{-3(D_\p+ \|\p\|_{\le M})}\le \frac{\widetilde{P}_{n+m}(\p)}{\widetilde{P}_n(\p)  \widetilde{P}_m(\p)} \le e^{3(D_\p + \|\p\|_{\le M})}
\]
The same is true for $P$ (with constant $6$ instead of $3$).
\end{lemma}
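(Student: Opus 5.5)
The plan is to obtain both inequalities from the factorization of words together with the quasimorphism bound, handling mixing through the constant $M$. For the upper bound, I split every $w\in\W_{n+m}$ uniquely as $w=w_1w_2$ with $w_1\in\W_n$ and $w_2\in\W_m$. The defect then gives $\p(w)\le \p(w_1)+\p(w_2)+D_\p$. Since the map $w\mapsto(w_1,w_2)$ is injective into $\W_n\times\W_m$, summing yields $\widetilde P_{n+m}(\p)\le e^{D_\p}\widetilde P_n(\p)\widetilde P_m(\p)$, which is stronger than required.

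For the lower bound, a pair $(w_1,w_2)$ need not be composable, so I insert a connector. By primitivity of $A$ (and $m\ge M$), for each $w_1\in\W_n$ and $w_2\in\W_m$ I choose a word $u\in\W_M$ with $w_1uw_2\in\W_{n+m+M}$; more precisely, $u$ depends only on the last letter of $w_1$ and the first letter of $w_2$. Applying the quasimorphism property twice gives
\[
\p(w_1uw_2)\ge \p(w_1)+\p(w_2)-\|\p\|'_{M}-2D_\p,
\]
where $\|\p\|'_M$ bounds $|\p(u)|$ over words of length at most $M$. At this point I must be careful: the paper's $\|\p\|_{\le M}$ is a maximum, not a maximum of absolute values, so I would either interpret it as a bound on $|\p|$ or note that the constant only needs to be uniform.

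The map $(w_1,w_2)\mapsto w_1uw_2$ is injective, because $w_1$ and $w_2$ can be read off as the prefix and suffix of the resulting word. Hence $\widetilde P_n\widetilde P_m\le e^{\|\p\|_{\le M}+2D_\p}\widetilde P_{n+m+M}$. I then use \eqref{eq:basicPP} to shift back from length $n+m+M$ to length $n+m$, which costs a factor $e^{\|\p\|_{\le M}+D_\p}$. The total constant is then at most $e^{3(D_\p+\|\p\|_{\le M})}$.

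For the periodic sums, Lemma \ref{lem.boundPP} compares $P_k$ and $\widetilde P_k$ within a factor $e^{2(\|\p\|_{\le M}+D_\p)}$ for $k\ge M$. I apply this to $k=n$, $k=m$ and $k=n+m$, all of which are at least $M$, and the constant degrades accordingly. I expect the exponent to come out near $9$ rather than $6$. To reach $6$, I would redo the periodic case directly: concatenate periodic words with connectors and close them up via Lemma \ref{lem.basic}. Since only uniformity of the constant matters for later use, I would accept whichever constant emerges. I regard this bookkeeping, together with the sign issue in $\|\p\|_{\le M}$, as the only real obstacle.
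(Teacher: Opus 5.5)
Your proposal is correct and is essentially the paper's own proof. You split $w\in\W_{n+m}$ for the upper bound. For the lower bound you insert a connector $u\in\W_M$, use the defect twice, and shift back from length $n+m+M$ via \eqref{eq:basicPP}. You then pass to $P$ with Lemma \ref{lem.boundPP}, and there you are too pessimistic about the constant: since $P_k\le \widetilde P_k$ is free, the comparison is paid at most twice per direction, giving exponent $5D_\p+4\|\p\|_{\le M}\le 6(D_\p+\|\p\|_{\le M})$ rather than $9$.

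Your caution about constants is justified beyond the sign issue. The upper half of \eqref{eq:basicPP}, which both you and the paper invoke, really costs $e^{D_\p}\widetilde P_M(\p)\le |\W_M|\,e^{D_\p+\|\p\|_{\le M}}$, not $e^{D_\p+\|\p\|_{\le M}}$. Indeed, $\p=0$ on the golden mean shift ($M=2$, $n=m=2$) gives $|\W_4|/|\W_2|^2=8/9<1$. So the displayed constants, yours and the paper's, only hold up to a factor depending on the subshift. This is harmless, since only uniformity is used later.
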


\begin{proof}
For the upper bound notice that
\[
\widetilde{P}_{n+m}(\p) \le e^{D_{\p}} \widetilde{P}_n(\p) \widetilde{P}_m(\p).
\]
For the lower bound
\begin{align*}
\widetilde{P}_n(\p) \widetilde{P}_m(\p) &\le e^{2D_\p+ \|\p\|_{\le M}} \widetilde{P}_{n+m+M}(\p)\\
&\le e^{3D_\p + 2\|\p\|_{\le M}} \widetilde{P}_{n+m}(\p) 
\end{align*}
where in the first inequality we use mixing to find, given $w_1 \in \W_n, w_2 \in \W_m$, a word $w' \in \W_M$ with $w_1 w' w_2 \in \W$ (and used the quasimorphism property). This concludes the proof for $\widetilde{P}$ and by Lemma \ref{lem.boundPP} the same holds for $P$.
\end{proof}

We deduce,  by subadditivity, that 
\[
\Pr(\p) = \lim_{n\to\infty} \frac{1}{n} \log P_n(\p) = \lim_{n\to\infty} \frac{1}{n} \log \widetilde{P}_n(\p) \in \mathbb{R}
\]
exists.

Now we construct the measure.
 \begin{definition}
 For $N \ge 1$, let 
\[
 \mu_\p^N = \frac{1}{P_N(\p)} \sum_{w \in \Fix_n} e^{\p(w)} \delta_{x_w}
 \]
 where $\delta_{x_w}$ is the Dirac measure based at the periodic orbit $x_w= (w,w,\ldots) \in \Sigma_A$ obtained from repeating $w$.
 \end{definition}

We note that, given $w_1 \in \W_n$ and $N \ge M$ we have that
\begin{align*}
P_{n+N}(\p) \mu_\p^{n+N}[w_1] &= \sum_{w_1w_2 \in \Fix_{n+N}} e^{\p(w_1w_2)}\le e^{D_\p} e^{\p(w_1)} \widetilde{P}_N(\p)\le e^{2D_\p + \|\p\|_{\le M}} e^{\p(w_1)}  P_{N+M}(\p)\\
&\le e^{3(D_\p + \|\p\|_{\le M})} e^{\p(w_1)}  P_{N}(\p).
\end{align*}

Similarly, one can prove
\[
P_{n+N}(\p) \mu_\p^{n+N}[w_1]\ge e^{-6(D_\p- \|\p\|_{\le M})} e^{\p(w_1)}
\]

Combining the above we get the following.
\begin{lemma}
There exist $C_1,C_2>0$ so that the following holds. If $\p\in\qm$ then, for any $n, w \in \W_n$ and $N \ge M$,
\[
e^{-C_1(D_\p + \|\p\|_{\le M})}\le \frac{\mu_\p^{n+N}[w]}{e^{\p(w) - P_n(\p)}} \le e^{C_2(D_\p + \|\p\|_{\le M})}.
\]
\end{lemma}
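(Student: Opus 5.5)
The plan is to combine the two displayed estimates for $P_{n+N}(\p)\,\mu_\p^{n+N}[w_1]$ with the almost-multiplicativity of Lemma~\ref{lem.subsup}. I read the denominator $e^{\p(w)-P_n(\p)}$ as $e^{\p(w)}/P_n(\p)$, i.e.\ the exponent is $\p(w)-\log P_n(\p)$. This is the normalisation that the two displayed bounds naturally produce.

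Once the lemma is established in that form, the version with $e^{\p(w)-n\Pr(\p)}$ follows from one further fact. Lemma~\ref{lem.subsup} and the standard sub/super-multiplicativity argument give $e^{-c}e^{n\Pr(\p)}\le P_n(\p)\le e^{c}e^{n\Pr(\p)}$ for $n\ge M$, with $c$ linear in $D_\p+\|\p\|_{\le M}$. This is the Gibbs-type normalisation that will eventually feed into \eqref{eq:gibbs1fs}.

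\emph{Upper bound.} Start from the displayed estimate
\[
P_{n+N}(\p)\,\mu_\p^{n+N}[w]\le e^{3(D_\p+\|\p\|_{\le M})}\,e^{\p(w)}\,P_N(\p),
\]
and divide by $P_{n+N}(\p)$. For $n,N\ge M$, the $P$-version of Lemma~\ref{lem.subsup} gives
\[
P_{n+N}(\p)\ge e^{-6(D_\p+\|\p\|_{\le M})}P_n(\p)P_N(\p).
\]
The factor $P_N(\p)$ then cancels, leaving $\mu_\p^{n+N}[w]\le e^{9(D_\p+\|\p\|_{\le M})}e^{\p(w)}/P_n(\p)$.

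\emph{Lower bound.} First I would check the displayed lower estimate carefully, since it is only asserted. Given $w\in\W_n$ and $N$ large, fix any $v\in\W_{N-2M}$. By mixing, choose connectors $u,u'\in\W_M$ so that $w\,u\,v\,u'$ is a periodic word in $\Fix_{n+N}$. Distinct $v$ give distinct words, so
\[
P_{n+N}(\p)\,\mu_\p^{n+N}[w]\ge e^{\p(w)}\,e^{-c'(D_\p+\|\p\|_{\le M})}\,\widetilde P_{N-2M}(\p),
\]
using the quasimorphism inequality three times and $|\p(u)|,|\p(u')|\le\|\p\|_{\le M}$ (taking the norm to bound $|\p|$ on short words). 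Two applications of \eqref{eq:basicPP} and Lemma~\ref{lem.boundPP} convert $\widetilde P_{N-2M}(\p)$ into $P_N(\p)$, at the cost of a further factor linear in $D_\p+\|\p\|_{\le M}$. Finally, the upper half of Lemma~\ref{lem.subsup}, $P_{n+N}(\p)\le e^{6(\cdot)}P_n(\p)P_N(\p)$, gives $\mu_\p^{n+N}[w]\ge e^{-C_1(\cdot)}e^{\p(w)}/P_n(\p)$.

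\emph{Main obstacle.} I expect the main difficulty to be bookkeeping in the boundary ranges $n<M$ or $M\le N<3M$, where Lemma~\ref{lem.subsup} and the connector construction do not apply verbatim. The whole point of the lemma is that the constants depend only linearly on $D_\p+\|\p\|_{\le M}$, so these ranges must be handled without losing that.
\begin{itemize}
\item For small $N$, I would first replace $N$ by $N+M$ or $N+2M$ using \eqref{eq:basicPP}. Each such shift costs only a factor $e^{\pm(\|\p\|_{\le M}+D_\p)}$.
\item For $n<M$, I would compare $P_n(\p)$ with $P_M(\p)$ in the same way. Alternatively, I would observe that $P_n(\p)$ and $e^{\p(w)}$ are then both controlled by $e^{\pm M\|\p\|_{\le M}}$. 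That bound introduces a factor $M$, which is a fixed constant of the subshift and can be absorbed into $C_1,C_2$.
\end{itemize}
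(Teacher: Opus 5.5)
Your argument for the main range is the same as the paper's, whose proof consists only of the words ``combining the above''. That is: divide the two displayed estimates for $P_{n+N}(\p)\,\mu_\p^{n+N}[w]$ by $P_{n+N}(\p)$, then use Lemma~\ref{lem.subsup} to replace $P_{n+N}(\p)$ by $P_n(\p)P_N(\p)$. Several of your readings are right:
\begin{itemize}
\item Reading the denominator as $e^{\p(w)}/P_n(\p)$ is the intended meaning; the paper's next line, $|P_n(\p)-n\Pr(\p)|\le D_\p$, only makes sense for $\log P_n(\p)$.
\item Reading $\|\p\|_{\le M}$ as a bound on $|\p|$ is forced. For $\p(w)=-|w|$ the stated lower bound would otherwise exceed the upper one.
\item Your derivation of the lower estimate is correct for $n\ge M$ and $N>2M$, and it restores the factor $P_N(\p)$ that the paper's display drops.
\end{itemize}

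The part you call the main obstacle cannot be carried out as you describe, because the statement as written is false there.

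\emph{First}, for $n<M$ the set $\Fix_n$ may be empty. The shift on $\{1,2,3\}$ with allowed transitions $1\to2$, $2\to1$, $2\to3$, $3\to1$ is mixing: it is irreducible with cycles of lengths $2$ and $3$. It has no fixed points, so $P_1(\p)=0$ while $\mu_\p^{1+N}[1]>0$. Hence the lower bound fails for $n=1$, and in particular $P_n(\p)$ is not bounded below by $e^{-M\|\p\|_{\le M}}$.

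\emph{Second}, even for $n\ge M$ no constants of the pure form $e^{\pm C(D_\p+\|\p\|_{\le M})}$ can work. For $\p=0$ both bounds equal $1$, which forces $\mu_0^{n+N}[w]=1/|\Fix_n|$ for every $w\in\W_n$. Summing over $w$ then gives $|\W_n|=|\Fix_n|$, which fails for the golden mean shift for every $n\ge 3$ (e.g.\ $|\W_3|=5$, $|\Fix_3|=4$).

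The source is the upper half of \eqref{eq:basicPP}, which drops a combinatorial factor: one only has $\widetilde P_{n+M}(\p)\le |\W_M|\,e^{\|\p\|_{\le M}+D_\p}\,\widetilde P_n(\p)$. Three steps use this half: your conversion of $\widetilde P_{N-2M}(\p)$ into $P_N(\p)$, your shifts in $N$, and the last step of the paper's displayed upper estimate. So these cost a factor depending on the subshift, not just $e^{\pm(\|\p\|_{\le M}+D_\p)}$.

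What your argument actually proves, like the paper's, is the following:
\begin{itemize}
\item the inequality with constants $e^{\pm C(D_\p+\|\p\|_{\le M}+1)}$, where $C$ depends only on the subshift, for $n\ge M$;
\item the same for all $n$ once $P_n(\p)$ is replaced by $e^{n\Pr(\p)}$.
\end{itemize}
That weaker form is all that \eqref{eq.gibbs} and Lemma~\ref{lem.gibbuni} require, since later only local boundedness of the Gibbs constants in $t$ is used.
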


Note that, by the quasimorphism property
\[
\sup_{n\ge 1}|P_n(\p) - n\Pr(\p)| \le D_\p.
\]
So we can alter the above inequality to
\[
e^{-C_1(D_\p + \|\p\|_{\le M})}\le \frac{\mu_\p^{n+N}[w]}{e^{\p(w) - n\Pr(\p)}} \le e^{C_2(D_\p + \|\p\|_{\le M})}.
\]
Then, using the fact that cylinder sets are both open and closed, we have that any accumulation point of $(\mu_\p^{N})_N$ verifies the same inequalities. It is possible to prove that such accumulation points are ergodic, and since the previous inequality shows that they are absolutely continuous with respect to each other, there is only one accumulation point $\mu_\p$, which thus verifies

\begin{equation}\label{eq.gibbs}
e^{-C_1(D_\p + \|\p\|_{\le M})}\le \frac{\mu_\p[w]}{e^{\p(w) - n\Pr(\p)}} \le e^{C_2(D_\p + \|\p\|_{\le M})}
\end{equation}
for all $w \in \W$.

\subsection{The pressure function}
\label{subsec:pressurefunction}
We now turn our attention to the pressure function of Definition~\ref{def:pressurefunction}. We propose here to establish that it is convex and of class $C^1$ on $\mathbb{R}$, but before embarking on it, some preliminary comments are in order. The value of the pressure function at $0$ is the solution to the equation $P(-s\psi)=0$, which in the case of $-\psi$ arising from a Hölder-continuous potential is known in the literature as \emph{Bowen's equation}, introduced by Bowen in his seminal work on quasicircles~\cite{Bowen-quasicircles} and subsequently proven to yield the Hausdorff dimension of certain invariant sets in a variety of dynamical situations. 

\begin{example}
Define $\psi\colon \W\to \mathbb{R}$ as $\psi(w)=n$ for all $w\in \W_n$ and all $n\geq 1$. Then $\psi\in \qm^{++}$, and it is immediate to check that $\Pr(\phi,\psi)(t)=\Pr(t\phi)$ for all $t\in \mathbb{R}$. In the case of $\phi$ generated by a Hölder-continuous potential $r$ as in Example~\ref{ex.holder}, our definition of pressure function recovers the classical pressure function for $r$, the study of which is ``the main object of thermodynamic formalism", to quote Ruelle in the introduction to~\cite{Ruelle-book}.
\end{example}

When $\phi, \p$ are obtained from a H\"older function as in Example~\ref{ex.holder}, the pressure function $\Pr(\phi,\p)$ has been investigated in the monograph~\cite{PP}, where it is proven to be a real analytic function \cite{PP}. There is a rich theory of thermodynamic formalism associated to this case.  More recently, the pressure function has been studied by Barreira and Doutor~\cite[Section 4.1]{Barreira_2009} as part of their pursuit to establish a conditional variational principle for the dimension spectra of almost additive sequences. In particular, their setting is at the same time more general and more restrictive than ours: instead of a single quasimorphism $\phi$, they deal with an arbitrary finite collection of almost additive sequences, on the other hand they only contemplate quasimorphisms $\psi$ arising from Hölder-continuous potentials as in our Example~\ref{ex.holder}. 

Let us prove directly that the pressure curve is $C^{1}$. First of all, we claim that for every $t$ there is unique value $\beta(t)$ so that $P(t\phi-\beta(t)\p)=0$. Fix $t$ and consider $F(s)=P(t\phi-s\p)$: $F$ is continuous. Take $s'>s$ and compute
\begin{align*}
F(s')&=\lim_{n\to\infty} \frac{1}{n} \sum_{w \in \W_n} e^{t\phi-s'\p(w)}=\lim_{n\to\infty} \frac{1}{n} \sum_{w \in \W_n} e^{t\phi-s\p(w)}e^{(s-s')\p(w)}\\
&\leq \limsup_{n\to\infty} \frac{1}{n} \sum_{w \in \W_n} e^{t\phi-s\p(w)}e^{(s-s')(nC-\tau)}=F(s)-\tau(s'-s).
\end{align*}
It follows that $\lim_{s\to -\oo}F(s)=+\oo, \lim_{s\to+\oo}F(s)=-\oo$, therefore by continuity and strict monotonicity, $F$ has a unique zero $s(t)=\beta(t)$.

Next, we use the variational principle. Note that, since $\p\in \qm^{++}$, for every $\mu\in \Minv$ we have 
\begin{align}\label{eq:positiveintegral}
\nonumber\mu(\psi) &=\lim_{n\to\infty}\frac{1}{n}\sum_{\omega \in \W_n}\int_{[\omega]}\psi(\omega) \ d\mu \geq \limsup_{n\to\infty}\frac{1}{n}\sum_{\omega \in \W_n}\int_{[\omega]}(\tau n- C)\ d\mu\\
&=\limsup_{n\to\infty}\frac{\tau n-C}{n}\sum_{\omega \in \W_n}\mu([\omega])=\tau>0.
\end{align}
Applying \Cref{thm:variationalprinciple} to the quasimorphism $t\phi-\beta(t)\p$, we get 
\[
 0\geq h_{\mu}(\sigma)+\mu(t\phi-\beta(t)\p)=h_{\mu}(\sigma)+t\mu(\phi)-\beta(t)\mu(\p)\;,\]
from which 
\[
\beta(t)\geq \frac{h_{\mu}(\sigma)+t\mu(\phi)}{\mu(\p)\;,}  
\]
with equality if and only if $\mu=\mu_t$, the equilibrium state of $t\phi-\beta(t)\p$. Writing $a_{\mu}=\mu(\phi)/\mu(\psi)$ and $b_{\mu}=h_{\mu}(\sigma)/\mu(\psi)$ for all $\mu\in \Minv$, it follows that 
\begin{align}\label{eq:beta}
\beta(t)=\sup_{\mu\in \Minv}\{A_{\mu}(t)=a_{\mu}t+b_{\mu}\},
\end{align}
hence $\beta(t)$ is the supremum of a family affine functions, and is therefore convex. 

We fix $t_0$ and consider $\beta(t_0)=A_{\mu_{t_0}}(t_0)$. For every $t$ we get
\[
  \beta(t)\geq A_{\mu_{t_0}}(t)=(t-t_0)a_{\mu_{t_0}}+\beta(t_0)
\]
that is, the graph of $\beta$ is over the line $t\mapsto (t-t_0)a_{\mu_{t_0}}+\beta(t_0)$. From here we deduce that $a_{\mu_{t_0}}=\frac{\mu_{t_0}(\phi)}{\mu_{t_0}(\p)}$ is a subgradient of $\beta$ at $t_0$. It is not much work to show that in fact $a_{\mu_{t_0}}$ is the derivative of $\beta$ at $t_0$.

\begin{lemma}
It holds $\beta'(t_0+)=\beta'(t_0-)=a_{\mu_{t_0}}$.  
\end{lemma}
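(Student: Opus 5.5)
Since $\beta$ is convex, both one-sided derivatives exist, and the subgradient inequality $\beta(t)\ge A_{\mu_{t_0}}(t)$ already gives $\beta'(t_0-)\le a_{\mu_{t_0}}\le \beta'(t_0+)$. So it suffices to show the reverse inequalities. The plan is to approximate $\beta$ near $t_0$ from above using the equilibrium states $\mu_t$ of $t\phi-\beta(t)\p$, and to show these measures converge weak$^*$ to $\mu_{t_0}$ as $t\to t_0$ in a way that also carries the values $a_{\mu_t}$ to $a_{\mu_{t_0}}$.

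\emph{Step 1: a sandwich for the difference quotients.} By \eqref{eq:beta} and the equality case, $\beta(t)=A_{\mu_t}(t)$ and $\beta(t_0)\ge A_{\mu_t}(t_0)$. Hence $\beta(t)-\beta(t_0)\le a_{\mu_t}(t-t_0)$. Combined with the subgradient bound at $t_0$, for $t>t_0$ this gives
\[
a_{\mu_{t_0}}\le \frac{\beta(t)-\beta(t_0)}{t-t_0}\le a_{\mu_t}\;,
\]
and the reverse inequalities hold for $t<t_0$. It therefore suffices to prove $a_{\mu_t}\to a_{\mu_{t_0}}$ as $t\to t_0$.

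\emph{Step 2: convergence of the equilibrium states.} Take any sequence $t_k\to t_0$ and, by compactness of $\Minv$, pass to a subsequence with $\mu_{t_k}\to\nu$ weak$^*$. Convexity makes $\beta$ continuous, so $\beta(t_k)\to\beta(t_0)$. We then want to pass to the limit in
\[
0=h_{\mu_{t_k}}(\sigma)+t_k\mu_{t_k}(\phi)-\beta(t_k)\mu_{t_k}(\p)\;.
\]
Entropy is upper semicontinuous on a subshift. The maps $\mu\mapsto\mu(\phi)$ and $\mu\mapsto\mu(\p)$ are continuous, because each is a uniform limit of the continuous functionals $\mu\mapsto\frac1n\int\phi_n\,d\mu$ (with $\phi_n$ locally constant), the error being at most $D_\phi/n$ uniformly in $\mu$ by the almost additivity estimate. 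This yields $h_\nu(\sigma)+\nu(t_0\phi-\beta(t_0)\p)\ge 0$, and by Theorem~\ref{thm:variationalprinciple} equality must hold. Uniqueness in Theorem~\ref{thm:propiedadesmedidamul}(3) then forces $\nu=\mu_{t_0}$.

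\emph{Step 3: conclusion.} Since $\mu(\p)\ge\tau>0$ for every invariant measure by \eqref{eq:positiveintegral}, continuity of the two integral functionals gives $a_{\mu_{t_k}}\to a_{\mu_{t_0}}$. As every sequence has such a subsequence, $a_{\mu_t}\to a_{\mu_{t_0}}$, and Step 1 yields both one-sided derivatives equal to $a_{\mu_{t_0}}$. The same argument shows $t\mapsto\beta'(t)$ is continuous, so $\beta$ is $C^1$.

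The main obstacle is the continuity of $\mu\mapsto\mu(\phi)$ in Step 2. The approximation error must be checked to be uniform in $\mu$, by bounding $|\int\phi_{n}\,d\mu - \tfrac{1}{m}\int\phi_{nm}\,d\mu|$ via the defect, and it is only this uniformity that lets the weak$^*$ limit pass through.
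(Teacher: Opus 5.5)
Your proposal is correct and is essentially the paper's own argument. Both use the sandwich $a_{\mu_{t_0}}\le \frac{\beta(t)-\beta(t_0)}{t-t_0}\le a_{\mu_t}$, and then weak$^*$ compactness, upper semicontinuity of entropy, continuity of $\mu\mapsto\mu(\phi),\mu(\p)$ and uniqueness of the equilibrium state to force $\mu_t\to\mu_{t_0}$. The differences are cosmetic: you pass to the limit in the unnormalised identity $h_{\mu_t}(\sigma)+\mu_t(t\phi-\beta(t)\p)=0$ using continuity of $\beta$, where the paper uses boundedness of $a_{\mu_n}$; you prove continuity of the integral functionals directly instead of citing it; and you state explicitly the step $a_{\mu_{t_k}}\to a_{\mu_{t_0}}$, which the paper leaves implicit.
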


\begin{proof}
Let $(r_n)_n \subset\R_{>0}$, and for every $n$ let $\mu_n$ be the equilibrium measure of $\beta(t_0+r_n)$, thus $\beta(t_0+r_n)=A_{\mu_n}(t_0+r_n)$. Then, since $\beta(t_0)\geq A_{\mu_n}(t_0)$
\begin{align*}
\frac{\beta(t_0+r_n)-\beta(t_0)}{r_n}\leq\frac{A_{\mu_n}(t_0+r_n)-A_{\mu_n}(t_0)}{r_n}=a_{\mu_n}. 
\end{align*}
Likewise, $\beta(t_0+r_n)\geq A_{\mu_{t_0}}(t_0+r_n), \beta(t_0)=A_{\mu_{t_0}}(t_0+r_n)$, and 
\begin{align*}
a_{\mu_{t_0}}\leq \frac{\beta(t_0+r_n)-\beta(t_0)}{r_n}\leq a_{\mu_n}.
\end{align*}
It is no loss of generality to assume that $(\mu_n)_n$ is convergent to some $\sigma$-invariant measure $\nu$. Note that 
\begin{enumerate}
  \item $\Minv\ni\mu \mapsto \mu(\p), \mu(\psi)$ are continuous (see Proposition $5.7$ in \cite{TermoEqSt}).
  \item $\Minv\ni\mu\mapsto h_{\mu}(\sigma)$ is upper-semicontinuous (because $\sigma$ is expansive).
\end{enumerate}
Hence, $A_{\nu}(t_0)\geq \limsup_{n\to\infty} A_{\mu_n}(t_0)$. On the other hand we can compute
\begin{align*}
A_{\mu_n}(t_0)=A_{\mu_n}(t_0+r_n)-r_n a_{\mu_n}=\beta(t_0+r_n)-r_n a_{\mu_n}
\end{align*}
and since $(a_{\mu_n})_n$ is bounded, we deduce $\lim_{n\to\infty} A_{\mu_n}(t_0)=\lim_{n\to\infty} \beta(t_0+r_n)=\beta(t_0)$. Hence, 
\[
  A_{\nu}(t_0)\geq \beta(t_0)
\]
and by uniqueness of the maximizer, $\nu=\mu_{t_0}$. We have proved
\[
  \beta'(t_0+)=\lim_{n\to\infty} \frac{\beta(t_0+r_n)-\beta(t_0)}{r_n}=a_{\mu_{t_0}}.
\]
Likewise, $\beta'(t_0-)=a_{\mu_{t_0}}$.
\end{proof}

Observe that $\beta'$ is monotone, and since it has the intermediate value property, it cannot have jumps. Hence it is continuous and $\beta$ is $C^1$.

\begin{corollary}\label{cor:derivativeofpressure}
  The pressure function $P(\phi,\p)$ is of class $C^{1}$. Moreover, 
  \[
  P(\phi,\p)'(t)=\frac{\int \phi \dd\mu_{t}}{\int \p \dd\mu_{t}}
  \]
  where $\mu_t$ is the equilibrium state of $t\phi-P(\phi,\p)(t)\p$. 
\end{corollary}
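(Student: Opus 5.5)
The corollary should follow by assembling the preceding discussion, so the plan is to make each ingredient explicit in turn.

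Write $\beta(t)=\Pr(\phi,\p)(t)$. First, recall that for each fixed $t$ the function $F(s)=\Pr(t\phi-s\p)$ is continuous and strictly decreasing, with slope at most $-\tau$ coming from the linear growth \eqref{eq:defqm++} of $\p$. Hence $\beta(t)$ is well defined as the unique zero of $F$. Next, apply the variational principle (Theorem~\ref{thm:variationalprinciple}) to $t\phi-\beta(t)\p$, using $\mu(\p)\ge\tau>0$ from \eqref{eq:positiveintegral}. This gives the representation \eqref{eq:beta},
\[
\beta(t)=\sup_{\mu\in\Minv}\left(a_\mu t+b_\mu\right),\qquad a_\mu=\frac{\mu(\phi)}{\mu(\p)},\quad b_\mu=\frac{h_\mu(\sigma)}{\mu(\p)}.
\]
The supremum is attained exactly at $\mu_t$, the Gibbs state of $t\phi-\beta(t)\p$, by the uniqueness in Theorem~\ref{thm:propiedadesmedidamul}(3). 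So $\beta$ is convex, and $a_{\mu_{t}}$ is a subgradient of $\beta$ at $t$.

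The key step is the preceding lemma: the one-sided derivatives $\beta'(t_0\pm)$ both equal $a_{\mu_{t_0}}$. Its proof takes $t_n\to t_0$ and extracts a weak-$*$ limit $\nu$ of the equilibrium states $\mu_{t_n}$. It then combines three facts: continuity of $\mu\mapsto\mu(\phi),\mu(\p)$ on $\Minv$, upper semicontinuity of entropy (the shift is expansive), and uniqueness of the maximiser. These give $\nu=\mu_{t_0}$, and the difference quotients, which are squeezed between $a_{\mu_{t_0}}$ and $a_{\mu_{t_n}}$, converge to $a_{\mu_{t_0}}$. Granting this lemma, $\beta$ is differentiable everywhere with
\[
\beta'(t)=\frac{\mu_t(\phi)}{\mu_t(\p)}=\frac{\int f_{\mu_t,\phi}\,\dd\mu_t}{\int f_{\mu_t,\p}\,\dd\mu_t},
\]
where the second expression is how the integrals in the statement are to be read, via Remark~\ref{rem:sameintegral}.

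Continuity of $\beta'$ then follows from a general fact: a differentiable convex function on $\R$ is automatically $C^1$. Indeed, $\beta'$ is monotone, so its only possible discontinuities are jumps, and Darboux's theorem rules those out. Alternatively, the same compactness argument as in the lemma shows that $\mu_{t_n}\to\mu_{t_0}$ whenever $t_n\to t_0$, and the continuity of $\mu\mapsto \mu(\phi)/\mu(\p)$ then gives continuity of $\beta'$ directly.

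The main obstacle is the identification $\nu=\mu_{t_0}$ inside the lemma. It needs genuine continuity of $\mu\mapsto\mu(\p)$ for quasimorphisms, which is not merely semicontinuity. I would get this from the uniform approximation $|\mu(\p)-\frac1n\int\p(x_0\cdots x_{n-1})\,\dd\mu|\le D_\p/n$: each approximant is continuous in $\mu$ because it integrates a locally constant function, and the approximation is uniform in $\mu$. This is the fact cited from \cite[Proposition 5.7]{TermoEqSt}. The second ingredient is boundedness of $a_{\mu_{t_n}}$, which follows from $\mu(\p)\ge\tau$ together with $|\mu(\phi)|\le \tau'$ from the at-most-linear growth of $\phi$.
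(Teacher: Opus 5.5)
Your proposal is correct and follows the same route as the paper. It uses the variational representation $\beta(t)=\sup_{\mu}(a_\mu t+b_\mu)$ and squeezes the one-sided difference quotients between $a_{\mu_{t_0}}$ and $a_{\mu_{t_0+r_n}}$, identifying the weak-$*$ limit with $\mu_{t_0}$ via upper semicontinuity of entropy and uniqueness of the equilibrium state, and then invokes Darboux for continuity of $\beta'$. Your uniform bound $|\mu(\p)-\frac1n\int\p(x_0\cdots x_{n-1})\,\dd\mu|\le D_\p/n$ additionally proves the continuity of $\mu\mapsto\mu(\p)$ that the paper only cites from \cite{TermoEqSt}.
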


\subsection{The cocycle associated to a linear growth quasimorphism}\label{ssec:cocyclepositive}
We finish this part by characterizing the elements in $\qm^{++}$. We will establish that these correspond to positive Bowen functions, which can be thought of as a version of the positive Livšic theorem.  Fix $\p\in \qm^{++}$ and let $\tau, C>0$ so that $\p(w)\geq \tau|w|-C$, for every $w\in\mathcal W$. Fix also a reference measure $\mathrm{m}\in \mathcal{M}(\sigma)$ and consider $f=f_{\mathrm{m},\p}:D\to \mathbb R, f \in L^{\oo}(\mathrm{m})$ as given in \Cref{pro:potencialloogeneral}, where with no loss of generality we assume that $D$ is $\sigma$-invariant and $|f|\leq R$ on $D$. 

\begin{remark}\label{rmk:Mindependentmu}
  Note that $R$ only depends on $\p$, and not on the chosen invariant measure $\mathrm m$.
\end{remark}

Recall that, due to \eqref{eq:positiveintegral}, 
\[
  \inf\{\mu(\p):\mu\in \mathcal M(\sigma)\}=\tau>0.
\]

\begin{lemma}
  There exists $N$ so that for every $x\in D$, $S_Nf(x)\geq \frac{\tau}2$.
\end{lemma}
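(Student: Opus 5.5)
The plan is to combine the pointwise control of Theorem~\ref{pro:potencialloogeneral} with the linear growth bound $\p(w)\ge \tau|w|-C$. By construction, $D$ is a $\sigma$-invariant set of full $\mathrm m$-measure on which $|f|\le R$, and for every $x\in D$ and every $n\in\N$,
\[
|S_nf(x)-\p(x_0\cdots x_{n-1})|\le D(\p).
\]
Combining this with the growth bound gives, for every $x\in D$ and $n\ge1$,
\[
S_nf(x)\ \ge\ \p(x_0\cdots x_{n-1})-D(\p)\ \ge\ \tau n - C - D(\p).
\]
The inequality is uniform in $x\in D$. This is the key point: the constants $C$ and $D(\p)$ depend only on $\p$, not on the point or on the reference measure, consistent with Remark~\ref{rmk:Mindependentmu}.

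It then suffices to choose $N\in\N$ with $\tau N - C - D(\p)\ge \tau/2$. For instance, take any integer
\[
N\ \ge\ \frac12+\frac{C+D(\p)}{\tau}.
\]
This choice yields $S_Nf(x)\ge\tau/2$ for every $x\in D$, as claimed.

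The only point needing care is that the estimate of Theorem~\ref{pro:potencialloogeneral} holds simultaneously for \emph{all} $n$ on the same full-measure set. We take $D$ to be the set of points where it holds for all $n$, intersected with the set where $|f|\le R$ and made $\sigma$-invariant. We can make it invariant by intersecting over the backward and forward images $\sigma^{-k}$, which preserves full measure because $\mathrm m$ is invariant. The theorem as stated provides exactly this ("for $\mu$-almost every point, for every $n$"), so I expect no real obstacle.

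If one preferred to avoid the pointwise statement, an alternative is a compactness argument via \eqref{eq:positiveintegral}. This would use $\inf_\mu \mu(\p)=\tau$ together with ergodic averages, but it is unnecessary here.
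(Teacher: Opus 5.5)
Your proof is correct, and it takes a genuinely different and much shorter route than the paper's.

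\textbf{The paper's route.} The paper argues by contradiction. Assuming $S_nf(x^{(n)})<\tau/2$ for arbitrarily large $n$, it encodes the $f$-orbit data in the shift on $[-R,R]^{\mathbb{N}}$. From there it extracts an ergodic limit measure and a generic point, and pulls these back to points $y^{(n)}\in D$ whose windowed Birkhoff averages all lie below $7\tau/8$. Taking limits of the empirical measures along these orbits produces $\mu\in\Minv$ with $\mu(\psi)\le 7\tau/8<\tau$, contradicting \eqref{eq:positiveintegral}.

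\textbf{Your route.} You combine the uniform estimate $|S_nf(x)-\psi(x_0\cdots x_{n-1})|\le D(\psi)$ of Theorem~\ref{pro:potencialloogeneral}, valid at every $x\in D$ and every $n$, with the growth bound $\psi(w)\ge\tau|w|-C$. This gives the stronger, explicit bound $S_nf(x)\ge\tau n-C-D(\psi)$ for all $n$ and all $x\in D$, and hence an explicit $N$. The $\sigma$-invariance of $D$ plays no role here.

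\textbf{Correction to your closing remark.} The compactness argument does \emph{not} avoid the pointwise comparison: it uses it in the step $\frac1n\int(\psi_n-S_nf)\,d\mu_n\to0$. What it avoids is the pointwise growth bound $\psi(w)\ge\tau|w|-C$. It uses linear growth only through the integral condition $\inf_{\mu\in\Minv}\mu(\psi)\ge\tau>0$.

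\textbf{What each approach buys.} Because of this, the paper's proof, and with it Theorem~\ref{thm:positiveLivsic}, applies verbatim to any quasimorphism with $\inf_\mu\mu(\psi)>0$. That is exactly what the proof of the subsequent corollary $\qm^{++}=\{\psi\in\qm:\inf_\mu\mu(\psi)>0\}$ implicitly needs for the nontrivial inclusion. Your argument presupposes linear growth, so it cannot serve there. For the lemma as stated, your argument is complete and simpler.
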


\begin{proof}
Assume the contrary: then, for arbitrary large $n$ we can find $x^{(n)}\in D$ so that 
 \[
   S_nf(x^{(n)})<\frac{\tau}2
 \]
We want to improve the previous inequality to 
\[
  \max_{0\leq k<n} S_nf(\sigma^k x^{(n)})<\tau'<\tau.
\]
We proceed as follows. Consider 
\[
  \Psi: D\to \Omega=[-R,R]^{\mathbb N}\qquad \Psi(x)=(f(x),f(\sigma x),\cdots)
\]
and equip $\Omega$ with the product topology. Let $\tau\colon\Omega\to\Omega$ be the shift map, and let $X\colon\Omega\to \mathbb R$ be the projection onto the first coordinate, which is continuous. Then, $\Omega_f=\overline{\Psi(D)}$ is a compact $\tau$-invariant set, and for every $x\in D$,
\[
  \Psi(f(\sigma x))=\tau(\Psi(x)).
\]

Denote $\underline{y}^{(n)}=\Psi(x^{(n)})$ and consider 
\[
  \nu_n=\frac{1}n\sum_{k=0}^{n-1}\tau_*^{k}\delta_{\underline{y}^{(n)}}=\frac{1}n\sum_{k=0}^{n-1}\delta_{\Psi\left(f(\sigma^k x^{(n)})\right)}.
\]
If $\nu$ is an accumulation point of $(\nu_n)$, then it is $\tau$-invariant, and
\[
  \int X\ d \nu=\lim_{n\in S_{\nu}} \frac{1}{n}S_nf(x^{(n)})\leq \frac{\tau}{2}.
\]
With no loss of generality (using the ergodic decomposition), we assume that $\nu$ is ergodic. Fix $\underline{z}\in \Omega_f$ a generic point for $\nu$: we can then write
\[
  \frac{S_nX(\underline z)}{n}=n\int X\ d \nu+r_n\quad r_k=o(n),
\]
and therefore, given $n$ and $0\leq k<n$,
\begin{align*}
\frac{S_nX(\sigma^k\underline z)}{n}=\frac{S_{n+k}X(\underline z)-S_{k}X(\underline z)}{n}=\int X\ d \nu+\frac{r_{n+k}-r_k}{n}.
\end{align*}
Since 
\[
  \max_{0\leq k<n}\left|\frac{r_{n+k}-r_k}{n}\right|\xrightarrow[n\mapsto\oo]{}0
\]
we finally get
\[
  \max_{0\leq k<n} \frac{S_nX(\sigma^k\underline z)}{n}\xrightarrow[n\mapsto\oo]{}\int X\ d \nu\leq \frac{\tau}{2}.
\]
and therefore, for $n$ large
\[
  \max_{0\leq k<n} \frac{S_nX(\sigma^k\underline z)}{n}\leq \frac{3\tau}{4}.
\]
Take $y^{(n)}\in D$ so that 
\[
  |f(\sigma^{k}y^{(n)})-z_k|<\frac{\tau}{8}\quad k=0,\ldots, 2n.
\]
Then, for $0\leq k<n$,
\begin{align}\label{eq:inequalityBirkhoff}
 \frac{S_nf(\sigma^{k}y^{(n)})}{n}<\frac{S_nX(\tau^{k}\underline z)}{n}+\frac{\tau}{8}\leq \frac{7\tau}{8}<\tau.
\end{align}

\smallskip

Next, consider the measures
\[
  \mu_n=\frac{1}{n}\sum_{k=0}^{n-1}\sigma_*^{k}\delta_{y^{(n)}},
\]
which are supported on $D$. Take $\mu$ an accumulation point of them: $\mu$ is $\sigma$-invariant. Also,
\begin{align*}
\mu(\p)=\lim_{n\in S_{\mu}} \frac{1}{n}\int \p_n \ d\mu_n=\lim_{n\in S_{\mu}} \frac{1}{n}\int (\p_n-S_nf) \ d\mu_n+\frac{1}{n}\int S_nf \ d\mu_n\\
=\lim_{n\in S_{\mu}}\frac{1}{n}\int S_nf \ d\mu_n=\lim_{n\in S}\frac{1}{n}\sum_{k=0}^{n-1} S_nf(\sigma^k y^{(n)}) \leq \frac{7\tau}{8}<\tau,
\end{align*}
which is a contradiction, due to our assumption.
\end{proof}

Let $N$ be so that for every $x\in D$, $S_Nf(x)\geq \frac{\tau}2$, and define $u:D\to\mathbb R$,
\[
  u(x)=\frac{1}{N}\sum_{k=0}^{N-2}(N-1-k)f(\sigma^k x).
\]
Naturally, $u\in L^{\oo}(\mathrm{m})$, and
\[
  f+u\circ\sigma-u=\frac{1}NS_Nf\geq \frac{\tau}{2N}.
\]
Note that $\frac{1}NS_Nf$ has the Bowen property. We have proved the following.

\begin{theorem}[Positive Livšic theorem for Bowen functions]\label{thm:positiveLivsic}
  Let $\p\in\qm^{++}$ and $\mu\in \mathcal{M}(\sigma)$. Then, there exist $g_{\mu,\p}:D\to\mathbb R\in L^{\oo}(\mu)$ with the Bowen property, and $E,\tau'>0$ so that for every $x\in D$,
  \begin{enumerate}
    \item $g_{\mu,\p}(x)>\tau'$,
    \item $|S_ng_{\mu,\p}(x)-\p(x_0, \ldots , x_{n-1})|<E$, for every $n\geq 1$.
  \end{enumerate}
\end{theorem}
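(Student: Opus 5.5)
The plan is to assemble the construction carried out just before the statement, keeping track of which constants depend on what. Fix $\p\in\qm^{++}$ with $\p(w)\ge\tau|w|-C$ and $\mu\in\Minv$. By Theorem~\ref{pro:potencialloogeneral} there is $f=f_{\mu,\p}\in L^\infty(\mu)$, defined on a $\sigma$-invariant full-measure set $D$ with $|f|\le R$, such that
\[
|S_nf(x)-\p(x_0\cdots x_{n-1})|\le D(\p)
\]
for all $x\in D$ and $n\ge1$. By Remark~\ref{rmk:Mindependentmu}, $R$ depends only on $\p$. We use~\eqref{eq:positiveintegral}, i.e. $\inf_{\nu\in\Minv}\nu(\p)=\tau>0$. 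The preceding lemma then provides $N$ such that $S_Nf(x)\ge\tau/2$ for all $x\in D$.

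The main step is that lemma, and I would argue it by contradiction along the lines indicated. Suppose there are points $x^{(n)}\in D$ with $S_nf(x^{(n)})<\tau/2$ for arbitrarily large $n$. Code orbits by $\Psi(x)=(f(\sigma^kx))_{k\ge0}\in[-R,R]^{\N}$ and take a weak$^*$ limit of the empirical measures along $\Psi(x^{(n)})$. This gives an invariant measure on the closure $\Omega_f$ in which the first coordinate has mean at most $\tau/2$; passing to an ergodic component keeps this bound. A generic point $\underline z$ of that component has all length-$n$ windows with averages at most $3\tau/4$ for large $n$. Approximate $\underline z$ by some $\Psi(y^{(n)})$ with $y^{(n)}\in D$ on the first $2n$ coordinates, to precision $\tau/8$. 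The empirical measures of $y^{(n)}$ in $\Sigma_A$ accumulate on some $\nu\in\Minv$. Since $\p_n-S_nf$ is bounded on $D$ and $\nu\mapsto\nu(\p)$ is continuous (Proposition 5.7 of \cite{TermoEqSt}), we get $\nu(\p)\le 7\tau/8<\tau$, contradicting~\eqref{eq:positiveintegral}. The delicate point, and the step I expect to be the main obstacle, is this approximation: $\Omega_f$ is only the closure of $\Psi(D)$, so one must check that $D$-points exist matching a generic $\underline z$ on long finite windows. This holds because product-topology neighbourhoods only constrain finitely many coordinates, which is why the window length is fixed before choosing $y^{(n)}$.

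Given $N$, set
\[
u(x)=\frac1N\sum_{k=0}^{N-2}(N-1-k)f(\sigma^kx)
\]
and $g=g_{\mu,\p}=f+u\circ\sigma-u=\frac1NS_Nf$, defined on $D$. A telescoping computation verifies this identity. Property (1) then holds with $\tau'=\tau/(2N)$, or any smaller positive constant to get the strict inequality.

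For property (2), note that $S_ng=S_nf+u\circ\sigma^n-u$ and $|u|\le NR$, so
\[
|S_ng(x)-\p(x_0\cdots x_{n-1})|\le D(\p)+2NR=:E
\]
(enlarge slightly for strictness). For the Bowen property, take $x,x'\in D$ in the same $n$-cylinder. Then $|S_ng(x)-S_ng(x')|\le 2E+|\p(x_{(n)})-\p(x'_{(n)})|=2E$, since the words coincide. Finally, boundedness gives $g\in L^\infty(\mu)$.
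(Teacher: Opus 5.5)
Your proposal is correct and follows the paper's proof essentially step for step. It uses the same ergodic contradiction argument (via the coding $\Psi$ and an approximating point $y^{(n)}\in D$) to find $N$ with $S_Nf\ge\tau/2$ on $D$, and the same transfer function $u$ giving $g=\frac1N S_Nf=f+u\circ\sigma-u$; your explicit constants $\tau'<\tau/(2N)$ and $E=D(\p)+2NR$, and the Bowen check via property (2), just spell out what the paper leaves implicit.

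As an aside, under the stated hypothesis $\p\in\qm^{++}$ the lemma you flag as the main obstacle is actually immediate: for $x\in D$ one has $S_Nf(x)\ge\p(x_0\cdots x_{N-1})-D(\p)\ge\tau N-C-D(\p)$, which exceeds $\tau/2$ for large $N$. The contradiction argument is only needed if one assumes merely $\inf_{\nu\in\Minv}\nu(\p)>0$, as the subsequent corollary characterising $\qm^{++}$ implicitly requires.
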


\begin{corollary}
We have that
  \[
  \qm^{++}=\{\p\in\qm:\inf\{\mu(\p):\mu\in \Minv\}>0\}.
  \]
\end{corollary}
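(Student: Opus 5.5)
We prove the two inclusions separately; one of them is essentially contained in \eqref{eq:positiveintegral}.

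\emph{The inclusion $\subseteq$.} Take $\p\in\qm^{++}$ with $\p(w)\ge \tau|w|-C$ for all $w\in\W$. Then for every $\mu\in\Minv$,
\[
\frac1n\int \p_n\,\dd\mu\ \ge\ \tau-\frac Cn ,
\]
so $\mu(\p)\ge\tau$ and the infimum is at least $\tau>0$. This is exactly the computation \eqref{eq:positiveintegral}.

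\emph{The inclusion $\supseteq$.} Suppose $\p\in\qm$ and $\inf_{\mu}\mu(\p)=\tau_0>0$. The plan is to rerun the argument that produced Theorem~\ref{thm:positiveLivsic}. The key observation is that the proof of the lemma preceding that theorem never used the linear growth bound. It used only three facts: the representation $f=f_{\mathrm m,\p}$ of Theorem~\ref{pro:potencialloogeneral}, the bound $|f|\le R$ on $D$ (which depends only on $\p$, by Remark~\ref{rmk:Mindependentmu}), and the inequality $\mu(\p)\ge\tau$ for every invariant $\mu$, which is precisely our hypothesis.

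So the first step is to assume that for arbitrarily large $n$ there is $x^{(n)}\in D$ with $S_nf(x^{(n)})<\tau_0/2$. We then build the auxiliary compact shift $\Omega_f$ and an ergodic limit measure $\nu$ with $\int X\,\dd\nu\le\tau_0/2$. Next we choose points $y^{(n)}\in D$ all of whose length-$n$ Birkhoff averages stay below $7\tau_0/8$. Finally we take a weak$^*$ limit $\mu$ of the empirical measures along $y^{(n)}$. This $\mu$ satisfies $\mu(\p)\le 7\tau_0/8<\tau_0$, contradicting the hypothesis. Hence there is $N$ with $S_Nf\ge\tau_0/2$ on $D$.

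\emph{Recovering linear growth.} Writing $n=qN+r$ with $0\le r<N$ gives, for $x\in D$,
\[
S_nf(x)\ \ge\ q\,\frac{\tau_0}{2}-NR\ \ge\ \frac{\tau_0}{2N}\,n-C' .
\]
Combining this with $|S_nf(x)-\p(x_0\dots x_{n-1})|\le D(\p)$ yields $\p(w)\ge \frac{\tau_0}{2N}|w|-C''$ for every word $w$ whose cylinder meets $D$.

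\emph{Main obstacle.} The difficulty is passing from words whose cylinder meets $D$ to all $w\in\W$, since $D$ carries full $\mathrm m$-measure but need not meet every cylinder. I would handle this by choosing the reference measure $\mathrm m$ to have full support, for instance the Gibbs state $\mu_0$ of the zero quasimorphism (Theorem~\ref{thm:propiedadesmedidamul}). Then $D$ is dense and meets every cylinder $[w]$, so the bound holds for all words. Together with the observation that $\p$ is bounded below, this gives $\p\in\qm^{++}$.
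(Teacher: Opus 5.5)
Your proof is correct and follows the paper's argument. You fix a full-support reference measure so that $D$ is dense, and rerun the lemma behind Theorem~\ref{thm:positiveLivsic}. As you rightly note, that lemma uses only $\inf_{\mu}\mu(\psi)>0$, not linear growth; the paper relies on this tacitly, since the theorem is stated for $\qm^{++}$. You then transfer the linear lower bound from $D$ to all words. The only cosmetic difference is that the paper packages $S_Nf\ge\tau_0/2$ into the cohomologous function $g=\frac1N S_Nf>\tau'$, whereas you use the equivalent block decomposition $n=qN+r$ directly.
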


\begin{proof}
  Indeed, fix $\mu\in \mathcal{M}(\sigma)$ of full support and consider $g_{\mu,\p}:D\to\mathbb R$ given by the theorem above. $D$ is dense, thus given $x\in \Sigma, n\geq 1$ there exists $y\in D$ so that $x_0x_1\cdots x_{n-1}=y_0y_1\cdots y_{n-1}$. It follows that
  \begin{align*}
  \p(x_0,x_1, \ldots, x_{n-1})=\p(y_0, y_1,\ldots, y_{n-1})\ge S_ng_{\mu,\p}(y)-E\ge n\tau'-E.
  \end{align*}
\end{proof}


\section{Rigidity and counting for quasimorphisms}\label{sec:RigidityAndCounting}
\subsection{Rigidity}
In this section we establish Theorems \ref{thm.rigidity} and \ref{thm.rigidity2}.
Our first result will help us to characterise the equivalence of quasimorphisms in terms of the measure classes of their associated Gibbs states.
\begin{proposition}
Take $\p, \phi \in \qm$ then $\Pr(\p,\phi)$ is a strictly convex everywhere if and only if $\p$ and $\phi$ are not roughly similar.
\end{proposition}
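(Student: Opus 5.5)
The plan is to work entirely with the variational description of the pressure function from \textsection\ref{subsec:pressurefunction}. Write $\beta=\Pr(\p,\phi)$, so $\Pr(t\p-\beta(t)\phi)=0$ for every $t$. By \eqref{eq:beta},
\[
\beta(t)=\sup_{\mu\in\Minv}A_\mu(t),\qquad A_\mu(t)=\frac{t\,\mu(\p)+h_\mu(\sigma)}{\mu(\phi)},
\]
and the supremum at $t$ is attained exactly at $\mu_t$, the equilibrium state of $t\p-\beta(t)\phi$. Corollary~\ref{cor:derivativeofpressure} gives $\beta'(t)=a_{\mu_t}=\mu_t(\p)/\mu_t(\phi)$. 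One caveat on hypotheses: for $\beta$ to be defined we need $\phi\in\qm^{++}$. We also need to rule out $\p$ being similar to $\phi$ with a non-positive constant, for instance by assuming $\p\in\qm^+$ unbounded, as in Theorem~\ref{thm.rigidity}. I will make this explicit at the end.

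\emph{Easy direction (similar implies not strictly convex).} Suppose $\p\sim\phi$, say $|\p-\tau\phi|\le C$ with $\tau>0$. Then $t\p-s\phi$ is within bounded distance $|t|C$ of $(t\tau-s)\phi$. Pressure is unchanged by bounded perturbations, since the partition sums change by at most a factor $e^{|t|C}$. Hence $\Pr(t\p-s\phi)=\Pr(-(s-t\tau)\phi)$, and this vanishes exactly when $s-t\tau=\beta(0)$. So $\beta(t)=\tau t+\beta(0)$ is affine, and in particular not strictly convex anywhere.

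\emph{Main direction (not strictly convex implies similar).} Since $\beta$ is convex, failure of strict convexity means $\beta$ is affine on some interval $[t_0,t_1]$ with $t_0<t_1$, with constant slope $a$ there. Let $L(t)=\beta(t_0)+a(t-t_0)$.
\begin{enumerate}
\item At $t_0$ we have $A_{\mu_{t_0}}(t_0)=\beta(t_0)$, and $A_{\mu_{t_0}}$ is affine with slope $a_{\mu_{t_0}}=\beta'(t_0)$. For the right derivative at $t_0$ this slope equals $a$, so $A_{\mu_{t_0}}=L$.
\item For any $t\in(t_0,t_1]$ this gives $A_{\mu_{t_0}}(t)=L(t)=\beta(t)$. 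So $\mu_{t_0}$ attains the supremum at $t$, equivalently $h_{\mu_{t_0}}(\sigma)+\mu_{t_0}(t\p-\beta(t)\phi)=0=\Pr(t\p-\beta(t)\phi)$.
\item By the uniqueness in Theorem~\ref{thm:propiedadesmedidamul}(3), $\mu_{t_0}=\mu_t$. The final clause of that theorem then says the difference of the two quasimorphisms,
\[
(t-t_0)\p-(\beta(t)-\beta(t_0))\phi=(t-t_0)\bigl(\p-a\phi\bigr),
\]
is bounded. Hence $\p=a\phi+O(1)$.
\end{enumerate}
Consequently $\beta$ is globally affine, by the computation in the easy direction.

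\emph{Sign of the slope.} It remains to show $a>0$, so that this is genuinely a rough similarity. If $a=0$, then $\p$ is bounded. If $a<0$, then $\p\le a\phi+C\le a\tau'|w|+C'\to-\infty$, because $\phi$ has linear growth. Both cases are excluded once $\p$ is bounded below and unbounded. For general $\p\in\qm$ they are genuine exceptions: the argument then shows that $\beta$ is affine precisely when $\p$ is within bounded distance of a real multiple of $\phi$. I would record this as the precise form of the statement.

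I expect the delicate point to be step~1, namely ensuring that the tangent line at $t_0$ is exactly $L$. This relies on $\beta$ being $C^1$ with $\beta'(t_0)=a_{\mu_{t_0}}$ (Corollary~\ref{cor:derivativeofpressure}), so that the one-sided derivative at the endpoint of the affine interval agrees with $a$. If this is awkward, the fallback is to choose $t_0$ in the interior of the interval.
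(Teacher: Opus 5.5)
Your proof is correct, but it reaches the key step by a different mechanism than the paper.

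\textbf{The paper's route.} It takes a flat piece $[u,v]$ and notes that the zero-pressure potential at the midpoint $w=(u+v)/2$ is the average of those at $u$ and $v$. From the Gibbs property it deduces that $\mu_u[x_0\ldots x_{n-1}]\,\mu_v[x_0\ldots x_{n-1}]/\mu_w[x_0\ldots x_{n-1}]^2$ is bounded above and below. It then uses a measure-differentiation theorem to rule out mutual singularity, and ergodicity to get $\mu_u=\mu_v=\mu_w$.

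\textbf{Your route.} You stay inside the convex-analytic picture \eqref{eq:beta}. There $\beta$ is the upper envelope of the affine functions $A_\mu$, and at each $t$ only $A_{\mu_t}$ touches it. A flat piece forces the supporting line $A_{\mu_{t_0}}$ to touch at a second parameter. Uniqueness of equilibrium states (Theorem~\ref{thm:propiedadesmedidamul}(3)) then gives $\mu_{t_0}=\mu_t$ at once. This avoids the differentiation theorem and the singularity/ergodicity step. If you choose $t_0$ in the interior of the flat interval, you do not even need the $C^1$ property: $A_{\mu_{t_0}}$ is a supporting line there, so its slope must equal $a$. Both routes finish with the Gibbs property.

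\textbf{Sign of $a$.} Your treatment of this is a genuine improvement. The paper's argument only yields $\p=a\phi+O(1)$ and never checks that $a>0$. Yet $\p$ bounded, or $\p=-\phi$, gives an affine $\beta$ without rough similarity. So your extra hypotheses ($\phi\in\qm^{++}$, and $\p$ bounded below and unbounded) are genuinely needed.

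\textbf{A small point on the final step.} Argue directly from \eqref{eq:gibbs1fs} rather than from the last clause of Theorem~\ref{thm:propiedadesmedidamul}. Read literally, that clause ignores that adding $c|w|$ leaves the equilibrium state unchanged. Since $t\p-\beta(t)\phi$ and $t_0\p-\beta(t_0)\phi$ both have pressure $0$, equality of their Gibbs states bounds their difference uniformly.
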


\begin{proof}
Note that if $\p$ and $\phi$ are equivalent then $\Pr(\p,\phi)$ is a straight line. Therefore we just need to show that if $\Pr(\p,\phi)$ is not strictly convex everywhere then $\p \sim \phi$.

To that end, suppose that $\Pr(\p,\phi)$ is not strictly convex everywhere. This implies that $\Pr(\p,\phi)$ contains a straight line segment. That is there exist $a,b,u,v  \in \R$ with $u < v$ such that $\Pr(\p,\phi)(t) = at + b$ for $t \in [u,v]$. Now consider the quasimorphisms
\[
\varphi_u = -u \p + \Pr(\p,\phi)(u) \phi, \ \ \varphi_v = -v \p + \Pr(\p,\phi)(v) \phi 
\ \ \text{ and }  \ \ \varphi_w= -w \p + \Pr(\p,\phi)(w) \phi
\]
where $w = (u+v)/2$. Note that, by assumption,
\[
\Pr(\p,\phi)(w) = \frac{\Pr(\p,\phi)(u) + \Pr(\p,\phi)(v)}{2}.
\]
Now let, $\mu_u, \mu_v, \mu_w$ be the Gibbs measures associated to the above quasimorphisms.

For any $x \in \Sigma$ the Gibbs property implies that
\[
\frac{\mu_u[x_0\ldots x_{n-1}]}{\mu_w[x_0\ldots x_{n-1}]} \cdot \frac{\mu_v[x_0\ldots x_{n-1}]}{\mu_w[x_0\ldots x_{n-1}]} 
\]
is bounded away from $0$ and $\infty$ for all $n \ge 1$.

Then, by~\cite[Theorem 1.8]{Heinonen}, this implies that neither $\mu_u$ or $\mu_v$ are mutually singular with respect to $\mu_w$. By ergodicity this implies that $\mu_w = \mu_v = \mu_u$. In particular there exists $E > 1$ such that 
\[
E^{-1} \le \frac{\mu_u [w]}{\mu_v[w]} \le E
\]
for any $w \in \W$. This implies, by the Gibbs property, that $\p \sim \phi$.
\end{proof}

We then immediately deduce the following corollary.

\begin{corollary}\label{cor.line}
The following are equivalent:
\begin{enumerate}
\item  $\Pr(\p,\phi)$ is a straight line;
\item  $\Pr(\p,\phi)$ contains a straight line segment; and,
\item  $\p \sim \phi$.
\end{enumerate}
\end{corollary}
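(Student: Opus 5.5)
We prove the cycle of implications (3)$\Rightarrow$(1)$\Rightarrow$(2)$\Rightarrow$(3); the first two are formal and the last is the preceding Proposition.

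For (3)$\Rightarrow$(1), assume $\p \sim \phi$, so $|\p - \tau\phi| \le C$ for some $\tau, C>0$. Then for every $t,s$ the quasimorphism $t\p - s\phi$ lies within bounded distance $|t|C$ of $(t\tau - s)\phi$. The pressure $\Pr$ changes by at most $|t|C/n \to 0$ on the normalised logarithmic sums, so it is invariant under bounded perturbations. Hence
\[
\Pr(t\p - s\phi) = \Pr((t\tau - s)\phi).
\]
This vanishes exactly when $s = t\tau + \Pr(\phi,\phi)(0)$, by the uniqueness of the root established in \textsection\ref{subsec:pressurefunction}, since $\phi \in \qm^{++}$ makes $s\mapsto \Pr(-s\phi)$ strictly decreasing. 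Thus $\Pr(\p,\phi)(t) = \tau t + c$ is a straight line.

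The implication (1)$\Rightarrow$(2) is trivial.

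For (2)$\Rightarrow$(3), suppose $\Pr(\p,\phi)$ is affine on some interval $[u,v]$ with $u<v$. Then it is not strictly convex everywhere, and the preceding Proposition gives $\p\sim\phi$. If one wants a self-contained argument, it repeats that Proposition's proof. Take $w=(u+v)/2$ and the Gibbs states $\mu_u,\mu_v,\mu_w$ of the quasimorphisms $-r\p + \Pr(\p,\phi)(r)\phi$ for $r\in\{u,v,w\}$; each has pressure zero, since $\Pr(r\p - \Pr(\p,\phi)(r)\phi)=0$ by definition of the pressure function. Affinity gives the exact identity that the $w$-potential is the average of the $u$- and $v$-potentials. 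The Gibbs bounds \eqref{eq.gibbs} then show that $\mu_u[x_{(n)}]\mu_v[x_{(n)}]/\mu_w[x_{(n)}]^2$ is bounded above and below, uniformly in $n$ and $x$.

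The main obstacle lies in this step: deducing non-singularity from a bound on a product of ratios rather than on each ratio separately. We handle it by the cited result from~\cite{Heinonen}, or directly as follows. The bound gives $\mu_u\mu_v\asymp\mu_w^2$ on cylinders, and Cauchy–Schwarz over $\W_n$ gives $\sum_{|c|=n}\sqrt{\mu_u[c]\mu_v[c]}\ge \mathrm{const}>0$. So the Hellinger affinity of $\mu_u$ and $\mu_v$ stays bounded below, which rules out mutual singularity of these two ergodic measures, and hence $\mu_u=\mu_v$. With $\mu_u=\mu_v$, Theorem~\ref{thm:propiedadesmedidamul} shows that the difference of the two potentials, $(v-u)\p - (\Pr(\p,\phi)(v)-\Pr(\p,\phi)(u))\phi$, is bounded. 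Since $u\ne v$, this reads $|\p - a\phi|\le C'$ with $a$ the slope.

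It remains to check $a>0$, so that this is genuinely a rough similarity. In the setting of the statement, $\p,\phi\in\qm^{++}$ (as in Theorem~\ref{thm.rigidity2}), and linear growth of $\p$ forces $a>0$ because $a\phi$ must dominate $\tau|w|-C$. When $\p$ is merely in $\qm$ this positivity remains to be justified separately; we would interpret $\sim$ accordingly there, the case $a\le 0$ corresponding to $\p$ bounded or negatively scaled.
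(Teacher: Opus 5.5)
Your proof is correct and follows the paper's route: (3)$\Rightarrow$(1) and (1)$\Rightarrow$(2) are immediate, and (2)$\Rightarrow$(3) is the preceding Proposition. Your self-contained version reproduces the Proposition's Gibbs-state argument, with a valid Hellinger-affinity step in place of the appeal to Heinonen, and you are right that the slope $a$ must be checked to be positive: this is automatic when $\p\in\qm^{++}$, but genuinely needed otherwise (for bounded $\p$ the pressure function is constant while $\p\not\sim\phi$). The only slip, a sign error that also appears in the paper, is that the pressure-zero potentials whose Gibbs states you compare are $r\p-\Pr(\p,\phi)(r)\phi$, not their negatives.
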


We now study natural some quantities associated to quasimorphisms. Let us start with their growth rates \eqref{eq:hquasi}. For $\p \in \qm^{+}$ we define
\begin{equation}
\label{eq:hp}
h(\p) = \lim_{T \to\infty} \frac{1}{T} \log |\{ w \in \W : \p(w) \leq T\}|
\end{equation}
if $ |\{ w \in \W : \p(w) \leq T\}|$ is finite for each $T >0$, in which case the limit exists by subadditivity, and set $h(\p) = \infty$ otherwise. We can also realise $h(\p)$ as
\[
h(\p) = \lim_{T \to\infty} \frac{1}{T} \log \#\{ \{w\} \in \Per : \ov{\p}\{w\} \leq T\};
\]
see Lemma \ref{lem.growcomp}.

The following proposition shows that finite growth rate is equivalent to linear growth for quasimorphisms.
\begin{proposition}\label{prop.growthrate}
For a quasimorphism $\p \in \qm^{+}$ the following are equivalent:
\begin{enumerate}
\item $\p$ belolngs to $\qm^{++}$;
\item $h(\p) <\infty$;
\item $\{ w \in \W : \p(w) \leq T\}$ is finite for each $T >0$;
\item $\{ w \in \W : \p(w) \leq T\}$ is finite for some $T\ge D_{\p}$.
\end{enumerate}
Hence,  $\qm^{++} = h^{-1}(0,\infty)$ and $\qm^{+}\setminus \qm^{++}=h^{-1}(\{\oo\})$.
\end{proposition}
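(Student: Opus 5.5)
We prove the cycle of implications $(1)\Rightarrow(2)\Rightarrow(3)\Rightarrow(4)\Rightarrow(1)$.

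\emph{Step $(1)\Rightarrow(2)$.} Suppose $\p(w)\ge \tau|w|-C$. Then $\p(w)\le T$ forces $|w|\le (T+C)/\tau$. Hence
\[
|\{w\in\W:\p(w)\le T\}|\le \sum_{n\le (T+C)/\tau}|\W_n|\le c\,k^{(T+C)/\tau},
\]
which gives $h(\p)\le \log k/\tau<\infty$.

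\emph{Steps $(2)\Rightarrow(3)$ and $(3)\Rightarrow(4)$.} The first holds by the convention that $h(\p)=\infty$ whenever some sublevel set is infinite. The second is trivial: take $T=D_\p$ (or any larger $T$).

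\emph{Step $(4)\Rightarrow(1)$.} This is the substantive step, and I would argue by contrapositive. Suppose $\p\in\qm^+\setminus\qm^{++}$. By the Corollary following Theorem~\ref{thm:positiveLivsic}, there is $\mu\in\Minv$ with $\mu(\p)\le 0$. Since $\p$ is bounded below, say $\p\ge -B$, every $\nu\in\Minv$ satisfies $\nu(\p)\ge 0$ (as $\tfrac1n\int\p_n\,d\nu\ge -B/n$). So $\mu(\p)=0$, and by ergodic decomposition we may take $\mu$ ergodic.

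Now take $f=f_{\mu,\p}$ from Theorem~\ref{pro:potencialloogeneral}, so that $|S_nf(x)-\p(x_0\cdots x_{n-1})|\le D_\p$ $\mu$-a.e. Since $\int f\,d\mu=0$ and $\mu$ is ergodic, $S_nf(x)=o(n)$ for $\mu$-a.e.\ $x$. We need infinitely many distinct words $w$ with $\p(w)\le T$ for some fixed $T\ge D_\p$. Sublinear growth alone is not enough, so I would use recurrence. Since $S_nf$ is a cocycle with zero mean over an ergodic invertible extension, Atkinson's theorem (recurrence of zero-mean cocycles) gives, for a.e.\ $x$, infinitely many $n$ with $|S_nf(x)|<\epsilon$. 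For those $n$, $\p(x_0\cdots x_{n-1})\le D_\p+\epsilon$. These are words of distinct lengths, hence infinitely many distinct words. If $D_\p+\epsilon$ must be at most the given $T$, note that the hypothesis in (4) is for some $T\ge D_\p$, and sublevel sets are monotone in $T$, so infinitude at level $D_\p+\epsilon$ with $\epsilon$ small is what we need.

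The boundary case $T=D_\p$ exactly is the delicate point. To handle it I would instead use the quasimorphism structure directly: for any words with $\p(w_i)$ small, concatenations roughly add. Alternatively, since $\p\ge -B$ and $\p(x_{(n)})\le D_\p+\epsilon$ along the subsequence, the values $\p(x_{(n)})$ lie in a bounded interval. I would push this down to level $D_\p$ by writing $x_{(m)}$ as $x_{(n)}$ followed by a tail $v$ and using $\p(x_{(n)})\ge \p(x_{(m)})+\p(v)-D_\p$; this might need a small adjustment to the precise constant. I expect this constant bookkeeping, and the justification of Atkinson recurrence on a one-sided shift (pass to the natural extension), to be the main obstacle.

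\emph{Final claim.} The last sentence follows from the equivalence together with positivity of $h$ on $\qm^{++}$. Positivity holds because quasimorphisms have at most linear growth, $\p(w)\le\tau'|w|+C'$, so the sublevel set at $T$ contains all words of length up to about $T/\tau'$, giving $h(\p)\ge h_{top}(\sigma)/\tau'>0$.
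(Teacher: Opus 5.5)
Your implications $(1)\Rightarrow(2)\Rightarrow(3)\Rightarrow(4)$ and the positivity of $h$ on $\qm^{++}$ are fine. The step $(4)\Rightarrow(1)$, however, has a genuine gap, and it is not a corner case.

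\textbf{The gap.} Sublevel sets increase with $T$. Hence hypothesis (4) is equivalent to finiteness of $\{w\in\mathcal{W}:\psi(w)\le D_\psi\}$, and the level $T=D_\psi$ is the \emph{entire} content of $(4)\Rightarrow(1)$.

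Your Atkinson argument shows that $\{w:\psi(w)\le D_\psi+\epsilon\}$ is infinite for every $\epsilon>0$. But these sets \emph{contain} the level-$D_\psi$ set, so their infinitude does not contradict (4). Your monotonicity remark runs in the wrong direction.

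The proposed repair does not close this either. From $x_{(m)}=x_{(n)}v$, the quasimorphism inequality gives only $\psi(v)\le D_\psi+\psi(x_{(m)})-\psi(x_{(n)})$. This is at most $D_\psi$ only when $\psi(x_{(n)})$ fails to increase between the two times. Nothing in the recurrence argument prevents these values from creeping upward along the return times. You would need a finer cocycle statement, for instance $S_nf(x)\le 0$ for infinitely many $n$, which you do not establish.

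As written, you have shown (1)--(3) equivalent to ``finite for some $T>D_\psi$''. That is a priori a stronger hypothesis than (4).

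\textbf{The missing idea.} It is elementary, and it is what the paper uses.
\begin{itemize}
\item Suppose only finitely many words satisfy $\psi(w)\le D_\psi$, and choose $N$ larger than all their lengths.
\item Then $\psi(u)>D_\psi$ for every $u\in\mathcal{W}_N$. Because $\mathcal{W}_N$ is \emph{finite}, there is $\epsilon>0$ with $\psi\ge D_\psi+\epsilon$ on $\mathcal{W}_N$.
\item Write an arbitrary $w$ as $w_1\cdots w_k u$, with $w_j\in\mathcal{W}_N$, $k=\lfloor |w|/N\rfloor$ and $|u|<N$ (the remainder $u$ possibly empty).
\item Applying the quasimorphism inequality at most $k$ times gives $\psi(w)\ge \sum_{j}\psi(w_j)+\psi(u)-kD_\psi\ge k\epsilon-c_N$. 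Here $c_N\ge 0$ bounds $|\psi|$ on the finitely many words of length less than $N$.
\end{itemize}
This is linear growth, so $\psi\in\qm^{++}$.

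The strict inequality on a finite set of words is exactly what absorbs the boundary level $D_\psi$. The argument uses nothing beyond the definition of a quasimorphism. So the positive Liv\v{s}ic corollary, ergodic decomposition, the potential $f_{\mu,\psi}$ and Atkinson's theorem are all unnecessary for this proposition.
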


\begin{proof}
$(1) \implies (2)$ and $(2) \implies (3) \implies (4)$ are simple. We therefore just need to check the implication $(4) \implies (1)$.

Suppose $(4)$ holds. Then there exists $N \ge1$ and $\epsilon >0$ such that $w\in \W_N$ implies $\p(w)>D_{\p}+\epsilon$. Given $w \in \W$ we can write $w = w_1 \cdots w_k w_{k+1}$ where $w_1,\ldots, w_k \in \W_N$, $k = \lfloor |w|/N\rfloor$ and $|w_{k+1}| < N$, i.e. we write $w$ as a product of words of length $N$ with a final remainder word of length less than $N$. Then, by applying the  quasimorphism identity $k+1$ times we see that
\begin{align*}
\p(w) &\ge \sum_{j=1}^k \p(w_j)  - (k+1)D_\p + \p(w_{k+1}) \\
&\ge k(D_\p) - (k+1)D_\p + \p(w_{k+1}) \\
&=k\epsilon -D_\p + \p(w_{k+1}) \ge \left(\frac{|w|}{N}-1\right)\epsilon - D_\p + \p(w_{k+1})
\end{align*}
and it follows that $\p \in \qm^{++}$. 
\end{proof}

Recall the definition of dilation or a pair of quasimorphisms $\p, \phi \in \qm^{++}$ given in \eqref{eq:defDil}:
\[
\Dil(\p,\phi) = \sup_{w \in \Per} \frac{\ov{\p}\{w\}}{\ov{\phi}\{w\}}.
\]
Note that $\Dil(\p,\phi)$ is strictly positive and finite.
ee that the dilation can be obtain as a largest stretch factor for the quasimorphisms (opposed to their homogenizations).
\begin{lemma}
We have that
\[
\Dil(\p,\phi) = \lim_{n\to\infty} \sup_{w \in \W_n} \frac{\p\{w\}}{\phi\{w\}} \ \ \text{ and } \ \ \Dil(\phi,\p)^{-1} = \lim_{n\to\infty} \inf_{w \in \W_n} \frac{\p\{w\}}{\phi\{w\}}.
\]
\end{lemma}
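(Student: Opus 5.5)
The plan is to prove matching upper and lower bounds for $s_n:=\sup_{w\in\W_n}\p(w)/\phi(w)$, working directly with words of length $n$. Both $\p,\phi$ lie in $\qm^{++}$, so $\phi(w)\ge \tau|w|-C$ and $\phi(w)>0$ once $n$ is large. The statement about $\Dil(\phi,\p)^{-1}$ follows from the same argument with $\sup$ replaced by $\inf$ and the roles of $\p$ and $\phi$ exchanged, so I only treat the first identity.

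Two facts drive everything. First, the formula
\[
\ov\p\{w\}=\inf_m \frac{\p(w^m)+D_\p}{m}=\sup_m\frac{\p(w^m)-D_\p}{m},
\]
taken at $m=1$, gives $|\ov\p\{w\}-\p(w)|\le D_\p$ for every periodic word $w$, and likewise for $\phi$. Second, the mixing constant $M$ allows any word to be closed up or extended. The other ingredient is the quasimorphism inequality together with the boundedness of $\p,\phi$ on words of length at most $M$, via $\|\cdot\|_{\le M}$.

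\emph{Upper bound.} Fix $w\in\W_n$. By mixing, choose $v\in\W_M$ such that $wv$ is periodic. Then
\[
\ov\p\{wv\}\ge \p(wv)-D_\p\ge \p(w)-K \quad\text{and}\quad \ov\phi\{wv\}\le \phi(w)+K,
\]
where $K$ depends only on the defects and on $\|\p\|_{\le M}$, $\|\phi\|_{\le M}$. By the definition of the dilation, $\p(w)-K\le \Dil(\p,\phi)\,(\phi(w)+K)$. Dividing by $\phi(w)\ge\tau n-C$ gives
\[
s_n\le \Dil(\p,\phi)+O(1/n),
\]
and hence $\limsup_n s_n\le\Dil(\p,\phi)$.

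\emph{Lower bound.} Fix a periodic orbit $\{w\}$ with $|w|=\ell$. For large $n$, write $n=m\ell+r$ with $0\le r<\ell$. For $r>0$, pick any $u\in\W_r$ with $w^mu\in\W$; this can be arranged using mixing, possibly at the cost of replacing the tail by a fixed connecting word of bounded length, which only changes the bounded error. Iterating the quasimorphism inequality and comparing with the homogenization shows that
\[
\p(w^m u)=m\,\ov\p\{w\}+O(1) \quad\text{and}\quad \phi(w^mu)=m\,\ov\phi\{w\}+O(1),
\]
where the $O(1)$ is uniform in $m$; this uses $|\p(w^m)-m\ov\p\{w\}|\le D_\p$. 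Since $\ov\phi\{w\}>0$, the ratio $\p(w^mu)/\phi(w^mu)$ tends to $\ov\p\{w\}/\ov\phi\{w\}$, so $\liminf_n s_n\ge \ov\p\{w\}/\ov\phi\{w\}$. Taking the supremum over $\{w\}$ gives $\liminf_n s_n\ge\Dil(\p,\phi)$, and therefore the limit exists and equals $\Dil(\p,\phi)$.

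The only delicate point is the bookkeeping in the lower bound for lengths $n$ that are not multiples of $\ell$, where the remainder word must keep the concatenation admissible. Mixing handles this with a uniformly bounded error, and that error is negligible against the linear growth of $\phi$.
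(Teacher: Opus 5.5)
Your proof is correct. The paper leaves this lemma to the reader, and your argument is essentially the intended one: the upper bound is exactly the closing-up argument of Lemma~\ref{lem.comp} with $\p$ and $\phi$ interchanged, which gives $\p(w)\le \Dil(\p,\phi)\phi(w)+C$; the lower bound via $w^m u$ is the natural complement; and the $\inf$ identity follows from the $\sup$ identity for the pair $(\phi,\p)$ by taking reciprocals.

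Two cosmetic points. First, $\|\cdot\|_{\le M}$ is only a maximum, so your constant $K$ also needs $\min_{v\in\W_M}\p(v)$, as in Remark~\ref{rem.indep}. Second, in the lower bound you can simply take $u$ to be the length-$r$ prefix of $w$. Then $w^m u$ is a prefix of $w^{m+1}$, so no mixing is needed.
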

\noindent We leave the details of the proof to the reader.

The following result shows that the dilations are the optimal comparison constants between two quasimorphisms.
\begin{lemma}\label{lem.comp}
Given $\p, \phi \in \qm^{++}$ there exists $C >0$ such that
\[
\Dil(\p, \phi)^{-1}\p(w) - C \le \phi(w) \le \Dil(\phi, \p)\p(w) + C
\]
for all $w \in \W$.
\end{lemma}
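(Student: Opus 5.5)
The plan is to reduce both inequalities of Lemma~\ref{lem.comp} to one elementary fact: \emph{a quasimorphism whose homogenization is nonnegative on every periodic orbit is bounded below on $\W$.}

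For the upper bound I would set $\chi = \Dil(\phi,\p)\,\p - \phi$. This is a quasimorphism, being a linear combination of quasimorphisms, with defect $D_\chi \le \Dil(\phi,\p) D_\p + D_\phi$. Homogenization is linear, so for every $\{w\}\in\Per$
\[
\ov\chi\{w\} = \Dil(\phi,\p)\,\ov\p\{w\} - \ov\phi\{w\} \ge 0,
\]
since $\ov\phi\{w\}/\ov\p\{w\} \le \Dil(\phi,\p)$ by \eqref{eq:defDil} and $\ov\p\{w\}>0$. For the lower bound I would take $\chi' = \phi - \Dil(\p,\phi)^{-1}\p$. Here $\ov\p\{w\}/\ov\phi\{w\}\le \Dil(\p,\phi)$ and $\ov\phi\{w\}>0$, so again $\ov{\chi'}\ge 0$ on $\Per$. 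Once $\chi$ and $\chi'$ are shown to be bounded below by constants $-C$ and $-C'$, both inequalities follow with the constant $\max(C,C')$.

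To prove the key fact, let $\chi$ be a quasimorphism with $\ov\chi\ge0$ on $\Per$.

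First, for a periodic word $u$ we use the formula $\ov\chi\{u\} = \inf_{n\ge1}(\chi(u^n)+D_\chi)/n$ recalled in the paper. Taking $n=1$ gives
\[
\chi(u) \ge \ov\chi\{u\} - D_\chi \ge -D_\chi .
\]

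Second, we pass from periodic words to arbitrary ones using the mixing constant $M$. Given $w\in\W$, primitivity of $A$ provides $v\in\W_M$ such that $wv$ is periodic, exactly as in the proof of Lemma~\ref{lem.basic}. The quasimorphism inequality then gives
\[
\chi(w) \ge \chi(wv) - \chi(v) - D_\chi \ge -2D_\chi - \|\chi\|_{\le M},
\]
with $\|\chi\|_{\le M}$ as in \eqref{eq:normM}. This bound is uniform in $w$, which proves the key fact.

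I do not expect a real obstacle. The one point to check is that the closing word $v$ can be chosen of length exactly $M$, so that $\chi(v)$ is controlled by the finite quantity $\|\chi\|_{\le M}$. This holds because $A^M$ has all entries positive, so for any last letter of $w$ and any first letter of $w$ there is an allowed word of length $M$ joining them. One could instead try a variational route, approximating invariant measures by periodic ones and using a Bowen representative $f_{\mu,\chi}$. That route only controls averages and would need a Mañé-type sub-action argument, so the closing argument above is preferable.
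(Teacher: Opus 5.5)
Your argument is correct and is essentially the paper's proof. Both close $w$ into a periodic word $wv$ with $v\in\W_M$, then use that a quasimorphism differs from its homogenization by at most its defect on periodic words; the only difference is that you run this once on the difference quasimorphism $\Dil(\phi,\p)\p-\phi$ (precisely the content of the paper's Lemma~\ref{lem.hombound}) rather than chaining it separately through $\phi$ and $\p$, which gives a cleaner constant (and note that a connector of length exactly $M$ uses $A^{M+1}>0$, which holds since $A^n>0$ for all $n\ge M$).
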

\begin{proof}
Let $M\geq 1$ be the mixing constant of $\Sigma$. For any word $w \in \W$ there exists  $w' \in \W_M$ such that $\{ww'\} \in \Per$. Then
\begin{align*}
\phi(w) \le \phi(ww') - \phi(w') + D_\phi &\le \ov\phi\{ww'\} + 2D_\phi - \phi(w')  \\
&\le \Dil(\phi,\p)\ov\p\{ww'\}+ 2D_\phi - \phi(w')\\ 
&\le \Dil(\phi,\p)\p(ww')+ 2D_\phi + \Dil(\phi,\p)D_\p - \phi(w') \\
&\le \Dil(\phi,\p)\p(w) +2D_\phi + \Dil(\phi,\p)D_\p - \phi(w')  + \Dil(\phi,\p)\p(w') + D_\p.
\end{align*}
The required upper bound then holds with 
\[
2D_\phi + D_\p+\Dil(\phi,\p)(D_\p + \|\p\|_{\le M}\|) - \min_{w \in \W_M} \phi(w).
\]
For the lower bound
\begin{align*}
\phi(w) \ge \phi(ww') - \phi(w') - D_\phi &\ge \ov\phi\{ww'\} - \phi(w') - 2D_\phi\\
&\ge \Dil(\p,\phi)^{-1}\ov\p\{ww'\} - \phi(w') - 2D_\phi\\ 
&\ge \Dil(\p,\phi)^{-1}\p(ww') - \phi(w') - 2D_\phi - D_\p\\
&\ge \Dil(\phi,\p)^{-1}\p(w) - (\phi(w') + 2D_\phi) - \Dil(\p,\phi)^{-1}(D_\p - \p(w')) -D_\p\\
\end{align*}
and the lower bound holds with constant
\[
2 D_\phi + D_\p +  \Dil(\p,\phi)^{-1} ( D_\p - \min_{w\in\W_M}\{\p(w)\} ) + \|\phi\|_{\le M}.
\]
\end{proof}

\begin{remark}\label{rem.indep}
Notice that in the above lemma $C$ can be taken to be
\[
2 D_\phi + D_\p +  \Dil(\p,\phi)^{-1} ( D_\p - \min_{w\in\W_M}\{\p(w)\} ) + \|\phi\|_{\le M} +\Dil(\phi,\p)(D_\p + \|\p\|_{\le M}) - \min_{w \in \W_M} \phi(w).
\]
\end{remark}

\begin{remark}
The same proof shows that for $\p \in \qm^{++}$ and $\phi \in \qm$ there exists $C>0$ such that 
\[
\left(\inf_{\{u\} \in \Per} \frac{\ov\phi\{u\}}{\ov\p\{u\}}\right) \p(w) - C \le \phi(w) \le \left(\sup_{\{u\} \in \Per} \frac{\ov\phi\{u\}}{\ov\p\{u\}}\right) \p(w) + C
\]
for all $ w\in \W$.
\end{remark}
We note the following immediate corollary.
\begin{corollary}\label{cor.od}
Suppose that $\psi, \phi \in \qm^{++}$. Then, $\psi \sim \phi$ if and only if there exists $\tau > 0$ such that $\ov \psi\{w\} = \tau \ov\phi \{w\}$ for all $\{w\} \in \Per$.
\end{corollary}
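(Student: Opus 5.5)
The forward direction is immediate from the definitions. Suppose $\psi \sim \phi$, so there are $\tau, C>0$ with $|\psi(w) - \tau\phi(w)| \le C$ for all $w\in\W$. For a periodic word $w$, apply this to $w^n$, divide by $n$ and let $n\to\infty$ using \eqref{eq:defhomogenization}. This gives $\ov\psi\{w\} = \tau\ov\phi\{w\}$ for every $\{w\}\in\Per$.

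For the converse, assume $\ov\psi\{w\} = \tau\ov\phi\{w\}$ for all $\{w\}\in\Per$, with a single $\tau>0$. I compute the dilations directly from \eqref{eq:defDil}. Every ratio $\ov{\psi}\{w\}/\ov{\phi}\{w\}$ equals $\tau$, and the denominators are strictly positive because $\phi\in\qm^{++}$. Hence
\[
\Dil(\psi,\phi) = \tau \qquad\text{and}\qquad \Dil(\phi,\psi)=\sup_{w\in\Per}\frac{\ov\phi\{w\}}{\ov\psi\{w\}} = \tau^{-1}.
\]
I then apply Lemma~\ref{lem.comp} with the roles chosen so that $\psi$ is bounded in terms of $\phi$. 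That is, I use the pair $(\phi,\psi)$ in the lemma with the variable names exchanged, which yields a constant $C>0$ such that
\[
\Dil(\phi,\psi)^{-1}\phi(w) - C \le \psi(w) \le \Dil(\psi,\phi)\phi(w) + C
\]
for all $w\in\W$. Substituting both dilations gives $\tau\phi(w) - C \le \psi(w) \le \tau\phi(w)+C$, that is, $|\psi(w)-\tau\phi(w)|\le C$. This is exactly \eqref{eq:defeqqm}, so $\psi\sim\phi$.

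The only point requiring care is bookkeeping. One has to make sure which dilation appears on which side of Lemma~\ref{lem.comp} once the names $\psi$ and $\phi$ are swapped. The reason the argument works is that the two extremal ratios coincide, since every ratio equals $\tau$, so the upper and lower comparison constants collapse to the same number $\tau$. Beyond this there is no real obstacle: the substantive work, namely passing from homogenizations on periodic orbits to bounds on all words through the mixing constant $M$, is already contained in Lemma~\ref{lem.comp}.
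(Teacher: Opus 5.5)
Your proposal is correct and is essentially the paper's argument: the paper records this as an immediate corollary of Lemma~\ref{lem.comp}. When every ratio equals $\tau$, the two dilations are $\tau$ and $\tau^{-1}$, so the two-sided comparison collapses to $|\psi(w)-\tau\phi(w)|\le C$; your forward direction, homogenizing the rough-similarity bound along $w^n$, is likewise the intended argument.
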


We will need the following simple lemma later.
\begin{lemma}\label{lem.hombound}
A quasimorphism $\p \in \qm$ belongs to $\qm^+$ if and only if $\ov{\p}\{w\} \ge 0$ for all $\{w\} \in \Per$.
\end{lemma}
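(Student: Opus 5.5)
The forward direction is immediate. If $\p\in\qm^+$, say $\p\ge -B$ on $\W$, then for any periodic word $w$ we have $\p(w^n)\ge -B$ for every $n$. Dividing by $n$ and letting $n\to\infty$ gives $\ov\p\{w\}\ge 0$.

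For the converse, assume $\ov\p\{w\}\ge 0$ for all $\{w\}\in\Per$. The plan is to bound $\p(w)$ below for an arbitrary word $w\in\W$ by closing it up into a periodic word, exactly as in the proof of Lemma~\ref{lem.comp}.

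By mixing, with $M$ the mixing constant, there is $w'\in\W_M$ such that $ww'$ is periodic. By the quasimorphism inequality,
\[
\p(w)\ge \p(ww')-\p(w')-D_\p .
\]
Next we use the sup formula for the homogenization stated earlier,
\[
\ov\p\{ww'\}=\sup_{n\ge1}\frac{\p((ww')^n)-D_\p}{n}.
\]
Taking $n=1$ in this supremum gives only the upper bound $\ov\p\{ww'\}\ge \p(ww')-D_\p$, which points the wrong way. Instead we use the inf formula,
\[
\ov\p\{ww'\}=\inf_{n\ge1}\frac{\p((ww')^n)+D_\p}{n}.
\]
Taking $n=1$ here yields $\p(ww')+D_\p\ge \ov\p\{ww'\}\ge 0$, that is, $\p(ww')\ge -D_\p$. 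This lower bound follows from iterated subadditivity, which is what justifies the inf formula. Combining the two displays gives
\[
\p(w)\ge -2D_\p-\max_{u\in\W_M}\p(u),
\]
a constant independent of $w$, since $\W_M$ is finite. Hence $\p$ is bounded below, so $\p\in\qm^+$.

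The only point that needs care is choosing the correct one of the two formulas for $\ov\p$: the inf formula gives the lower bound we need, while the sup formula does not. Checking that formula, $\p(v^n)\le n\p(v)+(n-1)D_\p$, is routine. Beyond this the argument is short.
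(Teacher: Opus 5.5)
Your proof is correct and is essentially the paper's argument. You close $w$ up to a periodic word $ww'$ with $|w'|$ bounded by mixing, use subadditivity to get $\psi(ww')\ge \overline{\psi}\{ww'\}-D_\psi\ge -D_\psi$, and absorb $\psi(w')$ and the defect into a uniform constant via the quasimorphism inequality; your explicit check $\psi(v^n)\le n\psi(v)+(n-1)D_\psi$ is exactly the one-sided estimate the paper invokes as ``sub/superadditivity''.
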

\begin{proof}
The only if statement is easy. To prove the other implication suppose that $\ov{\p}\{w\} \ge 0$ for all $\{w\} \in \Per$. Then, given any $w \in \W$ we can find $w' \in \W_{\le M}$ such that $\{ww'\} \in \Per$. Then $\ov{\p}\{ww'\} \ge 0$ and
\[
|\ov{\p}\{ww'\} - \p(ww')|  \le D_\p  \ \ \text{ and } \ \ | \p(ww') - \p(w)| \le D_\p + \max_{w \in \W_{\le M}} \p(w)
\]
where we have used the sub/superadditivity in the first inequality and the quasimorphism property in the latter.  In particular there exists $C \ge 0$ such that $\p(w) + C > 0$ for all $w \in \W$.
\end{proof}

\begin{remark}\label{rem:integralhomogeneous}
  For $\{w\}\in \Per$ denote $\delta_{\{w\}}\in\mathcal M(\sigma)$ the $\sigma$-invariant measure supported on $\{w\}$. Then it is possible to show (\cite[Example $5.10$]{TermoEqSt}) that
  \[
  \ov{\p}\{w\}=\delta_{\{w\}}(\p).
  \]
  Furthermore, by \cite{Sigmund1974}, the set $\{\delta_{\{w\}}:\{w\}\in\Per\}$ is dense in $\mathcal M(\sigma)$. It follows that for $\p\in\qm$,
  \[
  \p\in\qm^{++} \Leftrightarrow \ov{\p}\{w\} \ge 0\ \forall\{w\} \in \Per \Leftrightarrow \mu(\p)\ge 0\ \forall \mu\in\mathcal M(\sigma).
  \] 
\end{remark}

The following lemma shows that the dilations can be thought of as the asymptotic gradients of the pressure functions.

\begin{lemma}\label{lem.asympgrad}
For $\p, \phi \in \qm^{++}$ we have that
\[
\Dil(\phi, \p) = \lim_{t\to\infty} \frac{\Pr(\phi,\p)(t)}{t} \ \ \text{ and } \ \ \Dil(\p,\phi)^{-1} = \lim_{t\to -\infty} \frac{\Pr(\phi,\p)(t)}{t}.
\]
\end{lemma}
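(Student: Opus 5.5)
Throughout, write $\beta(t)=\Pr(\phi,\p)(t)$. The plan is to use the variational description \eqref{eq:beta}:
\[
\beta(t)=\sup_{\mu\in\Minv}\bigl(a_\mu t+b_\mu\bigr),\qquad a_\mu=\frac{\mu(\phi)}{\mu(\p)},\quad b_\mu=\frac{h_\mu(\sigma)}{\mu(\p)}.
\]
Here $\mu(\p)\ge\tau>0$ by \eqref{eq:positiveintegral}, and similarly $\mu(\phi)>0$. Since $0\le h_\mu(\sigma)\le h_{top}(\sigma)$, the intercepts satisfy $0\le b_\mu\le h_{top}(\sigma)/\tau$ uniformly in $\mu$. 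Consequently
\[
t\,\sup_\mu a_\mu\;\le\;\beta(t)\;\le\;t\,\sup_\mu a_\mu+\frac{h_{top}(\sigma)}{\tau}\qquad (t>0),
\]
and the analogous inequalities hold with $\inf_\mu a_\mu$ for $t<0$. Dividing by $t$ and letting $t\to\pm\infty$ gives
\[
\lim_{t\to\infty}\frac{\beta(t)}{t}=\sup_{\mu\in\Minv}\frac{\mu(\phi)}{\mu(\p)},\qquad \lim_{t\to-\infty}\frac{\beta(t)}{t}=\inf_{\mu\in\Minv}\frac{\mu(\phi)}{\mu(\p)}.
\]
For the lower bound on $\beta(t)$ I would use a single measure with slope close to the supremum (equivalently, a periodic measure, for which $h_\mu=0$), so that both limits exist and do not merely hold as $\limsup$ or $\liminf$.

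It then remains to identify these extremal ratios with the dilations. By Remark~\ref{rem:integralhomogeneous}, $\ov\p\{w\}=\delta_{\{w\}}(\p)$, and likewise for $\phi$. Restricting the supremum to periodic measures $\delta_{\{w\}}$ therefore gives
\[
\sup_\mu \frac{\mu(\phi)}{\mu(\p)}\;\ge\;\sup_{\{w\}\in\Per}\frac{\ov\phi\{w\}}{\ov\p\{w\}}=\Dil(\phi,\p).
\]
For the reverse inequality I would use Sigmund's density of periodic measures in $\Minv$ together with the continuity of $\mu\mapsto\mu(\p)$ and $\mu\mapsto\mu(\phi)$ (Proposition 5.7 of \cite{TermoEqSt}, as already used in the $C^1$ lemma). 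Because the denominators are bounded below by $\tau>0$, the ratio $\mu\mapsto\mu(\phi)/\mu(\p)$ is continuous on $\Minv$, so its supremum over the dense set of periodic measures equals its supremum over all of $\Minv$. The same argument shows $\inf_\mu \mu(\phi)/\mu(\p)=\inf_{\{w\}}\ov\phi\{w\}/\ov\p\{w\}=\bigl(\sup_{\{w\}}\ov\p\{w\}/\ov\phi\{w\}\bigr)^{-1}=\Dil(\p,\phi)^{-1}$, which is the second identity.

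The main obstacle is the continuity step for $\mu\mapsto\mu(\p)$, since quasimorphisms are not continuous functions on $\Sigma_A$. I would rely on the cited proposition. If a direct argument is needed, note that $\mu(\p)=\lim_n\frac1n\int\p_n\,d\mu$ and that the error in this limit is at most $D_\p/n$ uniformly in $\mu$ (by sub- and superadditivity of $a_n=\int\p_n\,d\mu$). Hence $\mu(\p)$ is a uniform limit of the functions $\mu\mapsto\frac1n\int\p_n\,d\mu$, each of which is weak$^*$-continuous because $\p_n$ is locally constant. A uniform limit of continuous functions is continuous, which completes the argument.
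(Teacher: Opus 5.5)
Your proof is correct, but it takes a different route from the paper's.

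\textbf{The paper's route.} The paper argues by counting at the level of words. It inserts the comparison $\phi\le\Dil(\phi,\p)\,\p+C$ of Lemma~\ref{lem.comp} into the annulus sums $\sum_{|\p(w)-n|<R}e^{t\phi(w)}$ of Lemma~\ref{lem.expgrow}. This gives $\Pr(\phi,\p)(t)\le t\,\Dil(\phi,\p)+h(\p)$ for $t>0$. For the lower bound it uses words on which $\phi/\p$ is nearly $\Dil(\phi,\p)$ to bound the same sums from below for infinitely many $n$.

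\textbf{Your route.} You work on the dual side instead. The representation \eqref{eq:beta} has intercepts $b_\mu$ bounded uniformly, so the two asymptotic slopes are $\sup_{\mu\in\Minv}\mu(\phi)/\mu(\p)$ and $\inf_{\mu\in\Minv}\mu(\phi)/\mu(\p)$, and existence of both limits follows directly from the sandwich. Passing to periodic orbits is then a soft density argument: Sigmund's theorem plus weak$^*$ continuity of $\mu\mapsto\mu(\p)$. Your proof of that continuity, by uniform $D_\p/n$-approximation with the weak$^*$-continuous maps $\mu\mapsto\frac1n\int\p_n\,d\mu$, is clean and removes the need for the citation.

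\textbf{What each buys.} The paper's argument needs neither Sigmund's theorem nor any continuity of $\mu\mapsto\mu(\p)$. It also produces the explicit bound \eqref{eq.asymp1}, which is reused later. Your argument yields the intermediate identity $\Dil(\phi,\p)=\sup_{\mu\in\Minv}\mu(\phi)/\mu(\p)$, which is relevant for instance to the effective domain in Theorem~\ref{thm.ldp}. If you bound $b_\mu\le\sup_\nu b_\nu=\Pr(\phi,\p)(0)=h(\p)$ instead of by $h_{top}(\sigma)/\tau$, you recover \eqref{eq.asymp1} as well.

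\textbf{Two minor remarks.} First, the extra ``single measure'' step for the lower bound is unnecessary. Since $b_\mu\ge 0$, you already have $\Pr(\phi,\p)(t)\ge t\,\sup_\mu a_\mu$ for $t>0$. Second, if $\delta_{\{w\}}$ is taken to be the normalised orbit measure, so that it lies in $\Minv$ and Sigmund's theorem applies, then $\delta_{\{w\}}(\p)=\ov\p\{w\}/|w|$ rather than $\ov\p\{w\}$. This is harmless here, because only the ratio $\delta_{\{w\}}(\phi)/\delta_{\{w\}}(\p)$ enters.
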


\begin{proof}
Consider $t > 0$.
By Lemma \ref{lem.comp} there exists $C_t > 0$ such that 
\[
\sum_{|\p(w) - n| < R} e^{t\phi(w)} \le \sum_{|\p(w) - n| < R} e^{t\Dil(\phi,\p)\p(w) + tC_t} \le \ov{C}_t e^{tn \Dil(\phi,\p) + nh(\p)} 
\]
for any $n\ge 1$. Here $\ov{C}_t$ is a constant depending only on $t$. Taking exponential growth rates of the above gives
\[
\Pr(\phi,\p)(t) \le t \Dil(\phi,\p) + h(\p).
\]
This implies that
\[
\Dil(\phi, \p) \ge \limsup_{t\to\infty} \frac{\Pr(\phi,\p)(t)}{t}.
\]

Now note that for any $\epsilon >0$ there are infinitely many $w \in \W$ for which we have $\phi(w) \ge \Dil(\phi,\p)\p(w) - \epsilon$. In particular there is $C_t$ depending only on $t$ and infinitely many $n \ge 1$ for which
\[
\sum_{|\p(w) - n| < R} e^{t\phi(w)} \ge C e^{nt(\Dil(\phi,\p)-\epsilon)}.
\]
Taking exponential growth rates of either side of this equation and using the fact that $\epsilon$ is arbitrary we see that
\[
\Dil(\phi, \p) \le \liminf_{t\to\infty} \frac{\Pr(\phi,\p)(t)}{t}.
\]
Combining this with the other inequality proves that 
\[
\Dil(\phi, \p) = \lim_{t\to\infty} \frac{\Pr(\phi,\p)(t)}{t} 
\]
as required. The other equality follows in the same way.
\end{proof}

Note that in the above lemma we established the inequality
\begin{equation} \label{eq.asymp1}
\Pr(\phi,\p)(t) \le t \Dil(\phi,\p) + h(\p)
\end{equation}
when $t >0$. We can similarly show that
\begin{equation} \label{eq.asymp2}
\Pr(\phi,\p)(t) \le t \Dil(\p,\phi)^{-1} - h(\phi)
\end{equation}
for $t <0$.
\begin{remark}
When $\p \in \qm^{++}$ and $\phi \in \qm$ the same proof shows that
\[
\lim_{t\to\infty} \frac{\Pr(\phi,\p)(t)}{t} = \sup_{\{u\} \in \Per} \frac{\ov\phi\{u\}}{\ov\p\{u\}}\ \ \text{ and } \ \ \lim_{t\to -\infty} \frac{\Pr(\phi,\p)(t)}{t} = \inf_{\{u\} \in \Per} \frac{\ov\phi\{u\}}{\ov\p\{u\}}.
\]
\end{remark}

\begin{remark}\label{rem.asympupper}
By convexity and the previous lemma we have that
\[
\Pr(\phi,\p)(t) \le \Dil(\phi,\p) t + \Pr(\phi,\p)(0)
\]
for all $t \ge 0$.
\end{remark}

As a corollary we deduce the following.

\begin{lemma}\label{lem.bounds}
Given $\p, \phi \in \qm^{++}$ we have that
\[
\Dil(\phi,\p) \ge \frac{h(\p)}{h(\phi)}
\]
and we have equality if and only if $\p \sim \phi$. In particular,  $\p \sim \phi$ if and only if there exists $\tau \in \R$ such that  $\ov{\p} = \tau \ov{\phi}$.
\end{lemma}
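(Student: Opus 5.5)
The plan is to read both the inequality and the equality case off the pressure function $\beta:=\Pr(\phi,\p)$, using its convexity, its asymptotic slope (Lemma~\ref{lem.asympgrad}) and the rigidity statement Corollary~\ref{cor.line}.

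\emph{Step 1: compute $\beta$ at two points.} At $t=0$, $\beta(0)$ is the unique $s$ with $\Pr(-s\p)=0$. By the counting description of the pressure function (Lemma~\ref{lem.expgrow}), $\beta(0)=\lim_n \frac1n\log|\{w: |\p(w)-n|<R\}| = h(\p)$. Similarly, at $t=h(\phi)$ the quasimorphism $h(\phi)\phi - 0\cdot\p$ has $\Pr(h(\phi)\phi)$… Since I am not sure of the sign convention, I will instead use the description $\beta(t)=\lim_n\frac1n\log\sum_{|\p(w)-n|<R}e^{t\phi(w)}$ at $t=-h(\phi)$. Summing $e^{-h(\phi)\phi(w)}$ over all $w$ with $\p(w)\approx n$ for all $n$ is a Poincaré-type series. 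Its critical exponent argument shows that the zero $s$ of $\Pr(-h(\phi)\phi - s\p)$ is $0$, because $\Pr(-h(\phi)\phi)=0$. This last fact is itself the statement that $h(\phi)$ solves Bowen's equation for $\phi$, which is the case $\p=$ length of the $t=0$ computation applied to $\phi$. So $\beta(-h(\phi))=0$ and $\beta(0)=h(\p)$.

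\emph{Step 2: convexity gives the inequality.} By Remark~\ref{rem.asympupper}/Lemma~\ref{lem.asympgrad}, the slope of $\beta$ tends to $\Dil(\phi,\p)$ as $t\to\infty$. By convexity, every secant slope is bounded by this limit, in particular the slope of the chord from $-h(\phi)$ to $0$:
\[
\Dil(\phi,\p)\ \ge\ \frac{\beta(0)-\beta(-h(\phi))}{h(\phi)}=\frac{h(\p)}{h(\phi)}.
\]

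\emph{Step 3: the equality case.} If equality holds, then the convex function $\beta$ has a chord slope equal to its supremal slope. Hence $\beta$ is affine on $[-h(\phi),\infty)$, and in particular contains a line segment. Corollary~\ref{cor.line} (with the roles as in the Proposition preceding it) then gives $\p\sim\phi$. Conversely, if $\p\sim\phi$, then $\ov\p=\tau\ov\phi$ by Corollary~\ref{cor.od}. In that case $\Dil(\phi,\p)=1/\tau$ and $h(\p)=h(\phi)/\tau$ directly from the definitions, using the periodic-orbit expression for $h$ (or rough similarity on words), so equality holds. The final sentence of the statement is then just Corollary~\ref{cor.od} restated, with $\tau>0$ forced by positivity of homogenizations in $\qm^{++}$.

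The main obstacle is Step 1: identifying $\beta(-h(\phi))=0$ and $\beta(0)=h(\p)$ rigorously. This amounts to showing $\Pr(-h(\p)\p)=0$ for $\p\in\qm^{++}$. I would prove it by grouping words according to the value of $\p$. The sum $\sum_{w\in\W_n}e^{-s\p(w)}$ is compared with $\sum_m e^{-sm}|\{w\in\W_n:\p(w)\approx m\}|$, using the linear bounds $\tau n-C\le\p(w)\le\tau' n+C'$, so that only $m\asymp n$ contribute. Then the exponential growth in $n$ is negative exactly when $s>h(\p)$, and positive when $s<h(\p)$.
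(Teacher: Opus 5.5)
Your proposal is correct and is essentially the paper's own argument: the paper also obtains the inequality from convexity of $\Pr(\phi,\p)$, the values $\Pr(\phi,\p)(0)=h(\p)$ and $\Pr(\phi,\p)(-h(\phi))=0$, and the asymptotic slope given by Lemma~\ref{lem.asympgrad}, and it gets the equality case from Corollary~\ref{cor.line}. The hedging in your Step 1 is unnecessary, since both values follow at once from Remark~\ref{rem:identifybetazero} (applied with $\p$, and with $\phi$ in the second slot) together with uniqueness of the zero in Definition~\ref{def:pressurefunction}; your explicit check of the converse direction and of the final sentence via Corollary~\ref{cor.od} fills in what the paper leaves implicit.
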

\begin{proof}
The inequality follows from convexity of $\Pr(\phi,\p)$ and the fact that $\Pr(\phi,\p)(0) = h(\p)$ and $\Pr(\phi,\p)(-h(\phi)) = 0$. The final statement is then a consequence of Corollary \ref{cor.line}.
\end{proof}

\subsubsection{Identifying the drift with the derivative}
Suppose that $\p \in \qm^{++}$ and $\phi \in \qm$.
For each $t \in \R$ let $\mu_t$ be the equilibrium state for
\[
  \Phi_t = t\phi - \beta(t)\p
\]
where $\beta(t) = \Pr(\phi,\p)(t)$.

Without loss of generality, we will, in this section, make the following standing, normalisation assumption:

\noindent\textbf{Assumption:} $\Pr(\phi) = 0$, i.e. $h(\phi) =1$.

\smallskip 

Let us record some properties of the family $(\mu_t)_t$. 

\begin{lemma}\label{lem.gibbuni}
For each $t \in \R$ there exists $E_t > 1$ such that for any $ w \in \W$,
\begin{align}\label{eq:eqmut}
E_t^{-1}&\leq \frac{\mu_t([w])}{\exp\left(\Phi_t(w)\right)} \leq  E_t.
  \end{align} 
Furthermore, each $E_t$ can be chosen such that for any closed interval $I \subset \R$, $\sup_{t \in I} E_t < \infty$.
\end{lemma}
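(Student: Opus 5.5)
The first assertion is just the Gibbs property of Theorem~\ref{thm:propiedadesmedidamul}, equation \eqref{eq:gibbs1fs}, applied to $\Phi_t = t\phi-\beta(t)\p$. By definition of $\beta(t)=\Pr(\phi,\p)(t)$ we have $\Pr(\Phi_t)=0$, so $\mu_t([w])/\exp(\Phi_t(w))$ lies in $[E_t^{-1},E_t]$. The real work is the uniformity on compact intervals, and for this I would not cite Theorem~\ref{thm:propiedadesmedidamul} abstractly. Instead I would use the explicit constants produced by the construction in \textsection\ref{ssec:Gibbsconstants}, namely \eqref{eq.gibbs}:
\[
e^{-C_1(D_{\Phi_t} + \|\Phi_t\|_{\le M})}\le \frac{\mu_t[w]}{e^{\Phi_t(w) - |w|\Pr(\Phi_t)}} \le e^{C_2(D_{\Phi_t} + \|\Phi_t\|_{\le M})},
\]
where $C_1,C_2$ are absolute and $M$ is the mixing constant. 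One caveat is that the norm $\|\cdot\|_{\le M}$ should be read as $\max_{|w|\le M}|\Phi_t(w)|$, which is what is actually used in those estimates (both upper and lower bounds on short words appear). Since $\Pr(\Phi_t)=0$, this gives the required bound with
\[
E_t=\exp\big(\max(C_1,C_2)(D_{\Phi_t}+\|\Phi_t\|_{\le M})\big).
\]

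It therefore suffices to bound $D_{\Phi_t}$ and $\|\Phi_t\|_{\le M}$ uniformly for $t$ in a closed interval $I$. The defect is subadditive under linear combinations, so
\[
D_{\Phi_t}\le |t|D_\phi+|\beta(t)|D_\p .
\]
Similarly,
\[
\|\Phi_t\|_{\le M}\le |t|\max_{|w|\le M}|\phi(w)|+|\beta(t)|\max_{|w|\le M}|\p(w)| ,
\]
and the maxima are over a finite set of words. Both bounds are therefore controlled by $\sup_{t\in I}(|t|+|\beta(t)|)$.

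Finally, $\beta=\Pr(\phi,\p)$ is continuous; indeed it is convex and even $C^1$ by Corollary~\ref{cor:derivativeofpressure}. Hence it is bounded on the compact set $I$, which gives $\sup_{t\in I}E_t<\infty$.

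The main obstacle is justifying that the Gibbs constant really depends only on $D_{\Phi_t}+\|\Phi_t\|_{\le M}$, with $C_1,C_2$ independent of the quasimorphism. This requires rechecking the chain Lemma~\ref{lem.basic}–Lemma~\ref{lem.subsup} and the estimate $|P_n(\Phi)-n\Pr(\Phi)|\le D_\Phi$. In each of these the constants enter only through $D_\Phi$, $\|\Phi\|_{\le M}$ and $\min_{|w|\le M}\Phi(w)$, so I expect the verification to be routine bookkeeping. The limit measure is the equilibrium state $\mu_t$ by the uniqueness in Theorem~\ref{thm:propiedadesmedidamul}, so the constants pass to $\mu_t$.
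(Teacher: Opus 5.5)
Your proposal is correct and follows the paper's route. The paper's proof only cites \eqref{eq.gibbs}; you supply the step it leaves implicit, namely that the Gibbs constant there depends only on $D_{\Phi_t}+\|\Phi_t\|_{\le M}$ (rightly read with absolute values). That quantity is bounded by a multiple of $|t|+|\beta(t)|$, so continuity of $\beta$ gives $\sup_{t\in I}E_t<\infty$ on every compact interval $I$.
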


\begin{proof}
This follows directly from equation $(\ref{eq.gibbs})$ above.
\end{proof}

\begin{lemma}
As $t \to t_0$ we have that $\mu_t \to \mu_{t_0}$ in the weak $*$ topology.
\end{lemma}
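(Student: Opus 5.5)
Fix $t_0$ and a closed interval $I$ containing $t_0$ in its interior. The plan is to show that every weak-$*$ accumulation point of $\mu_t$ as $t\to t_0$ equals $\mu_{t_0}$. Since $\Minv$ is weak-$*$ compact, this gives convergence. There are two possible routes, and I would run the Gibbs-bound route, keeping the variational one as a fallback.

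\emph{Gibbs route.} By Lemma~\ref{lem.gibbuni}, with $E=\sup_{t\in I}E_t<\infty$, we have
\[
E^{-1}\le \mu_t([w])\,e^{-\Phi_t(w)}\le E
\]
for all $t\in I$ and $w\in\W$. Take a sequence $t_k\to t_0$ with $\mu_{t_k}\to\nu$. Cylinders are clopen, so $\mu_{t_k}([w])\to\nu([w])$ for each fixed $w$. Also, for fixed $w$,
\[
\Phi_{t_k}(w)=t_k\phi(w)-\beta(t_k)\p(w)\to \Phi_{t_0}(w),
\]
because $\beta=\Pr(\phi,\p)$ is continuous (it is $C^1$ by Corollary~\ref{cor:derivativeofpressure}). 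Passing to the limit gives
\[
E^{-1}\le \nu([w])\,e^{-\Phi_{t_0}(w)}\le E\quad\text{for all } w\in\W .
\]
Combined with the Gibbs property of $\mu_{t_0}$, this yields $E'^{-1}\le \nu([w])/\mu_{t_0}([w])\le E'$ on every cylinder. Since cylinders generate the Borel $\sigma$-algebra, $\nu$ and $\mu_{t_0}$ are mutually absolutely continuous. Now $\nu$ is $\sigma$-invariant, being a weak-$*$ limit of invariant measures, and $\mu_{t_0}$ is ergodic by Theorem~\ref{thm:propiedadesmedidamul}(1). Hence the density $d\nu/d\mu_{t_0}$ is invariant and therefore constant, so $\nu=\mu_{t_0}$.

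\emph{Main obstacle.} The delicate point is the uniformity of $E_t$ over $I$; everything else is routine. Lemma~\ref{lem.gibbuni} asserts this uniformity, but if I had to justify it I would go through \eqref{eq.gibbs}. There the constant depends only on $D_{\Phi_t}$ and $\|\Phi_t\|_{\le M}$. We have $D_{\Phi_t}\le |t|D_\phi+|\beta(t)|D_\p$, and $\|\Phi_t\|_{\le M}$ is bounded by the same kind of expression, so both are bounded on $I$ by continuity of $\beta$. One also needs $\Pr(\Phi_t)=0$, which holds by the definition of $\beta$.

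\emph{Variational fallback.} Use upper semicontinuity of $\mu\mapsto h_\mu(\sigma)$, continuity of $\mu\mapsto\mu(\phi),\mu(\p)$ (as already used in the proof that $\beta'(t_0\pm)=a_{\mu_{t_0}}$), and continuity of $\beta$. Together these give
\[
h_\nu(\sigma)+\nu(\Phi_{t_0})\ge \limsup_k\bigl(h_{\mu_{t_k}}(\sigma)+\mu_{t_k}(\Phi_{t_k})\bigr)=0=\Pr(\Phi_{t_0}).
\]
Uniqueness of the equilibrium state in Theorem~\ref{thm:propiedadesmedidamul}(3) then forces $\nu=\mu_{t_0}$.
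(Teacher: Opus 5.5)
Your main (Gibbs) route is correct and is essentially the paper's argument. It uses uniform Gibbs constants on compact $t$-intervals from Lemma~\ref{lem.gibbuni}, notes that cylinders are clopen so every weak-$*$ limit point inherits the Gibbs bounds, and concludes mutual absolute continuity with $\mu_{t_0}$ and hence equality. You are more explicit than the paper in two places: you use continuity of $\beta$ to identify the limiting exponent as $\Phi_{t_0}$, and you invoke ergodicity of $\mu_{t_0}$ to pass from mutual absolute continuity to $\nu=\mu_{t_0}$. Your variational fallback is also valid and mirrors the argument the paper uses to show $\beta'(t_0\pm)=a_{\mu_{t_0}}$.
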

\begin{proof}
Since cylinder sets are both open and closed, any weak $*$ limit of $\mu_t$ as $t\to t_0$ satisfies the same Gibbs property. In fact, by Lemma \ref{lem.gibbuni} the Gibbs constants are uniformly bounded away from zero and infinity. It is easy to check that any two $\sigma$-invariant measures satisfying the Gibbs property for fixed $\p$ and with same exponent are mutually absolutely continuous. Hence, any two limit points for $\mu_t$ must be identical. 
\end{proof}

The following follows easily from the definition of $\qm^{++}$.
\begin{lemma}\label{lem.grow1}
Fix $R>0$. Then there exists $C_R > 0$ depending only on $R$ such that the following holds: if $w = (x_1, \ldots, x_k) \in \W_k$ and $n \ge 1$ then there are at most $C_R$ values of $j$ such that
\[
|\p(x_1, \ldots, x_j) - n | < R.
\]
\end{lemma}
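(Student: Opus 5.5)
The plan is to use linear growth of $\p$ along prefixes, combined with the quasimorphism inequality, to show that the function $j\mapsto \p(x_1,\ldots,x_j)$ increases at a definite rate on long stretches. Concretely, since $\p\in\qm^{++}$ we may fix $\tau,C>0$ with $\p(u)\ge \tau|u|-C$ for all $u\in\W$. Write $p_j=\p(x_1,\ldots,x_j)$. For $i<j\le k$, the prefix $(x_1,\ldots,x_j)$ is the composable product of $(x_1,\ldots,x_i)$ and $u=(x_{i+1},\ldots,x_j)\in\W_{j-i}$; note that $u\in\W$ because $\W$ consists of all subwords of points of $\Sigma_A$. The quasimorphism property then gives
\[
p_j \ge p_i + \p(u) - D_\p \ge p_i + \tau(j-i) - C - D_\p .
\]

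Now suppose that $i<j$ are two indices with $|p_i-n|<R$ and $|p_j-n|<R$. Then $p_j-p_i<2R$, so by the display above $\tau(j-i)< 2R+C+D_\p$, that is,
\[
j-i< L_R:=\frac{2R+C+D_\p}{\tau}.
\]
Hence every pair of admissible indices lies within distance $L_R$ of each other. It follows that the whole set of admissible $j$ is contained in an interval $[j_0,j_0+L_R)$, where $j_0$ is the smallest admissible index. Such an interval contains at most $C_R:=\lceil L_R\rceil+1$ integers, and this bound is independent of $w$, $k$ and $n$. It depends on $R$ and on the fixed $\p$; the latter is implicit in the statement.

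I do not expect any real obstacle. The only point to check is that the middle subwords $(x_{i+1},\ldots,x_j)$ lie in $\W$, so that the linear lower bound applies to them. This holds because $\W$ is closed under taking subwords, as noted in the footnote in the preliminaries. If one preferred to avoid this, one could instead bound $\p(u)$ from below by writing $u$ as a tail segment and using subadditivity, but the subword-closure argument is simpler.
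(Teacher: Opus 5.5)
Your proof is correct and follows the route the paper intends. The paper only remarks that the lemma ``follows easily from the definition of $\qm^{++}$'', and your argument supplies exactly those details: you apply the linear lower bound $\psi(u)\ge\tau|u|-C$ to the middle subword together with the quasimorphism inequality, which forces all admissible indices into a window of length $(2R+C+D_\psi)/\tau$, and you correctly note that the constant also depends on the fixed $\psi$.
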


In the following two lemmas we use the notation $\asymp$ to denote that two sequences (in $n$) have asymptotically the same growth: two sequences $a_n, b_n$ satisfy $a_n \asymp b_n$ if there exists $C > 1$ such that eventually $C^{-1} a_n \le b_n \le C a_n$. We write $a_n \preceq b_n$ to denote that there exists $L >0$ such that eventually $a_n \le L \, b_n$.

\begin{lemma}\label{lem.expgrow}
Suppose that $\p \in \qm^{++}$, $\p \in \qm$ and $t \in \R$. Then, for any fixed, sufficiently large $R > 0$,
\[
\sum_{|\p(w) - n|<R} e^{t\phi(w)} \asymp e^{n \Pr(\phi,\p)(t)}
\]
as $n \to\infty$.
\end{lemma}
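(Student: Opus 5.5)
Write $\beta=\Pr(\phi,\p)(t)$ and $\Phi_t=t\phi-\beta\p$, and let $\mu_t$ be its equilibrium state. By Lemma~\ref{lem.gibbuni} and the fact that $\Pr(\Phi_t)=0$, we have $\mu_t([w])\asymp e^{\Phi_t(w)}=e^{t\phi(w)-\beta\p(w)}$ uniformly in $w\in\W$. On the set $\{w:|\p(w)-n|<R\}$ the factor $e^{-\beta\p(w)}$ is within $e^{|\beta|R}$ of $e^{-\beta n}$. So it suffices to show
\[
\sum_{|\p(w)-n|<R}\mu_t([w])\asymp 1\qquad (n\to\infty),
\]
for every sufficiently large fixed $R$. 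Multiplying through by $e^{\beta n}$ then gives the claim.

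\emph{Upper bound.} Partition the index set according to word length. Any two words $w,w'$ in the set with $w$ a prefix of $w'$ lie on a common chain of prefixes. Lemma~\ref{lem.grow1} says that along any infinite sequence $x\in\Sigma_A$, at most $C_R$ prefixes $x_{(j)}$ satisfy $|\p(x_{(j)})-n|<R$. Hence
\[
\sum_{|\p(w)-n|<R}\mu_t([w])=\int \#\{j:|\p(x_{(j)})-n|<R\}\,d\mu_t(x)\le C_R .
\]
This holds for any $R$.

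\emph{Lower bound.} Here $R$ must be large. Since $\p\in\qm^{++}$, we have $\p(w)\ge\tau|w|-C\to\infty$ along prefixes. The quasimorphism property together with the bound $\|\p\|_{\le1}$ on single letters gives
\[
|\p(x_{(j+1)})-\p(x_{(j)})|\le D_\p+\max_{|a|=1}|\p(a)|=:K .
\]
Take $R>K$. For every $x$ the sequence $\p(x_{(j)})$ eventually exceeds $n$, and for $n$ larger than $\max_a|\p(a)|$ it starts below $n$. It moves in steps of size at most $K$, so it must hit the window $(n-R,n+R)$. Thus every $x$ has at least one prefix in the set, and the integral above is at least $1$. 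Equivalently, the first-entry prefixes form a disjoint cover of $\Sigma_A$ by cylinders, so the sum is $\ge1$.

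Combining the two bounds with the Gibbs comparison yields
\[
C^{-1}e^{n\beta}\le\sum_{|\p(w)-n|<R}e^{t\phi(w)}\le Ce^{n\beta}
\]
with $C=E_t e^{|\beta|R}\max(C_R,1)$. The only delicate point is the lower bound, namely that the window must be wide enough to catch every orbit; this is why $R$ is taken larger than $K$. The rest is bookkeeping via the Gibbs property.
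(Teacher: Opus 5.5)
Your proof is correct and follows the same route as the paper: the Gibbs property of $\mu_t$ (with $\Pr(t\phi-\beta\psi)=0$) reduces the claim to $\sum_{|\psi(w)-n|<R}\mu_t[w]\asymp 1$, and Lemma~\ref{lem.grow1} gives the upper bound. The paper cites only Lemma~\ref{lem.grow1}, which supplies just the upper bound, and leaves the lower bound implicit in the phrase ``sufficiently large $R$''. You make that lower bound explicit: once $R$ exceeds the one-letter increment bound $K$, every $x\in\Sigma_A$ has a prefix in the window.
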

\begin{proof}
Note that
\begin{align*}
\sum_{|\p(w) - n|<R} e^{t\phi(w)} &\asymp e^{n \Pr(\phi,\p)(t)} \, \sum_{|\p(w) - n|<R} e^{t\phi(w) - \Pr(\phi,\p)(t) \p(w)} \\
&\asymp e^{n \Pr(\phi,\p)(t)} \, \sum_{|\p(w) - n|<R} \mu_t[w]\\
&\asymp e^{n \Pr(\phi,\p)(t)}.
\end{align*}
Where the last inequality follows from Lemma \ref{lem.grow1}.
\end{proof}

Before we continue we record the following remarks that will be useful for applications.
\begin{remark}\label{rem.expgrow}
Since Gibbs measures give positive measure to any cylinder, the same proof as above can easily be adapted to show that 
\[
\sum_{|\p(w) - n|<R, w \in U} e^{t\phi(w)} \asymp e^{n \Pr(\phi,\p)(t)}
\]
as $n\to\infty$ where $U = \bigcup_{j=1}^k [x_j] \subset \Sigma$ is a finite, non-empty, union of one-cylinders.
\end{remark}

\begin{remark}\label{rem:identifybetazero}
  By the previous lemma, if we fix $\p\in\qm^{++}$ then for every $\phi\in\qm$, 
 \[P(\phi,\p)(0)=h(\psi).\] 
\end{remark}

\begin{remark}\label{rem:derivativeatzero}
Note that by taking the derivative at $t=0$ we get
\[
P(\phi,\p)(0) = \lim_{T\to\infty} \frac{1}{|\W_\p(T)|} \sum_{w \in \W_\p(T)} \frac{\phi(w)}{T} 
\]
\end{remark}

Before moving on we note the following lemma, which shows that similar growth estimates as the ones obtained in \ref{lem.expgrow} hold when we count over orbits with respect to the homogenization.

\begin{lemma}\label{lem.growcomp}
Suppose that $\p \in \qm^{++}$, $\p \in \qm$ and $t \in \R$. Then, for any fixed, sufficiently large $R > 0$ there exists $R' > R$ such that
\[
\frac{1}{n} \sum_{|\p(w) - n|<R} e^{t\phi(w)} \preceq \sum_{|\ov\p\{w\} - n|<R'} e^{t\ov\phi\{w\}} 
\]
and 
\[
\sum_{|\ov\p\{w'\} - n|<R} e^{t\ov\phi\{w'\}} \preceq \sum_{|\p(w) - n|<R'} e^{t\phi(w)}
\]
as $n\to\infty$. 
\end{lemma}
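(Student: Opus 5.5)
The two inequalities compare counts over words with counts over periodic orbits. The plan is to pass between words and orbits by closing up words with short connecting words, using the mixing constant $M$. The basic estimate is $|\ov\p\{u\}-\p(u)|\le D_\p$ for any periodic word $u$, which follows from the sub/superadditive characterisation $\ov\p\{u\}=\inf_n(\p(u^n)+D_\p)/n=\sup_n(\p(u^n)-D_\p)/n$ at $n=1$. The same bound holds for $\phi$. So everything reduces to controlling the multiplicities of the maps between words and orbits, and to keeping the values of $\p$ and $\phi$ within bounded additive error.

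\emph{First inequality.} Given $w\in\W$ with $|\p(w)-n|<R$, I choose, as in the proof of Lemma~\ref{lem.comp}, a word $w'\in\W_{\le M}$ (chosen once for each last/first letter pair) such that $ww'$ is periodic. By the quasimorphism property,
\[
|\p(ww')-\p(w)|\le D_\p+\max_{|v|\le M}|\p(v)|,
\]
and hence
\[
|\ov\p\{ww'\}-\p(w)|\le K_\p:=2D_\p+\max_{|v|\le M}|\p(v)|.
\]
Likewise $|t\ov\phi\{ww'\}-t\phi(w)|\le |t|K_\phi$. So $\{ww'\}$ lies in the window $|\ov\p\{\cdot\}-n|<R':=R+K_\p$, and $e^{t\phi(w)}\le e^{|t|K_\phi}e^{t\ov\phi\{ww'\}}$. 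The map $w\mapsto\{ww'\}$ is not injective, because a given orbit of length $m$ arises from at most $m$ cyclic rotations, each with at most $M+1$ ways to cut off the tail $w'$. Since $\p\in\qm^{++}$, every $w$ in the window has $|w|\le(n+R+C)/\tau$, so the multiplicity is $O(n)$. This is exactly the source of the factor $1/n$ in the statement, and it gives the first inequality.

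\emph{Second inequality.} Given an orbit $\{w'\}$ with $|\ov\p\{w'\}-n|<R$, I pick any representative word $w'$ (one cyclic rotation). By the basic estimate, $|\p(w')-n|<R+D_\p$ and $t\phi(w')\ge t\ov\phi\{w'\}-|t|D_\phi$. Distinct orbits have distinct chosen representatives, so this map is injective. Here I have to be careful with the multiset convention: $\{w^m\}$ and $\{w^k\}$ are different orbits, but their representatives are also different words, so injectivity is fine. Taking $R'=R+D_\p$ then gives the second inequality with no loss.

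I do not expect a real obstacle here. The only delicate point is the multiplicity count in the first inequality, namely checking that lengths in the window are linearly bounded in $n$. This is where linear growth of $\p$ (condition \eqref{eq:defqm++}) is used, and it is also the reason the $1/n$ factor appears only on one side.
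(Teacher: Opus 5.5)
Your proposal is correct and follows the paper's argument essentially step for step. You close each word up with a connector of length at most $M$, control the additive errors by the defects, and bound the multiplicity of $w\mapsto\{ww'\}$ by $O(n)$ as (number of rotations)$\times$(number of admissible cut points); for the reverse inequality you pick one representative word per orbit. The one minor difference is that you bound the cut points per rotation by $M+1$ using the length of the connector, whereas the paper bounds them by $C_R$ using the $\psi$-window and Lemma~\ref{lem.grow1}; either works.
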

\begin{proof}
The second estimate is simple. For the first one, given $w \in \W$ such that $|\p(w) - n| < R$, we arbitrarily extend $w$ by some $w' \in \W_{\le M}$ (where $M$ is a number that exists by mixing) so that $ww'ww' \in \W$ and $\{ww'\} \in \Per$. The period of $\{ww'\}$ is bounded above by $Cn$ for some $C >0$ since $\p(ww')$ is approximately $n$ and $\p$ is linearly comparable to the number of symbols in the input word.  Suppose that we ran the same process with a different $\tilde{w}$ satisfying $|\p(\tilde{w}) - n| < R$ and obtained a periodic point $\tilde{w}\tilde{w}'$ such that $\{\tilde{w}\tilde{w}'\} = \{ww'\}$. Then we deduce that $\tilde{w}$ must correspond to a starting segment of one of the cyclic permutations of $ww'$. There are at most $Cn$ permutations of $ww'$ and for each of these, there are at most $C_R$ initial segments that could correspond to $\tilde{w}$ by Lemma \ref{lem.grow1}. In particular there are at most $C_R Cn$ many possible words that $w'$ could be. This implies that the map sending $w \mapsto \{ww'\}$ is at most $L n$ to $1$ for some $L >0$. Note also that there exists $C' >0$ independent of $w$ and its extension $ww'$ such that $|\p(w) - \ov\p\{ww'\}|<C'$ and $|\phi(w) - \ov\phi\{ww'\}|<C'$. Combining these facts gives
\[
\sum_{|\p(w) - n|<R} e^{t\phi(w)} \le L n e^{|t|C'} \sum_{|\ov\p\{w\} - n|<R + C'} e^{t\ov\phi\{w\}} 
\]
as required.
\end{proof}

To simplify notation, for each $n\ge 1$, we define $\ev_n : \Sigma_A \to \W_n$ by $\ev_n(x) = (x_0, \ldots, x_{n-1})$.

\begin{proposition}\label{prop.agree}
For each $t \in \R$ it holds that for $\mu_t$ almost every $x \in \Sigma$,  
\[
\lim_{n\to\infty} \frac{\phi(\ev_n(x))}{\p(\ev_n(x))}=P(\phi,\p)'(t). 
\]
\end{proposition}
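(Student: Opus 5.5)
We reduce the statement to a pointwise ergodic theorem for bounded cocycles, applied separately to $\phi$ and to $\p$. Fix $t$ and write $\mu_t$ for the equilibrium state of $\Phi_t=t\phi-\beta(t)\p$. By Theorem~\ref{thm:propiedadesmedidamul}, $\mu_t$ is ergodic. Applying Theorem~\ref{pro:potencialloogeneral} with $\mu=\mu_t$ gives functions $f_{\mu_t,\phi}, f_{\mu_t,\p}\in L^\infty(\mu_t)$ such that, for $\mu_t$-almost every $x$ and every $n$,
\[
|S_nf_{\mu_t,\phi}(x)-\phi(\ev_n(x))|\le D(\phi)
\]
and similarly for $\p$.

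Dividing by $n$ and applying Birkhoff's ergodic theorem to the bounded functions $f_{\mu_t,\phi}$ and $f_{\mu_t,\p}$, we get, for $\mu_t$-almost every $x$,
\[
\frac{\phi(\ev_n(x))}{n}\to\int f_{\mu_t,\phi}\,\dd\mu_t=\mu_t(\phi), \qquad \frac{\p(\ev_n(x))}{n}\to \mu_t(\p).
\]
The identification of the integrals with $\mu_t(\phi)$ and $\mu_t(\p)$ is Remark~\ref{rem:sameintegral}. Alternatively, one can apply Kingman's subadditive theorem directly to the almost additive sequence $\phi(\ev_n(x))+D_\phi$, which avoids Theorem~\ref{pro:potencialloogeneral} altogether. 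In either approach the limit is the constant $\mu_t(\phi)$ by ergodicity.

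Since $\p\in\qm^{++}$, we have $\p(\ev_n(x))\ge\tau n-C$. Hence $\mu_t(\p)\ge\tau>0$, which also follows from \eqref{eq:positiveintegral}, and the denominators are eventually positive. Taking the quotient of the two limits gives
\[
\frac{\phi(\ev_n(x))}{\p(\ev_n(x))}\to\frac{\mu_t(\phi)}{\mu_t(\p)} \qquad \text{for } \mu_t\text{-a.e. } x.
\]

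Finally, Corollary~\ref{cor:derivativeofpressure} identifies $\mu_t(\phi)/\mu_t(\p)$ with $\Pr(\phi,\p)'(t)$, which completes the proof. The only delicate point is ensuring that the objects labelled $\int\phi\,\dd\mu_t$ in that corollary coincide with the limits $\mu(\phi)=\lim\frac1n\int\phi_n\,\dd\mu$ used here. This holds by definition of $\mu(\cdot)$ in the variational principle setup, so I expect no real obstacle. The argument is essentially routine once ergodicity of $\mu_t$ and the $L^\infty$ cocycle representation are in hand.
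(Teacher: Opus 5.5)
Your proposal is correct and follows essentially the same route as the paper: you represent $\phi\circ\ev_n$ and $\p\circ\ev_n$, up to bounded error, by Birkhoff sums of $L^\infty(\mu_t)$ functions (Theorem~\ref{pro:potencialloogeneral}), apply Birkhoff's theorem, identify the limits with $\mu_t(\phi)$ and $\mu_t(\p)$ via Remark~\ref{rem:sameintegral}, and conclude with Corollary~\ref{cor:derivativeofpressure}. The only cosmetic differences are that you make the ergodicity of $\mu_t$ explicit and take the quotient after dividing by $n$ using $\mu_t(\p)\ge\tau>0$, whereas the paper absorbs the bounded errors directly in the ratio using $S_ng\to\infty$.
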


\begin{proof}
Denote $f_n=\phi\circ\ev_n, g_n=\p\circ\ev_n$, and let $f,g\in L^{\oo}(\mu_t)$ be Bowen functions so that 
$(|S_nf-f_n|)_n$ and $(|S_ng-g_n|)_n$ are uniformly bounded. In particular (see remark~\ref{rem:sameintegral})), 
\[
  \mu_t(\phi)=\int f\dd\mu_t\qquad \mu_t(\p)=\int g\dd\mu_t.
\]
Since $\p\in\Qm^{++}$, it follows that $\lim_n S_ng=\infty$, and hence $\mu_t$-almost everywhere,
\[
   \lim_{n}\frac{f_n(x)}{g_n(x)}=\lim_n \frac{S_nf(x)}{S_ng(x)}=\frac{\int f\dd \mu_t}{\int g\dd\mu_t} 
\]
where in the last part we have used Birkhoff's theorem. Using Corollary \ref{cor:derivativeofpressure} we conclude that
\[
  \mu_t-a.e.(x),\quad \lim_{n}\frac{f_n(x)}{g_n(x)}=P(\phi,\p)'(t).
\]
\end{proof}

In particular we get the following, that strengthens Theorem~\ref{thm.drift}.

\begin{theorem}\label{thm.posdrift}
Suppose that $\p \in \qm^{++}$ and $\phi \in \qm^+$. Then for $\mu_{0}$ almost every $x \in \Sigma$,
\[
\lim_{n\to\infty}\frac{\phi(\ev_n(x))}{\p(\ev_n(x))} = P(\phi,\p)'(0) = \frac{\int \phi\dd\mu_0}{\int \p\dd\mu_0} := \tau(\phi/\p).
\]
Furthermore, $\tau(\phi/\p)$ is strictly positive unless $\phi$ is bounded. 
\end{theorem}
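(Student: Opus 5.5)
The almost-everywhere convergence is Proposition~\ref{prop.agree} specialised to $t=0$. I would begin by checking that $\mu_0$ is the right measure. Since $\beta(0)=\Pr(\phi,\p)(0)=h(\p)$ by Remark~\ref{rem:identifybetazero}, the potential is $\Phi_0=-h(\p)\p$, so $\mu_0=\mu_{-h(\p)\p}$, as in Theorem~\ref{thm.drift}. The identification
\[
P(\phi,\p)'(0)=\frac{\int\phi\dd\mu_0}{\int\p\dd\mu_0}
\]
is Corollary~\ref{cor:derivativeofpressure}, where the integrals are understood as $\mu_0(\phi)$ and $\mu_0(\p)$. The denominator satisfies $\mu_0(\p)\ge\tau>0$ by \eqref{eq:positiveintegral}. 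This settles the first part of the statement.

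For positivity, I would note that $\mu_0(\phi)\ge 0$ because $\phi$ is bounded below, so $\tau(\phi/\p)\ge0$, and the task is to exclude $\mu_0(\phi)=0$ when $\phi$ is unbounded. Suppose $\mu_0(\phi)=0$. Then the pressure function $\beta=\Pr(\phi,\p)$ has $\beta'(0)=0$. By the variational description \eqref{eq:beta}, $\beta(t)=\sup_\mu(a_\mu t+b_\mu)$ with $a_\mu=\mu(\phi)/\mu(\p)\ge0$ for every $\mu$, since $\mu(\phi)\ge0$ by Remark~\ref{rem:integralhomogeneous}. Hence $\beta$ is non-decreasing, and the convexity of $\beta$ together with $\beta'(0)=0$ forces $\beta$ to be constant, equal to $h(\p)$, on $(-\infty,0]$. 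I would then apply Corollary~\ref{cor.line}, or its analogue for the pair $(\phi,\p)$ with $\phi\in\qm$ as in the rigidity proposition. The straight segment makes the equilibrium states $\mu_t$ for $t\le0$ coincide, so $t\phi-\beta(t)\p$ and $-h(\p)\p$ differ by a bounded function for some $t<0$ (Theorem~\ref{thm:propiedadesmedidamul}). This gives $t\phi$ bounded, hence $\phi$ bounded, a contradiction.

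The main obstacle is this last rigidity step. The paper proves Corollary~\ref{cor.line} for pairs in $\qm^{++}$, so I would redo its argument for $\phi\in\qm^+$. Take $u<w<v\le0$ on the flat piece. The Gibbs property gives that $\mu_u\mu_v/\mu_w^2$ is bounded on cylinders. The cited Heinonen/ergodicity argument then yields $\mu_u=\mu_w=\mu_v$. Finally, the last clause of Theorem~\ref{thm:propiedadesmedidamul} shows that $(u-v)\phi$ is bounded; the $\p$-terms cancel because $\beta(u)=\beta(v)$.

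As an alternative for this step I would use Lemma~\ref{lem.hombound} and Remark~\ref{rem:integralhomogeneous}. The equilibrium state $\mu_0$ has full support and is Bernoulli, so if $\phi$ is unbounded I expect $\mu_0(\phi)>0$ to follow from the Gibbs bounds. Fix a long word $w$ with $\phi(w)$ large. Almost every point visits the cylinder $[w]$ with positive frequency, and the linear growth $\phi(\ev_n x)\gtrsim c n$ then follows from almost-additivity, together with the lower bound on $\phi$ for the filler words between visits. The point to check is that the defects, one $D_\phi$ per concatenation, are beaten by choosing $\phi(w)>2D_\phi+|\inf\phi|$.
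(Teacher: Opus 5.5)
Your proposal is correct and follows the paper's proof. The almost-everywhere limit and the derivative formula come from Proposition~\ref{prop.agree} at $t=0$ together with Corollary~\ref{cor:derivativeofpressure}, and positivity comes from showing that $\tau(\phi/\p)=0$ forces $\Pr(\phi,\p)$ to be constant on $(-\infty,0]$ and then invoking rigidity of the Gibbs states; your treatment of that last step is more careful than the paper's appeal to Theorem~\ref{thm.rigidity} with ``proportionality constant $0$''. There the equilibrium states of $u\phi-\beta(u)\p$ and $v\phi-\beta(v)\p$ coincide, both have zero pressure, and the $\p$-terms cancel because $\beta(u)=\beta(v)$. Your alternative argument, via a positive visit frequency to a cylinder $[w]$ with $\phi(w)>2D_\phi+|\inf\phi|$, is also valid.
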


\begin{proof}

The limit exists and equals the derivative of the pressure by the previous Proposition, together with \Cref{cor:derivativeofpressure}. Due to our positiveness assumptions, $\tau(\phi/\p)\geq 0$. Suppose that this drift vanishes. Then, since the pressure $\Pr(\phi,\p)(t)$ is convex (and so its derivative is increasing), if $\tau(\phi/\p) = 0$ then the derivative of $\Pr(\phi,\p)(t)$ are $0$ at every point $t \le 0$. This implies that $\Pr(\phi,\p)(t)$ is not strictly convex and so, by Theorem \ref{thm.rigidity}, $\ov{\p}$ and $\ov{\phi}$ are proportional with proportionality constant $\tau(\phi/\p) = 0$. This means that $\ov{\phi}=0$, which in turn forces $\phi$ to be bounded.
\end{proof}

Armed with this we can conclude Theorem~\ref{thm.rigidity2}: observe that the asymptotic characterization of the drift was given in Remark~\ref{rem:derivativeatzero}.

\begin{proposition}
  If $\phi,\p \in \qm^{++}$, then $\tau(\phi/\p) \ge h(\p)/h(\phi)$. Moreover, the following are equivalent:
\begin{enumerate}
\item $\tau(\phi/\p) = h(\p)/h(\phi)$;
\item $\Pr(\p,\phi)$ is affine;
\item $\Pr(\p,\phi)$ is not strictly convex on $\mathbb{R}$; and,
\item $\phi \sim \p$.
\end{enumerate}
\end{proposition}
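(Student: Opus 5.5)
The plan is to read everything off the convex $C^1$ function $\beta(t) = \Pr(\phi,\p)(t)$, defined by $\Pr(t\phi - \beta(t)\p)=0$. We need its values at two points and its derivative at $0$. By Remark~\ref{rem:identifybetazero}, $\beta(0) = h(\p)$. For $t=-h(\phi)$ we use $\Pr(-h(\phi)\phi)=0$, which follows from Lemma~\ref{lem.expgrow} applied to the pair $(\phi,\phi)$ together with the definition of $h(\phi)$. This gives $\beta(-h(\phi))=0$. Finally, Theorem~\ref{thm.posdrift}, Remark~\ref{rem:derivativeatzero} and Corollary~\ref{cor:derivativeofpressure} give $\beta'(0)=\tau(\phi/\p)$.

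\emph{The inequality.} Since $\beta$ is convex and differentiable, its graph lies above the tangent line at $0$. Evaluating at $t=-h(\phi)$ gives
\[
0=\beta(-h(\phi)) \ge \beta(0) - h(\phi)\beta'(0) = h(\p) - h(\phi)\,\tau(\phi/\p),
\]
that is, $\tau(\phi/\p)\ge h(\p)/h(\phi)$.

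\emph{Equivalences.} First, equality in the tangent-line inequality means that the chord of $\beta$ on $[-h(\phi),0]$ coincides with the tangent line at $0$. Convexity then forces $\beta$ to be affine on $[-h(\phi),0]$, so the graph of $\Pr(\phi,\p)$ contains a segment. Corollary~\ref{cor.line} then gives $\phi\sim\p$, which proves (1)$\Rightarrow$(4).

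Next, we pass between $\Pr(\phi,\p)$ and $\Pr(\p,\phi)$. If $\phi\sim\p$, say $|\p-c\phi|\le C$, then both pressure functions are affine, since $\Pr(t\p - s\phi)=\Pr((tc-s)\phi)$; this gives (4)$\Rightarrow$(2). The implication (2)$\Rightarrow$(3) is trivial. For (3)$\Rightarrow$(4), if $\Pr(\p,\phi)$ is not strictly convex then it contains a line segment, and Corollary~\ref{cor.line}, applied to the ordered pair $(\p,\phi)$, yields $\p\sim\phi$.

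It remains to show (4)$\Rightarrow$(1). If $\p\sim\phi$, then $\ov\p=c\,\ov\phi$ for some $c>0$, and $h(\p)=h(\phi)/c$ directly from the definition of growth rate. Moreover $\beta(t)=t/c+h(\p)$, so $\beta'(0)=1/c=h(\p)/h(\phi)$.

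\emph{Main obstacle.} The main subtlety is in applying Corollary~\ref{cor.line}. It is stated for the ordered pair $\Pr(\p,\phi)$, while the statement of Theorem~\ref{thm.rigidity} mixes the two orderings. I would check that the Proposition preceding Corollary~\ref{cor.line} is symmetric in the roles of $\p$ and $\phi$ when both lie in $\qm^{++}$, so that both orderings are legitimate; otherwise I would invoke Theorem~\ref{thm.rigidity}, which covers $\Pr(\phi,\p)$ directly. A secondary point is justifying $\Pr(-h(\phi)\phi)=0$. This comes from Lemma~\ref{lem.expgrow} at $t=0$ with $\phi$ in both slots: there $\beta\equiv 1\cdot t + h(\phi)$, so the zero of $s\mapsto\Pr(-s\phi)$ is at $s=h(\phi)$.
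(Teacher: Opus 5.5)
Your proof is correct and takes the same route as the paper. Convexity of $\Pr(\phi,\p)$, together with $\Pr(\phi,\p)(0)=h(\p)$ and $\Pr(\phi,\p)(-h(\phi))=0$, gives the tangent-line inequality at $0$; equality then forces an affine segment, and Corollary~\ref{cor.line} yields $\phi\sim\p$. You also correctly spell out the remaining implications, which the paper leaves implicit, and your concern about the ordering of the arguments is harmless, since Corollary~\ref{cor.line} is stated for an arbitrary pair and both $\phi$ and $\p$ lie in $\qm^{++}$ here.
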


\begin{proof}
The function $\Pr(\phi,\p)(t)$ is convex, and  $\Pr(\phi,\p)(0) = h(\p), \Pr(\phi,\p)(-h(\phi)) = 0$. Since $\phi\in\qm^{++}$, $h(\phi)>0$, and the derivative of the pressure function at zero ($\tau(\phi/\p)$), verifies
\[
  \tau(\phi/\p)\geq \frac{h(\p)}{h(\phi)}.
\]
In case of equality, the derivative of $\Pr(\phi,\p)(t)$ is constant and equal to $\tau(\phi/\p)$ on the interval $[-h(\phi),0]$, i.e. the pressure curve is a straight line. By Corollary \ref{cor.line}, we get that $\phi\sim\p$.
\end{proof}

\subsection{Large deviations and Theorems~\ref{thm.rigidity} and~\ref{thm.rigidity2}}\label{sec.ldp}
We are now ready to prove our main results from Section \ref{sec.rigid}.

\begin{proof}[Proof of Theorem \ref{thm.rigidity}]
The equivalence of $(1), (2)$ and $(3)$ follows from Corollary \ref{cor.line}. The furthermore statement (i.e. the large deviation estimate) follows as a corollary of the large deviation principle below. Lastly, the fact that $\tau(\phi/\p)$ is strictly positive when $\phi$ is unbounded follows from Theorem \ref{thm.drift}.
\end{proof}

In order to state the large deviation principle, some terminology and notation is in order. We recall that, given a function $f\colon \mathbb{R}\to \overline{\mathbb{R}}$, the Fenchel-Legendre transform of $f$ is defined as the function
\[f^*\colon \mathbb{R}\to \overline{\mathbb{R}}\;, \quad s\mapsto \sup_{t\in \mathbb{R}}st-f(t)\;.\]
For $\p\in \qm^{++}$, recall the notation $\W_{\p}(T)=\{w\in \W:\p(w)\leq T\}$ from the introduction. The finite discrete set $\W_{\p}(T)$ is equipped with the uniform probability measure $\mu_T$; any function $A_T\colon \W_{\p}(T)\to \mathbb{R}$ becomes thus a random variable, which we indicate with
\[A_T(w)\;,\quad w\sim \mu_T\;.\] The expected value of $A_T$ with respect to $\mu_T$ is denoted $\mathbf{E}_T$.

\begin{theorem}\label{thm.ldp}
Suppose that $\p \in \qm^{++}$ and $\phi \in \qm$. Let $I(\phi,\p)\colon \mathbb{R} \to \mathbb{R}$ be the Fenchel-Legendre transform of the function $\Pr(\phi,\p) - h(\p)$. Then the family of random variables
\begin{equation}
\label{eq:familyrv}
\frac{1}{T}\phi(w)\;, \quad w\sim \mu_T,\;T>0
\end{equation}
 satisfies the large deviation principle with rate function $I(\phi,\p)$: for every open $U \subset \R$ and every closed $V \subset \R$ with $U \subset V$, 
\begin{align*}
-\inf_{s\in U}I(\phi,\p)(s) &\le \liminf_{T \to\infty} \frac{1}{T} \log \left( \frac{1}{|\W_\p(T)|} \left| \left\{ w \in \W_\p(T) : \frac{\phi(w)}{T} \in U  \right\} \right|  \right) \\
&\le \limsup_{T \to\infty} \frac{1}{T} \log \left( \frac{1}{|\W_\p(T)|} \left| \left\{ w \in \W_\p(R,T) : \frac{\phi(w)}{T} \in V  \right\} \right|  \right)  \le -\inf_{s\in V} I(\phi,\p)(s).
\end{align*}
Furthermore, the effective domain $\{s\in \mathbb{R}:I(\phi,\p)(s)<+\infty\}$ of $I(\phi,\p)$ is the closed interval $[\alpha_{\min}, \alpha_{\max}]$ where
\begin{equation}
\label{eq:effdom}
\left[\inf_{\{u\} \in \Per} \frac{\ov\phi\{u\}}{\ov\p\{u\}},\sup_{\{u\} \in \Per} \frac{\ov\phi\{u\}}{\ov\p\{u\}}\right]\;;
\end{equation}
on the interior of this interval, $I(\phi,\p)$ is strictly convex and of class $C^1$.  
\end{theorem}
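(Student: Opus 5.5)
The plan is to apply the Gärtner--Ellis theorem to the family \eqref{eq:familyrv}, using Lemma~\ref{lem.expgrow} to compute the limiting logarithmic moment generating function. For $t\in\R$ set
\[
\Lambda_T(t)=\log \mathbf{E}_T\big[e^{t\phi(w)}\big]=\log\sum_{w\in\W_\p(T)}e^{t\phi(w)}-\log|\W_\p(T)| .
\]
The first step is to show that $\frac1T\Lambda_T(t)\to \Pr(\phi,\p)(t)-h(\p)$ for every $t$.

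To get the first term, I would decompose $\W_\p(T)$ into the shells $\{|\p(w)-n|<R\}$ with $n\le T$, where $R$ is large and fixed. Each shell sum is $\asymp e^{n\Pr(\phi,\p)(t)}$ by Lemma~\ref{lem.expgrow}, and $\p$ is bounded below, so there are $O(T)$ shells. Hence
\[
\frac1T\log\sum_{w\in\W_\p(T)}e^{t\phi(w)}\to\max\{\Pr(\phi,\p)(t),0\}.
\]
I then need to handle the case $\Pr(\phi,\p)(t)<0$. Since $\Pr(\phi,\p)(t)\ge h(\p)+t\,\tau(\phi/\p)$ by convexity, it is natural to use the normalized count directly. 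The words $w$ with $\p(w)\le T-R$ contribute a sum that is exponentially smaller than the top shell plus $O(1)$. So I will restrict the count to the top shell $T-R<\p(w)\le T$, which gives the growth rate $\Pr(\phi,\p)(t)$ in all cases once the shells are summed as a geometric series. If that restriction is awkward, I would instead run the LDP on the top shell and compare, since $|\W_\p(T)|\asymp e^{Th(\p)}$ is comparable to the top shell count (taking $t=0$ in Lemma~\ref{lem.expgrow}). In either case, dividing by $|\W_\p(T)|$ subtracts $h(\p)=\Pr(\phi,\p)(0)$ (Remark~\ref{rem:identifybetazero}).

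With $\Lambda(t)=\Pr(\phi,\p)(t)-h(\p)$ finite on all of $\R$ and $C^1$ by Corollary~\ref{cor:derivativeofpressure}, the function is essentially smooth. Gärtner--Ellis then yields the upper bound for closed $V$ and the lower bound for open $U$, with rate function $I(\phi,\p)=\Lambda^*$.

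For the effective domain, $\Lambda$ is convex with asymptotic slopes
\[
\alpha_{\min}=\lim_{t\to-\infty}\Pr(\phi,\p)(t)/t,\qquad \alpha_{\max}=\lim_{t\to\infty}\Pr(\phi,\p)(t)/t,
\]
and by the remark after Lemma~\ref{lem.asympgrad} these equal the inf and sup in \eqref{eq:effdom}. Standard convex duality then gives:
\begin{itemize}
\item $\Lambda^*(s)=+\infty$ for $s$ outside $[\alpha_{\min},\alpha_{\max}]$, since $st-\Lambda(t)\to\infty$ in one direction;
\item $\Lambda^*(s)<\infty$ on the open interval;
\item $\Lambda^*(s)<\infty$ at the endpoints.
\end{itemize}
Finiteness at the endpoints is the delicate point. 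I would prove it using the upper bounds \eqref{eq.asymp1} and \eqref{eq.asymp2}, which give $\Lambda(t)\ge \alpha_{\max}t-C$ up to constants. Convexity together with the limit $\Lambda(t)/t\to\alpha_{\max}$ shows $\alpha_{\max}t-\Lambda(t)$ is nonincreasing, hence bounded by its value at $0$, so $\Lambda^*(\alpha_{\max})<\infty$. The same argument works at $\alpha_{\min}$. The degenerate case $\phi\sim c\p$ makes the interval a point and needs no further work.

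For regularity on the interior, I split into two cases:
\begin{itemize}
\item If $\phi$ is not roughly similar to a multiple of $\p$, Corollary~\ref{cor.line} (via the rigidity proposition) makes $\Pr(\phi,\p)$ strictly convex. Its derivative is then a strictly increasing continuous bijection onto $(\alpha_{\min},\alpha_{\max})$, so $\Lambda^*(s)=s\,t(s)-\Lambda(t(s))$ with $t(s)=(\Lambda')^{-1}(s)$. Then $(\Lambda^*)'(s)=t(s)$ is continuous and strictly increasing, so $\Lambda^*$ is $C^1$ and strictly convex.
\item If $\phi\sim c\p$, the interior is empty.
\end{itemize}

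I expect the main obstacle to be the first step: identifying the limit of $\frac1T\Lambda_T$ when $\Pr(\phi,\p)(t)<0$, where the small-$\p$ words could dominate the sum over $\W_\p(T)$. This may force me to work with the shell $T-R<\p(w)\le T$ and justify the transfer to the full ball.
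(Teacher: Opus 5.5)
You follow the same route as the paper (Gärtner--Ellis with $\Lambda=\Pr(\phi,\p)-h(\p)$, then convex duality). The obstacle you flag at the end is not a technicality: it cannot be overcome, because for the ball $\W_\p(T)$ the statement is false.

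Your shell computation is right: $\frac1T\log\mathbf{E}_T[e^{t\phi}]\to\max\{\Pr(\phi,\p)(t),0\}-h(\p)$. When $\Pr(\phi,\p)(t)<0$ (for instance for all $t<-h(\phi)$ whenever $\phi\in\qm^{++}$), the words of small $\p$-length contribute $\asymp 1$ and dominate the top shell, which is $\asymp e^{T\Pr(\phi,\p)(t)}$. Passing to the annulus $\{T-R<\p(w)\le T\}$ changes the random variables. The comparability of $|\W_\p(T)|$ with the annulus count only transfers the \emph{lower} bound to the ball, because the number of $w\in\W_\p(T)$ with $\phi(w)/T\in V$ can be exponentially larger than the number in the annulus.

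Concretely, take the full shift on $\{1,2\}$ with $\p(w)=|w|$ and $\phi(w)=|w|+\#_1(w)$. Then $\Pr(\phi,\p)(t)=t+\log(1+e^t)$ and $h(\p)=\log 2$, so the effective domain of $I(\phi,\p)$ is $[1,2]$. Yet all $2^{L}$ words of length $L=\lfloor T/4\rfloor$ lie in $\W_\p(T)$ and have $\phi(w)/T\in[1/5,1/2]$ once $T\ge 20$, while $|\W_\p(T)|\le 2^{T+1}$. So for $V=[1/5,1/2]$ the $\limsup$ in the upper bound is at least $-\frac{3}{4}\log 2>-\infty=-\inf_V I(\phi,\p)$.

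The paper's proof has exactly this gap. It asserts that the growth rates over $\{|\p(w)-n|\le R\}$ and over $\W_\p(T)$ coincide ``by elementary analytic considerations'', and this fails whenever $\Pr(\phi,\p)(t)<0$. What your argument does establish is the LDP for the annuli, together with the lower bound for the ball. The annulus version is the one in the paper's subsequent remark, and presumably what the undefined $\W_\p(R,T)$ was meant to denote. Since $\max\{\Pr(\phi,\p),0\}$ agrees with $\Pr(\phi,\p)$ near $t=0$, the concentration bound \eqref{eq:ldbound} for the ball still follows from the Gärtner--Ellis upper bound.

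Separately, your endpoint argument runs backwards. \eqref{eq.asymp1} is an \emph{upper} bound, $\Pr(\phi,\p)(t)\le\alpha_{\max}t+h(\p)$. Also $t\mapsto\alpha_{\max}t-\Lambda(t)$ is non\emph{decreasing}, since its derivative is $\alpha_{\max}-\Lambda'(t)\ge 0$, so its value at $0$ bounds it from below, not above. Convexity and the asymptotic slope alone do not give finiteness at the endpoint: the $C^1$ convex function equal to $t^2/2$ for $t\le 0$ and to $t-\log(1+t)$ for $t\ge 0$ has $\sup\Lambda'=1$ but $\Lambda^*(1)=\infty$. For the same reason, the paper's appeal to a general convex-analytic fact about the closed interval needs support.

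Use the variational structure instead. By compactness of $\Minv$, continuity of $\mu\mapsto\mu(\phi),\mu(\p)$, and density of periodic-orbit measures, some $\mu\in\Minv$ has $\mu(\phi)/\mu(\p)=\alpha_{\max}$. Then \eqref{eq:beta} gives $\Pr(\phi,\p)(t)\ge\alpha_{\max}t+h_\mu(\sigma)/\mu(\p)\ge\alpha_{\max}t$ for every $t\in\R$, so $I(\phi,\p)(\alpha_{\max})\le h(\p)$. The same argument works at $\alpha_{\min}$.
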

\noindent For a comprehensive introduction to the large deviation principle, we refer to the standard monograph~\cite{DZ} of Dembo and Zeitouni. Notice that, since $\Pr(\phi,\p)(0)-h(\p)=0$, the definition of Fenchel-Legendre transform gives at once that $I(\phi,\p)$ takes values in $\mathbb{R}_+$, which is consistent with the fact that it governs a large deviation principle.
\begin{remark}
The effective domain of the rate function $I(\phi,\p)$ is reduced to a singleton precisely when $\phi\sim\p$, as follows from~\eqref{eq:effdom} and the already established facts in the present section.
\end{remark} 
\begin{proof}
The statement is a consequence of the already established analytic properties of the pressure function, together with classical large deviation theory and convex analysis; we give the details. 

Define the limiting logarithmic moment-generating function of the family of random variables in~\eqref{eq:familyrv} as
\[\Lambda\colon \mathbb{R}\to \overline{\mathbb{R}}\;, \quad t\mapsto \limsup_{T\to\infty}\frac{1}{T}\log \mathbf{E}_T[e^{t\phi}]\;.\]
Unravelling the definition, we have
\begin{align*}
  \Lambda(t)&=\limsup_{T\to\infty}\frac{1}{T}\log\biggl(\frac{1}{|\W_{\p}(T)|}\sum_{w\in \W_{\p}(T)}e^{t\phi(w)}\biggr)\\
  &=-h(\p)+\limsup_{T\to\infty}\frac{1}{T}\log\sum_{w\in \W_{\p}(T)}e^{t\phi(w)}=-h(\p)+\Pr(\phi,\p)(t)\;,
\end{align*}
using~\eqref{eq:hp}, Lemma~\ref{lem.expgrow} and the fact that, for all sufficiently large $R>0$,
\[
\lim_{n\to\infty}\frac{1}{n}\log \sum_{w\in \W,\;|\p(w)-n|\leq R}e^{t\phi}=\lim_{T\to\infty}\frac{1}{T}\log \sum_{w\in \W_{\p}(T)}e^{t\phi(w)}\;,  
\]
which is a matter of elementary analytic considerations. We deduce, in particular, that $\Lambda$ takes finite values and is of class $C^1$ on $\mathbb{R}$, by the analogous property for the pressure function established in \textsection\ref{subsec:pressurefunction}. The Gärtner-Ellis theorem in large deviation theory, for which we refer to~\cite[Theorem 2.3.6]{DZ}, delivers then directly that the family in~\eqref{eq:familyrv} satisfies the large deviation principle with rate function given by the Fenchel-Legendre transform of $\Lambda$. 

The fact that $I(\phi,p)$ is strictly convex and of class $C^1$ on the interior of its effective domain follows from the analogous property for $\Lambda$ (hence, for $\Pr(\phi,\p)$) and classical convex analytic considerations, for which we refer to Rockafellar's textbook treatment in~\cite[\textsection 26]{Rockafellar}. It is also a well known convex-analytic fact that the effective domain of the Fenchel-Legendre transform $I(\phi,\p)$ is the closed interval 
\begin{align*}
 [\inf_{\mathbb{R}}\Lambda',\sup_{\mathbb{R}}\Lambda']=[\inf_{\mathbb{R}}\Pr(\phi,\p)',\sup_{\mathbb{R}}\Pr(\phi,\p)']&=[\lim_{t\to-\infty}\Pr(\phi,\p)(t)/t,\lim_{t\to+\infty}\Pr(\phi,\p)(t)/t]\\
 &=[\Dil(\p,\phi)^{-1},\Dil(\phi,\p)]\;,
 \end{align*}
the last equality being given by Lemma~\ref{lem.asympgrad}. This concludes the proof.
\end{proof}

\begin{remark}
The same large deviation principle holds when
\begin{enumerate}
\item we count over periodic orbits and use the homogenizations of $\p$ and $\phi$,
\item and when we count over the sets $\{w\in \W:|\p(w)-n|\leq R\}$, as $n\to\infty$, for all sufficiently large $R>0$.
\end{enumerate}
\end{remark}

\begin{proof}[Proof of Theorem \ref{thm.rigidity2}]
The equivalence statements follow easily from the convexity of $\Pr(\phi,\p)$ and the asymptotic average exists the above large deviation principle.
\end{proof}


\section{The space of quasimorphisms}\label{sec:spaceofquasimorphisms}

In this section we study the metric properties of $\Qm^{++}$ equipped with the symmetrized Thurston distance.

\subsection{Basic properties}
We begin by verifying that $\Qq$ is indeed a metric on $\Qm^{++}$.
\begin{lemma}\label{lem.hinequ}
The function $\Qq : \Qm^{++} \times \Qm^{++} \to \R_{\ge 0}$ is a metric on $\Qm^{++}$.
\end{lemma}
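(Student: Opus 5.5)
We verify the metric axioms one at a time, working with representatives $\phi,\p,\eta\in\qm^{++}$. Part 2) of the first Remark in the introduction already shows that $\Qq$ does not depend on the chosen representatives. For finiteness we use Lemma~\ref{lem.comp}: for $w\in\W_n$ with $n$ large, $\ov\phi\{w\}\le \tau'|w|+C'$ and $\ov\p\{w\}\ge \tau|w|-C$. Hence the ratio $\ov\phi\{w\}/\ov\p\{w\}$ is bounded over $\Per$. Here one should use $\ov\p\{w\}\ge \sup_n(\p(w^n)-D_\p)/n\ge \tau|w|$ together with positivity, so that short orbits cause no trouble. It follows that $\Dil(\phi,\p)<\infty$, and likewise $\Dil(\p,\phi)<\infty$.

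\textbf{Nonnegativity and symmetry.} The paper has already observed that $\Dil(\phi,\p)\Dil(\p,\phi)\ge 1$, so $\Qq\ge 0$. Symmetry holds because the product $\Dil(\phi,\p)\Dil(\p,\phi)$ is symmetric in its two arguments.

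\textbf{Triangle inequality.} For every $\{w\}\in\Per$ we write
\[
\frac{\ov\phi\{w\}}{\ov\eta\{w\}}=\frac{\ov\phi\{w\}}{\ov\p\{w\}}\cdot\frac{\ov\p\{w\}}{\ov\eta\{w\}}\le \Dil(\phi,\p)\Dil(\p,\eta).
\]
Taking the supremum over $\{w\}$ gives $\Dil(\phi,\eta)\le\Dil(\phi,\p)\Dil(\p,\eta)$. The same argument in the reverse direction gives $\Dil(\eta,\phi)\le \Dil(\eta,\p)\Dil(\p,\phi)$. Multiplying these two inequalities and taking logarithms yields $\Qq([\phi],[\eta])\le \Qq([\phi],[\p])+\Qq([\p],[\eta])$.

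\textbf{Nondegeneracy.} This is the only substantive step. Suppose $\Qq([\phi],[\p])=0$, and set $a=\Dil(\phi,\p)$. Then $\Dil(\p,\phi)=1/a$. By definition, $\ov\phi\{w\}\le a\,\ov\p\{w\}$ for every $\{w\}$. On the other hand, $\ov\p\{w\}/\ov\phi\{w\}\le 1/a$, which rearranges to $\ov\phi\{w\}\ge a\,\ov\p\{w\}$. Together these give $\ov\phi=a\,\ov\p$ on $\Per$, and Corollary~\ref{cor.od} (equivalently, the final statement of Lemma~\ref{lem.bounds}) then gives $\phi\sim\p$, that is, $[\phi]=[\p]$. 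Conversely, if $\phi\sim\p$ then $\ov\phi=\tau\ov\p$, so both dilations are $\tau$ and $\tau^{-1}$ and $\Qq=0$. All of the real work therefore sits in Corollary~\ref{cor.od}, which rests on Lemma~\ref{lem.comp}. The proof should present these as the key inputs and note that no further estimate is needed.
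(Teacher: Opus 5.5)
Your proof is correct. For nonnegativity, symmetry and the triangle inequality it matches what the paper dismisses as easy. The two arguments differ in the nondegeneracy step.

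The paper goes through the pressure function. By Lemma~\ref{lem.asympgrad}, $\Dil(\phi,\p)$ and $\Dil(\p,\phi)^{-1}$ are the asymptotic slopes of the convex function $\Pr(\phi,\p)$ at $+\infty$ and $-\infty$. So their product equals $1$ exactly when the pressure curve is affine. Corollary~\ref{cor.line}, whose proof uses Gibbs states, their ergodicity and mutual absolute continuity, then converts affineness into $\phi\sim\p$.

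You instead note that $\Dil(\phi,\p)\Dil(\p,\phi)=1$ forces $\ov\phi=a\,\ov\p$ by elementary algebra on the two suprema. You then invoke Corollary~\ref{cor.od}. That corollary is immediate from the two-sided bounds of Lemma~\ref{lem.comp} once $\Dil(\phi,\p)=a$ and $\Dil(\p,\phi)=a^{-1}$, since they give $|\phi-a\p|\le C$.

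Your route is more elementary and self-contained. It shows the metric axioms need no thermodynamic formalism, only the mixing/closing argument behind Lemma~\ref{lem.comp}. The paper's route costs more, relying on Lemma~\ref{lem.asympgrad} and Gibbs-measure rigidity. In exchange it ties the vanishing of $\Qq$ to affineness of the pressure curve, which is the viewpoint the paper later uses for pressure geodesics.

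One small correction to your finiteness paragraph. The bounds $\ov\phi\{w\}\le\tau'|w|$ and $\ov\p\{w\}\ge\tau|w|$ come from the linear upper bound valid for every quasimorphism and the defining lower bound of $\qm^{++}$, after dividing $\phi(w^n)$, $\p(w^n)$ by $n$. They do not come from Lemma~\ref{lem.comp}, whose constants already presuppose finite dilations. Citing that lemma there would be circular, but the argument you actually write down is fine.
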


\begin{proof}
It is easy to check that $\Qq$ is positive and satisfies the triangle inequality. Lastly, by Lemma \ref{lem.comp}, $\Qq([\p],[\phi]) =0$
if and only if $\Dil(\p,\phi)\Dil(\phi,\p) = 1$ which occurs if and only if $\Pr(\p,\phi)$ is a straight line by Lemma \ref{lem.asympgrad} (since $\Pr(\p,\phi)$ is convex). Corollary \ref{cor.line} shows that this happens if and only if $\p \sim \phi$, i.e. $[\p] = [\phi]$.
\end{proof}

We now verify that $(\Qm^{++},\Qq)$ is contractible.

\begin{theorem}
The space $(\Qm^{++},\Qq)$ is contractible. More precisely, given $[\psi_0]\in\Qm^{++}$ there is a
continuous homotopy
\[
\Hc:\Qm^{++}\times[0,1]\longrightarrow\Qm^{++}
\]
such that $\Hc([\varphi],0)=[\varphi]$ and $\Hc([\varphi],1)=[\psi_0]$ for every $[\varphi]\in\Qm^{++}$.
\end{theorem}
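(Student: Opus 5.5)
The plan is a straight-line homotopy in the cone $\qm^{++}$, normalised by growth rate. Given $[\varphi]$, let $\varphi^\ast=h(\varphi)\varphi$ be the representative with $h(\varphi^\ast)=1$. Since $h(\tau\varphi)=h(\varphi)/\tau$, this representative is determined by the class up to bounded error. Indeed, if $\varphi\sim\varphi'$ then $\ov{\varphi'}=\tau\ov\varphi$ and $h(\varphi')=h(\varphi)/\tau$, so $\ov{\varphi'^\ast}=\ov{\varphi^\ast}$. Fix $\psi_0$ with $h(\psi_0)=1$. Define
\[
\Hc([\varphi],s)=\bigl[(1-s)\varphi^\ast+s\psi_0\bigr].
\]
Because $\qm^{++}$ is a convex cone, this lies in $\qm^{++}$. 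It is well defined on classes, since replacing $\varphi$ by $\varphi'\sim\varphi$ changes $(1-s)\varphi^\ast+s\psi_0$ only by a function whose homogenization vanishes, and Corollary \ref{cor.od} then gives the same class. The endpoint conditions are clear.

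The key step is continuity, which I will control through homogenizations. For $\p_1,\p_2\in\qm^{++}$ write $\Dil(\p_1,\p_2)=\sup_{\{w\}}\ov{\p_1}\{w\}/\ov{\p_2}\{w\}$; homogenization is linear. First I will show that $[\varphi]\mapsto\varphi^\ast$ is continuous in the sense that $\Dil(\varphi^\ast,\phi^\ast)$ and $\Dil(\phi^\ast,\varphi^\ast)$ both tend to $1$ as $\Qq([\varphi],[\phi])\to0$. Since $\Dil(\varphi^\ast,\phi^\ast)\geq h(\phi^\ast)/h(\varphi^\ast)=1$ by Lemma \ref{lem.bounds}, and likewise in the other order, the product equals $e^{\Qq}$ and each factor lies in $[1,e^{\Qq}]$. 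Hence
\[
e^{-\Qq}\,\ov{\phi^\ast}\le\ov{\varphi^\ast}\le e^{\Qq}\,\ov{\phi^\ast}
\]
pointwise on $\Per$.

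Next, for $s,s'\in[0,1]$ I will compare $\p_s=(1-s)\varphi^\ast+s\psi_0$ with $\p'_{s'}=(1-s')\phi^\ast+s'\psi_0$ orbit by orbit. The two-sided bound above gives
\[
\ov{\p_s}\le \max(e^{\Qq},1)\,\ov{(1-s)\phi^\ast+s\psi_0}.
\]
Also $\bigl|\ov{\p'_s}-\ov{\p'_{s'}}\bigr|\le|s-s'|\bigl(\ov{\phi^\ast}+\ov{\psi_0}\bigr)$. The right side is bounded by $C|s-s'|\,\ov{\p'_{s'}}$, because $\ov{\phi^\ast}$ and $\ov{\psi_0}$ are comparable to each other, and hence to $\ov{\p'_{s'}}$ uniformly in $s'$, via the finite dilations between them. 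Combining the two estimates shows that $\Dil(\p_s,\p'_{s'})\Dil(\p'_{s'},\p_s)\to1$ as $([\phi],s')\to([\varphi],s)$.

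The main obstacle is uniformity of the comparison constant $C$ near $[\varphi]$. It depends on $\Dil(\phi^\ast,\psi_0)$ and $\Dil(\psi_0,\phi^\ast)$. These are bounded by $e^{\Qq([\phi],[\varphi])}$ times the corresponding dilations for $\varphi^\ast$, so they stay bounded on a neighbourhood of $[\varphi]$, which gives joint continuity.
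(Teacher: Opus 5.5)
Your proposal is correct and follows essentially the same route as the paper: a straight-line homotopy between growth-normalised representatives ($h=1$), with continuity controlled orbit by orbit on homogenizations, using Lemma \ref{lem.bounds} to place each dilation factor in $[1,e^{\Qq}]$. The paper splits the estimate into a $1$-Lipschitz bound in $[\varphi]$ and a time modulus $2\log(1+e^{\Qq([\varphi],[\psi_0])}|t-s|)$, joined by the triangle inequality; you combine the two comparisons directly, and you also check well-definedness on classes, which the paper leaves implicit.
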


\begin{proof}
Fix $\psi_0\in\mathcal \qm^{++}$ with $h(\p_0)=1$. For $[\varphi]\in\Qm^{++}$ with $h(\varphi) =1$ define
\[
\Hc([\varphi],t) = (1-t)\varphi+t\psi_0 \ \  \ 0\leq t\leq1.
\]
Clearly $\Hc([\varphi],t) \in \Qm^{++}$ for all $t \in [0,1]$.
Also,
\[
\Hc([\varphi],0)=[\varphi]\ \ \text{ and }  \ \ \Hc([\varphi],1)=[\psi_0].
\]
Therefore, we just need to verify that $\Hc$ is continuous.

We first consider continuity in the first variable. Let $[\varphi],[\psi]\in\mathcal{QM}^{++}$ with $h(\varphi) = h(\psi)=1$.
Throughout the rest of the proof, given $\p_1,\p_2 \in \Qm^{++}$ we write $\ov\p_1 \le \ov\p_2$  if $\ov\p_1\{w\} \le \ov\p_2\{w\}$  for all $\{w\} \in \Per$. 

Set $A=\Dil(\varphi,\psi)$ and note that, by Lemma \ref{lem.bounds}, $A\geq1$. We then see that
\[
\ov{\varphi_t} = (1-t)\ov\varphi+t\ov{\psi_0}  \leq (1-t)A\ov\psi+t\ov{\psi_0} \leq A(1-t)\ov\psi + At\ov{\psi_0} = A\ov{\psi_t}.
\]
Hence $\Dil(\varphi_t,\psi_t) \le \Dil(\varphi,\psi)$
 and interchanging $\varphi$ and $\psi$ gives
$\Dil(\psi_t,\varphi_t) \leq \Dil(\psi,\varphi).$
Therefore
\[
\begin{aligned}
\Qq([\varphi_t],[\psi_t]) &= \log\Dil(\varphi_t,\psi_t) + \log\Dil(\psi_t,\varphi_t)\\
&\leq \log\Dil(\varphi,\psi) + \log\Dil(\psi,\varphi) = \Qq([\varphi],[\psi]).
\end{aligned}
\]
Thus, for every fixed $t\in[0,1]$,
\begin{equation}\label{eq.lip}
\Qq \left(\Hc([\varphi],t),  \Hc([\psi],t)\right) \leq \Qq([\varphi],[\psi]).
\end{equation}
In particular, the homotopy is $1$-Lipschitz in the first variable.

For the continuity in time we begin by setting $ M= e^{\Qq([\varphi],[\p_0])}.$
For $s\in[0,1]$, we have that 
\[
\ov{\varphi_s} =(1-s)\ov\varphi+s\ov{\psi_0} \geq (1-s)\ov\varphi+\frac{s}{M}\ov\varphi\\ = \left(1-s\left(1-\frac{1}{M}\right)\right)\ov\varphi \geq \frac1M\ov\varphi.
\]
Hence $ \ov\varphi\leq M\ov{\varphi_s}$ and similarly $ \ov{\psi_0}\leq M\ov{\varphi_s}.$

Now suppose $0\leq s<t\leq1$. Since
\[ 
\ov{\varphi_t} = \ov{\varphi_s} + (t-s) \left(\ov{\psi_0}-\ov\varphi \right) \ \ \text{ and } \ \ 
\ov{\psi_0}-\ov\varphi \leq \ov{\psi_0} \leq M\ov{\varphi_s}
\]
we obtain that
\[
\ov{\varphi_t} \leq \left(1+M(t-s)\right)\ov{\varphi_s}.
\]
It follows that $ \Dil(\varphi_t,\varphi_s) \leq 1+M(t-s).$
Running the same argument with $s$ and $t$ swapped gives 
$\Dil(\varphi_s,\varphi_t) \leq 1+M(s-t).$
Therefore
\begin{equation}\label{eq.time}
Q([\varphi_t],[\varphi_s]) \leq 2\log\left(1+M|t-s| \right) = 2 \log\left(1+e^{\Qq([\varphi],[\p_0])}|t-s| \right),
\end{equation}
which tends to zero as $t\to s$. 

Lastly, for joint continuity we simply note that, by the triangle
inequality,
\[
\Qq(\Hc([\varphi],t), \Hc([\psi],s)) \leq \Qq(\Hc([\varphi],t), \Hc([\psi],t)) + \Qq(\Hc([\psi],t),\Hc([\psi],s)).
\]
Then, using $(\ref{eq.lip})$ and $(\ref{eq.time})$ above we have that
\[
\Qq(\Hc([\varphi],t), \Hc([\psi],s)) \leq \Qq([\varphi],[\psi]) + 2\log(1+e^{\Qq([\varphi],[\p_0])}|t-s|).
\]
This proves joint continuity of $\Hc$ since as $t \to s$ and $[\varphi] \to [\psi]$ we see that $\Qq(\Hc([\varphi],t), \Hc([\psi],s)$ tends to $0$.
\end{proof}

\subsection{Word quasimorphisms}\label{sec.wq}
To prove that the space of quasimorphisms is separable, we will study a nice class of quasimorphisms which we call word quasimorphsms. Their definition is inspired by word metrics on groups.
\begin{definition}
A collection of words $E \subset \W$ is called generating if, given any $w \in \W$, we can find $e_1, \ldots, e_k \in E$ such that $w = e_1 \cdots \, e_k$.
\end{definition}
\begin{definition}
Given generating $E \subset \W$, define $\phi_E \in \qm^{++}$ as follows:
\[
\phi_E(w) = \inf\{ k : w = e_1 \cdots \, e_k \text{ with } e_1, \ldots, e_k \in E \}.
\]
We call $\phi_E$ constructed in this way a word quasimorphism.
 \end{definition}
It is simple to check that this is indeed a member of $\qm^{++}$.

\begin{proposition}
The space $(\Qm^{++}, \Qq)$ is unbounded.
\end{proposition}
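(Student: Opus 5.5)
We will exhibit a one-parameter family of classes whose $\Qq$-distance from a fixed base point tends to infinity. The base point is the length quasimorphism $\ell(w)=|w|$, which lies in $\qm^{++}$ with $\ov\ell\{w\}=n$ for $\{w\}\in\Per_n$. The comparison points are built from word quasimorphisms, since those are the tool just introduced for this purpose.

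Since $\Sigma_A$ is mixing with positive entropy, it has at least two distinct periodic orbits. Fix a periodic word $u$ of length $p$ and a periodic orbit $\{v\}$ different from $\{u^j\}$ for every $j$. For $N\ge 1$, let $E_N$ consist of $u^N$ together with all words of length at most $L$, where $L$ is a constant independent of $N$. Every word of length at least $1$ is a concatenation of words of length $1$, so $E_N$ is generating, and $\phi_N:=\phi_{E_N}\in\qm^{++}$.

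We compute two homogenization ratios against $\ell$.

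\begin{itemize}
\item On the orbit $\{u^N\}$, of length $Np$, we have $\phi_N((u^N)^m)\le m$, hence $\ov\phi_N\{u^N\}\le 1$ while $\ov\ell\{u^N\}=Np$. This gives $\Dil(\ell,\phi_N)\ge Np$.
\item On the fixed orbit $\{v\}$ we want a lower bound $\ov\phi_N\{v\}\ge c|v|$ with $c>0$ independent of $N$. The intended reason is that a long power $v^m$ cannot contain many occurrences of $u^N$ once $N$ is large, because $v^m$ is periodic with a different primitive period from $u$. Every letter must then be covered by pieces of length at most $L$, except on a bounded number of overlaps with $u^N$ per period of $v$. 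Concretely, if $N p>|v|+p$, then by a Fine--Wilf type argument $u^N$ does not occur in $v^m$ at all. In that case every decomposition of $v^m$ uses only pieces of length at most $L$, so $\phi_N(v^m)\ge m|v|/L$. Hence $\ov\phi_N\{v\}\ge |v|/L$, which gives $\Dil(\phi_N,\ell)\ge \ov\phi_N\{v\}/\ov\ell\{v\}\ge 1/L$.
\end{itemize}

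Combining the two bounds,
\[
\Qq([\ell],[\phi_N])=\log\bigl(\Dil(\ell,\phi_N)\Dil(\phi_N,\ell)\bigr)\ge \log(Np/L)\longrightarrow\infty
\]
as $N\to\infty$, so the space is unbounded.

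The main obstacle is the combinatorial step showing that $u^N$ does not occur inside $v^m$ for large $N$. If $u^N$ occurred in $v^m$, then a word would have both periods $p$ and $|v|$ and length at least $p+|v|$. Fine--Wilf then gives it period $\gcd(p,|v|)$, forcing the primitive roots of $u$ and of the cyclic shift of $v$ to coincide, which would make $\{v\}$ a power orbit of the primitive root of $u$. To rule this out, we choose $u$ and $v$ with primitive words having distinct cyclic classes. This is possible since a mixing SFT with positive entropy has infinitely many primitive periodic orbits.

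An alternative that avoids this combinatorics is to take $L=1$ and compare against the Birkhoff-sum quasimorphisms of Example~\ref{ex.holder}. Here one would use Lemma~\ref{lem.comp} and Remark~\ref{rem:integralhomogeneous} to evaluate dilations as extrema over $\Minv$, and then take locally constant $r_N$ equal to $1/N$ on the cylinder $[u]$-region and $1$ elsewhere. This still requires that the orbit measure $\delta_{\{v\}}$ sees a bounded portion of the small region, which follows from the same separation.
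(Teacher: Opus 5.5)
Your proof is correct. It differs from the paper's in how you control the second dilation.

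\textbf{The paper's route.} The paper fixes a single test orbit $\{\widetilde w\}$ and uses the generating sets $\W_1\cup\{\widetilde w^n\}$. It rescales to $h(\p_n)=1$ via the crude bound $h(\p_{E_n})\le\log|E_n|$, and shows $\ov{\p}_n\{\widetilde w\}\to 0$. The conclusion is left implicit: for any base point $\phi$ with $h(\phi)=1$ one has $\Dil(\phi,\p_n)\to\infty$. Meanwhile $\Dil(\p_n,\phi)\ge h(\phi)/h(\p_n)=1$ by Lemma~\ref{lem.bounds}, an inequality that rests on convexity of the pressure function.

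\textbf{Your route.} You keep the unnormalized base point $\ell$, which is legitimate since $\Qq$ is scale invariant. You then bound $\Dil(\phi_N,\ell)$ from below by hand, using a second orbit $\{v\}$ and Fine--Wilf. The occurrence argument is sound: a factor of $v^m$ equal to $u^N$ would have periods $p$ and $|v|$ and length greater than $p+|v|$. It would therefore have period $\gcd(p,|v|)$, forcing a common primitive root.

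\textbf{Comparison.} Your argument is fully elementary and gives the explicit bound $\Qq([\ell],[\phi_N])\ge\log(Np/L)$. The paper's is shorter and avoids combinatorics on words, but depends on the thermodynamic inequality. You could remove Fine--Wilf the same way the paper does:
\[
\Dil(\phi_N,\ell)\ge \frac{h(\ell)}{h(\phi_N)}\ge \frac{h_{\mathrm{top}}(\sigma)}{\log|E_N|}.
\]
This is bounded below independently of $N$, since $|E_N|\le|\W_{\le L}|+1$.

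\textbf{Two minor points.}
\begin{enumerate}
\item Your first formulation, $\{v\}\neq\{u^j\}$ for all $j$, is not sufficient. Take $u=abab$ and $v=ab$ in a full shift: then $u^N$ does occur in $v^m$, and $\ov\phi_N\{v\}\to 0$. The condition you actually need, and later impose, is that the primitive roots of $u$ and $v$ lie in different cyclic classes. State it that way from the start.
\item The closing ``alternative'' with $r_N$ is only a sketch (the ``$[u]$-region'' is never specified) and is not needed; drop it.
\end{enumerate}
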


\begin{proof}
It suffices to construct a sequence of quasimorphisms $\p_n \in \qm^{++}$ with $h(\p_n) =1$ such that there exists $\{w\} \in \Per$ with
\[
\ov{\p}_n\{w\} \to 0
\]
as $n\to\infty$. To that end, fix $\{\widetilde{w}\} \in \Per$ and define, for each $n\ge 1$, $E_n$  to be the set containing $\{ w \in \W_1 \}$ and the word $\widetilde{w}^n$. Then set $\p_n = h(\p_{E_n})\p_{E_n}$ and note that $h(\p_n) = 1$, $h(\p_{E_n}) \le |\{w \in \W_1 \}| + 1$  and so
\[
\ov\p_n\{\widetilde{w}\} = \lim_{k\to\infty} \frac{\p_n(\widetilde{w}^k)}{k} =  \lim_{k\to\infty}  \frac{\p_n(\widetilde{w}^{nk})}{nk}  \le
\frac{(|\{w \in \W_1 \}| + 1)k}{nk} \to 0
\]
as $n\to\infty$.
\end{proof}

We now want to show that the collection of equivalence class containing word quasimorphisms is dense in $(\Qm^{++}, \Qq)$. For this we need a couple of preliminary results.

\begin{lemma}\label{lem.k}
Take $\p \in \qm^{++}$. Then there exists $\kappa \ge 1$ such that if $w = w_1 w_2$ or $w = w_2w_1$ for words $w_1, w_2 \in \W$ with $w_2 \in \W_\kappa $. Then $\p(w_1) < \p(w)$.
\end{lemma}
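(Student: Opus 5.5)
The plan is to combine the quasimorphism inequality with the linear lower bound \eqref{eq:defqm++} defining $\qm^{++}$. Fix $\tau, C>0$ with $\p(u)\ge \tau|u|-C$ for all $u\in\W$, and write $D=D_\p$. The one fact we need is that a sufficiently long word carries a definite positive amount of $\p$, enough to beat the defect.

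Suppose $w=w_1w_2$ with $w_2\in\W_\kappa$. Definition~\ref{def:quasimorphism} gives
\[
\p(w)\ \ge\ \p(w_1)+\p(w_2)-D\ \ge\ \p(w_1)+\tau\kappa-C-D .
\]
The case $w=w_2w_1$ is symmetric: the same inequality holds, since the quasimorphism bound does not depend on the order of concatenation. It is therefore enough to choose an integer $\kappa\ge1$ with $\tau\kappa>C+D$, for instance $\kappa=\lfloor (C+D)/\tau\rfloor+1$. Then $\p(w)-\p(w_1)\ge \tau\kappa-C-D>0$, which gives the strict inequality $\p(w_1)<\p(w)$.

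I do not expect a real obstacle here. The only point to check is that $\kappa$ depends only on $\p$, through $\tau$, $C$ and $D_\p$, and not on $w_1$ or $w_2$. This is immediate, because the constants in \eqref{eq:defqm++} and the defect are uniform over $\W$. We also get the slightly stronger quantitative statement $\p(w)-\p(w_1)\ge \tau\kappa-C-D_\p>0$, which may be useful later in the separability argument for word quasimorphisms.
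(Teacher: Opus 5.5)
Your proof is correct and follows the same route as the paper, which simply invokes the quasimorphism inequality together with the linear lower bound $\psi(w)\ge \tau|w|-C$. You have filled in the explicit choice of $\kappa$ with $\tau\kappa > C + D_\psi$ that the paper leaves implicit.
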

\begin{proof}
This follows from the quasimorphism property and the assumption that there exists $\tau, C >0$ such that $\p(w) \ge \tau n - C$ for $w \in \W_n$ for any $n\ge 1$.
\end{proof}
We then immediately deduce the following.
\begin{corollary}\label{lem.qmint}
Let $\p \in \qm^{++}$ and $\kappa$ be as in Lemma \ref{lem.k}. Define $C = \max\{ \p(w) : w \in \W_\kappa \} + D_\p$. Then if $\p(w) < n -C$ and $ww' \in \W$ with $w' \in \W_\kappa $ then
\[
\p(w) < \p(ww') \le n.
\]
\end{corollary}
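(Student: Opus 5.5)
The statement to prove is Corollary \ref{lem.qmint}: with $C = \max\{\p(w) : w\in\W_\kappa\} + D_\p$, if $\p(w) < n - C$ and $ww'\in\W$ with $w'\in\W_\kappa$, then $\p(w) < \p(ww') \le n$. The two inequalities are independent, so I will prove them separately, each by one direct appeal to an earlier fact.

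\textbf{Lower bound.} The strict inequality $\p(w) < \p(ww')$ is exactly Lemma \ref{lem.k}. Apply it with $w_1 = w$ and $w_2 = w'\in\W_\kappa$, so that the product $w_1w_2 = ww'$ is a legitimate word in $\W$. Nothing else is needed for this half.

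\textbf{Upper bound.} Use the quasimorphism inequality \eqref{eq:defquasimorphism} in one direction only:
\[
\p(ww') \le \p(w) + \p(w') + D_\p .
\]
Since $w'\in\W_\kappa$, we have $\p(w') \le \max\{\p(u): u\in\W_\kappa\}$. Hence
\[
\p(ww') \le \p(w) + C < (n - C) + C = n ,
\]
which is in fact strict, and in particular gives $\p(ww') \le n$.

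The only point needing care is that $C$ is finite. This holds because $\W_\kappa$ is a finite set of words. I do not expect any real obstacle: the corollary is a one-line combination of Lemma \ref{lem.k} and the defect bound.
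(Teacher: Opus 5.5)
Your proof is correct and is exactly the argument the paper intends when it says the corollary is deduced immediately: the strict lower bound is Lemma~\ref{lem.k} applied with $w_1=w$, $w_2=w'$, and the upper bound is the one-sided defect inequality $\p(ww')\le \p(w)+\p(w')+D_\p\le \p(w)+C<n$. You are also right that the upper bound is strict and that $C$ is finite because $\W_\kappa$ is finite.
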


\begin{proposition}
The collection of equivalence classes in $\Qm^{++}$ than contain a word quasimorphism are dense in $(\Qm^{++}, \Qq)$.
\end{proposition}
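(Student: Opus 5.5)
Given $\p \in \qm^{++}$ and $\epsilon>0$, I will build a generating set $E$ with $\Qq([\phi_E],[\p])$ small. After rescaling, assume $\p$ takes large values on long words. For a large parameter $n$, let $\kappa$ and $C$ be as in Lemma \ref{lem.k} and Corollary \ref{lem.qmint}, and set
\[
E_n = \{ w \in \W : \p(w) \le n \}.
\]
By Proposition \ref{prop.growthrate} this set is finite. It contains all one-letter words once $n$ is large, so it is generating. The candidate approximant is $\phi_{E_n}$, and the goal is
\[
\ov\p \le (n+D_\p)\,\ov{\phi_{E_n}} \quad\text{and}\quad (n - C')\,\ov{\phi_{E_n}} \le \ov\p + O(1)\cdot\ov{\phi_{E_n}}
\]
on $\Per$. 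These give $\Dil(\p,\phi_{E_n})\le n+D_\p$ and $\Dil(\phi_{E_n},\p)\le (n-C'')^{-1}$. Hence $\Qq \le \log\frac{n+D_\p}{n-C''}\to 0$.

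\textbf{Upper bound.} If $w=e_1\cdots e_k$ with $e_i\in E_n$, the quasimorphism property gives $\p(w)\le kn+(k-1)D_\p$. Hence $\p(w)\le (n+D_\p)\phi_{E_n}(w)$. Applying this to $w^m$, dividing by $m$ and passing to homogenizations gives the first inequality.

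\textbf{Lower bound (main step).} Here one must show that any word factors greedily into about $\p(w)/(n-C)$ pieces from $E_n$. Given $w$, I take the longest prefix $e_1$ with $\p(e_1)\le n$. Corollary \ref{lem.qmint} guarantees that extending past this prefix by $\kappa$ letters cannot stay below $n$ unless $\p(e_1)\ge n-C$. So every maximal prefix except possibly the last has $\p(e_i)\ge n-C$. Iterating this on the remainder yields a factorization $w=e_1\cdots e_k$ with $\p(e_i)\ge n-C$ for $i<k$.

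Summing with the quasimorphism property gives $\p(w)\ge (k-1)(n-C-D_\p)+\min\p$. Since $\phi_{E_n}(w)\le k$, this yields
\[
(n-C-D_\p)\,\phi_{E_n}(w)\le \p(w)+\text{const}.
\]
Passing to $w^m$ and homogenizing gives $\ov{\phi_{E_n}}\le (n-C-D_\p)^{-1}\ov\p$.

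The main care is needed in the greedy step. Monotonicity of $\p$ in prefixes only holds in $\kappa$-letter jumps (Lemma \ref{lem.k}), so the maximal prefix must be chosen among lengths up to a window of size $\kappa$. One also has to check that $C$ does not depend on $n$, which holds by Corollary \ref{lem.qmint}. Combining both bounds, $\Qq([\phi_{E_n}],[\p])\le\log\frac{n+D_\p}{n-C-D_\p}\to0$, which proves density.
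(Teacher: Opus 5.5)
Your proof is correct. It follows the paper's overall plan:
\begin{itemize}
\item the same sets $E_n=\{w\in\W:\p(w)\le n\}$;
\item the same upper bound $\p(w)\le (n+D_\p)\phi_{E_n}(w)$;
\item a lower bound from a factorization of $w$ into elements of $E_n$, almost all of which have $\p$-value at least $n-C$.
\end{itemize}

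You differ in how that factorization is produced. The paper starts from a \emph{minimal} factorization $w=w_1\cdots w_k$, with $k=\phi_{E_n}(w)$. It then repeatedly moves blocks of $\kappa$ letters from later pieces into earlier ones. This needs the monotonicity of Lemma \ref{lem.k} so that the trimmed pieces stay in $E_n$ and the number of pieces remains exactly $\phi_{E_n}(w)$. Your greedy longest-prefix factorization only has to satisfy $\phi_{E_n}(w)\le k$, which holds automatically for any factorization. So minimality never has to be maintained, and Lemma \ref{lem.k} is not actually needed. This is a genuine simplification of the main step.

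There is one small slip. You justify $\p(e_i)\ge n-C$ via Corollary \ref{lem.qmint}, which requires at least $\kappa$ letters after $e_i$. As written, this only covers all pieces except the last two. That is harmless: you get $k-2$ in place of $k-1$, plus an extra term bounded independently of $w$.

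The cleaner fix is to use maximality directly. If $a$ is the letter following $e_i$, then $\p(e_ia)>n$, so $\p(e_i)>n-\max_{\W_1}\p-D_\p$ for every non-final piece. This proves your claim exactly as stated, with a constant independent of $n$. It also makes the remark about choosing prefixes within a window of size $\kappa$ unnecessary. The longest prefix with $\p\le n$ exists simply because $w$ is finite and its first letter lies in $E_n$ once $n$ is large.
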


\begin{proof}
Take $\p \in \Qm^+$ and define, for each $n\ge 1$,  $E_n = \{w \in \W: \p(w) \le n\}$. Set $\p_n = \p_{E_n}$ for each $n\ge1$. We will prove that $[\p_n] \to [\p]$ in the $\Qq$ metric.

To that end, suppose that $w \in \W$ has $\p_n(w) = k$ and that $w = w_1\cdots w_k$ where each $w_j \in E_n$. Then note that
\[
\p(w) = \p(w_1\cdots w_k) \le \sum_{j=1}^k \p(w_j) + kD_\p \le (n+D_\p)k = (n+D_\p)\p_n(w).
\]

We now want to obtain a similar lower bound for $\p$ in terms of $\p_n$. We will assume that $n$ is sufficiently large so that $\W_{\le 2\kappa} \subset E_n$ where $\kappa$ is as in Lemma \ref{lem.k}.  Let $C >0$ be the constant from Corollary \ref{lem.qmint}. 

We will describe a procedure that finds a nice expression for $w \in \W$ with $\p_n(w) = k \ge 3$ as a product of $k$ words in $E_n$. Specifically we will show it is possible to find an expression $w = \widetilde{w}_1\cdots \widetilde{w}_k$ where $\widetilde{w}_j \in E_n$ for $j=1,\ldots ,k$ and $n - C < \p_n(\widetilde{w}_j) \le n$ for each $j =1,\ldots, k-2$, i.e. the first $k-2$ words in the  $E_n$ expression for $w$ have length approximately $n$.

We will describe the procedure first for a word $w \in \W$ with $\p_n(w) =3$. Suppose $\p_n(w) = 3$ and that $w = w_1 w_2 w_3$ where $w_1, w_2, w_3 \in E_n$. Now, if $n -C < \p_n(w_1)\le n$ we are done. Suppose this is not the case. Now, if $w_2 \in \W_{\ge \kappa}$ then write $w_2 = w_2^1 w_2^2$ where $w_2^1 \in \W_\kappa$. By Lemma \ref{lem.k} $\p_n(w_2^2) < \p_n(w_2)$ and $\p_n(w_1) < \p_n(w_1w_2^1) \le n$ by Corollary \ref{lem.qmint}. In particular $w = (w_1w_2^1) w_2^2 w_3$ is a new length $3$ expression for $w$ with words in $E_n$ and $\p_n(w_1) < \p_n(w_1w_2^1) \le n$. If $w_2$ does not belong to $\W_{\ge \kappa}$ then $w_3$ must belong to $\W_{\ge \kappa}$: indeed, if it did not, then $w_2w_3 \in \W_{\le 2 \kappa}$ but by assumption on $n$ this would mean $w_2w_3 \in E_n$ which contradicts $\p_n(w) =3$. Hence $w_3= w_3^1w_3^2$ and rewriting $w = w_1 (w_2w_3^1) w_3^2$ we are back to the case where the second word in our expression for $w$ has length at least $\kappa$. We can the extend $w_1$ as before to increase its $\p_n$ length. We have shown that, as long as $w = w_1'w_2'w_3'$ where each of the $w_j' \in E_n$ and $\p_n(w_1') < n -C$, we can find a new $3$ product expression for $w$ where the first word has strictly larger $\p_n$ length.
Hence  continuing this process inductively must eventually result with an expression $w = \widetilde{w}_1 \widetilde{w}_2 \widetilde{w}_3$ where $\widetilde{w}_1, \widetilde{w}_2, \widetilde{w}_3 \in E_n$ and $n-C <\p_n(\widetilde{w}_1)$.

The above procedure shows how to express a word with $\p_n$ length $3$ in the desired form. When $\p_n(w) \ge 4$ we can apply the same process inductively starting with the first word in a minimal expression for $w$. That is, if $\p_n(w) = k \ge 4$ and $w = w_1 \cdots w_k$ with  the $w_j \in E_n$, then we can apply the above argument to write $w = w_1' \cdots \, w_k'$ where $w_j' \in E_n$ and $n - C \le \p_n(w_1')$. We then apply the procedure to $w'_2 \dots w_k'$ and so on, until we have obtained an expression $w = \widetilde{w}_1 \cdots \widetilde{w}_k$ where $\widetilde{w}_j \in E_n$ for $j=1,\ldots ,k$ and $n - C < \p_n(\widetilde{w}_j) \le n$ for each $j =1,\ldots, k-2$. \\

Now, given $w \in \W$ with $\p_n(w) = k$ we can write $w =w_1 \cdots w_k$ where $w_j \in E_n$ for $j=1,\ldots ,k$ and $n - C < \p_n(w_j) \le n$ for each $j =1,\ldots, k-2$.
We then have that 
\begin{align*}
\p(w) = \p(w_1\cdots w_k) &\ge \sum_{j=1}^{k-2}\p(w_j) + \p(w_{k-1}) + \p(w_k) - kD_\p\\
&\ge (n-C)(k-2) - kD_\p \\
&= k(n - D_\p - C) - (n-C) = (n - D_\p -C) \p_n(w) - 2(n-C).
\end{align*}
Combining the above inequalities implies that $[\p_n] \to [\p]$ in the $\D$ metric as $n\to\infty$.
\end{proof}

\subsection{Completeness with bounded defect}

We now turn our attention to understanding the collection of quasimorphisms with bounded defect.

\begin{lemma}\label{lem.bound}
Suppose that $\p_E$ is a word quasimorphism and $B \subset  \Qm^{++}$ a bounded subset (with respect to $\Qq$). Then for any $[\p] \in B$ with $h(\p) =1$ and $D_\p \le D$, and $w \in \W$ we have that
\[
C^{-1} \p_E(w) - C \le \p(w) \le C \, \p_E(w) + C
\]
where $C > 1$ is a constant depending only on $B$ and $E$.
\end{lemma}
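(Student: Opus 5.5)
The plan is to control the two dilations of $\p$ against $\p_E$ uniformly over $B$, and then convert these into additive comparison constants using the explicit constant of Remark~\ref{rem.indep}.

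\emph{Step 1: uniform dilation bounds.} Fix $[\p_0]\in B$ and set $R=\sup_{[\p]\in B}\Qq([\p],[\p_E])<\infty$, which is finite by boundedness of $B$ and the triangle inequality. Normalise $\p_E$ to have growth rate $1$ by replacing it with $h(\p_E)\p_E$; this only changes constants depending on $E$. For $[\p]\in B$ with $h(\p)=1$, Lemma~\ref{lem.bounds} gives
\[
\Dil(\p,\p_E)\ge 1 \quad\text{and}\quad \Dil(\p_E,\p)\ge 1 .
\]
Their product is at most $e^R$, so each factor lies in $[1,e^R]$. Thus the multiplicative constants $\Dil(\p,\p_E)$, $\Dil(\p_E,\p)$ and their inverses are bounded in terms of $B$ and $E$ only.

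\emph{Step 2: uniform additive constant.} We then apply Lemma~\ref{lem.comp} with the explicit constant of Remark~\ref{rem.indep}, taking $\phi=\p$ and $\p_E$ in the role of $\p$. The constant there is built from four kinds of quantities:
\begin{itemize}
\item the defects $D_\p\le D$ and $D_{\p_E}$;
\item the two dilations, which are controlled by Step 1;
\item data of $\p_E$ alone, namely $\|\p_E\|_{\le M}$ and $\min_{\W_M}\p_E$;
\item the terms $\|\p\|_{\le M}$ and $\min_{w\in\W_M}\p(w)$.
\end{itemize}
So the only remaining task is to bound $\p$ above and below on the finite set $\W_{\le M}$, uniformly over the class.

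\emph{Step 3: bounding $\p$ on short words (the main obstacle).} Here the hypotheses $h(\p)=1$ and $D_\p\le D$ must be used, together with the dilation bounds.

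For the upper bound, take $w\in\W_{\le M}$ and close it up by mixing into a periodic word $ww'$ with $|w'|\le M$. Quasimorphism inequalities then give
\[
\p(ww')\ge \p(w)+\p(w')-D
\quad\text{and}\quad
|\p(ww')-\ov\p\{ww'\}|\le D .
\]
The homogenisation satisfies $\ov\p\{ww'\}\le e^R\,\ov{\p_E}\{ww'\}\le e^R\cdot 2M\cdot c_E$. So $\p(w)+\p(w')$ is bounded above uniformly.

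To split this sum we need a lower bound on single values. Such a bound comes from the cone condition: $\ov\p\ge e^{-R}\ov{\p_E}>0$ on periodic orbits. Then the argument of Lemma~\ref{lem.hombound} gives $\p(u)\ge -C'$ for all $u$, with $C'$ depending only on $D$, $R$ and $E$. To make that argument uniform, I would rather avoid its dependence on $\max_{\W_{\le M}}\p$, which is circular. Instead I would use almost superadditivity on periodic words,
\[
\p(v)\ge \ov\p\{v\}-D\ge -D
\quad\text{for } v\in\Fix ,
\]
and write any short word as a prefix of a periodic word of length at most $2M$.

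This interplay is what I expect to be delicate: one must check that no step reuses an unknown bound on $\p$. With lower bounds on all values in hand, the upper bound on $\p(w)$ follows from the bound on $\p(w)+\p(w')$. Plugging everything into Remark~\ref{rem.indep} yields $C$ depending only on $B$, $E$ (and $D$), and enlarging $C$ to dominate the multiplicative constants from Step 1 gives the stated two-sided inequality.
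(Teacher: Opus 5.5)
Your Step~1 is the paper's proof: normalise by $h$, use Lemma~\ref{lem.bounds} for lower bounds on both dilations, and use $\Qq([\p],[\p_E])\le R$ for upper bounds. The paper then simply invokes Lemma~\ref{lem.comp} with Remark~\ref{rem.indep}. You correctly notice that the constant there also contains $\|\p\|_{\le M}$ and $\min_{\W_M}\p$, which the paper never controls. However, your Step~3 does not control them either. Periodic words only give $|\p(v)-\ov\p\{v\}|\le D$ for $v\in\Fix$, and hence two-sided bounds on sums $\p(u)+\p(u')$ over completions $uu'\in\Fix$. To split such a sum you need a one-sided bound on one summand. Your proposed source, $\p(u)\ge \p(uu')-\p(u')-D$, again needs an upper bound on $\p(u')$, which you in turn derive from a lower bound on a complementary piece. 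The bootstrap is circular and never closes.

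No argument can close it, because the uniform bound is false for arbitrary representatives. Take the alphabet $\{1,2,3\}$ with allowed transitions $1\to 2$, $2\to 3$, $3\to 1$, $3\to 2$. This is mixing, since there are cycles of lengths $2$ and $3$. Since $3$ is the only successor of $2$ and $2$ is the only predecessor of $3$, the function $g(w)=\#_2(w)-\#_3(w)$ (number of $2$'s minus number of $3$'s) is exactly additive and takes values in $\{-1,0,1\}$. Choose $c>0$ so that $\p_0(w)=c|w|$ has $h(\p_0)=1$, and set $\p_N=\p_0+Ng$. Each $\p_N$ has defect $0$, $h(\p_N)=1$ and $\ov{\p_N}=\ov{\p_0}$, so all lie in the single class $[\p_0]$. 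Yet $\p_N(2)=c+N$, while $\p_E(2)=1$ for every word quasimorphism, since single letters must lie in $E$. Letting $N\to\pm\infty$ breaks both your claimed uniform lower bound $\p\ge -C'$ and the upper bound. So neither your short-word bounds nor the lemma as stated (uniformly over representatives with $h=1$, $D_\p\le D$) can hold. One must either normalise the representative first, for instance by subtracting a suitable exactly additive bounded function such as $Ng$ above, or allow $C$ to depend on $\max_{\W_{\le M}}|\p|$. The completeness proof that extracts pointwise limits from this lemma needs the same adjustment.
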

\begin{proof}
By Lemma \ref{lem.comp} and Remark \ref{rem.indep} we just need to show that $\Dil(\p,\phi)$ and $\Dil(\phi,\p)$ are uniformly bounded above in terms of $E$ and $B$. To that end note that since $h(\p) =1$ we have that
\[
\Dil(\p_E,\p) \ge \frac{h(\p)}{h(\p_E)} = \frac{1}{h(\p_E)}.
\]
Hence
\[
e^{\Qq([\p], [\p_E])}  = \Dil(\p_E, \p) \Dil(\p,\p_E) \ge \frac{\Dil(\p, \p_E)}{h(\p_E)}
\]
and  $\Dil(\p,\p_E)$ is bounded above by a quantity depending only on $B$ and $E$.

Similarly
\[
\Dil(\p_E,\p) \ge \frac{h(\p)}{h(\p_E)} = \frac{1}{h(\p_E)}
\]
and so
\[
\Dil(\p_E,\p)^{-1} \le h(\p_E).
\]
This concludes the proof.
\end{proof}

Recall the following definition from the introduction.

\begin{definition}
Let $\Qm_D^{++} \subset \Qm^{++}$ be the collection of classes that contain a quasimorphism $\p$ with $h(\p) = 1$ and $D_\p \le D$.
\end{definition}

\begin{theorem}
We have that $(\Qm^{++}_D,\Qq)$ is a complete subset of $(\Qm^{++},\Qq)$.
\end{theorem}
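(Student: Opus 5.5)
Take a Cauchy sequence $[\p_n]$ in $\Qm^{++}_D$ and choose representatives $\p_n \in \qm^{++}$ with $h(\p_n)=1$ and $D_{\p_n}\le D$. The plan is to extract a pointwise limit $\p$ of the $\p_n$, show that $\p$ lies in $\qm^{++}$ with $h(\p)=1$ and $D_\p\le D$, and then show that $\Qq([\p_n],[\p])\to 0$.

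\textbf{Compactness.} A Cauchy sequence is bounded, so Lemma~\ref{lem.bound} applies. Fix a word quasimorphism $\p_E$ (for instance $E=\W_1$). Then there is $C>1$ with
\[
C^{-1}\p_E(w)-C\le \p_n(w)\le C\p_E(w)+C
\]
for all $n$ and all $w\in\W$. In particular, for each fixed $w$ the values $\p_n(w)$ are bounded. Since $\W$ is countable, a diagonal argument gives a subsequence converging pointwise to some $\p\colon\W\to\R$. The defect inequality $|\p_n(w_1w_2)-\p_n(w_1)-\p_n(w_2)|\le D$ passes to pointwise limits, so $D_\p\le D$. The uniform lower bound also passes to the limit, so $\p\in\qm^{++}$. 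This step is where bounded defect is essential: without it, pointwise limits need not be quasimorphisms, which matches the non-completeness of $\Qm^{++}$.

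\textbf{Homogenizations.} Because the defects are uniformly bounded, the formula $\ov{\p}\{w\}=\inf_m(\p(w^m)+D_\p)/m=\sup_m(\p(w^m)-D_\p)/m$ gives, for each $n$,
\[
\Bigl|\ov{\p_n}\{w\}-\frac{\p_n(w^m)}{m}\Bigr|\le \frac{D}{m}
\]
uniformly in $n$, and likewise for $\p$. Letting $n\to\infty$ and then $m\to\infty$ yields $\ov{\p_n}\{w\}\to\ov\p\{w\}$ for every $\{w\}\in\Per$.

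\textbf{Convergence in $\Qq$.} Fix $\epsilon>0$ and choose $N$ with $\Qq([\p_n],[\p_m])<\epsilon$ for $n,m\ge N$. By Lemma~\ref{lem.bounds}, $\Dil(\p_n,\p_m)\ge h(\p_m)/h(\p_n)=1$, and the same holds with the roles swapped. Hence each factor satisfies $\Dil(\p_n,\p_m)\le e^{\epsilon}$ and $\Dil(\p_m,\p_n)\le e^{\epsilon}$. Consequently
\[
e^{-\epsilon}\,\ov{\p_m}\{w\}\le\ov{\p_n}\{w\}\le e^{\epsilon}\,\ov{\p_m}\{w\}
\]
for every orbit $\{w\}$. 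Let $m\to\infty$ along the subsequence; since each ratio converges orbitwise, taking the supremum over orbits gives $\Dil(\p_n,\p),\Dil(\p,\p_n)\le e^{\epsilon}$. Therefore $\Qq([\p_n],[\p])\le 2\epsilon$ for $n\ge N$. This bound holds for the full sequence, which is Cauchy, so it converges to $[\p]$.

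\textbf{Normalization.} It remains to check that $h(\p)=1$, so that $[\p]\in\Qm^{++}_D$. By Lemma~\ref{lem.comp} and Remark~\ref{rem.indep}, the constants are uniform because the defects and $\|\cdot\|_{\le M}$ are uniformly bounded. This gives $e^{-2\epsilon}\p_n-C\le\p\le e^{2\epsilon}\p_n+C$ for large $n$. Comparing growth rates, $h(\p)$ lies within a factor $e^{2\epsilon}$ of $h(\p_n)=1$, and letting $\epsilon\to 0$ gives $h(\p)=1$. If this sandwich argument proved delicate, I would not need it: any positive rescaling $h(\p)\p$ has defect $h(\p)D_\p$, which is at most $D$ provided $h(\p)\le 1$.

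I expect the main obstacle to be the normalization step, namely confirming that the limiting growth rate equals $1$ so that the limit class genuinely lies in $\Qm^{++}_D$. The uniformity of the constants in Lemma~\ref{lem.comp} is what should make this step go through.
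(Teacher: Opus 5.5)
\paragraph{The gap: uniform bounds on the representatives.}
Your outline is essentially the paper's proof: diagonal extraction via Lemma~\ref{lem.bound}, pointwise convergence of homogenizations from the $\inf/\sup$ formula, $\Dil\ge 1$ from $h=1$, and a growth-rate sandwich. Everything after the extraction is correct. The gap is the extraction itself, and the paper's proof shares it, since both rest on Lemma~\ref{lem.bound}.

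The constant in Lemma~\ref{lem.comp} and Remark~\ref{rem.indep} involves $\|\p_n\|_{\le M}$ and $\min_{\W_M}\p_n$. These are \emph{not} controlled by $h(\p_n)=1$, $D_{\p_n}\le D$ and boundedness of $[\p_n]$, because a single class contains many admissible representatives.

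Here is a concrete counterexample. Take the mixing subshift on $\{a,b,c\}$ with allowed transitions $b\to a$, $a\to c$, $c\to c$, $c\to b$.
\begin{itemize}
\item In any word, each $a$ other than a first letter is preceded by $b$, and each $b$ other than a last letter is followed by $a$. So $F(w)=\#_a(w)-\#_b(w)$ is additive and takes values in $\{-1,0,1\}$.
\item For $\p\in\qm^{++}$ with $h(\p)=1$, the functions $\p_n=\p+nF$ all lie in $[\p]$, with $h(\p_n)=1$ and $D_{\p_n}=D_\p$.
\item Yet $\p_n(a)=\p(a)+n\to\infty$.
\end{itemize}
So for the constant, hence Cauchy, sequence $[\p_n]=[\p]$ with these representatives, the uniform bound fails and no subsequence converges pointwise. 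Merely choosing representatives with $h(\p_n)=1$ and $D_{\p_n}\le D$ is therefore not enough. Your later claim that $\|\cdot\|_{\le M}$ is uniformly bounded is exactly the unproved point.

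\paragraph{The fix: normalize the representatives before extracting.}
Let $\mathcal K$ be the space of functions $g$ on the alphabet whose sum along every cycle of the transition graph vanishes.
\begin{itemize}
\item For $g\in\mathcal K$, the additive function $G(w)=\sum_i g(w_i)$ has $\ov G\equiv 0$ and is therefore bounded. Hence $\p_n-G$ is again a representative of $[\p_n]$, with the same growth rate and the same defect.
\item Choose $g_n\in\mathcal K$ so that the letter values of $\tilde\p_n=\p_n-G_n$ lie in a fixed complement of $\mathcal K$.
\item Finitely many cycle-sum functionals, along cycles $c_1,\dots,c_r$, already cut out $\mathcal K$. So on that complement the letter values are bounded linearly by these $r$ cycle sums.
\item Each such cycle sum is within $|c_i|D$ of $\ov{\p_n}\{c_i\}\le \Dil(\p_n,\p_E)\,\ov{\p_E}\{c_i\}$.
\item This is uniformly bounded, because Lemma~\ref{lem.bounds} gives $\Dil(\p_n,\p_E)\le h(\p_E)\,e^{\Qq([\p_n],[\p_E])}$.
\end{itemize}
Hence $\tilde\p_n$ is uniformly bounded on letters. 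Since the defect is at most $D$, it is also uniformly bounded on $\W_{\le M}$. Only at this point do Lemma~\ref{lem.comp} and Remark~\ref{rem.indep} give the uniform comparison with $\p_E$, and the rest of your argument then goes through unchanged.

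In your final normalization step you do not actually need uniform constants. For fixed $n$, an additive constant does not affect growth rates, so $h(\p)$ is pinched between $e^{-2\epsilon}$ and $e^{2\epsilon}$ directly.
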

\begin{proof}
Take a Cauchy sequence $[\p_n]$ in $\Qm^{++}_D$ where we assume that $h(\p_n) =1$ and $D_{\p_n} \le D$ for all $n \ge 1$. 

By Lemma \ref{lem.bound} we can extract a subsequence $n_k$ such that $\p_{n_k}(w)$ converges as $k\to\infty$ for each $w\in\W$. Write $\p_\infty(w)$ for the limits. We first want to show that $\p_\infty \in \Qm^+$.
This is easy to see however as
\[
|\p_\infty(w_1w_2) - \p_\infty(w_1) - \p_\infty(w_2)|  = \lim_{k\to\infty} | \p_{n_k}(w_1w_2) - \p_{n_k}(w_1) - \p_{n_k}(w_2)| \le D
\]
and
\[
\p_\infty(w) = \lim_{k\to\infty} \p_{n_k}(w) \ge C^{-1} \p_E(w) - C 
\]
for all $w\in\W$.

We now just need to show that $[\p_n]$ converges to $[\p_\infty]$ and that $[\p_\infty] \in \Qm^{++}_D$. To that end, note that for $w \in \Fix$
\[
\ov{\p}_\infty\{w\} = \lim_{n\to\infty} \frac{\lim_{k\to\infty} \p_{n_k}(w^n) + D}{n} \ge \limsup_{k\to\infty} \ov{\p}_{n_k}\{w\}
\]
where we have used that
\[
\ov{\p}\{w\} = \inf_{n\ge 1} \frac{\p(w^n) + D}{n}
\]
for $\p \in \Qm^+$ and any $w \in \Fix$. Similarly  $\ov{\p}_\infty\{w\} \le \liminf_{k \to\infty} \ov{\p}_{n_k}\{w\}$ and so we deduce that $\ov{\p}_{n_k}$ converges to $\ov{\p}_\infty$ pointwise as $k\to\infty$.

Now note that this convergence implies that
\begin{equation}\label{eq.conv}
\left( \limsup_{k\to\infty} \Dil(\p_n, \p_{n_k})\right)^{-1} \ov{\p}_n\{w\} \le \ov{\p}_\infty\{w\} \le \limsup_{k\to\infty} \Dil(\p_{n_k}, \p_n) \, \ov{\p}_n\{w\}
\end{equation}
for each $n\ge 1$. We also have that
\begin{align*}
1\le \min\{ \Dil(\p_{n_k}, \p_n), \Dil(\p_n,\p_{n_k})\}  &\le \max\{ \Dil(\p_{n_k}, \p_n), \Dil(\p_n,\p_{n_k})\}\\
 &\le \Dil(\p_{n_k},\p_n) \Dil(\p_n, \p_{n_k}) = e^{\Qq([\p_{n_k}], [\p_n])}
\end{align*}
for all $n,k \ge 1$ since $h(\p_n) = 1$ for all $n\ge 1$. It follows, since $\p_n$ is Cauchy, that 
\[
\lim_{n\to\infty} \limsup_{k\to\infty} \Dil(\p_{n_k},\p_n) \Dil(\p_n, \p_{n_k}) = 1.
\]
Along with equation $(\ref{eq.conv})$ this shows that $[\p_n] \to [\p_\infty]$ as $n\to\infty$.

Finally we need to show that $[\p_\infty]$ belongs to $\Qm^{++}_D$. To do so it suffices to show that $h(\p_\infty) =1$ since we saw that $D_{\p_\infty} \le D$ above.
To that end we write $\tau_n = \limsup_{k\to\infty} \Dil(\p_{n_k},\p_n) \Dil(\p_n, \p_{n_k})$ and note that, by the above discussion, $\tau_n \to 1$ as $n\to\infty$ and
\[
\tau_n^{-1} \ov{\p}_n\{w\} \le \ov{\p}_\infty\{w\} \le \tau_n \ov{\p}_n\{w\}
\]
for all $n \ge1$. This equation implies that
\[
\tau_n = h(\tau_n^{-1} \p_n(w)) \ge h( \ov{\p}_\infty\{w\} ) \ge h(\tau_n \p_n(w)) = \tau_n^{-1}
\]
since $h(\p_n) = 1$ for all $n \ge 1$. Letting $n \to\infty$ gives $h(\p_\infty) = 1$ as required. 
\end{proof}

\subsection{Density of locally constant functions}
In this section we prove Theorem \ref{thm.lc}.
\begin{definition}
Given a continuous function $F: \Sigma \to \R$ we say that $F$ is twin to $\p \in \qm^{++}$ if for any $\{w\} \in \Per_n$ and $x_w = (w,w,\ldots) \in  \Sigma$
\[
S_nF(x_w) = \ov\p\{w\}.
\]
We say that $F$ is twin to $[\p'] \in \Qm^{++}$ if $F$ is a twin to some $\p \in [\p']$.
\end{definition}

\begin{proposition}\label{prop.rep}
Each quasimorphism $\p \in \qm^{++}$ is twin to a continuous function.
\end{proposition}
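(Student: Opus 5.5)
The plan is to build $F$ so that $\int F\,\dd\mu=\mu(\p)$ for every $\mu\in\Minv$, and then read off the twin identity from periodic measures. By Remark~\ref{rem:integralhomogeneous}, $\ov\p\{w\}=\delta_{\{w\}}(\p)$. With the normalisation there (the measure equidistributed on the $n$ points of $\{w\}$, up to the factor $n$), the identity $\int F\,\dd\delta_{\{w\}}=\delta_{\{w\}}(\p)$ becomes exactly $S_nF(x_w)=\ov\p\{w\}$ for $\{w\}\in\Per_n$. So the statement reduces to the following: the functional $a_\p\colon\Minv\to\R$, $\mu\mapsto\mu(\p)$, is represented exactly by a continuous function. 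Note that $a_\p$ is affine, since $\mu\mapsto\frac1n\int\p\circ\ev_n\,\dd\mu$ is affine and the limit preserves this. It is also weak$^*$-continuous (Proposition 5.7 of \cite{TermoEqSt}, already used in the proof of Corollary~\ref{cor:derivativeofpressure}).

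\textbf{Step 1 (approximation).} Let $A(\Minv)$ be the space of continuous affine functions on the compact convex set $\Minv$. The subspace $L=\{\mu\mapsto\int G\,\dd\mu : G\in C(\Sigma_A)\}$ contains the constants and separates points of $\Minv$. By the standard Hahn--Banach/Choquet fact, such a subspace is uniformly dense in $A(\Minv)$. Hence for every $\epsilon>0$ there is $G\in C(\Sigma_A)$ with $\sup_{\mu}|a_\p(\mu)-\int G\,\dd\mu|<\epsilon$.

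\textbf{Step 2 (controlling sup norms).} Uniform closeness of the functionals does not give small $\|G\|_\infty$, so I will use the classical identity
\[
\inf_{u\in C(\Sigma_A)}\|G+u-u\circ\sigma\|_\infty=\max_{\mu\in\Minv}\Bigl|\int G\,\dd\mu\Bigr|.
\]
This follows from Hahn--Banach: the annihilator of the closure of the coboundaries consists of $\sigma$-invariant signed measures. Their Jordan parts are again invariant, so the dual norm is attained on $\Minv$. Adding a coboundary changes no integral against invariant measures, hence no periodic sum $S_nG(x_w)$.

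\textbf{Step 3 (iteration).} First choose $F_1$ with $\sup_\mu|a_\p(\mu)-\int F_1\,\dd\mu|<2^{-1}$. Inductively, let $r_k=a_\p-\int(F_1+\dots+F_k)\,\dd\,\cdot$, so $\|r_k\|_\infty<2^{-k}$. By Step 1, pick $G$ with $\sup_\mu|r_k(\mu)-\int G\,\dd\mu|<2^{-k-1}$; then $\max_\mu|\int G\,\dd\mu|<2^{-k}+2^{-k-1}$. By Step 2, modify $G$ by a coboundary to obtain $F_{k+1}$ with $\|F_{k+1}\|_\infty\le 2^{-k+1}$ and the same integrals. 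Then $F=\sum_k F_k$ converges uniformly to a continuous function with $\int F\,\dd\mu=\mu(\p)$ for all $\mu\in\Minv$, and the twin identity follows as explained above.

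I expect the main obstacle to be Step 2, together with the density claim in Step 1; each needs a careful, if standard, duality argument. If the duality route proves awkward, the fallback is to start from the bounded Bowen function $g_{\mu,\p}$ of Theorem~\ref{thm:positiveLivsic}, approximate it by locally constant functions on cylinders, and run the same summable-correction scheme, matching periodic data of increasing length at each stage.
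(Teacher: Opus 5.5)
Your proposal is correct, but it takes a different route from the paper.

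\textbf{The paper's route.} The paper observes that $f_n(x)=\psi(x_0,\dots,x_{n-1})$ is an almost additive sequence and invokes Cuneo's theorem \cite{Cuneo}, which gives a continuous $f$ with $\frac1n\|f_n-S_nf\|_\infty\to0$. Twinness then follows by evaluating along $x_w$ at multiples of the period: $\ov{\psi}\{w\}/k=\lim_n f_{nk}(x_w)/(nk)=\lim_n S_{nk}f(x_w)/(nk)=S_kf(x_w)/k$.

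\textbf{Your route.} You prove directly the weaker but sufficient statement $\int F\,d\mu=\mu(\psi)$ for all $\mu\in\mathcal{M}(\sigma)$. In effect you re-derive the part of Cuneo's theorem that is needed here, and your steps check out.
\begin{itemize}
\item Step~1: a subspace of $A(K)$ that contains the constants and separates points of $K$ is indeed dense. This is a standard consequence of the barycentre formula, or of Hahn--Banach separation in $E\times\mathbb{R}$.
\item Step~2: the Jordan parts of an invariant signed measure are invariant even though $\sigma$ is not invertible. Indeed $\eta=\sigma_*\eta^+-\sigma_*\eta^-$ forces $\eta^+\le\sigma_*\eta^+$, and the two measures have the same mass.
\item Step~3: you rightly built in slack ($2^{-k}+2^{-k-1}<2^{-k+1}$), because the infimum in Step~2 need not be attained.
\end{itemize}

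\textbf{Trade-off.} The paper's citation is one line and yields the stronger uniform conclusion $\frac1n\|f_n-S_nf\|_\infty\to0$. Your argument is self-contained and uses only classical duality.

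\textbf{A simplification.} You can drop the Choquet-theoretic Step~1 entirely. Invariance of $\mu$ makes $a_n(\mu)=\int\psi(x_0,\dots,x_{n-1})\,d\mu$ satisfy $|a_{n+m}-a_n-a_m|\le D_\psi$. Hence the locally constant $G_m(x)=\frac1m\psi(x_0,\dots,x_{m-1})$ already satisfies $\sup_{\mu}|\mu(\psi)-\int G_m\,d\mu|\le D_\psi/m$. Now take $F_1=G_{m_1}$ and let $F_{k+1}$ be a coboundary correction of $G_{m_{k+1}}-G_{m_k}$, with $m_k=2^k$. This runs the same telescoping scheme with explicit approximants. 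The correction itself can also be made explicit with the averaging coboundary $u$ used in the proof of Theorem~\ref{thm:positiveLivsic}.

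Note also that neither your argument nor the paper's uses the hypothesis $\psi\in\qm^{++}$.
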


\begin{proof}
Define the sequence of functions  $(f_n)_{n\ge 1}$, $f_n :\Sigma \to \R$, by $f_n(x) = \p(x_0,\ldots,x_{n-1})$. Then $(f_n)_{n\ge 1}$ is an almost additive function in the sense of \cite{Cuneo}. In particular by \cite[Theorem 1.2]{Cuneo} there exists a continuous function $f: \Sigma \to \R$ such that
\begin{equation}\label{eq.cont}
\frac{1}{n} \| f_n - S_nf\|_\infty \to 0
\end{equation}
as $n\to\infty$. 

We will verify that $f$ is twin to $\p$. To that end, take $\{w\} \in \Per_k$ and note that
\[
\frac{\ov \p\{w\}}{k} = \lim_{n\to\infty} \frac{\p(w^n)}{kn} = \lim_{n\to\infty} \frac{f_{nk}(x_w)}{nk} =\lim_{n\to\infty} \frac{S_{nk}f(x_w)}{nk} = \frac{S_k f(x_w)}{k}
\]
where the first and second and fourth equalities follow from the definitions and the third equality follows from  $(\ref{eq.cont})$ above. Hence $\ov \p\{w\} = S_k f(x_w)$ for all $\{w\} \in \Per$ and we are done.
\end{proof}

We also need the following.

\begin{lemma}\label{lem.pos}
Suppose that $\p \in \qm$ then $\p$ belongs to $\qm^{++}$ if and only if given any $\phi \in \qm^{++}$ there is $C >0$ such that
\[
\ov{\p}\{w\} \ge C \, \ov{\phi}\{w\}
\]
for all $\{w\} \in \Per$.
\end{lemma}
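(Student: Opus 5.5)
The plan is to prove the two implications separately. For the "only if" direction, take a quasimorphism $\p$ with linear growth. For the "if" direction, it suffices to test against the single simplest element of $\qm^{++}$, namely the length function $\ell(w)=|w|$.

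\emph{($\Rightarrow$)} Suppose $\p\in\qm^{++}$ and let $\phi\in\qm^{++}$ be arbitrary.

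First I obtain a linear lower bound for $\ov\p$. Choose $\tau,C>0$ with $\p(u)\ge \tau|u|-C$ for all $u\in\W$. For $\{w\}\in\Per$ with $|w|=k$ we have
\[
\frac{\p(w^n)}{n}\ge \tau k-\frac{C}{n},
\]
so letting $n\to\infty$ gives $\ov\p\{w\}\ge \tau|w|$.

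Second I obtain a linear upper bound for $\ov\phi$. By the footnote in \textsection\ref{subsec:modulispace}, every quasimorphism has at most linear growth: $\phi(u)\le \tau'|u|+C'$ for all $u\in\W$. The same homogenization argument then gives $\ov\phi\{w\}\le\tau'|w|$.

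Combining the two bounds, $\ov\phi\{w\}/\ov\p\{w\}\le \tau'/\tau$ for every periodic orbit. Hence $\Dil(\phi,\p)\le\tau'/\tau<\infty$. Equivalently, $\ov\p\{w\}\ge \Dil(\phi,\p)^{-1}\,\ov\phi\{w\}$ for all $\{w\}\in\Per$, so the constant $C=\Dil(\phi,\p)^{-1}>0$ works.

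\emph{($\Leftarrow$)} Assume the comparison holds for every $\phi\in\qm^{++}$, and apply it to $\phi=\ell$. The length function is additive, so it is a quasimorphism with zero defect, and $\ell\in\qm^{++}$ with $\tau=1$. Its homogenization is $\ov\ell\{w\}=|w|$. The hypothesis therefore gives a constant $c>0$ with $\ov\p\{w\}\ge c|w|$ for all $\{w\}\in\Per$, i.e.
\[
\inf_{\{u\}\in\Per}\frac{\ov\p\{u\}}{\ov\ell\{u\}}\ge c>0 .
\]

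Next I invoke the remark following Lemma~\ref{lem.comp}, which applies to an arbitrary $\p\in\qm$ compared against an element of $\qm^{++}$ (here $\ell$). It yields a constant $C>0$ with
\[
\p(w)\ \ge\ \Bigl(\inf_{\{u\}\in\Per}\frac{\ov\p\{u\}}{\ov\ell\{u\}}\Bigr)|w|-C\ \ge\ c|w|-C
\]
for all $w\in\W$. This is exactly the linear growth condition \eqref{eq:defqm++}. It also shows $\p$ is bounded below, so $\p\in\qm^+$, and therefore $\p\in\qm^{++}$.

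The only step needing care is the use of the remark after Lemma~\ref{lem.comp} for a $\p$ not yet known to lie in $\qm^{++}$. Its proof is a closing argument: extend $w$ by a connecting word $w'\in\W_M$ so that $\{ww'\}\in\Per$. It then controls $\p(w)$ using only $D_\p$, $\ov\p\{ww'\}$ and $\min_{\W_M}\p$, and never uses positivity of $\p$. So it applies verbatim. Should that be in doubt, I would redo it directly. Compare $\p(w)$ with $\ov\p\{ww'\}-2D_\p-\max_{u\in\W_{\le M}}|\p(u)|$, and use $\ov\p\{ww'\}\ge c|ww'|\ge c|w|$. This mirrors the proof of Lemma~\ref{lem.hombound}.
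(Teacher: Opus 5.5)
Your proof is correct and follows essentially the same route as the paper. For the forward direction, the paper only cites Lemma~\ref{lem.comp}, while you check directly from the linear upper and lower bounds that the dilation is finite. For the reverse direction, the paper runs the same closing argument (extend $w$ by a connecting word of length $M$ to get a periodic orbit) against a general test quasimorphism of linear growth, whereas you specialise it to the length function $w \mapsto |w|$.
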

\begin{proof}
The forward direction follows from Lemma \ref{lem.comp}. We suppose that we have $\phi \in \qm$ and $C >0$ such that $\ov{\p}\{w\} \ge C \, \ov{\phi}\{w\}$ for all $w \in \Per$. Now, given $w \in \W$ there exists $w' \in \W_M$ where $M \ge 1$ is the mixing constant such that $ww'$ is periodic. Then
\begin{align*}
\p(w) \ge \p(ww')  - \p(w') - D_\p &\ge \ov{\p}\{ww'\} - D_\p - \p(w') - D_\p \\
&\ge C \ov{\phi}\{ww'\} - \p(w') - 2D_\p\\
&\ge C\phi(ww') - D_\phi - \p(w') - 2D_\p\\
&\ge C\phi(w)- C\phi(w') - \p(w') -(1 + C) D_\phi - 2D_\p.
\end{align*}
This implies the lemma as $M$ is independent of $w$.
\end{proof}

We are now ready to prove Theorem \ref{thm.lc}.

\begin{proof}[Proof of Theorem \ref{thm.lc}]
Take $[\phi] \in \Qm^{++}$. It suffices to show that for any $\epsilon >0$ there is $[\p] \in \Qm^{++}$ that is twin to a rational valued, locally constant function and  $\Qq([\phi],[\p]) < \epsilon$.

Let $F \colon \Sigma \to \R$ be a continuous function that represents $\phi$ coming from Proposition \ref{prop.rep}. Note that by Lemma \ref{lem.pos} there exists $\tau > 0$ such that $\ov\phi\{w\} \ge \tau |w|$ for any periodic $w$.

 Then, since rational valued, locally constant functions are dense in the supremum norm in $C(\Sigma,\R)$ \cite{PP}, there exists a locally constant function $r : \Sigma \to \R$ such that $\|r - F\|_\infty < \delta$ where $\delta >0$ is to be chosen later. Define $\p_r \in \qm$ to be the quasimorphism twin to $r$. 

We then have that
\[
|\ov\phi\{w\} - \ov\p_t\{w\}| = |S_{|w|}r(x_w) - S_{|w|}F(x_w)| < \delta |w|
\]
for all $\{w\} \in \Per$. Assuming that $\delta < \tau/2 $ this implies that $\ov\p_r\{w\} \ge \tau|w|/2$ for all periodic $w$. Thus by Lemma \ref{lem.pos} $\p_r \in \qm^{++}$ when $\delta$ is sufficiently small.

We now note that
\[
\left| 1 - \frac{\ov\p_r\{w\}}{\ov\phi\{w\}} \right| = \frac{|\ov\phi\{w\} - \ov\p_t\{w\}|}{\ov\phi\{w\}} \le \frac{\delta |w|}{\tau |w|} = \frac{\delta}{\tau}
\]
or in other words
\[
(1 - \eta)\ov\phi\{w\} \le \ov\p_r\{w\} \le (1 + \eta)\ov\phi\{w\} 
\]
for each $\{w\} \in \Per$ where $\eta = \delta/\tau$.
This implies that
\[
\Qq([\p_r],[\phi]) \le \log \left( \frac{1 + \eta}{1-\eta}\right).
\]
Therefore, for $\delta >0$ sufficiently small we see that $\p_r \in \qm^{++}$ and $\Qq([\p_r],[\phi]) \le \epsilon$ as required.
\end{proof}

\begin{remark}
Note that separability of $\Qm^{++}$ follows from Theorem \ref{thm.lc} since the set of rational valued, local constant functions on a subshift of finite type are countable.
\end{remark}

\subsection{Pressure geodesics}
In this section we show that the pressure curves form geodesics in $(\Qm^{++},\Qq)$.
\begin{theorem}
Fix $[\p], [\phi] \in \Qm^{++}$ and set $\p_t = \Pr(\p,\phi)(t) \phi - t\p$  for each $t \in \R$. Then the $\p_t \in \qm^{++}$ for all $t \in \R$ and
\[
t \mapsto [ \Pr(\p,\phi)(t) \phi - t\p]  = [\p_t]\in \Qm^{++}
\]
is a bi-infinite geodesic for the $\D$ metric: that is, for any $r, s,t \in \R$ with $s < r<t$ we have that
\[
\Qq([\p_r],[\p_t]) = \Qq([\p_r],[\p_s]) + \Qq([\p_s], [\p_t])
\]
and $\lim_{t\to\pm \infty} \Qq([\p_0],[\p_t]) = \infty$.
\end{theorem}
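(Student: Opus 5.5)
The plan is to reduce everything to one–variable convex analysis of $\beta(t):=\Pr(\p,\phi)(t)$, the unique $s$ with $\Pr(t\p-s\phi)=0$. By the earlier results, $\beta$ is convex and $C^1$. By Lemma~\ref{lem.asympgrad}, its asymptotic slopes are
\[
b:=\Dil(\p,\phi)=\lim_{t\to\infty}\beta(t)/t,\qquad a:=\Dil(\phi,\p)^{-1}=\lim_{t\to-\infty}\beta(t)/t .
\]
If $[\p]=[\phi]$ the curve is constant, so the divergence claim requires $[\p]\neq[\phi]$; we assume this from now on. Then Corollary~\ref{cor.line} says $\beta$ is strictly convex, so $a<\beta'(t)<b$ for every $t$.

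\emph{Step 1: $\p_t\in\qm^{++}$.} We use the characterisation $\qm^{++}=\{\p':\inf_{\mu\in\Minv}\mu(\p')>0\}$ (the corollary to Theorem~\ref{thm:positiveLivsic}). For $\mu\in\Minv$ the variational principle applied to $t\p-\beta(t)\phi$ gives $h_\mu(\sigma)+t\mu(\p)-\beta(t)\mu(\phi)\le 0$, that is, $\mu(\p_t)\ge h_\mu(\sigma)\ge0$. Equality $\mu(\p_t)=0$ would force both $h_\mu(\sigma)=0$ and $\mu$ to be the equilibrium state $\mu_t$ of $t\p-\beta(t)\phi$. This is impossible, since $\mu_t$ is Bernoulli with full support and hence has positive entropy (a zero-entropy Bernoulli measure would be a point mass, contradicting full support on a mixing SFT with $k\ge2$ symbols). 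Thus $\mu(\p_t)>0$ for every $\mu$. Since $\mu\mapsto\mu(\p_t)$ is continuous and $\Minv$ is compact, the infimum is attained and is therefore positive.

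\emph{Step 2: computing the distance along the curve.} For $\{w\}\in\Per$ set $x_w=\ov\p\{w\}/\ov\phi\{w\}$. The set $\{x_w\}$ has infimum $a$ and supremum $b$. We have $\ov{\p_t}\{w\}=\ov\phi\{w\}\,(\beta(t)-tx_w)$, so
\[
\Dil(\p_r,\p_s)=\sup_w \frac{\beta(r)-rx_w}{\beta(s)-sx_w},
\]
and the positivity from Step 1 (together with $\beta(t)\ge tb$ and $\beta(t)\ge ta$, from the supporting lines) keeps all denominators positive for $x\in[a,b]$. A ratio of affine functions of $x$ is monotone, so the supremum and infimum over $\{x_w\}$ are the values at the endpoints $a$ and $b$, approached as limits. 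Multiplying the two dilations then gives $\Qq([\p_r],[\p_s])=|g(r)-g(s)|$, where
\[
g(t)=\log(\beta(t)-ta)-\log(\beta(t)-tb).
\]
The bookkeeping of which endpoint gives the sup in each factor is the step I expect to need the most care. The cleanest route is to check both orderings directly and observe that the product is the same cross-ratio in either case.

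\emph{Step 3: monotonicity and divergence.} We have
\[
g'(t)=\frac{\beta'(t)-a}{\beta(t)-ta}-\frac{\beta'(t)-b}{\beta(t)-tb}>0,
\]
since $a<\beta'<b$ and both denominators are positive. So $g$ is strictly increasing, and additivity along $s<r<t$ follows at once. For divergence, $t\mapsto\beta(t)-tb$ is nonincreasing (because $\beta'<b$) and nonnegative, so it stays bounded as $t\to+\infty$. Meanwhile $\beta(t)-ta\ge t(b-a)+O(1)\to\infty$, hence $g(t)\to+\infty$. Symmetrically, as $t\to-\infty$ the quantity $\beta(t)-ta$ stays bounded while $\beta(t)-tb\to\infty$, hence $g(t)\to-\infty$. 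In both cases $\Qq([\p_0],[\p_t])=|g(t)-g(0)|\to\infty$.
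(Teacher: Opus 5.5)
Your argument is correct, and it is organised differently from the paper's.

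\emph{Membership in $\qm^{++}$.} The paper first proves Lemma~\ref{lem.post} (strict positivity of $\beta(t)-tb$ for $t\ge 0$ and of $\beta(t)-ta$ for $t\le 0$) via the large deviation principle. It then bounds $\overline{\psi_t}$ below by a positive multiple of $\overline{\phi}$ and applies Lemma~\ref{lem.pos}. You reach the same conclusion from the variational principle, $\mu(\psi_t)\ge h_\mu(\sigma)$, ruling out equality by uniqueness and positive entropy of the Bernoulli equilibrium state. This is more self-contained.

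\emph{Geodesic property.} The paper computes dilations only against the two base points $[\psi_{-h(\psi)}]=[\psi]$ and $[\psi_0]=[\phi]$ (Lemma~\ref{lem.compu}). It verifies $D_{r,t}=D_{r,s}D_{s,t}$ only for $r=-h(\psi)$, $t=0$ (Corollary~\ref{cor.geo}), and handles general triples by a brief appeal to reparametrising the pressure curve. You observe instead that each $\overline{\psi_t}/\overline{\phi}$ is an affine function of the single ratio $x_w$. Hence every dilation along the curve is the endpoint value (at $a$ or $b$) of a monotone M\"obius map. This gives the closed cross-ratio formula $\Qq([\psi_r],[\psi_s])=|g(r)-g(s)|$ for all pairs at once; Lemma~\ref{lem.compu} is its special case at the base points. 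Additivity and divergence then reduce to the monotonicity and end behaviour of $g$, with no reparametrisation step. You are also right that divergence requires $[\psi]\neq[\phi]$, a hypothesis the paper leaves implicit.

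Two points you should state explicitly. First, the endpoints $a,b$ need not be attained by any $x_w$, and the supporting lines only give $\beta(t)-ta\ge 0$ and $\beta(t)-tb\ge 0$. Without strict positivity there, a dilation could be infinite. Strict positivity comes from the uniform bound $\beta(t)-tx_w\ge \inf_\mu\mu(\psi_t)/\max_\mu\mu(\phi)$, obtained by evaluating on periodic orbit measures, and your Step~1 does supply this. Second, monotonicity of $g$ gives $\Qq([\psi_r],[\psi_t])=\Qq([\psi_r],[\psi_s])+\Qq([\psi_s],[\psi_t])$ for $r<s<t$. The ordering $s<r<t$ in the statement is a typo (compare Corollary~\ref{cor.geo}); with that ordering the identity would fail for strictly monotone $g$. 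So your ``additivity along $s<r<t$'' should be read with the middle index as the middle parameter.
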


We will use the following lemma multiple times throughout this section.

\begin{lemma}\label{lem.post}
For any $t \ge 0$ we have that
\[
\Pr(\p,\phi)(t) - t \Dil(\p,\phi) > 0.
\]
Likewise for any any $t \le 0$ we have that
\[
\Pr(\p,\phi)(t) - t\Dil(\p,\phi)^{-1} > 0.
\]
\end{lemma}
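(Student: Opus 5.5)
The plan is to compare $\beta(t):=\Pr(\p,\phi)(t)$ with the lines given by its asymptotic slopes, using the variational formula \eqref{eq:beta} and the rigidity result Corollary~\ref{cor.line}. Here $\phi\in\qm^{++}$ plays the role of the second argument. By Lemma~\ref{lem.asympgrad} the slope of $\beta$ at $+\infty$ is $\Dil(\p,\phi)$ and the slope at $-\infty$ is $\Dil(\phi,\p)^{-1}$. I read the second inequality of the lemma as referring to the slope at $-\infty$, namely $\Pr(\p,\phi)(t)-t\Dil(\phi,\p)^{-1}>0$ for $t\le 0$. The argument below is symmetric in the two cases.

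\emph{Step 1 (weak inequality).} Apply \eqref{eq:beta}, i.e.\ $\beta(t)=\sup_{\mu\in\Minv}\frac{h_\mu(\sigma)+t\mu(\p)}{\mu(\phi)}$, with $\mu=\delta_{\{w\}}$ for a periodic orbit $\{w\}$. This measure has zero entropy, and by Remark~\ref{rem:integralhomogeneous} its integrals are $\ov\p\{w\}$ and $\ov\phi\{w\}$. This gives $\beta(t)\ge t\,\ov\p\{w\}/\ov\phi\{w\}$ for every $\{w\}\in\Per$. For $t\ge0$, taking the supremum over $\{w\}$ gives $\beta(t)\ge t\Dil(\p,\phi)$. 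For $t\le 0$, the supremum of $t\cdot(\text{ratio})$ is $t$ times the infimum of the ratios, so $\beta(t)\ge t\Dil(\phi,\p)^{-1}$.

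\emph{Step 2 (monotonicity).} Set $g_+(t)=\beta(t)-t\Dil(\p,\phi)$. Since $\beta$ is convex and $C^1$ (Corollary~\ref{cor:derivativeofpressure}), $\beta'$ is nondecreasing with limit $\Dil(\p,\phi)$ at $+\infty$. Hence $\beta'\le\Dil(\p,\phi)$, so $g_+$ is convex and nonincreasing. By Step 1, $g_+\ge0$ on $[0,\infty)$. Also $g_+(0)=\beta(0)=h(\phi)>0$ by Remark~\ref{rem:identifybetazero} and Proposition~\ref{prop.growthrate}, which settles $t=0$.

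\emph{Step 3 (strictness; the main point).} Suppose $g_+(t_0)=0$ for some $t_0>0$. Since $g_+$ is nonincreasing and $g_+\ge 0$, it vanishes identically on $[t_0,\infty)$. Then $\beta$ is affine on $[t_0,\infty)$, so it contains a straight segment. By Corollary~\ref{cor.line}, $\p\sim\phi$, and $\beta$ is affine on all of $\R$. Its slope must then equal the asymptotic slope $\Dil(\p,\phi)$, so $g_+$ is constant, equal to $g_+(0)=h(\phi)>0$. This contradicts $g_+(t_0)=0$.

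For $t\le 0$, apply the same reasoning to $g_-(t)=\beta(t)-t\Dil(\phi,\p)^{-1}$. It is nondecreasing because $\beta'\ge\lim_{t\to-\infty}\beta'=\Dil(\phi,\p)^{-1}$. It satisfies $g_-\ge0$ by Step 1 and $g_-(0)=h(\phi)>0$. If it vanished somewhere on $(-\infty,0)$, it would vanish on a half-line, and the same rigidity contradiction follows.

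The only delicate point I anticipate is the strictness in Step 3: the Step 1 bound alone gives just $\ge$. The way around it is to combine the fact that a convex function sitting on its asymptote must coincide with it on a half-line with the rigidity of Corollary~\ref{cor.line}.
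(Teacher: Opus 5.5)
Your proof is correct. You also read the second inequality correctly. By Lemma~\ref{lem.asympgrad} the slope of $\Pr(\psi,\phi)$ at $-\infty$ is $\Dil(\phi,\psi)^{-1}$. With $\Dil(\psi,\phi)^{-1}$ as printed, the claim already fails for $\psi=2\phi$, where $\Pr(\psi,\phi)(t)=2t+h(\phi)$. Likewise, the value at $0$ is $\Pr(\psi,\phi)(0)=h(\phi)$, as you write.

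Your route is genuinely different from the paper's. The paper checks $t=0$ and then rules out a zero of $\Pr(\psi,\phi)(t)-t\Dil(\psi,\phi)$ for $t>0$. It does this by passing to the Fenchel--Legendre transform and invoking the large deviation principle (Theorem~\ref{thm.ldp}) to produce an exponentially growing proportion of periodic orbits. The weak inequality is never isolated there; positivity is implicitly carried over from $t=0$ by continuity.

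You obtain the weak inequality directly, by evaluating \eqref{eq:beta} on zero-entropy periodic-orbit measures. Only the ratio $\ov\psi\{w\}/\ov\phi\{w\}$ enters, so the normalisation of $\delta_{\{w\}}$ does not matter. Strictness then comes from convexity together with Corollary~\ref{cor.line}. A zero forces $g_\pm$ to vanish on a half-line, which gives rough similarity, hence a pressure line through the origin, contradicting $\Pr(\psi,\phi)(0)=h(\phi)>0$.

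Your argument uses only the variational principle, convexity and rigidity, with no counting or G\"artner--Ellis input, and it shows exactly where strictness comes from. It also avoids a step in the paper's sketch that does not follow as written: the claim that a zero makes the transform negative at $\Dil(\psi,\phi)$. In fact a zero at $t_0>0$ gives $\sup_t\bigl(t\Dil(\psi,\phi)-\Pr(\psi,\phi)(t)\bigr)\ge 0$.

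What the paper's framing offers is a link to the counting picture. There $\Dil(\phi,\psi)^{-1}$ and $\Dil(\psi,\phi)$ are the endpoints of the effective domain of the rate function.
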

\begin{proof}
This is true for $t = 0$ since $\Pr(\p,\phi)(0) = h(\p) > 0$.
Suppose for contradiction that there exists $t > 0$ such that $\Pr(\p,\phi)(t) - t \Dil(\p,\phi) = 0$. This implies that $I(\Dil(\p,\phi)) < 0$ where $I$ is the Fenchel-Legendre transform of $\Pr(\p,\phi)(t)$. In particular, since $I$ is $C^1$, there exists $t' >0$ such that $I(t') < 0$. By the large deviation principle this implies that there exists an open set $U \subset \R$ such that 
\[
\liminf_{n\to\infty} \frac{1}{n} \log \left( \frac{1}{\#\{w \in \Fix : \ov{\p}(w) < T \}} \#\left\{ w \in \Fix: \ov{\p}(w) < T ,  \frac{\ov{\phi}(w)}{T} \in U \right\}\right) > 0 
\]
which is absurd. This shows that no such $t > 0$ can exist. A similar proof shows the same for $t < 0$.
\end{proof}

The following lemma shows that the curve $\Pr(\p,\phi)(t) \phi - t\p $ stays inside $\qm^{++}$.

\begin{lemma}\label{lem.++}
Fix $\p,\phi \in \qm^{++}$. For each $t \in \R$ define $\p_t =  \Pr(\p,\phi)(t) \phi - t\p  \in \qm$. Then $\p_t$ belongs to $\qm^{++}$ for each $t \in \R$.
\end{lemma}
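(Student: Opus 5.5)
The plan is to verify membership in $\qm^{++}$ through homogenizations, using Lemma~\ref{lem.pos}. Write $\beta(t)=\Pr(\p,\phi)(t)$. First, $\p_t=\beta(t)\phi-t\p$ is a real linear combination of quasimorphisms, so $\p_t\in\qm$ with $D_{\p_t}\le |\beta(t)|D_\phi+|t|D_\p$. Homogenization is linear, so for every $\{w\}\in\Per$
\[
\ov{\p_t}\{w\}=\beta(t)\,\ov\phi\{w\}-t\,\ov\p\{w\}.
\]
By Lemma~\ref{lem.pos}, applied with the reference quasimorphism $\phi\in\qm^{++}$, it then suffices to find a constant $c_t>0$ with $\ov{\p_t}\{w\}\ge c_t\,\ov\phi\{w\}$ for all $\{w\}\in\Per$.

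\emph{Case $t\ge 0$.} By definition of the dilation, $\ov\p\{w\}\le \Dil(\p,\phi)\,\ov\phi\{w\}$. Multiplying by $-t\le 0$ gives
\[
\ov{\p_t}\{w\}\ge \bigl(\beta(t)-t\Dil(\p,\phi)\bigr)\ov\phi\{w\}.
\]
The coefficient $c_t=\beta(t)-t\Dil(\p,\phi)$ is strictly positive by the first part of Lemma~\ref{lem.post}.

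\emph{Case $t\le 0$.} Here $\ov\p\{w\}\ge \Dil(\phi,\p)^{-1}\ov\phi\{w\}$, and $-t\ge 0$, so
\[
\ov{\p_t}\{w\}\ge \bigl(\beta(t)-t\,\alpha\bigr)\ov\phi\{w\}, \qquad \alpha=\Dil(\phi,\p)^{-1}.
\]
By Lemma~\ref{lem.asympgrad} applied to the pair $(\p,\phi)$, the constant $\alpha$ is exactly $\lim_{t\to-\infty}\beta(t)/t$, the asymptotic slope of $\beta$ at $-\infty$. This is the slope that appears in the second half of Lemma~\ref{lem.post}, whose subscripted dilation should be read this way.

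The main obstacle is strict positivity of $\beta(t)-t\alpha$, and I would establish it independently. Convexity of $\beta$ together with $\beta(t)/t\to\alpha$ as $t\to-\infty$ gives $\beta(t)\ge t\alpha+\lim_{s\to-\infty}(\beta(s)-s\alpha)$. The function $g(t)=\beta(t)-t\alpha$ is convex and nondecreasing on $(-\infty,0]$, since $\beta'\ge\alpha$, with $g(0)=h(\p)>0$. Hence $g(t)\ge g(-\infty)\ge 0$. To exclude $g(t)=0$ for some $t\le 0$, I would argue as in Lemma~\ref{lem.post}. Equality would force $g$ to be constant on $(-\infty,t]$, i.e.\ $\beta$ affine with slope $\alpha$ on a half-line. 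By Corollary~\ref{cor.line} this forces $\p\sim\phi$, so $\ov\p=\alpha\ov\phi$ and $\beta(s)=s\alpha+h(\p)$, which makes $g\equiv h(\p)>0$, a contradiction. Thus $c_t=g(t)>0$, and Lemma~\ref{lem.pos} yields $\p_t\in\qm^{++}$.
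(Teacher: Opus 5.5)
Your argument is the paper's: bound $\ov{\p_t}$ below by a multiple of $\ov\phi$ using the dilations, get positivity of the coefficient from Lemma~\ref{lem.post}, and conclude with Lemma~\ref{lem.pos}. Your reading of the $t\le 0$ constant as $\Dil(\phi,\p)^{-1}$ is correct. It is the asymptotic slope of $\Pr(\p,\phi)$ at $-\infty$, and the $\Dil(\p,\phi)^{-1}$ written both in the paper's proof and in Lemma~\ref{lem.post} is a slip.

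One step in your independent positivity argument for $t\le 0$ is asserted without justification: $g(-\infty)\ge 0$. It does not follow from convexity and the asymptotic slope alone. For example, $\alpha t-1+2e^{t}$ is convex, has slope $\alpha$ at $-\infty$ and value $1$ at $0$, yet $g(-\infty)=-1$. Without this inequality, your ``equality forces $g$ constant on $(-\infty,t]$'' step fails.

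It is easily supplied. Apply \eqref{eq:beta}, with the roles of $\phi$ and $\p$ exchanged, to the zero-entropy periodic-orbit measures. This gives $\beta(t)\ge t\,\ov\p\{w\}/\ov\phi\{w\}$ for every $\{w\}\in\Per$, hence $\beta(t)\ge t\,\alpha$ for all $t\le 0$. Alternatively, simply cite the corrected second half of Lemma~\ref{lem.post}.

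A harmless slip: $g(0)=\Pr(\p,\phi)(0)=h(\phi)$, not $h(\p)$.
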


\begin{proof}
Suppose that $t >0$. Then we have that for any $\{w\} \in \Per$,
\begin{align*}
\ov{\p}_t \{w\} &=   \Pr(\p,\phi)(t) \ov{\phi}\{w\}- t\ov{\p}\{w\}  \\
&\ge  \Pr(\p,\phi)(t) \ov{\phi}\{w\} - t \Dil(\p,\phi)\ov{\phi}\{w\} \\
&= \ov{\phi}\{w\}( \Pr(\p,\phi)(t) - t \Dil(\p,\phi)).
\end{align*}
When $t \le 0$ we similarly have that for any $w \in \Fix$,
\begin{align*}
\ov{\p}_t \{w\} &=   \Pr(\p,\phi)(t) \ov{\phi}\{w\} - t\ov{\p}\{w\}  \\
&\ge  \Pr(\p,\phi)(t) \ov{\phi}\{w\} - t \Dil(\p,\phi)^{-1}\ov{\phi}\{w\} \\
&= \ov{\phi}\{w\}( \Pr(\p,\phi)(t) - t \Dil(\p,\phi)^{-1}).
\end{align*}
We now apply Lemma \ref{lem.pos} to deduce the conclusion.
\end{proof}

Combined wit Lemma \ref{lem.comp} we can show a stronger, quantitative version of this result.
\begin{corollary}\label{cor.lamb}
Fix $\p,\phi \in \qm^{++}$ and for each $t \in \R$ define $\p_t =  \Pr(\p,\phi)(t) \phi - t\p  \in \qm$.
Then there exists a constant $\lambda > 0$ such that, for all $t \in \R$, $\p_t + (\lambda|t| + \lambda)$ is a strictly positive quasimorphism belonging to $\qm^{++}$.
\end{corollary}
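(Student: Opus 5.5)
The plan is to make the proof of Lemma~\ref{lem.++} quantitative. There we bounded the homogenisations $\ov{\p}_t$ from below. Here we bound $\p_t$ itself, word by word, using Lemma~\ref{lem.comp}, whose additive constant $C$ depends only on $\p$ and $\phi$ and not on $t$. Write $\beta(t)=\Pr(\p,\phi)(t)$. Set $m=\inf_{\{u\}\in\Per}\ov\p\{u\}/\ov\phi\{u\}$ and $M=\sup_{\{u\}\in\Per}\ov\p\{u\}/\ov\phi\{u\}=\Dil(\p,\phi)$. Lemma~\ref{lem.comp}, in the form of the remark following it, gives
\[
m\,\phi(w)-C\le \p(w)\le M\,\phi(w)+C \quad\text{for all } w\in\W .
\]
Since $\phi\in\qm^{++}$, there is also $C_\phi\ge 0$ with $\phi(w)\ge -C_\phi$ for all $w\in\W$.

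\emph{Case $t\ge 0$.} Use the upper bound on $\p$:
\[
\p_t(w)=\beta(t)\phi(w)-t\p(w)\ge \bigl(\beta(t)-tM\bigr)\phi(w)-tC .
\]
By Lemma~\ref{lem.post}, the coefficient $a(t):=\beta(t)-tM$ is strictly positive. By Remark~\ref{rem.asympupper}, which encodes convexity together with Lemma~\ref{lem.asympgrad}, we have $\beta(t)\le Mt+\beta(0)$. Hence $0<a(t)\le\beta(0)=h(\phi)$ (Remark~\ref{rem:identifybetazero}). Therefore
\[
\p_t(w)\ge -h(\phi)C_\phi-tC .
\]

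\emph{Case $t\le 0$.} Here $-t\p=|t|\p$, so we use the lower bound on $\p$:
\[
\p_t(w)\ge\bigl(\beta(t)-tm\bigr)\phi(w)-|t|C .
\]
The $t\le 0$ part of Lemma~\ref{lem.post} gives $\beta(t)-tm>0$; this is its statement with the infimal slope $m$, which is the asymptotic slope at $-\infty$ from Lemma~\ref{lem.asympgrad}. For the upper bound, convexity of $\beta$ means all its slopes are at least $m$. So $\beta(0)-\beta(t)\ge m(0-t)$, i.e.\ $\beta(t)-tm\le\beta(0)=h(\phi)$. This gives again
\[
\p_t(w)\ge -h(\phi)C_\phi-|t|C .
\]

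Now take $\lambda>\max\{C,\,h(\phi)C_\phi\}$, for instance $\lambda=C+h(\phi)C_\phi+1$. Then $\p_t(w)+\lambda|t|+\lambda>0$ for every $w\in\W$ and every $t\in\R$, with $\lambda$ independent of both. Adding a constant to a quasimorphism gives a quasimorphism whose defect changes by at most three times that constant. It also preserves linear growth: $\p_t\ge\tau_t|w|-C_t$ by Lemma~\ref{lem.++}, and adding the constant only changes $C_t$. So $\p_t+\lambda|t|+\lambda\in\qm^{++}$.

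The main obstacle is keeping the bound uniform in $t$. That requires the coefficient of $\phi$ to lie in $(0,h(\phi)]$ for all $t$, where positivity comes from Lemma~\ref{lem.post} and the upper bound from convexity. If the coefficient were allowed to blow up, the term $a(t)\phi(w)$ could not be controlled using only the lower bound $\phi\ge -C_\phi$. A second point of care is that the constant in Lemma~\ref{lem.comp} must not depend on $t$, which holds because it is applied only to the fixed pair $\p,\phi$.
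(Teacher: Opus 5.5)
Your proof is correct, and it takes a more direct route than the paper's.

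\textbf{The paper's route.} The paper applies Lemma~\ref{lem.comp} to the \emph{moving} pair $(\phi,\psi_t)$, getting $\psi_t(w)\ge \bigl(\Pr(\psi,\phi)(t)-t\Dil(\psi,\phi)\bigr)\phi(w)-C_t$ with a $t$-dependent constant $C_t$. It then argues, through the explicit constant of Remark~\ref{rem.indep}, that $C_t$ grows at most linearly in $|\Pr(\psi,\phi)(t)|+|t|$. That constant involves the defects, short-word values and dilations of $\psi_t$, and the computation is left to the reader. Finally it combines this with a linear bound on $|\Pr(\psi,\phi)(t)|$ and the growth bound $\phi(w)\ge\tau_\phi|w|-C_\phi$.

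\textbf{Your route.} You apply Lemma~\ref{lem.comp} once, to the \emph{fixed} pair $(\psi,\phi)$, so the additive constant $C$ does not depend on $t$. Linearity of $\psi_t$ in $(\phi,\psi)$ then leaves the coefficient of $\phi$ as the only $t$-dependence. You trap that coefficient in $(0,h(\phi)]$: Lemma~\ref{lem.post} gives positivity and convexity of $\Pr(\psi,\phi)$ gives the upper bound.

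\textbf{What each buys.} Your argument gives an explicit $\lambda=C+h(\phi)C_\phi+1$ and avoids the unchecked bookkeeping. It also makes the appeal to Lemma~\ref{lem.++} unnecessary, since $\psi_t(w)\ge a(t)\bigl(\tau_\phi|w|-C_\phi\bigr)-|t|C$ with $a(t)>0$ already shows linear growth. The paper's approach instead sets up linear-in-$|t|$ control of all constants attached to $\psi_t$, a pattern it reuses in Proposition~\ref{prop.curve}.

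\textbf{The $t\le 0$ case.} You correctly use the slope $m=\Dil(\phi,\psi)^{-1}$, which by Lemma~\ref{lem.asympgrad} is the asymptotic slope of $\Pr(\psi,\phi)$ at $-\infty$. As printed, Lemma~\ref{lem.post} and the $t\le0$ case of Lemma~\ref{lem.++} use $\Dil(\psi,\phi)^{-1}$ there. That is a typo: with that constant the inequality fails as $t\to-\infty$ whenever $\Dil(\psi,\phi)^{-1}<\Dil(\phi,\psi)^{-1}$. So your reading is the one the argument needs.
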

\begin{proof}
Consider $t \ge 0$. Then,  in the proof of the previous lemma we saw that 
\[
\Dil(\phi, \p_t)^{-1} \ge \Pr(\p,\phi)(t) - t \Dil(\p,\phi).
\]
Let $\tau_\phi, C_\phi >0$ be the constants such that $\phi(w) \ge \tau_\phi |w| -C_\phi$ for all $w \in \W$.
Now, by Lemma \ref{lem.comp} there exists $C_t > 0$ such that
\begin{align*}
\p_t(w) &\ge \phi(w)(\Pr(\p,\phi)(t) - t \Dil(\p,\phi)) - C_t \\
&\ge (\tau_\phi |w| - C_\phi)(\Pr(\p,\phi)(t) - t \Dil(\p,\phi)) - C_t\\
& \ge \tau_\phi(\Pr(\p,\phi)(t) - t \Dil(\p,\phi)|w| - C_\phi(\Pr(\p,\phi)(t) - t \Dil(\p,\phi)) - C_t.
\end{align*}
Now, since $\tau_\phi(\Pr(\p,\phi)(t) - t \Dil(\p,\phi)) > 0$ by Lemma \ref{lem.post} it suffices to show that there exists $M >0$ such that
\begin{equation}\label{eq.Mbound}
|C_\phi(\Pr(\p,\phi)(t) - t \Dil(\p,\phi)) + C_t| \le M  |t| + M.
\end{equation}
for all $ t\in \R$.

First note that, by Remark \ref{rem.indep}, the constant $C_t$ can be obtained from Remark \ref{rem.indep} which can easily be checked to be controlled by $c' (|\Pr(\p,\phi)(t)| + |t|) + c'$ for some suitably large $c' >0$ independent of $t \in \R$. We leave this calculation to the reader. Hence the left-hand side of $(\ref{eq.Mbound})$ is bounded above by
\begin{equation}\label{eq.Lbound}
L(|\Pr(\p,\phi)(t)| + |t|) + L
\end{equation}
for some $L > 0$.

Now note that, if $t \ge 0$,
\[
0 < \Pr(\p,\phi)(t) \le \Dil(\p,\phi) t + \Pr(\phi,\p)
\]
by Remark \ref{rem.asympupper} and if $t <0$ 
\[
0 > \Pr(\p,\phi)(t) \le t\Dil(\p,\phi)^{-1}
\]
by Lemma \ref{lem.post}. Hence there is $M' > 0$ such that $|\Pr(\p,\phi)(t)| \le M' |t| + M'$ for all $t \in \R$.
Subbing this into $(\ref{eq.Lbound})$ implies that there is $\lambda > 0$ such that
\[
|C_\phi(\Pr(\p,\phi)(t) - t \Dil(\p,\phi)) + C_t| \le \lambda  |t| + \lambda
\]
as required.
\end{proof}

\subsubsection{Computations}

For $s,t \in \R$ let $D_{s,t} = \Dil(\p_s,\p_t)$ also, assume that we have fixed $\phi,\p \in \Qm^+$ and let $\Pr$ denote the pressure $\Pr(\p,\phi)$.

\begin{lemma}\label{lem.compu}
When $t > -h(\p)$ we have that
\begin{enumerate}
\item $D_{t, -h(\p)} = h(\p)^{-1}(\Pr(t) D_{\phi,\p} - t)$; and,
\item $D_{-h(\p),t} = h(\p)(\Pr(t)D_{\p,\phi}^{-1} - t)^{-1}$.
\end{enumerate}
When $t<0$ we have that 
\begin{enumerate}\setcounter{enumi}{2}
\item $D_{t, 0} = h(\phi)^{-1}( \Pr(t) - D_{\p,\phi}t)$; and,
\item $D_{0,t} = h(\phi)(\Pr(t) - D_{\phi,\p}^{-1} t)^{-1}$.
\end{enumerate}
\end{lemma}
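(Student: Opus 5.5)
The plan is to compute all four dilations directly from the definition. The homogenization is linear, so for every $\{w\}\in\Per$ we have $\ov{\p}_t\{w\} = \Pr(t)\ov\phi\{w\} - t\ov\p\{w\}$. The key observation is that the two reference points of the geodesic are explicit rescalings of $\p$ and $\phi$.

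First I identify the endpoints.
\begin{itemize}
\item At $t=-h(\p)$: $s=0$ solves $\Pr(-h(\p)\p - s\phi)=0$, because $\Pr(-h(\p)\p)=0$ (this is the normalisation defining $h(\p)$, via Remark~\ref{rem:identifybetazero} applied with the roles of the pair arranged appropriately). Uniqueness of the solution of \eqref{eq:implicit} then gives $\Pr(-h(\p))=0$, so $\p_{-h(\p)} = h(\p)\p$.
\item At $t=0$: Remark~\ref{rem:identifybetazero} gives $\Pr(0)=h(\phi)$, so $\p_0 = h(\phi)\phi$.
\end{itemize}
Every dilation in the statement therefore becomes a supremum or infimum over $\Per$ of an expression affine in the single ratio $\ov\phi\{w\}/\ov\p\{w\}$, or in its reciprocal.

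For (1) and (2), take $t>-h(\p)$. I need $\Pr(t)>0$ there. This follows from convexity and $\Pr(-h(\p))=0$, since by Corollary~\ref{cor:derivativeofpressure} the derivative is the ratio $\int\p\,d\mu_t/\int\phi\,d\mu_t>0$ by \eqref{eq:positiveintegral}. Hence $\Pr$ is strictly increasing and positive to the right of $-h(\p)$. Then
\[
D_{t,-h(\p)}=\sup_{\{w\}}\frac{\Pr(t)\ov\phi\{w\}-t\ov\p\{w\}}{h(\p)\ov\p\{w\}} = h(\p)^{-1}\Bigl(\Pr(t)\sup_{\{w\}}\frac{\ov\phi\{w\}}{\ov\p\{w\}}-t\Bigr),
\]
which gives (1), since the coefficient $\Pr(t)$ is positive. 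For (2), I write
\[
D_{-h(\p),t}=h(\p)\Bigl(\inf_{\{w\}}\Bigl(\Pr(t)\frac{\ov\phi\{w\}}{\ov\p\{w\}}-t\Bigr)\Bigr)^{-1}.
\]
Using $\Pr(t)>0$ again, the infimum equals $\Pr(t)\inf\ov\phi/\ov\p - t = \Pr(t)\Dil(\p,\phi)^{-1}-t$. Passing the supremum of a reciprocal to the reciprocal of an infimum requires the quantity to be positive. That positivity holds because $\p_t\in\qm^{++}$ (Lemma~\ref{lem.++}), so the homogenizations are strictly positive.

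For (3) and (4), take $t<0$ and divide by $h(\phi)\ov\phi\{w\}$ instead. This gives $D_{t,0}=h(\phi)^{-1}\sup(\Pr(t)-t\,\ov\p/\ov\phi)$. Since $-t>0$, the supremum is $\Pr(t)-t\Dil(\p,\phi)$. Similarly $D_{0,t}=h(\phi)\bigl(\inf(\Pr(t)-t\,\ov\p/\ov\phi)\bigr)^{-1}$, and the infimum is $\Pr(t)-t\Dil(\phi,\p)^{-1}$, again positive by Lemma~\ref{lem.++}. Here the sign of $\Pr(t)$ plays no role; only the sign of $t$ matters.

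The computations are routine once the endpoints are identified. I expect the only real points of care to be the following:
\begin{itemize}
\item pinning down $\Pr(-h(\p))=0$ and $\Pr(0)=h(\phi)$ with the correct argument ordering of $\Pr(\p,\phi)$;
\item checking the signs of $\Pr(t)$ and $t$, which decide whether a supremum or an infimum of the ratio appears;
\item the positivity of the denominators, which Lemma~\ref{lem.post} and Lemma~\ref{lem.++} supply.
\end{itemize}
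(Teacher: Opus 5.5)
Your proposal is correct and follows essentially the paper's argument. Both proofs rest on the identifications $\p_{-h(\p)}=h(\p)\p$ and $\p_0=h(\phi)\phi$ (which the paper uses tacitly and you justify), on the linearity of the homogenization, and on the sign of $\Pr(t)$ (resp.\ of $t$) to decide whether $\Dil(\phi,\p)$ or $\Dil(\p,\phi)^{-1}$ appears. The only difference is presentational: the paper proves each identity as a pair of inequalities through the coarse comparisons of Lemma~\ref{lem.comp}, and suggests getting (3)--(4) from the reflection symmetry between $\Pr(\p,\phi)$ and $\Pr(\phi,\p)$, whereas you compute the suprema and infima over $\Per$ exactly, which is slightly more direct.
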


In the following proof, given $\p,\phi \in \qm$ we write $\p \lesssim \phi$ if there exists $C>0$ independent of $w$ such that $\p(w) \le \phi(w) + C$ for all $w\in\W$.
\begin{proof}
(1) Note that when $t > -h(\p)$, $\Pr(t) > 0$ and so
\[
\p_t = \Pr(t)\phi - t\p \lesssim \Pr(t)\p D_{\phi,\p} - t \p = h(\p) \p (\Pr(t) D_{\phi,\p} - t) h(\p)^{-1}.
\]
Hence $D_{t,-h(\p)} \le h(\p)^{-1}(\Pr(t) D_{\phi,\p} - t).$
For the other inequality note that
\[
\Pr(t) \phi - t \p = \p_t \lesssim D_{t,-h(\p)} h(\p) \p
\]
rearranges to
\[
\Pr(t)\phi \lesssim(D_{t,-h(\p)}h(\p) + t) \p.
\]
This implies that $\Pr(t) D_{\phi,\p} \le D_{t,-h(\p)} h(\p) + t$
or equivalently, $D_{t,-h(\p)} \ge h(\p)^{-1} (\Pr(t) D_{\phi,\p} - t)$ as required.  Statement (2) is proved similarly.

The statements $(3)$ and $(4)$ can either be deduced similarly or by using the fact that the pressure curves $\Pr(\p,\phi)$ and $\Pr(\phi,\p)$ are related in the following way
\[
(t,\Pr(\phi,\p)(t)) = (-\Pr(\p,\phi)(t), -t) 
\]
i.e. $\Pr(\phi,\p)$ is obtained from reflecting $\Pr(\p,\phi)$ in the line $y = -x$.
\end{proof}

\begin{corollary}\label{cor.geo}
Suppose that $r,s,t \in \R$ with $r<s<t$ then
\[
D_{r,t} = D_{r,s} \, D_{s,t} \ \ \ \text{ and } \ \ \  D_{t,r} = D_{t,s} \,  D_{s,r}.
\]
\end{corollary}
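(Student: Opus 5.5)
The plan is to compute every dilation $D_{s,t}=\Dil(\p_s,\p_t)$ in closed form as a ratio of affine quantities in $\Pr(s)$, $s$, $\Pr(t)$, $t$. Once that is done, the multiplicativity $D_{r,t}=D_{r,s}D_{s,t}$ for $r<s<t$ reduces to a telescoping identity. Write $\Pr=\Pr(\p,\phi)$, $a=\Dil(\p,\phi)$ and $b=\Dil(\phi,\p)^{-1}$. By the remark after Lemma \ref{lem.asympgrad} (applied with the roles of $\p,\phi$ in $\Pr(\p,\phi)$), these are the asymptotic slopes of $\Pr$ at $+\infty$ and $-\infty$ respectively, in the sense that $\inf\ov\p/\ov\phi=b$ and $\sup\ov\p/\ov\phi=a$ over $\Per$. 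On homogenizations we have $\ov{\p_t}=\Pr(t)\ov\phi-t\ov\p$, so for each $\{w\}$ the ratio $\ov{\p_s}\{w\}/\ov{\p_t}\{w\}$ depends only on $x=\ov\p\{w\}/\ov\phi\{w\}$. It equals $g(x)=(\Pr(s)-sx)/(\Pr(t)-tx)$, a Möbius function of $x$ on the range $x\in[b,a]$. The denominator is positive there by Lemma \ref{lem.post} and Lemma \ref{lem.++}.

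\textbf{Key step.} Since $g$ is monotone on $[b,a]$, its supremum is attained at an endpoint. By Sigmund density (Remark \ref{rem:integralhomogeneous}), the values $x$ are dense in $[b,a]$, so the supremum over $\Per$ equals $\max(g(a),g(b))$. The derivative of $g$ has the sign of $s\Pr(t)-t\Pr(s)$, i.e.\ the sign of $st(\Pr(t)/t-\Pr(s)/s)$. I would determine this sign using convexity of $\Pr$ together with the facts that $\Pr(0)=h(\p)>0$ and $\Pr(-h(\phi))=0$ (the reflection relation in Lemma \ref{lem.compu}). A cleaner route is to use the lines through the origin: for $s<t$, the point $(t,\Pr(t))$ lies "after" $(s,\Pr(s))$ in angular order around the origin. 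This holds because $\Pr$ is convex with positive intercept, so the angle $\theta(t)=\arg(t,\Pr(t))$ is strictly decreasing in $t$, which amounts to $s\Pr(t)-t\Pr(s)<0$ for $s<t$ since the determinant is negative. Hence for $s<t$ the function $g$ is decreasing, and
\[
D_{s,t}=g(b)=\frac{\Pr(s)-sb}{\Pr(t)-tb},\qquad D_{t,s}=\frac{\Pr(t)-ta}{\Pr(s)-sa}.
\]
As a sanity check, these formulas should reproduce Lemma \ref{lem.compu} when $s=-h(\phi)$ or $t=0$; matching these normalizations fixes any confusion over which of $a,b$ appears.

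\textbf{Conclusion.} With these formulas, for $r<s<t$ we get
\[
D_{r,s}D_{s,t}=\frac{\Pr(r)-rb}{\Pr(s)-sb}\cdot\frac{\Pr(s)-sb}{\Pr(t)-tb}=D_{r,t},
\]
and similarly $D_{t,s}D_{s,r}=D_{t,r}$ by telescoping with $a$.

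\textbf{Main obstacle.} The delicate point is the monotonicity claim, namely the sign of $s\Pr(t)-t\Pr(s)$ for all orderings, including mixed signs of $s,t$. I would handle it by writing $s\Pr(t)-t\Pr(s)$ as a determinant and noting that for convex $\Pr$ with $\Pr(0)>0$, the function $\Pr(t)-t\Pr(s)/s$ vanishes at $t=s$. One then compares slopes, treating the cases $s<0<t$, $0<s<t$ and $s<t<0$ separately using the supporting line at $0$. A secondary check is that the endpoint values $a,b$ are really attained as suprema and infima of $x$, which follows from the density statement together with continuity of $g$.
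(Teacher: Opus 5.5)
Your strategy of computing every $D_{s,t}$ in closed form and then telescoping is sound. As written, though, the key step has a sign error and an invalid justification, so it produces the wrong formulas.

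\textbf{Direction of monotonicity.} For $g(x)=(\Pr(s)-sx)/(\Pr(t)-tx)$ one has
\[
g'(x)=\frac{t\Pr(s)-s\Pr(t)}{(\Pr(t)-tx)^2},
\]
which is the opposite of the sign you state. Since $s\Pr(t)-t\Pr(s)<0$ for $s<t$, $g$ is \emph{increasing} on $[b,a]$. The correct formulas are therefore
\[
D_{s,t}=\frac{\Pr(s)-sa}{\Pr(t)-ta},\qquad D_{t,s}=\frac{\Pr(t)-tb}{\Pr(s)-sb}\qquad (s<t).
\]
Your formulas give the infima rather than the suprema. For $t<0$ you would get $D_{t,0}=(\Pr(t)-tb)/h(\phi)$, whereas Lemma~\ref{lem.compu}(3) gives $(\Pr(t)-ta)/h(\phi)$. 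Your product $D_{s,t}D_{t,s}$ is also the reciprocal of the correct one, hence at most $1$, which would force $\Qq([\psi_s],[\psi_t])\le 0$. Note too that for $\Pr=\Pr(\psi,\phi)$ the normalisations are $\Pr(0)=h(\phi)$ and $\Pr(-h(\psi))=0$, not the other way round.

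The conclusion survives, because telescoping only needs the maximising endpoint to be the same for every ordered pair $s<t$. With the corrected formulas, $D_{r,s}D_{s,t}$ telescopes through $u\mapsto \Pr(u)-ua$, and $D_{t,s}D_{s,r}$ through $u\mapsto\Pr(u)-ub$.

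\textbf{The sign of $s\Pr(t)-t\Pr(s)$.} Convexity together with $\Pr(0)>0$ does not give this. For $P(u)=1+u^2$ one has $sP(t)-tP(s)=(s-t)(1-st)>0$ when $s=2$, $t=3$. A supporting line at $0$ only bounds $\Pr$ from below, so it cannot handle the case $0<s<t$ either.

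The missing input is the asymptotic-slope information. Set $m=(\Pr(t)-\Pr(s))/(t-s)$. Then
\[
s\Pr(t)-t\Pr(s)=(s-t)\bigl(\Pr(s)-sm\bigr),
\]
and $m\in[b,a]$, because chord slopes of a convex function lie between its asymptotic slopes (Lemma~\ref{lem.asympgrad}). Hence $\Pr(s)-sm>0$, by the same positivity $\Pr(u)-ux>0$ for $x\in[b,a]$ (Lemma~\ref{lem.post}, extended by affineness in $x$) that you already use for the denominators. This settles all sign configurations at once, with no case split.

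Sigmund density is not needed here. By definition $a$ and $b$ are the supremum and infimum of $\ov\psi/\ov\phi$ over $\Per$, and monotonicity plus continuity of $g$ suffice.

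\textbf{Comparison with the paper.} Once repaired, your route differs from the paper's. The paper checks the identities only for $r=-h(\psi)$, $t=0$, using the special formulas of Lemma~\ref{lem.compu}, and leaves the general case to a reparametrisation of the pressure curve. Your closed forms cover every pair directly. They recover Lemma~\ref{lem.compu} via $\psi_0=h(\phi)\phi$ and $\psi_{-h(\psi)}=h(\psi)\psi$, and they give
\[
\Qq([\psi_s],[\psi_t])=\log\frac{(\Pr(s)-sa)(\Pr(t)-tb)}{(\Pr(t)-ta)(\Pr(s)-sb)}.
\]
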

\begin{proof}
We can easily verify these identities for $r = -h(\p)$ and $t=0$:
\begin{align*}
D_{-h(\p),s} D_{s,0} &= h(\p)(\Pr(s)D_{\p,\phi}^{-1} - s)^{-1} h(\phi)^{-1}(\Pr(s) - D_{\p,\phi}^{-1} s)\\
&= h(\phi)^{-1}h(\p) \left( \frac{\Pr(s) - D_{\p,\phi}s}{\Pr(s) D_{\p,\phi}^{-1} -s}\right)\\
&= h(\phi)^{-1}h(\p) D_{\p,\phi}  = D_{-h(\p),0}.
\end{align*}
and
\begin{align*}
D_{0,s} D_{s,-h(\p)} &= h(\phi)(\Pr(s) - D_{\phi,\p}^{-1} s)^{-1} h(\p)^{-1}(\Pr(s) D_{\phi,\p} - s)\\
&= h(\p)^{-1} h(\phi)  \left( \frac{\Pr(s)D_{\phi,\p} -s}{\Pr(s) - D_{\phi,\p}^{-1} s}\right)\\
&= h(\p)^{-1} h(\phi) D_{\phi,\p} = D_{0,-h(\p)}.
\end{align*}
For general $r,s,t \in \R$ we can reparameterise the curve $\Pr(\phi,\p)$.
\end{proof}
We are now ready to prove the main theorem of this section.

\begin{proof}[proof of Theorem \ref{thm.geo}]
Corollary \ref{cor.geo} and Lemma \ref{lem.++} immediately imply that $[\p_t]$ determines a geodesic in $\Qm^{++}$. To see that this is a bi-infinite geodesic, we calculate, using Lemma \ref{lem.compu},
\[
\lim_{t\to-\infty} e^{\Qq([\p_0],[\p_t])} = \lim_{t\to-\infty} \frac{(\Pr(t) - D_{\p,\phi}t)}{(\Pr(t) - D_{\phi,\p}^{-1}t)} = \lim_{t\to-\infty}  \frac{\left(\frac{\Pr(t)}{t} - D_{\p,\phi}\right)}{\left(\frac{\Pr(t)}{t} - D_{\phi,\p}^{-1}\right)} = \infty
\]
where we have used that $\frac{\Pr(t)}{t} \to D_{\phi,\p}^{-1}$ as $t\to-\infty$ by Lemma \ref{lem.asympgrad}. The limit $t\to\infty$ can be shown to be $\infty$ similarly.
\end{proof}

\subsection{The boundary}\label{sec.boundary}
\begin{lemma}
Let $\p_t = \Pr(\p,\phi)(t) \phi - t\p$ where $\p,\phi \in \qm^{++}$. Then, for any $w\in\W$, we have that
\[
\lim_{t\to\infty} \frac{\p_t(w)}{t}  = \Dil(\p,\phi)\phi(w) - \p(w) \ \ \text{ and } \ \ \lim_{t\to - \infty} \frac{\p_t(w)}{t}  = \frac{\Dil(\phi,\p) \p(w) - \phi(w)}{\Dil(\phi,\p)}.
\]
\end{lemma}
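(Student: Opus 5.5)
The plan is to reduce both limits to the asymptotic slopes of the pressure function computed in Lemma~\ref{lem.asympgrad}. For fixed $w\in\W$ and $t\neq 0$ the definition $\p_t=\Pr(\p,\phi)(t)\,\phi-t\p$ gives
\[
\frac{\p_t(w)}{t}=\frac{\Pr(\p,\phi)(t)}{t}\,\phi(w)-\p(w).
\]
Since $\phi(w)$ and $\p(w)$ are fixed real numbers, everything comes down to the behaviour of $\Pr(\p,\phi)(t)/t$ as $t\to\pm\infty$.

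For $t\to+\infty$ I apply Lemma~\ref{lem.asympgrad} with the roles of the two quasimorphisms interchanged. Both $\p$ and $\phi$ lie in $\qm^{++}$, so the lemma applies to the pair $(\p,\phi)$ and gives $\Pr(\p,\phi)(t)/t\to\Dil(\p,\phi)$. Substituting into the displayed identity yields $\lim_{t\to\infty}\p_t(w)/t=\Dil(\p,\phi)\phi(w)-\p(w)$, which is exactly the first claim.

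For $t\to-\infty$ the same lemma gives $\Pr(\p,\phi)(t)/t\to\Dil(\phi,\p)^{-1}$. Substituting,
\[
\frac{\p_t(w)}{t}\longrightarrow \Dil(\phi,\p)^{-1}\phi(w)-\p(w)=\frac{\phi(w)-\Dil(\phi,\p)\,\p(w)}{\Dil(\phi,\p)}.
\]

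This is the one step I expect to be the obstacle, and it is a sign issue rather than an analytic one. Read literally, with division by $t<0$, the computation produces the \emph{negative} of the displayed right-hand side $\bigl(\Dil(\phi,\p)\p(w)-\phi(w)\bigr)/\Dil(\phi,\p)$. So the second display holds as written only if the limit is taken with the normalisation $-1/t=1/|t|$. That is the convention used for the endpoint $\p_{-\infty}=-\lim_{t\to-\infty}\frac1t(\cdots)$ in the introduction, and it is also the normalisation that makes the limit a nonnegative quasimorphism.

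To confirm that this positive normalisation is the correct one, I check it with Lemma~\ref{lem.comp}, which gives $\phi(w)\le\Dil(\phi,\p)\p(w)+C$. Hence $\Dil(\phi,\p)\p-\phi$ is bounded below, as a boundary point should be.

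In writing the proof I would therefore establish the identity $\lim_{t\to-\infty}\p_t(w)/t=\Dil(\phi,\p)^{-1}\phi(w)-\p(w)$ from Lemma~\ref{lem.asympgrad}. I would then state explicitly that the displayed formula is this limit taken with the $1/|t|$ normalisation, and that the statement should be corrected accordingly rather than proved verbatim. No uniformity in $w$ is needed, since everything is pointwise in $w$ and the only analytic input is Lemma~\ref{lem.asympgrad}.
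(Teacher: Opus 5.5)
Your argument is exactly the paper's, which just cites Lemma~\ref{lem.asympgrad} applied to the pair $(\p,\phi)$. That gives $\Pr(\p,\phi)(t)/t\to\Dil(\p,\phi)$ as $t\to+\infty$ and $\Pr(\p,\phi)(t)/t\to\Dil(\phi,\p)^{-1}$ as $t\to-\infty$.

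Your sign diagnosis is also correct: literally $\p_t(w)/t\to\Dil(\phi,\p)^{-1}\phi(w)-\p(w)$ as $t\to-\infty$, so the second display holds only with the normalisation $-1/t=1/|t|$. That is the normalisation the paper actually uses when it defines $\p_{-\infty}=\Dil(\phi,\p)\p-\phi$, and it matches the convention $\pm\lim_{t\to\pm\infty}\frac1t(\cdots)$ in the introduction.
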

\begin{proof}
This follows from Lemma \ref{lem.asympgrad}.
\end{proof}
This lemma shows that, given $\p,\phi \in \qm^{++}$, there are natural endpoints of the corresponding Pressure geodesic $[\p_t]$ at $\pm \infty$. More precisely, if we let
\[
\p_\infty =  \Dil(\p,\phi)\phi(w) - \p(w) \ \ \text{ and } \p_{-\infty} = \Dil(\phi,\p) \p(w) - \phi(w)
\]
then we can think of $[\p_\infty]$ and $[\p_{-\infty}]$ as the endpoints $[\p_t]$.
\begin{lemma}
The quasimorphisms $\p_{\pm\infty}$ belong to $\qm^+\backslash \qm^{++}$.
\end{lemma}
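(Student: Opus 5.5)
We must show $\p_{\pm\infty}$ are bounded below but not of linear growth. Take $\p_\infty = \Dil(\p,\phi)\phi - \p$. Since $\qm$ is a vector space, it is a quasimorphism, and its homogenization is
\[
\ov{\p_\infty}\{w\} = \Dil(\p,\phi)\ov\phi\{w\} - \ov\p\{w\}
\]
by linearity of the limit defining $\ov{\cdot}$.

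For bounded below, the definition of dilation gives $\ov\p\{w\} \le \Dil(\p,\phi)\ov\phi\{w\}$ for every $\{w\}\in\Per$. Hence $\ov{\p_\infty}\ge 0$ on $\Per$, and Lemma \ref{lem.hombound} gives $\p_\infty\in\qm^+$. Alternatively, the lower bound in Lemma \ref{lem.comp}, $\p(w) \le \Dil(\p,\phi)\phi(w) + C$, gives this directly.

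For the failure of linear growth, suppose $\p_\infty\in\qm^{++}$. Applying Lemma \ref{lem.pos} with $\phi$ as the reference element of $\qm^{++}$ yields $c>0$ with $\ov{\p_\infty}\ge c\,\ov\phi$ on $\Per$. Rearranging, $\ov\p \le (\Dil(\p,\phi)-c)\ov\phi$ on $\Per$, so $\Dil(\p,\phi)\le \Dil(\p,\phi)-c$, which is a contradiction. Thus $\p_\infty\notin\qm^{++}$.

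The case of $\p_{-\infty}=\Dil(\phi,\p)\p-\phi$ is symmetric: swap the roles of $\p$ and $\phi$. Dividing by the positive constant $\Dil(\phi,\p)$ in the preceding lemma does not affect membership in either cone.

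The only step needing care is the use of Lemma \ref{lem.pos}, which requires a uniform constant. This is exactly what the contradiction with the supremum defining $\Dil$ exploits, so I expect no real obstacle. One could also argue via Proposition \ref{prop.growthrate}: a sequence of periodic orbits nearly realizing the dilation gives $\ov{\p_\infty}\{w_n\}/\ov\phi\{w_n\}\to 0$, which is incompatible with linear growth since $\ov\phi\{w\}\asymp |w|$.
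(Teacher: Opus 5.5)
Your proposal is correct and follows essentially the same route as the paper: Lemma~\ref{lem.hombound} gives membership in $\qm^+$, and Lemma~\ref{lem.pos} contradicts the supremum defining $\Dil(\p,\phi)$. The only cosmetic difference is that you compare $\ov{\p_\infty}$ with $\ov\phi$, which yields the bound $\Dil(\p,\phi)-c$, whereas the paper compares with $\ov\p$ and obtains $\Dil(\p,\phi)/(1+\epsilon)$.
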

\begin{proof}
It follows easily from Lemma \ref{lem.hombound} that the quasimorphisms belong to $\qm^{+}$. Now, suppose that $\p_\infty$ belongs to $\qm^{++}$. If this is the case, then for sufficiently small $\epsilon >0$ we have that
\[
\ov{\p_{\infty}}\{w\} \ge \epsilon \ov{\p}\{w\}
\]
for any $\{w\}\in\Per$ by Lemma \ref{lem.pos}. 
This rearranges to
\[
\frac{\Dil(\p,\phi)}{(1+\epsilon)} \ge \frac{\ov\p\{w\}}{\ov\phi\{w\}}
\]
which contradicts the definition of $\Dil(\p,\phi)$.
\end{proof}

\begin{proposition}
Every quasimorphism in $\qm^{+}\backslash \qm^{++}$ can be realised as the endpoint of a pressure geodesic for a pair of quasimorphisms in $\qm^{++}$. 
\end{proposition}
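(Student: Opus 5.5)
Given $\eta\in\qm^{+}\setminus\qm^{++}$, I will build $\p,\phi\in\qm^{++}$ whose pressure geodesic has $\eta$ (up to rough similarity) as its endpoint at $+\infty$. By the preceding lemma, that endpoint is $\p_\infty=\Dil(\p,\phi)\phi-\p$. So I want $\p,\phi$ with $\Dil(\p,\phi)\phi-\p$ roughly similar to $\eta$. I will fix any $\phi\in\qm^{++}$ (for instance the length quasimorphism $\phi(w)=|w|$, or any word quasimorphism) and set
\[
\p = \phi - \eta .
\]
The plan then has three steps: show $\p\in\qm^{++}$ after a suitable normalisation, compute $\Dil(\p,\phi)$, and identify the endpoint.

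First I need $\p\in\qm^{++}$. Since $\eta$ is not of linear growth, Proposition~\ref{prop.growthrate} and Remark~\ref{rem:integralhomogeneous} show that $\inf_\mu\mu(\eta)=0$ over $\Minv$. The upper bound $\ov\eta\{w\}\le c\,|w|$ holds for some $c$, by the at-most-linear growth footnote. To keep $\p$ positive I will instead take $\p=\phi-\epsilon\eta$ with $\epsilon>0$ small enough that $\epsilon\,\ov\eta\le \tfrac12\ov\phi$ on $\Per$. This is possible because $\ov\phi\{w\}\ge\tau|w|$ (Lemma~\ref{lem.pos}, Lemma~\ref{lem.comp}). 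Then $\ov\p\ge\tfrac12\ov\phi$, and Lemma~\ref{lem.pos} gives $\p\in\qm^{++}$. Since $\epsilon\eta\sim\eta$, nothing is lost.

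Second, I compute $\Dil(\p,\phi)=\sup_{\Per}\bigl(1-\epsilon\,\ov\eta/\ov\phi\bigr)=1-\epsilon\inf_{\Per}\ov\eta/\ov\phi$. I claim $\inf_{\Per}\ov\eta\{w\}/\ov\phi\{w\}=0$. It is $\ge0$ by Lemma~\ref{lem.hombound}. If it were some $c>0$, then $\ov\eta\ge c\,\ov\phi$, and Lemma~\ref{lem.pos} would put $\eta\in\qm^{++}$, a contradiction. Hence $\Dil(\p,\phi)=1$, and therefore
\[
\p_\infty=\Dil(\p,\phi)\phi-\p=\phi-(\phi-\epsilon\eta)=\epsilon\eta\sim\eta .
\]
The same construction with the roles of $\p$ and $\phi$ swapped realises $\eta$ as a $-\infty$ endpoint; the formula there is $\Dil(\phi,\p)\p-\phi$.

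The only delicate point is the positivity in the first step. The paper's Lemma~\ref{lem.pos} is stated with an arbitrary $\phi\in\qm^{++}$ as comparison, so I will use the bound $\ov\eta\{w\}\le C'\ov\phi\{w\}$ on periodic orbits. This follows from homogenising the upper linear bound on $\eta$ and the lower linear bound on $\ov\phi$, and it is what determines the choice of $\epsilon$. With that in place, the rest is the direct computation above.
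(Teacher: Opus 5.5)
Your argument is correct and is essentially the paper's proof. The paper likewise perturbs a fixed $\phi\in\qm^{++}$ by $\eta$, shows the relevant dilation is exactly $1$ because $\eta$ has no linear growth (via Lemma~\ref{lem.comp}; your use of Lemma~\ref{lem.pos} on homogenizations is the same argument), and reads off the endpoint. The only difference is that the paper adds rather than subtracts: it pairs $\phi$ with $\rho=\phi+\eta$, so that $\Dil(\phi,\rho)\rho-\phi=\eta$ exactly. This makes $\rho\in\qm^{++}$ automatic and spares you the small-$\epsilon$ positivity step and the bound $\ov\eta\le C'\ov\phi$.
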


\begin{proof}
Fix $\p_\infty \in \qm^+$. We want to construct a pressure geodesic for which the endpoint is in the class $[\p_\infty]$. To that end, take $\phi \in \qm^{++}$ and define $\rho = \phi + \p_\infty$.

Since $\ov{\rho} \ge \ov{\phi}$ we have that $\Dil(\phi,\rho) \le 1$. Conversely, there exists, by Lemma \ref{lem.comp}, $C>0$ such that 
\[
(1 - \Dil(\phi,\rho))\rho(w) \le \rho(w) - \phi(w) + C \le \p_\infty(w) + C
\]
for all $w \in \W$. It follows from Lemma \ref{lem.comp}, since $\p_\infty \nin \qm^{++}$ that $\Dil(\phi,\rho) \ge 1$. 

We have show that $\Dil(\phi,\rho) =1$ and so
\[
\p_\infty = \Dil(\phi,\rho)\rho - \phi
\]
is the endpoint of the pressure geodesic corresponding to $\rho, \phi \in \qm^{++}$.
\end{proof}

We can then characterise when a pair of quasimorphisms can be realised as the pair of endpoints of a pressure geodesic.
\begin{definition}
We say that two quasimorphisms $\p_1,\p_2 \in \qm^+\backslash \qm^{++}$ are compatible if $\p_1 + \p_2 \in \qm^{++}$.
\end{definition}

\begin{proposition}
The pair $\p, \phi \in \qm^+\backslash \qm^{++}$ are compatible if and only if there exists a pressure geodesic $[\rho_t]$ (associated to some pair in $\qm^{++}$) such that $[\rho_\infty] = [\p]$ and $[\rho_{-\infty}] = [\phi]$.
\end{proposition}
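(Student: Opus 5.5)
The plan is to work directly with the explicit endpoint formulas from the preceding lemma, together with Lemma \ref{lem.pos} and Lemma \ref{lem.hombound}.

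\textbf{Preliminary observation.} First I check that compatibility only depends on rough similarity classes. If $\p_1,\p_2\in\qm^+$ and $a,b>0$, then $a\p_1+b\p_2$ and $\min(a,b)(\p_1+\p_2)$ are comparable up to an additive constant. This is because $\p_1,\p_2$ are bounded below. Adding a bounded function does not affect membership in $\qm^{++}$. Hence $\p_1+\p_2\in\qm^{++}$ if and only if $a\p_1+b\p_2\in\qm^{++}$, so the condition descends to classes.

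\textbf{Forward direction.} Let $\rho,\phi\in\qm^{++}$ define the geodesic, and write $D_1=\Dil(\rho,\phi)$ and $D_2=\Dil(\phi,\rho)$. The endpoints are $\rho_\infty=D_1\phi-\rho$ and $\rho_{-\infty}=D_2\rho-\phi$, up to a positive scalar. For $b>0$ we compute
\[
\rho_\infty+b\rho_{-\infty}=(D_1-b)\phi+(bD_2-1)\rho .
\]
Choosing $b=D_2^{-1}$ gives $(D_1-D_2^{-1})\phi$. This lies in $\qm^{++}$ as soon as $D_1D_2>1$, i.e. as soon as $[\rho]\neq[\phi]$, by Lemma \ref{lem.bounds}. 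The degenerate case $\rho\sim\phi$ must be excluded, since then the geodesic is constant and both "endpoints" are bounded. By the preliminary observation, this shows that the endpoints are compatible.

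\textbf{Converse.} Suppose $\p+\phi\in\qm^{++}$ with $\p,\phi\in\qm^+\setminus\qm^{++}$. I set
\[
\rho=2\p+\phi \quad\text{and}\quad \eta=\p+2\phi .
\]
Both dominate $\p+\phi$, so by Lemma \ref{lem.pos} both lie in $\qm^{++}$. The key step is to compute the dilations exactly. Since $\ov\p,\ov\phi\ge0$ by Lemma \ref{lem.hombound}, we have
\[
\frac{\ov\eta}{\ov\rho}=\frac{\ov\p+2\ov\phi}{2\ov\p+\ov\phi}\le 2 .
\]
Equality is approached along periodic orbits on which $\ov\p/(\ov\p+\ov\phi)\to0$.

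Such orbits exist by Lemma \ref{lem.pos} applied to $\p$ versus $\p+\phi\in\qm^{++}$. If instead $\ov\p\ge c(\ov\p+\ov\phi)$ held for some $c>0$, then $\p$ would lie in $\qm^{++}$, a contradiction. Hence $\Dil(\eta,\rho)=2$, and symmetrically $\Dil(\rho,\eta)=2$. Plugging into the endpoint formulas for the pair $(\eta,\rho)$ gives
\[
\Dil(\eta,\rho)\rho-\eta=3\p \quad\text{and}\quad \Dil(\rho,\eta)\eta-\rho=3\phi ,
\]
so the endpoints are $[\p]$ and $[\phi]$.

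\textbf{Main obstacle.} The delicate point is getting the labels and orientation right, i.e. which argument plays the role of "$\p$" and which of "$\phi$" in the endpoint lemma. If the orientation comes out reversed, one simply swaps $\rho$ and $\eta$. Everything else is the exact identification of the dilation as the value $2$ via Lemma \ref{lem.pos}.
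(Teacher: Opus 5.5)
Your proof is correct and follows the paper's argument. The converse uses the same pair $2\p+\phi$, $\p+2\phi$ and the same contradiction with Lemma \ref{lem.pos} to pin both dilations at $2$. The forward direction again writes a positive combination of the two endpoints as a nonnegative, nonzero combination of the generating pair.

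The differences are cosmetic. You weight $\rho_{-\infty}$ by $D_2^{-1}$ to cancel the $\rho$ term, whereas the paper normalises $h=1$. You also make explicit two points the paper leaves implicit: that compatibility is a class invariant, and that the pair must satisfy $[\rho]\neq[\phi]$. For the class invariance, the fact you need is the pair of one-sided bounds $\min(a,b)(\p_1+\p_2)-C\le a\p_1+b\p_2\le\max(a,b)(\p_1+\p_2)+C$, not closeness up to an additive constant as literally stated.
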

\begin{proof}
We start with the reverse implication. Suppose that $[\rho_t]$ is a pressure geodesic for a pair of quasimorphisms $\rho_1, \rho_2$ both with exponential growth rate $1$. Then, up to scaling and uniformly bounded additive error we may assume that
\[
\rho_\infty = \Dil(\rho_1,\rho_2) \rho_2 - \rho_1 \ \ \text{ and } \ \ \rho_{-\infty} = \Dil(\rho_2,\rho_1)\rho_1 - \rho_2.
\]
We then have that 
\[
\rho_\infty  + \rho_{-\infty} = (\Dil(\rho_2,\rho_1)-1) \rho_1 + (\Dil(\rho_1,\rho_2) -1)\rho_2.
\]
Since $[\rho_1] \neq [\rho_2]$ we have that $\Dil(\rho_1,\rho_2) \Dil(\rho_2,\rho_1) \ge 1$. By Lemma \ref{lem.hinequ} we also see that both $\Dil(\rho_1,\rho_2) \ge 1$ and $\Dil(\rho_2,\rho_1) \ge 1$. Hence at least one of these is strictly bigger than $1$ and  $\rho_\infty  + \rho_{-\infty} + \qm^{++}$.\\

Conversely, suppose that $\p$ and $\phi$ are transverse. Then define
\[
\rho_1 = 2 \p + \phi \ \ \text{ and } \ \ \rho_2 = \p + 2\phi
\]
and note that these belong to $\qm^{++}$.

We note that
\[
\rho_1 = 2\rho_2 - 3\phi
\]
and so $\Dil(\rho_1,\rho_2) \le 2$. In fact, we must have $\Dil(\rho_1,\rho_2) = 2$ since if $2 - \Dil(\rho_1,\rho_2)  = a > 0$ then
\[
3\phi = 2\rho_2 - \rho_1 = (\Dil(\rho_1,\rho_2)  + a)\rho_2 - \rho_1 \ge a\rho_2
\]
contradicting $\phi \in \qm^+ \backslash \qm^{++}$. Likewise $\Dil(\rho_2,\rho_1)  = 2$.

To conclude the proof we now simply note that
\[
\Dil(\rho_1,\rho_2) \rho_2 - \rho_1 = 2\rho_2 - \rho_1 = 3\phi \ \ \text{ and } \ \ \Dil(\rho_2,\rho_1) \rho_1 - \rho_2 = 3\p.
\]
\end{proof}

\begin{example}
Suppose that $\Sigma$ is a full shift over the symbols $x_1,\ldots,x_k$ and suppose that $n < k$ and $y_1,\ldots, y_n \in \{x_1,...,x_k\}$ are $n$ fixed disjoint symbols. Let $y_{n+1},\ldots, y_k$ denote the remaining symbols so that $\{ y_1,\ldots, y_k\} = \{x_1,...,x_k\}$. Let, for each $j=1,\ldots,k$, $\p_j$ denote the counting quasimorphism that counts the number occurrences of $y_j$ in the input word. Then set
\[
\rho_1 = \sum_{j=1}^n \p_j \ \ \text{ and } \ \ \rho_2 = \sum_{j=n+1}^k \p_j.
\]
Note that both of these quasimorphisms belong to $\qm^+$. Furthermore, they are compatible since $\rho_1 + \rho_2$ outputs the number of symbols in the input word.
\end{example}

\subsection{Non-completeness}
In this section we show that the space $(\Qm^{++}, \Qq)$ is not complete. We use the notation $\MC(\Qm^{++})$ to denote the metric completion of $(\Qm^{++}, \Qq)$.

\begin{lemma}\label{lem.unifconv}
Suppose that $F_m$ is a sequence of H\"older functions twin to $[\p_m] \in \Qm^{++}$ and that $F_m$ converges uniformly to $F: \Sigma \to \R_{>0}$. Then $[\p_m]$ converges to some $[\p] \in \MC(\Qm^{++})$ where $F$ is dual to $\p$.
\end{lemma}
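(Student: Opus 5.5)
The plan is to show that $([\p_m])_m$ is Cauchy for $\Qq$, so that it converges in the metric completion $\MC(\Qm^{++})$. The limit point will then be identified with a class whose periodic data is given by the Birkhoff sums of $F$.

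\textbf{Step 1: uniform positivity of $F$.} Since $\Sigma_A$ is compact and $F$ is continuous and strictly positive, we have $\min F = 2\tau > 0$. By uniform convergence, $F_m \ge \tau$ for all large $m$.

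\textbf{Step 2: comparison of homogenizations.} For $\{w\}\in\Per_n$, twinness gives $\ov\p_m\{w\} = S_nF_m(x_w)$. Hence
\[
|\ov\p_m\{w\} - \ov\p_k\{w\}| \le n\|F_m-F_k\|_\infty \le \frac{\|F_m-F_k\|_\infty}{\tau}\,\ov\p_k\{w\}.
\]
Writing $\eta_{m,k} = \|F_m-F_k\|_\infty/\tau$, which tends to $0$ as $m,k\to\infty$, this yields
\[
(1-\eta_{m,k})\,\ov\p_k \le \ov\p_m \le (1+\eta_{m,k})\,\ov\p_k
\]
on $\Per$. Exactly as in the proof of Theorem~\ref{thm.lc}, this gives
\[
\Qq([\p_m],[\p_k]) \le \log\frac{1+\eta_{m,k}}{1-\eta_{m,k}} \to 0,
\]
so the sequence is Cauchy and converges to some point of $\MC(\Qm^{++})$.

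\textbf{Step 3: identification of the limit.} The same estimate, applied with $F$ in place of $F_k$, shows that the functional $\{w\}\mapsto S_{|w|}F(x_w)$ is the limit of $\ov\p_m$ in the multiplicative sense: $\ov\p_m/S_{|w|}F(x_w)\to 1$ uniformly over $\Per$. The limit point is therefore naturally identified with "$F$ dual to $\p$". This means we regard the completion point as represented by the periodic-orbit functional $S_nF(x_w)$, with the extended distance defined via the same dilation formula.

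If one wants an honest quasimorphism $\p$, the natural candidate is $\p(w) = S_{|w|}F(\underline{x}_w)$. This need not be a quasimorphism when $F$ is merely continuous, which is precisely how non-completeness arises. So the statement should be read with $[\p]$ living in $\MC(\Qm^{++})$, identified through its twin $F$.

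\textbf{Main obstacle.} The main difficulty is making the identification in Step 3 rigorous. One has to check that the dilation formula $\sup_{\Per}$ of ratios extends continuously to the completion. This follows from the uniform multiplicative bounds in Step 2. One also has to check that two uniformly convergent twin sequences with the same limit $F$ define the same completion point; this too follows from the estimate in Step 2, applied across the two sequences.
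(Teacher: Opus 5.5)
Your proposal is correct and follows essentially the same route as the paper. Both arguments first use uniform positivity of $F_m$ for large $m$. Both then uniformly compare the periodic data $\ov\p_m\{w\} = S_nF_m(x_w)$ to get the Cauchy property: the paper bounds $|\log\int F_m\,d\mu_w - \log\int F_k\,d\mu_w| \le \epsilon^{-1}\|F_m-F_k\|_\infty$ via the mean value theorem, which is equivalent to your multiplicative bound. Your Step 3 spells out what the paper dismisses as ``easy to check'': the completion point is identified with the Birkhoff-sum functional of $F$, independently of the approximating sequence.
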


\begin{proof}
Given $w = (x_0, \ldots, x_{n-1}) \in\Fix_n$ let $x_w = (x_0,\ldots,x_{n-1}, x_0, \ldots) \in \Sigma$ be the infinite sequence obtained from repeating $w$. Define $\mu_{w}$ denote the uniform counting measure
\[
\frac{1}{k} \sum_{j=0}^{k-1} \delta_{\sigma^j(x)}
\]
over the orbit of $x$.
Note that, if $F_m \to F$ uniformly, then, since $F$ is strictly positive, there exists $\epsilon > 0$ such that $F_m > \epsilon$ for all $m$ sufficiently large. Therefore
\begin{align*}
\left| \log F_m^n(x_w)- \log F_k^n(x_w)\right|  &= \left| \log \left(\int F_m \ d\mu_w\right) - \log \left(\int F_k \ d\mu_w\right) \right|\\
&\le \epsilon^{-1} \left| \int F_m \ d\mu_w- \int F_k \ d\mu_w \right|  \le \epsilon^{-1} \| F_m - F_k\|_\infty
\end{align*}
where we have used the mean value theorem in the first inequality.
It follows that $[\p_m]$ is Cauchy and so converges to a limit in $\MC(\Qm^{++})$. It is easy to check that $F$ is twin to the limit.
\end{proof}

\begin{lemma} \label{lem.uniquegibb}
Suppose that $F$ represents $[\p] \in \Qm^{++}$. Then, $F$ has a unique Gibb's state.
\end{lemma}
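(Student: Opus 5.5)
Here "$F$ represents $[\p]$" is read as "$F$ is a continuous function twin to some $\p\in[\p]$", as in Proposition~\ref{prop.rep}. A Gibbs state for $F$ is meant in the sense of~\eqref{eq.gibbs}: an invariant $\mu$ with $E^{-1}\le \mu[w]/e^{S_nF(x)-nP(F)}\le E$ for $w\in\W_n$, $x\in[w]$. The plan is to pass from $F$ back to a quasimorphism and invoke Theorem~\ref{thm:propiedadesmedidamul}.

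\textbf{Step 1 (Bowen property of $F$).} Fix the twin representative $\p$ and put $\p_F(w)=S_nF(x)$ for a chosen $x\in[w]$, $w\in\W_n$. We aim to show $|\p_F(w)-\p(w)|\le C$ uniformly in $w$. For $w\in\W_n$, close $w$ into a periodic word $ww'$ with $|w'|\le M$, using mixing. Twinness gives $S_{n+|w'|}F(x_{ww'})=\ov\p\{ww'\}$. The quasimorphism estimates used in Lemma~\ref{lem.comp} give $|\ov\p\{ww'\}-\p(w)|\le C$. Hence $|S_nF(x_{ww'})-\p(w)|\le C+M\|F\|_\infty$.

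It remains to compare $S_nF(x)$ with $S_nF(y)$ for arbitrary $x,y\in[w]$, i.e.\ to establish the Bowen property. Closing up around $x$ only yields periodic points near $x$ to finite precision, so continuity alone gives an error $o(n)$, not $O(1)$. The first attempt will use the construction of Proposition~\ref{prop.rep} via~\cite{Cuneo}: there $F$ is built from the almost additive sequence $\p\circ\ev_n$ and should carry a Bowen-type bound, giving $\|S_nF-\p\circ\ev_n\|_\infty\le C$ directly.

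If a general twin $F$ is allowed, I would instead argue as follows. Two continuous functions with equal periodic sums have the same integrals against all invariant measures, by density of periodic measures (Remark~\ref{rem:integralhomogeneous} and Sigmund). So $F$ is cohomologous, in the weak sense $\mu(F)=\mu(F')$ for all $\mu$, to the Cuneo representative $F'$. This suffices for the variational characterisation below even without a Bowen bound.

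\textbf{Step 2 (existence).} Since $\p_F$ is at bounded distance from $\p$, Theorem~\ref{thm:propiedadesmedidamul} applied to $\p$ yields $\mu_\p$ satisfying~\eqref{eq:gibbs1fs}, with $P(\p)=P(F)$. Replacing $\p(w)$ by $S_nF(x)$ changes the ratio only by a factor $e^{\pm C}$. Thus $\mu_\p$ is a Gibbs state for $F$.

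\textbf{Step 3 (uniqueness).} Suppose $\nu$ is another invariant measure with the Gibbs property for $F$. Then $\nu[w]$ and $\mu_\p[w]$ are comparable uniformly over all cylinders. So $\nu\ll\mu_\p$ with bounded density, and ergodicity of $\mu_\p$ (Theorem~\ref{thm:propiedadesmedidamul}(1)) forces $\nu=\mu_\p$. This is the same argument used in \textsection\ref{ssec:Gibbsconstants}.

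Alternatively, the Gibbs bound gives $h_\nu(\sigma)+\nu(F)=P(F)$, and $\nu(F)=\nu(\p)$ by Remark~\ref{rem:sameintegral} and the equality of integrals from Step 1. Hence $\nu$ is an equilibrium state of $\p$, and Theorem~\ref{thm:propiedadesmedidamul}(3) gives $\nu=\mu_\p$.

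\textbf{Main obstacle.} The crux is Step 1: the uniform $O(1)$ comparison between $S_nF$ and $\p$, the Bowen property. Twinness only controls periodic data, and an arbitrary continuous twin need not be Bowen. If this cannot be obtained, I would fall back on the equilibrium-state route in Step 3, which needs only equality of integrals and uniqueness from Theorem~\ref{thm:propiedadesmedidamul}(3). That route proves uniqueness of the equilibrium state rather than of a Gibbs state in the strict sense.
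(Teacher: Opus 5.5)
Your fallback is the right proof, but your main line cannot be rescued. The claim in Step~1 that the representative from Proposition~\ref{prop.rep} ``should carry a Bowen-type bound'' is false. \cite{Cuneo} only gives $\frac1n\|f_n-S_nf\|_\infty\to0$, as recorded in~\eqref{eq.cont}; that is an $o(n)$ error, not $O(1)$.

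For a merely continuous twin, the bound $|S_nF(x)-\p(x_0\cdots x_{n-1})|\le C$ can fail outright. Take $g$ continuous with vanishing periodic sums. If $g$ had the Bowen property, closing $x_0\cdots x_{n-1}$ up to a periodic point in the same cylinder would give $\sup_{n,x}|S_ng(x)|<\infty$. But there are such $g$ with unbounded Birkhoff sums: by uniform boundedness, $\|u-u\circ\sigma^n\|_\infty$ is not controlled by $\|u-u\circ\sigma\|_\infty$. Adding such a $g$ to a H\"older twin of $\p$ gives a twin $F$ that is not Bowen. A Gibbs bound holding uniformly over $x\in[w]$ forces the Bowen property, so this $F$ has \emph{no} Gibbs state in your strict sense.

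So Step~2 fails in general, and the first uniqueness argument in Step~3 fails with it. This first route is exactly the paper's one-line proof (``the Gibbs state for $F$ is $\mu_\p$ and so is unique by ergodicity''). That proof asserts the identification of $\mu_\p$ as a Gibbs state of $F$ without justification.

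The lemma should therefore be read with ``Gibbs state'' meaning equilibrium state. This is how the paper uses the term: it calls $\mu_\p$ indifferently the Gibbs or the equilibrium state. It is also what the non-completeness argument needs, since it concerns continuous $F$ with several equilibrium states. In that reading your fallback proves the lemma completely, with no Bowen property.

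Both $\mu\mapsto\int F\,d\mu$ and $\mu\mapsto\mu(\p)$ are continuous on $\Minv$; the latter is a uniform limit of $\mu\mapsto\frac1n\int\p(x_0\cdots x_{n-1})\,d\mu$, with error at most $D_\p/n$. They agree on periodic-orbit measures by twinness, and these measures are dense (Remark~\ref{rem:integralhomogeneous}). Hence $\int F\,d\mu=\mu(\p)$ for every $\mu\in\Minv$; Remark~\ref{rem:sameintegral} is not needed.

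The two variational principles then give $P(F)=P(\p)$. They also show that $\mu$ is an equilibrium state of $F$ if and only if $h_\mu(\sigma)+\mu(\p)=P(\p)$, that is, if and only if $\mu=\mu_\p$ by Theorem~\ref{thm:propiedadesmedidamul}(3). This gives existence and uniqueness at once. It replaces the paper's appeal to ergodicity, which would need the cylinder comparison you could not obtain, with uniqueness of the equilibrium state of $\p$.
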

\begin{proof}
The Gibbs state for $F$ is $\mu_\p$ and so is unique by ergodicity.
\end{proof}

\begin{theorem}
The space $(\Qm^{++}, \Qq)$ is not complete.
\end{theorem}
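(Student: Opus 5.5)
The plan is to use the two preceding lemmas. By Lemma~\ref{lem.unifconv}, any sequence of classes twin to Hölder functions $F_m$ that converge uniformly to a strictly positive continuous $F$ is $\Qq$-Cauchy. It therefore converges in $\MC(\Qm^{++})$ to a point whose twin is $F$. Non-completeness will follow once we choose a limit $F$ that cannot be twin to any class in $\Qm^{++}$. Lemma~\ref{lem.uniquegibb} tells us what to aim for: if $F$ represented some $[\p]\in\Qm^{++}$, then $F$ (and hence every positive rescaling, and the normalised potential $-sF$ solving Bowen's equation) would have a unique equilibrium state. So it suffices to produce a strictly positive continuous $F\colon\Sigma_A\to\R_{>0}$ for which some $-sF$ has at least two distinct equilibrium states.

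\textbf{Step 1 (choice of $F$).} We take a classical continuous potential exhibiting a phase transition. On the full shift $\{0,1\}^{\N}$ (or on a mixing SFT containing a fixed point, by restricting to a subsystem), the Hofbauer-type potential works. It is defined by $g(x)=-a_k$ when $x$ begins with exactly $k$ zeros, with $a_k\downarrow 0$ slowly (for instance $a_k\sim \gamma\log(1+1/k)$ with $\gamma$ large). For suitable parameters, $\beta g$ has two equilibrium states: the Dirac mass at $0^\infty$ and a fully supported one. We would then set $F = c - g'$ with $c$ a large constant, adjusted so that $F>0$ and so that the critical value $s$ in $\Pr(-sF)=0$ lands exactly at the phase transition. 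We expect this to be possible because the pressure curve $s\mapsto \Pr(-sF)$ for the Hofbauer family is explicit via renewal sums, and the transition parameter can be tuned by varying $\gamma$ and $c$. This tuning step is the main obstacle. What we would try first is to avoid fine-tuning by working with the asymmetric quantity directly: we would show that if $F$ were twin to $\p\in\qm^{++}$, then \emph{every} $tF$-family inherits uniqueness. The reason is that each $t\p-s\p'$ is a quasimorphism with a unique Gibbs state, by Theorem~\ref{thm:propiedadesmedidamul}. Moreover, equilibrium states of $-sF$ coincide with those of $-s\p$, since $\mu(\p)=\int F\,d\mu$ for all invariant $\mu$; this equality of integrals holds on periodic measures by the twin relation and extends by Sigmund density (Remark~\ref{rem:integralhomogeneous}) together with continuity of $\mu\mapsto\mu(\p)$. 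Consequently, a phase transition at \emph{any} inverse temperature $\beta$ yields a contradiction, because $-\beta F$ and $-\beta\p$ then have the same pressure and the same equilibrium states. This removes the tuning.

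\textbf{Step 2 (approximation).} Truncate: $F_m(x)=F(x)$ evaluated after replacing all coordinates beyond $m$ by $1$, so that $F_m$ is locally constant, positive, and converges uniformly to $F$. By Example~\ref{ex.holder}, each $F_m$ gives a quasimorphism $\p_{F_m}\in\qm^{++}$ twin to $F_m$. Lemma~\ref{lem.unifconv} then gives a Cauchy sequence $[\p_{F_m}]$ with limit $L$ in the completion, twin to $F$.

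\textbf{Step 3 (contradiction).} Suppose $L=[\p]\in\Qm^{++}$. Then some positive multiple $\tau F$ is twin to $\p$, and by the integral identity in Step 1, $-\beta\p$ and $-\beta\tau F$ have the same equilibrium states for every $\beta$. Theorem~\ref{thm:propiedadesmedidamul} gives uniqueness for the quasimorphism, contradicting the phase transition of $F$ at the appropriate $\beta/\tau$. Hence the Cauchy sequence has no limit in $\Qm^{++}$.
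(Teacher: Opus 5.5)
Your proposal is correct and is essentially the paper's proof: approximate a strictly positive continuous potential with non-unique equilibrium states uniformly by locally constant ones, obtain a $\Qq$-Cauchy sequence from Lemma~\ref{lem.unifconv}, and exclude a limit in $\Qm^{++}$ because a twin quasimorphism forces a unique equilibrium state (Lemma~\ref{lem.uniquegibb}). Your identity $\mu(\psi)=\int \tau F\,d\mu$ on all of $\mathcal{M}(\sigma)$, obtained from periodic measures plus Sigmund density, is exactly the justification the paper's one-line proof of that lemma leaves implicit; as you observe, the tuning worry is moot, since adding a constant to $F$ or changing the inverse temperature costs nothing. The one point still needing a reference, which the paper also takes for granted, is the existence of such an $F$ on a general mixing $\Sigma_A$ (which need not contain a fixed point): a Hofbauer-type potential built around a periodic orbit, or Israel's theorem, would supply it, rather than ``restricting to a subsystem''.
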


\begin{proof}
Take a continuous function $F : \Sigma \to \R$ that does not have a unique Gibb's state. Then, since H\"older functions are dense in the uniform topology in the space of continuous functions, there is a sequence of H\"older functions $F_m$ that converge to $F$ uniformly. Let $[\p_m] \in \Qm^{++}$ be the corresponding quasimorphisms that are represented by the $F_m$. By Lemma \ref{lem.unifconv} the sequence $[\p_m]$ converges in $\MC(\Qm^{++})$ to some $[\p]$ that is represented by $F$. If $[\p]$ belonged to $\Qm^{++}$ then by Lemma \ref{lem.uniquegibb} $F$ would have a single Gibb's state yet this is not the case. We conclude that $[\p]$ does not belong to $\Qm^{++}$.
\end{proof}


\section{Applications}\label{sec.apps}

\subsection{RAAGs and RACGs} \label{sec.rags}
Suppose that $\Gamma$ is a right angled Artin group or right angled Coxeter group that is not virtually cyclic and does not split as a direct product. Let $d$ denote the word metric on $\G$ associated to the standard vertex generating set.

Given a (possibly asymmetric) $\G$ invariant metric  $d_\ast : \G \times \G \to \R$, let $(\cdot|\cdot)_{d_\ast}$ denote (the possibly asymmetric) Gromov product
\[
(x|y)_{d_\ast} = \frac{1}{2} \left( d_\ast(x,o) + d_\ast(o,y) - d_\ast(x,y) \right)
\]
where $x,y \in \G$ and $o \in \G$ is the identity. The Gromov product for an arbitrary function $\p : \G \times \G \to \R$ is defined analogously. 
\begin{definition}\label{def.gp}
Let $\mathcal{H}_\G^d$ denote the collection of (possibly asymmetric) functions $\p : \G \times \G \to \R$ that are $\G$-invariant and satisfy the following: there exists $c_1, c_2 >0$ such that
\[
|(x|y)_{\p}|\le c_1(x|y)_d + c_2
\]
for all $x,y \in \G$.  We write $\mathcal{D}_\G^d \subset \mathcal{H}_\G^d$ for the collection of (possibly asymmetric) metrics in $\mathcal{H}_\G^d$.
\end{definition}

We say that two functions $\p_1, \p_2 \in \mathcal{H}_\G^d$ are roughly similar if there exists $\tau, C>0$ such that
\[
|\p_1(x,y) - \tau \p_2(x,y)|< C
\]
for all $x,y \in \G$. Let $\mathscr{D}_\G^d$ and $\mathscr{H}_\G^d$ denote the rough similarity classes of $\mathcal{D}_\G^d$ and $\mathcal{H}_\G^d$ respectively. We can think of $\mathscr{D}_\G^d$ as the collection of genuinely different metrics that fellow travel $d$. The collection $\mathscr{D}_\G^d$ (hence $\mathscr{H}_\G^d$) is uncountable.

\begin{lemma}\label{lem.uncount}
The space $\mathscr{D}_\G^d$ is uncountable.
\end{lemma}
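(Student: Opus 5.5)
To show $\mathscr{D}_\G^d$ is uncountable, I will build a one-parameter family of metrics in $\mathcal{D}_\G^d$ and separate their rough similarity classes using stable translation lengths. Fix a standard vertex generator $v$ and, for $\lambda\in[1,2]$, give $v^{\pm1}$ weight $\lambda$ and every other standard generator weight $1$. Let $d_\lambda$ be the resulting weighted word metric: $d_\lambda(x,y)$ is the minimal weighted length of a word representing $x^{-1}y$.

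The first step is to check that $d_\lambda\in\mathcal{D}_\G^d$.

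\begin{itemize}
\item The metric $d_\lambda$ is $\G$-invariant and satisfies $d\le d_\lambda\le 2d$, so it is quasi-isometric to $d$.
\item For the Gromov product bound I use the normal form theory of RAAGs/RACGs. Reduced words, up to shuffling commuting letters, are geodesic for every such weighted metric simultaneously, because the set of letters of any reduced representative is an invariant of the element. Hence $d_\lambda(o,x)=|x|_{\mathrm{other}}+\lambda\,\#_v(x)$, where $\#_v$ counts occurrences of $v^{\pm1}$ in a reduced word. The linear combination of Gromov products is then $(x|y)_{d_\lambda}=(x|y)_d+(\lambda-1)(x|y)_{\#_v}$.
\item It therefore suffices that $|(x|y)_{\#_v}|\le c_1(x|y)_d+c_2$. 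This holds because the cancellation between $x^{-1}$ and $y$ in a reduced product has word length about $2(x|y)_d$, and the $v$-letters that cancel lie inside that cancelled portion. The example list before Definition~\ref{def.gp} in fact asserts $\#_v\in\mathcal{H}_\G^d$, so I may simply cite it.
\end{itemize}

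The second step is to distinguish the classes. If $d_\lambda\sim d_\mu$, then passing to stable translation lengths gives $\ell_{d_\lambda}(g)=\tau\,\ell_{d_\mu}(g)$ for all $g$, with a single constant $\tau>0$. I evaluate this on two test elements.

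\begin{itemize}
\item For $g=v$ we have $\ell_{d_\lambda}(v)=\lambda$ and $\ell_{d_\mu}(v)=\mu$.
\item For $g=u$, a standard generator different from $v$, we have $\ell_{d_\lambda}(u)=\ell_{d_\mu}(u)=1$. Such a $u$ exists because $\G$ is not virtually cyclic.
\end{itemize}

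The second test element forces $\tau=1$, and then the first forces $\lambda=\mu$. So $\lambda\mapsto[d_\lambda]$ is injective on $[1,2]$, and $\mathscr{D}_\G^d$ is uncountable.

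The main obstacle is justifying the translation-length computation $\ell_{d_\lambda}(v)=\lambda$, that is, showing that $v^n$ has no shorter representative. This follows because any representative of $v^n$ must contain at least $n$ letters $v$: the exponent-sum homomorphism $\G\to\Z$ (or $\Z/2$ for RACGs, combined with the normal form) counts them. More cleanly, the letter-count invariant of reduced words gives $\#_v(v^n)=n$ exactly. The remaining care needed is the Gromov product estimate for $\#_v$; if citing the earlier example is not acceptable, I would prove it from the fact that reduced forms of $x^{-1}y$ are obtained from $x^{-1}$ and $y$ by cancelling a common prefix piece of length comparable to $(x|y)_d$.
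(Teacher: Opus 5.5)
For right-angled Artin groups your argument is complete, but the statement also covers right-angled Coxeter groups, and there your second step fails. Every standard generator of a RACG is an involution, so $v^2=u^2=1$ and $\ell_{d_\lambda}(v)=\ell_{d_\lambda}(u)=0$ for all $\lambda$. The relation $\ell_{d_\lambda}=\tau\,\ell_{d_\mu}$ evaluated on your two test elements then reads $0=\tau\cdot 0$, which gives neither $\tau=1$ nor $\lambda=\mu$. Your justification of $\ell_{d_\lambda}(v)=\lambda$ breaks for the same reason: in a RACG $v^n\in\{1,v\}$, so $\#_v(v^n)\le 1$, and the $\mathbb{Z}/2$-valued exponent sum only records parity. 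Your first step is fine in both cases. Strictly, though, it is the \emph{multiset} of letters of a reduced word that is invariant, not the set. You also use that any word reduces to a reduced one by shuffles and deletions of cancelling pairs, and neither operation increases weighted length.

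The repair is to use infinite-order test elements with different $v$-densities.
\begin{itemize}
\item In a RACG, $v$ is not central, since otherwise $\Gamma\cong\mathbb{Z}/2\times\Gamma'$ with $\Gamma'$ the RACG on the remaining generators. So some generator $u$ does not commute with $v$.
\item The defining graph is not a join, and $\Gamma$ is neither finite nor $D_\infty$. Hence the non-commutation graph is connected with at least three vertices, which yields a third generator $w$ not commuting with $u$ or with $v$.
\item Between any two consecutive equal letters of $(vu)^n$ or $(vuw)^n$ there is a letter not commuting with that letter, so these words are reduced. Hence $\ell_{d_\lambda}(vu)=1+\lambda$ and $\ell_{d_\lambda}(vuw)=2+\lambda$.
\item Now $d_\lambda\sim d_\mu$ forces $(1+\lambda)/(2+\lambda)=(1+\mu)/(2+\mu)$, and this function is strictly increasing, so $\lambda=\mu$.
\end{itemize}

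With this patch your route genuinely differs from the paper's. The paper takes $d(x,y)+\epsilon\, q(x^{-1}y)$ for a nontrivial homomorphism $q\colon\Gamma\to\mathbb{R}$. There the Gromov product coincides exactly with that of $d$, so no normal-form analysis is needed. On the other hand, those metrics are asymmetric, and the argument requires $\mathrm{Hom}(\Gamma,\mathbb{R})\neq 0$, which fails for every RACG since it is generated by involutions. Your weighted word metrics are symmetric and, with corrected test elements, cover both classes of groups.
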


\begin{proof}
We recall that $\G$ admits a non-trivial morphism to $\R$ (and in fact admits many quasimorphisms).
Now suppose $q : \G \to \R$ is a a non trivial homomorphism.
Now simply note that for any $\epsilon > 0$ sufficiently small the function 
\[   
d_{\epsilon,q}(x,y) :=   d(x,y) + \epsilon q(x^{-1}y)
\]
is a metric that fellow travels $d$ and is quasi-isometric to $d$. It is easy to check that $d_{\epsilon,q}$ and $d_{\epsilon',q}$ are not roughly similar unless $\epsilon = \epsilon'$. 
\end{proof}

\begin{remark}
Note that the metrics constructed in the above proof are not symmetric. The collection of equivalence classes represented by genuine, symmetric metrics in $\mathscr{D}_\G^d$ is also uncountable: the same proof as above can be applied by taking $d + \epsilon d_\Cc$ where $d_\Cc$ is the action of $\G$ on the contact graph, see Section \ref{sec.con}.
\end{remark}

Given $d_\ast \in \Dc_\G^d$ its stable translation length is
\[
\ell_{d_\ast}[x] = \lim_{n\to\infty} \frac{d_\ast(o,x^n)}{n}
\]
for each conjugacy class $[x]$. We then define the dilation of a pair of metrics $d_1,d_2 \in \Dc_\G^d$ to be
\[
\Dil(d_1,d_2) = \sup_{[x] : \ell_d[x] > 0} \frac{\ell_{d_1}[x]}{\ell_{d_2}[x]}
\]
where the supremum is over all conjugacy classes with strictly  positive $d$ (and hence $d_1,d_2$) stable translation length.
Using the dilations we set
\[
\D([d_1],[d_2]) = \log ( \Dil(d_1,d_2) \, \Dil(d_2,d_1))
\]
for $[d_1],[d_2] \in \scrD_\G^d$. It is easy to check that this is well-defined and that $\D([d_1],[d_2]) \ge 0$ for all $[d_1], [d_2] \in \scrD_\G^d$.

\subsubsection{Proofs of results for RAAGs/RACGs}
We now move on to prove our results for RAAGs and RACGs from the introduction.  We require the following result that associates a subshift of finite type to a RAAG or RACG.

\begin{theorem} [\cite{HM}, \cite{gttgraph}]\label{thm.graph}
Suppose $\G$ is a RAAG or RACG that does not split as a direct product and that $S$ is the vertex generating set for $\G$.
Then there exists a pair $(\mathcal{G}, \lambda)$ where 
\begin{enumerate}
\item $\Gc=(V, E, \{\ast_1, \ldots, \ast_k\})$ is a finite directed graph with a collection of vertices $\ast_1, \ldots, \ast_k$ called the \textit{initial states}; and,
\item $\lambda: V \to S$ is a labelling such that
for a directed edge path $(x_0, x_1, \dots, x_{n-1})$ (where the $x_j$ are vertices)
there is an associated path in the Cayley graph $\Cay(\G, S)$ beginning at the identity: the path corresponds to
\[
(o, \lambda(x_0), \lambda(x_0)\lambda(x_1), \dots, \lambda(x_0)\cdots \lambda( x_{n-1}))
\]
\end{enumerate}
that satisfies the following:
Let $\ev$ denote the map that sends a finite path in $\mathcal{G}$ starting in $\{\ast_1,\ldots, x_k\}$ to the endpoint of the corresponding paths in $\Cay(\G,S)$, i.e.
$\ev(x_0,\ldots, x_{n-1}) = \lambda(x_0)\cdots \lambda(x_{n-1})$. Then
\begin{enumerate}
\item for each vertex $v\in V$ there exists a directed path from $\ast_j$ to $v$ for each $j=1,\ldots,k$;
\item for each directed path in $\Gc$ the associated path in $\Cay(\G, S)$ is a geodesic; and,
\item the map $\ev$ defines a bijection between the set of directed paths starting in $\{ \ast_1,\ldots, \ast_k\}$ in $\Gc$ and $\G$.
\end{enumerate}
Furthermore, once the states $\ast_1, \ldots, \ast_k$ are removed from $\mathcal{G}$, the resulting graph is a single component and the subshift of finite type defined over $\mathcal{G}$ is mixing.
\end{theorem}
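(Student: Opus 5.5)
The statement is the existence of a finite geodesic automaton for RAAGs/RACGs; it is cited from \cite{HM}, \cite{gttgraph}. Rather than re-derive it, I would assemble it from the standard normal form theory for these groups.

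The first step is to use the well-known fact that RAAGs and RACGs (with the standard vertex generators) have a regular language of geodesic normal forms. Concretely, I would take the shortlex (or another) normal forms. Both classes have solvable word problem via cancellation and commutation moves, and the language of shortlex geodesics is regular, since these groups are automatic, in fact biautomatic via their CAT(0) cube complex structure. Taking a deterministic finite automaton recognising this language, with trim states, gives a directed graph. After passing to the standard edge-to-vertex conversion, I would label vertices by the generator read on entering them. The initial states $\ast_j$ correspond to the first letters.

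With this automaton in hand, properties (2) and (3) follow directly from uniqueness of shortlex normal forms and their geodesicity. Property (1) follows after pruning unreachable states.

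The substantive part is the final mixing and connectedness claim. This needs irreducibility of the automaton after removing initial states, together with aperiodicity. A generic shortlex automaton need not be strongly connected. This is exactly where the hypothesis that $\G$ does not split as a direct product (and is not virtually cyclic) enters. Here I would follow \cite{gttgraph}: use a modified automaton tracking the "last letters and their commuting set", so that states record the set of generators that may still be cancelled. I would then show that from any state one can append a word of letters generating a non-split subgroup to reach any other state. Non-splitting means the complement of the defining graph is connected, which lets one "reset" the state by appending a suitable path of non-commuting generators. Aperiodicity would then come from the existence of loops of coprime lengths, for instance via two non-commuting generators $a,b$ giving cycles reading $ab$ and $aab$ (for RAAGs) or $abc$-type words (for RACGs).

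This connectivity and aperiodicity step is the main obstacle, and for it I would rely on the cited works.
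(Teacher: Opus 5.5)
Your proposal takes essentially the paper's approach. The paper gives no argument of its own: it cites \cite{HM} for the finite automaton of geodesic normal forms and \cite{gttgraph} for the connectivity and mixing, and only moves the labels from edges to vertices, which is your edge-to-vertex conversion. If you write it out, note two things: regularity of the shortlex normal forms is a theorem of \cite{HM}, not a consequence of (bi)automaticity; and property (1) for \emph{every} $\ast_j$ comes from the strong connectivity of the non-initial part, not from pruning unreachable states.
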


\begin{remark}
The above theorem is a slight restatement of the cited works \cite{HM}, \cite{gttgraph}. Indeed, in  these works the graph structure has the labelling $\lambda$ defined over directed edges in $\Gc$ opposed to vertices. We have chosen to use a labelling over vertices however as it makes the above result more easily applicable in our geometric setting. 
\end{remark}

Now, let $(\Gc, \lambda)$ be the graph and labelling associated to $\G$ by Theorem \ref{thm.graph}. The graph gives rise to a subshift of finite type $\Sigma_A$ is the obvious way: the vertices $v$ in $\Gc$ are labelled $1,\ldots, n$ and the $n \times n$ transition matrix $A$ has $(i,j)$th entry $1$ if there is an edge in $\Gc$ from vertex $i$ to $j$ and is $0$ otherwise. Let $A'$ be the matrix $A$ but with the rows and columns corresponding to the states $\{\ast_1,\ldots, \ast_k\}$ removed and write $\Gc'$ for the graph $\Gc$ with the states $\ast_1,\ldots, \ast_k$ removed. By Theorem \ref{thm.graph} $A'$ is aperiodic and $\Sigma_{A'}$ is mixing. We want to construct quasimorphisms on $\Sigma_{A'}$ corresponding to $\p \in \Hc_\G^d$.

To achieve this, define, for each finite word $(x_0,\ldots, x_{n-1})$,
\[
\varphi(x_0,\ldots,x_{n-1}) = \p(o, \ev(x_0, \ldots, x_{n-1})).
\]
 The Gromov product condition on $\p$ guarantees precisely that $\varphi$ belong to $\qm(\Sigma_A)$. Indeed, let $(x_0,\ldots, x_{n+m-1})$ be a path in $\Gc$. Then $(x_0,\ldots, x_{n+m-1})$ correspond to a geodesic for $d$. In particular, if we set
\[
x^{-1} = \ev(x_0,\ldots,x_{n-1}) \in \G \ \ \text{ and } y = \ev(x_n,\ldots, x_{n+m-1}) \in \G,
\]
 then $d(o,x^{-1}) = n, d(o,y) = m, d(x,y) = d(o,x^{-1}y) = n+ m$ and 
 \[
 (x|y)_d = n + m - (n+m) = 0.
 \]
Therefore 
\[
|\varphi(x_0,\ldots,x_{n+m}) - \varphi(x_0,\ldots,x_n) - \varphi(x_{n+1}, \ldots, x_{n+m})| = |(x|y)_{\p_j}| \le c_1(x|y)_d + c_2 = c_2
\]
where $c_1, c_2$ is the constant coming from Definition \ref{def.gp}. We deduce that both $\varphi$ is a quasimorphism $\qm(\Sigma_{A'})$.

\begin{definition}\label{def.F}
Let $F: \Hc_\G^d \to \qm(\Sigma_{A'})$ denote the map that sends $\p \in \Hc_\G^d$ to $\varphi \in \qm(\Sigma_{A'})$. 
\end{definition}
\noindent Note that, if $d_\ast \in \Dc_\G^d$ then $F(d_\ast) \in \qm^{++}(\Sigma_{A'})$ since $d_\ast$ is quasi-isometric to $d$.

We are now ready to establish some of the main theorems from the introduction.

\begin{proof}[Proof of Theorem \ref{thm.rags1}]
Let $\theta_{\p_2/\p_1}$ be the  Manhattan curve for $\p_1, \p_2 \in \mathcal{H}_\G^d$. 
Let $\varphi_1 = F(\p_1)$ and $\varphi_2 = F(\p_2)$ where $F$ is the function defined above.
By construction, for each $s,t \in \R$,
\[
\sum_{x \in \G} e^{-s\p_1(o,x) - t\p_2(o,x)} = \sum_{w \in \Pi} e^{-s\varphi_1(w) - t\varphi_2(w)} =: Q(s,t)
\]
where $\Pi = \{ w = (x_0, \ldots, x_{n-1}) \in \W(A) : x_0 \in \{\ast_1,\ldots, \ast_k\} \}$. It is easy to see that, since $\p_1,\p_2$ are quasimorphisms and $A'$ is connected, the two variable series $Q(s,t)$ converges at $(s,t) \in \R^2$ if and only if the series
\[
\sum_{w \in \W(A')} e^{-s\varphi_1(w) - t\varphi_2(w)}
\] 
converges at $(s,t)$. This shows that the Manhattan curve $\theta_{\p_2/\p_1}(t)$ is precisely $\Pr(\varphi_2,\varphi_1)(-t)$ for each $t\in\R$ over the subshift $\Sigma_{A'}$. The $-t$ comes from the slight difference in parameterizations in the definitions for the pressure curve and Manhattan curves (we have followed the original definitions for both the Manhattan curve and pressure curves). In particular, by Theorem \ref{thm.rigidity} we deduce that $\theta_{\p_2/\p_1}$ is $C^1$ and is a straight line if and only if $\varphi_1$ and $\varphi_2$, considered as quasimorphisms in $\qm(\Sigma_{A'})$, are roughly similar. To conclude the proof we need only show that $\varphi_1 \sim \varphi_2$ if and only if $\p_1$ and $\p_2$ are roughly similar. This follows easily however from the quasimorphism property and the fact that there is a bijection between the paths in $\Gc$ starting in $\{\ast_1,\ldots, \ast_k\}$ and the group elements in $\G$.
\end{proof}

In the above proof we established the following result.

\begin{corollary}\label{cor.man}
Suppose that two metrics $\p_1, \p_2 \in \mathcal{H}_\G^d$ have non-empty Manhattan curve $\theta_{\p_1/\p_2}$. Then the for each $t \in \R$ in the domain of $\theta_{\p_1/\p_2}$ we have that $\theta_{\p_1/\p_2}(t) = \Pr(F(\p_1),F(\p_2))(-t)$.
\end{corollary}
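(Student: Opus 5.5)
The identity $\theta_{\p_1/\p_2}(t) = \Pr(F(\p_1),F(\p_2))(-t)$ should be read off from the computation already made in the proof of Theorem~\ref{thm.rags1}, with the roles of $\p_1$ and $\p_2$ exchanged. Put $\varphi_j = F(\p_j)$. The bijection $\ev$ of Theorem~\ref{thm.graph} gives, for $(s,t)\in\R^2$,
\[
\sum_{x\in\G} e^{-s\p_2(o,x)-t\p_1(o,x)} = \sum_{w\in\Pi} e^{-s\varphi_2(w)-t\varphi_1(w)}.
\]
The first step is to show that the right-hand series converges exactly when $\sum_{w\in\W(A')} e^{-s\varphi_2(w)-t\varphi_1(w)}$ converges.

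For this comparison I would decompose each $w\in\Pi$ as an initial state followed by a path in $\Gc'$. Every vertex of $\Gc'$ is reachable from each $\ast_j$ by a path of bounded length, so the quasimorphism property for $\varphi_1$ and $\varphi_2$ makes the two sums comparable up to multiplicative constants depending on $s,t$. The number of prefixes involved is bounded, so the constants stay finite.

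Next, group the $\W(A')$-sum by the value of $\varphi_2(w)$ in windows $|\varphi_2(w)-n|<R$. Lemma~\ref{lem.expgrow} gives the window sum as $\asymp e^{n\Pr(-t\varphi_1,\varphi_2)\ldots}$. More precisely, $\sum_{|\varphi_2(w)-n|<R} e^{-t\varphi_1(w)} \asymp e^{n\Pr(\varphi_1,\varphi_2)(-t)}$, and therefore
\[
\sum_w e^{-s\varphi_2(w)-t\varphi_1(w)} \asymp \sum_n e^{n(\Pr(\varphi_1,\varphi_2)(-t)-s)}.
\]
This series converges iff $s > \Pr(\varphi_1,\varphi_2)(-t)$. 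This step needs $\varphi_2\in\qm^{++}$, which holds when $\p_2\in\Dc_\G^d$. For a general $\p_2\in\Hc_\G^d$ with non-empty Manhattan curve, I would use the existence of some convergent point to replace $\varphi_2$ by a positive combination $s_0\varphi_2+t_0\varphi_1$ lying in $\qm^{++}$ and reparametrize linearly. I expect this reparametrization to be the main obstacle.

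It follows that the boundary of $\mathcal{C}_M$ is the graph $s = \Pr(\varphi_1,\varphi_2)(-t)$, which is exactly the asserted formula for $\theta_{\p_1/\p_2}(t)$ on its domain.
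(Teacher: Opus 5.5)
For $F(\p_2)\in\qm^{++}$ (for instance $\p_2\in\Dc_\G^d$) your argument is correct and is essentially the paper's. The paper reads the corollary off the proof of Theorem~\ref{thm.rags1}, which uses the bijection $\ev$, compares the sum over $\Pi$ with the sum over $\W(A')$ via the quasimorphism property and connectivity of $\Gc'$, and identifies the abscissa of convergence with the pressure function. Your windows $|\varphi_2(w)-n|<R$ together with Lemma~\ref{lem.expgrow} make that last step explicit, exactly as the paper does in the hyperbolic case.

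The part you leave open is a definitional issue rather than a technical one. $\Pr(F(\p_1),F(\p_2))$ is only defined by Definition~\ref{def:pressurefunction} when $F(\p_2)\in\qm^{++}$, and the paper's proof does not distinguish any other case either. Suppose you want to cover $F(\p_2)\in\qm^{+}\setminus\qm^{++}$, reading $\Pr(\varphi_1,\varphi_2)(-t)$ as the zero of $f_t(s)=\Pr(-t\varphi_1-s\varphi_2)$. Then the reparametrization through $s_0\varphi_2+t_0\varphi_1$ is a detour: by itself it only exhibits the boundary as a linear image of the graph of $\Pr(\varphi_1,s_0\varphi_2+t_0\varphi_1)$.

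It is simpler to group by word length instead of by $\varphi_2$-windows. Lemma~\ref{lem.subsup} gives $\widetilde{P}_n(\Phi)\asymp e^{n\Pr(\Phi)}$ on $\Sigma_{A'}$ for every quasimorphism $\Phi$, with no positivity assumption. Hence $\sum_{w\in\W(A')}e^{-t\varphi_1(w)-s\varphi_2(w)}<\infty$ if and only if $f_t(s)<0$. Since $f_t$ is convex and non-increasing (as $\varphi_2\ge 0$), whenever the abscissa is finite it is the unique zero of $f_t$. In the $\qm^{++}$ case this gives your conclusion directly from the definition of the pressure function, without needing Lemma~\ref{lem.expgrow}.
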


We are now ready to prove Theorem \ref{thm.rags2}.

\begin{proof}[Proof of Theorem \ref{thm.rags2}]
Fix $R >0$ and consider the sequence
\[
\sum_{x \in \G: |\p_1(o,x) - n| \le R}  e^{t\p_2(o,x)}
\]
for $n\ge 1$. Note that, for any fixed symbol $x_0$, we have that 
\[
\sum_{x \in \G: |\p_1(o,x) - n| \le R}  e^{t\p_2(o,x)} \ge \sum_{w \in \W(A') : w \in [x_0], |\varphi(w) - n|< R} e^{t\varphi_2(w)}
\]
for each $n\ge1$ since each path starting at $x_0$ corresponds to a unique group element in $\G$.

Further, since there is a bijection between elements in $\G$ and paths starting in $\{\ast_1,\ldots, \ast_k\}$ and by the quasimorphism property for $\varphi_1, \varphi_2$, there exists $C_t>0$ depending only on $t$ and $R' >R$ such that
\[
\sum_{x \in \G: |\p_1(o,x) - n| \le R}  e^{t\p_2(o,x)} \le C_t  \sum_{w \in \W(A') : |\varphi(w) - n|< R'} e^{t\varphi_2(w)}
\]
for each $n\ge1$. Combining these two inequalities and applying Lemma \ref{lem.expgrow} and Remark \ref{rem.expgrow} implies that
for any sufficiently large $R >0$,
\[
\sum_{x \in \G: |\p_1(o,x) - n| \le R}  e^{t\p_2(o,x)} \asymp e^{n\Pr(\varphi_2,\varphi_1)(t)}
\]
as $n\to\infty$. We can then apply the G\"artner-Ellis Theorem to deduce that there exists $\tau(\p_2/\p_1)$ which is equal to the derivative of $\Pr(\varphi_2,\varphi_1)$ at $0$ (or minus the derivative of $\theta_{d_2/d_1}$ at $0$) such that
\[
 \left| \left\{ x \in \G: |\p_1(o,x) - n| \le R \ \text{ and } \, \left| \frac{\p_2(o,x)}{n} - \tau(\p_2/\p_1)\right| > \epsilon \right\}\right|
\]
grows exponentially slower than $\left|\left\{x \in \G : |\p_1(o,x) - n| \le R \right\}\right|$ as $n\to\infty$. It is easy to check that this implies that 
\[
\frac{1}{\#\{ x \in \G: \p_1(o,x) < T\}}  \sum_{\p_1(o,x) < T} \frac{\p_2(o,x)}{T} \to \tau(\p_2/\p_1)
\]
as $T\to\infty$. The inequality $\tau(\p_2/\p_1) \ge h(\p_1)/h(\p_2)$ and the equivalence statements now follow from Theorem \ref{thm.rigidity} since the Manhattan curve $\theta_{\p_2/\p_1}(t)$ can be realised as $\Pr(\varphi_2,\varphi_1)(-t)$ for $t \in \R$.
\end{proof}

\subsection{Space of metric structures}
We now prove our main results concerning the space of metric structures for RAAGs and RACGs.

\begin{lemma}\label{lem.comprag}
Suppose that $d_1, d_2 \in \Dc_\G^d$. Then
\[
\Dil(d_1,d_2) = \Dil(F(d_1), F(d_2)).
\]
Furthermore, this fact implies that there exists $C >0$ such that
\[
\Dil(d_1,d_2)^{-1}d_1(x,y) -C \le d_2(x,y) \le \Dil(d_2,d_1)d_1(x,y) + C
\]
for all $x,y \in \G$.
\end{lemma}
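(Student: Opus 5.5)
We will compare the two suprema defining the dilations directly, by relating periodic orbits of $\Sigma_{A'}$ to conjugacy classes of $\G$ through the map $\ev$ of Theorem \ref{thm.graph}. Write $\varphi_j = F(d_j)$, $j=1,2$.

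\emph{Step 1: periodic orbits give conjugacy classes with matching homogenizations.} Let $\{w\}\in\Per$ for $\Sigma_{A'}$ and put $g=\ev(w)\in\G$. Since $ww\cdots w$ is a directed path in $\Gc'$ and we may prefix it by a bounded path $u$ from some initial state $\ast_i$ to the first letter of $w$, the word $uw^n$ is a path from an initial state. Hence $d_j(o,\ev(u)g^n)=\varphi_j(uw^n)$, which is within a bounded constant of $\varphi_j(w^n)$ by the quasimorphism property. Dividing by $n$ gives $\ov{\varphi_j}\{w\}=\ell_{d_j}[g]$. Moreover, since these paths are $d$-geodesics, $\ell_d[g]=|w|>0$. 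Consequently $\Dil(\varphi_1,\varphi_2)\le\Dil(d_1,d_2)$.

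\emph{Step 2: the reverse inequality.} This is the main obstacle, because not every conjugacy class with $\ell_d>0$ is visibly the label of a cycle in $\Gc'$. I would avoid needing such cycles by using the pressure/Manhattan dictionary. By Corollary \ref{cor.man} and Lemma \ref{lem.asympgrad}, $\Dil(\varphi_1,\varphi_2)=\lim_{t\to\infty}\Pr(\varphi_1,\varphi_2)(t)/t$, and this is a slope of the Manhattan curve of $d_1,d_2$.

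Fix $[x]$ with $\ell_d[x]>0$. Then $x^n$ has $d$-length growing linearly, and its geodesic representative is a path $uw_n$ in $\Gc$ with $u$ of bounded length. This gives words $w_n\in\W(A')$ with $\varphi_j(w_n)=d_j(o,x^n)+O(1)$.

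Now apply Lemma \ref{lem.comp} to the pair $\varphi_1,\varphi_2$: we have $\varphi_1(w)\le\Dil(\varphi_1,\varphi_2)\varphi_2(w)+C$ for all words. Applied to $w_n$, this gives $d_1(o,x^n)\le\Dil(\varphi_1,\varphi_2)\,d_2(o,x^n)+C'$. Dividing by $n$ yields $\ell_{d_1}[x]\le\Dil(\varphi_1,\varphi_2)\,\ell_{d_2}[x]$. This step never uses periodicity of $w_n$, so the obstacle disappears. Taking the supremum over $[x]$ gives $\Dil(d_1,d_2)\le\Dil(\varphi_1,\varphi_2)$, and hence equality.

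\emph{Step 3: the comparison inequalities.} Apply Lemma \ref{lem.comp} to $\varphi_1,\varphi_2$ with the constant $C$ it provides, and translate through the bijection of Theorem \ref{thm.graph}. For $x,y\in\G$, $G$-invariance reduces to $d_j(o,x^{-1}y)=\varphi_j(w)$, where $w$ is the path from an initial state representing $x^{-1}y$. Strip the first (initial-state) letter to obtain a word of $\W(A')$; this costs a bounded error via the quasimorphism defect. Substituting $\Dil(\varphi_i,\varphi_j)=\Dil(d_i,d_j)$ from Steps 1–2 then gives the stated two-sided bounds with an enlarged constant $C$.
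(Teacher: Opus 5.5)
Your proof is correct and follows the paper's argument: one inequality comes from identifying $\ov{\varphi_j}\{w\}$ with $\ell_{d_j}[g]$ (where $g$ is the group element read off a periodic word $w$), and the other from transferring the bound $\varphi_1\le \Dil(\varphi_1,\varphi_2)\varphi_2+C$ of Lemma~\ref{lem.comp} to group elements, with the comparison inequalities then obtained by substituting the equality of dilations into Lemma~\ref{lem.comp}. You also spell out two steps the paper leaves implicit, namely that $\ell_d[g]=|w|>0$ and that the bound is applied to $x^n$ before dividing by $n$; the appeal to Corollary~\ref{cor.man} and Lemma~\ref{lem.asympgrad} at the start of Step 2 is never used and can be dropped.
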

\begin{proof}
Let $\varphi_1 = F(d_1)$ and $\varphi_2 = F(d_2)$.
First note that, by Lemma  \ref{lem.comp} there exists $C>0$ such that
\[
\varphi_1(w) \le \Dil(\varphi_1, \varphi_2) \varphi_2(w) + C
\]
for all $w \in \W(A')$. Since every element in $\G$ can be realised as a finite path in $\Gc$ and, up to removing an initial state $\ast_1,\ldots, \ast_k$, as a finite path in $\Gc'$, we deduce that there exists $C' > 0$ such that
\begin{equation}\label{eq.comp}
d_1(o,x) \le \Dil(\varphi_1, \varphi_2) d_2(o,x) + C'
\end{equation}
for all $x \in \G$. In particular we deduce that 
\[
\Dil(\varphi_1, \varphi_2) \ge \Dil(d_1,d_2).
\]

For the opposite inequality note that, by the definition of $\varphi_1$, for any $\{w\} \in \Per(A')$ with $x_w = \ev(w) \in \G$ we have that
\[
\ov\varphi_1\{w\} = \lim_{n\to\infty} \frac{\varphi_1(w^n)}{n} = \lim_{n\to\infty} \frac{d_1(o,x_w^n)}{n} = \ell_{d_1}[x_w].
\]
The same is true for $\varphi_2$. It follows that $\Dil(\varphi_1, \varphi_2) \le \Dil(d_1,d_2)$ and the dilations agree.

For the furthermore statement, we simply set $\Dil(\varphi_1, \varphi_2) = \Dil(d_1,d_2)$ in $(\ref{eq.comp})$ above.
\end{proof}

\begin{corollary}
The distance $\D$ on $\Dc_\G^d$ determines a metric on $\Dc_\G^d$.
\end{corollary}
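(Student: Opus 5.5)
The plan is to reduce everything to the quasimorphism moduli space via the map $F$ of Definition~\ref{def.F}, and to pull back the fact that $\Qq$ is a metric (Lemma~\ref{lem.hinequ}). First I check that $\D$ descends to classes. If $d_1\sim d_1'$ with $|d_1'-\tau d_1|<C$, then $\ell_{d_1'}=\tau\ell_{d_1}$. Hence both dilations rescale by reciprocal factors, and the product $\Dil(d_1,d_2)\Dil(d_2,d_1)$ is unchanged. By Lemma~\ref{lem.comprag} we have $\Dil(d_i,d_j)=\Dil(F(d_i),F(d_j))$, so
\[
\D([d_1],[d_2])=\Qq([F(d_1)],[F(d_2)]).
\]
Here $F(d_i)\in\qm^{++}(\Sigma_{A'})$ because $d_i$ is quasi-isometric to $d$.

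Nonnegativity, symmetry and the triangle inequality then follow directly from those of $\Qq$ on $\Qm^{++}(\Sigma_{A'})$. The triangle inequality can also be seen directly, since $\Dil(d_1,d_3)\le \Dil(d_1,d_2)\Dil(d_2,d_3)$ termwise in the supremum.

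The main point is definiteness: $\D([d_1],[d_2])=0$ must force $d_1\sim d_2$. If $\D=0$ then $\Qq([F(d_1)],[F(d_2)])=0$, so $F(d_1)\sim F(d_2)$ by Lemma~\ref{lem.hinequ}. That is, $|F(d_2)(w)-\tau F(d_1)(w)|\le C$ for all $w\in\W(A')$. To transfer this to the group, I use Theorem~\ref{thm.graph}. Every $x\in\G$ is $\ev$ of a unique path starting at an initial state $\ast_j$. Removing the first vertex gives a word in $\W(A')$, at the cost of a bounded error coming from $\G$-invariance, the quasimorphism property and finitely many initial symbols. This yields $|d_2(o,x)-\tau d_1(o,x)|\le C'$ for all $x$, and $\G$-invariance extends the bound to all pairs $(x,y)$.

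This last transfer step, which also underlies the end of the proof of Theorem~\ref{thm.rags1}, is where care is needed. Alternatively, one can avoid it by applying Lemma~\ref{lem.comprag} directly. When the product of dilations is $1$, its two-sided comparison gives $\Dil(d_2,d_1)^{-1}d_1-C\le d_2\le \Dil(d_2,d_1)d_1+C$, and this is exactly rough similarity with $\tau=\Dil(d_2,d_1)$.
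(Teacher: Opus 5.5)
Your proof is correct. The alternative you give at the end is the paper's argument. The paper treats symmetry, nonnegativity and the triangle inequality as immediate, and gets definiteness directly from the two-sided comparison of Lemma~\ref{lem.comprag}.

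There is a small slip in that alternative. The lemma's lower bound is $\Dil(d_1,d_2)^{-1}d_1-C\le d_2$, not $\Dil(d_2,d_1)^{-1}d_1-C\le d_2$. It is the hypothesis $\Dil(d_1,d_2)\Dil(d_2,d_1)=1$ that turns $\Dil(d_1,d_2)^{-1}$ into $\Dil(d_2,d_1)$, so both coefficients equal $\tau=\Dil(d_2,d_1)$. As you wrote it, the two coefficients would not agree.

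Your primary route is a different, slightly longer path:
\begin{enumerate}
\item You pull back the metric property of $\Qq$ (Lemma~\ref{lem.hinequ}) to get $F(d_1)\sim F(d_2)$.
\item You then transfer rough similarity from $\W(A')$ to $\G$, using the coding of Theorem~\ref{thm.graph} and left-invariance.
\end{enumerate}
The transfer is sound; it is the same bookkeeping already done inside the proof of Lemma~\ref{lem.comprag}. It also gives injectivity of $\ov{F}$ for free, which Theorem~\ref{thm.RAAGembed} needs.

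It is heavier than necessary, though. In the paper, Lemma~\ref{lem.hinequ} is proved through the pressure function and the Gibbs-state rigidity of Corollary~\ref{cor.line}. The direct route needs only the elementary optimal-constant comparison of Lemma~\ref{lem.comp}, pushed to the group in Lemma~\ref{lem.comprag}.
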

\begin{proof}
It is easy to see that $\D$ is symmetric, positive and satisfies the triangle inequality and so we are left to check that $\D$ distinguishes point. However, this follows easily form Lemma \ref{lem.comprag} which implies $\D([d_1],[d_2]) = 0$ if and only if $\Dil(d_1,d_2)\Dil(d_2,d_1) = 1$ if and only if $d_1$ and $d_2$ are roughly similar, i.e. $[d_1] = [d_2]$.
\end{proof}

We now want to establish that the Manhattan curve $\theta_{d_2/d_1}$ for metrics in $\Dc_\G^d$ determines a curve in $\Dc_\G^d$.

\begin{proposition}\label{prop.curve}
Take $d_1,d_2 \in \Dc_\G^d$. Then, for each $t \in \R$ there exists a metric $d_t \in \Dc_\G^d$  and $C_t >0$ such that 
\[
|d_t(x,y) - (\theta_{d_2/d_1}(t)  d_1(x,y) +  td_2(x,y))| < C_t
\]
for all $x,y \in \G$. Furthermore, the $C_t$ can be chosen so that 
\[
C_t \le \lambda \, |t| + \lambda
\]
for some constants $\lambda >0$ and for all $t\in\R$.
\end{proposition}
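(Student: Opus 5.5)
We would derive the statement from the pressure geodesics of Theorem~\ref{thm.geo}, transported to the group through the map $F$ of Definition~\ref{def.F}. Set $\varphi_j=F(d_j)\in\qm^{++}(\Sigma_{A'})$ and $f_t(x,y)=\theta_{d_2/d_1}(t)d_1(x,y)+td_2(x,y)$. By Corollary~\ref{cor.man}, $\theta_{d_2/d_1}(t)=\Pr(\varphi_2,\varphi_1)(-t)$. Hence
\[
F(f_t)=\Pr(\varphi_2,\varphi_1)(s)\varphi_1-s\varphi_2 \quad\text{with } s=-t,
\]
which is exactly a point $\p_s$ on the pressure geodesic of the pair $(\varphi_2,\varphi_1)$. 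Lemma~\ref{lem.++} shows $F(f_t)\in\qm^{++}$. Corollary~\ref{cor.lamb} gives a single $\lambda>0$ such that $F(f_t)+\lambda|t|+\lambda$ is strictly positive and linearly growing. Its proof, through Lemma~\ref{lem.post} and Remark~\ref{rem.indep}, also gives an explicit slope $a_t>0$ and an intercept $b_t\le\lambda|t|+\lambda$ with $F(f_t)(w)\ge a_t|w|-b_t$.

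\textbf{Step 1: transfer to $\G$.} Every $x\in\G$ is $\ev$ of a unique path starting at an initial state $\ast_j$, hence a word of $\W(A')$ up to one symbol. Using $\G$-invariance, we get
\[
f_t(x,y)\ \ge\ a_t\,d(x,y)-b_t' \quad\text{for all } x,y\in\G,
\]
with $b_t'$ still $O(|t|+1)$, because the one-symbol error costs at most $(|\theta(t)|+|t|)\max_j\max_{s\in S}d_j(o,s)$. We also have the trivial upper bound $f_t\le(|\theta(t)|+|t|)\max_j\Dil(d_j,d)\,d+O(|t|+1)$. In particular $f_t$ is quasi-isometric to $d$ with additive constants linear in $|t|$, and Gromov-product bounds for $f_t$ follow from those of $d_1,d_2$, so $f_t\in\Hc_\G^d$.

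\textbf{Step 2: build the metric.} We define the chain metric
\[
d_t(x,y)=\inf\Big\{\textstyle\sum_{i=1}^m\big(f_t(z_{i-1},z_i)+K_t\big): z_0=x,\ z_m=y\Big\},\qquad d_t(x,x)=0,
\]
with $K_t=b_t'+1$. Each link then contributes at least $a_t d(z_{i-1},z_i)+1>0$. This $d_t$ is a $\G$-invariant (possibly asymmetric) metric, satisfies $d_t\le f_t+K_t$, and is quasi-isometric to $d$.

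\textbf{Step 3: the lower bound, the main obstacle.} It remains to prove $f_t(x,y)\le\sum_i f_t(z_{i-1},z_i)+C_t$ for every chain. When $\theta(t)$ and $t$ have the same sign this is immediate from the triangle inequality for $d_1,d_2$. The difficult case is mixed signs, since $(\cdot|\cdot)_{f_t}$ can then be very negative. There we would first reduce to chains whose points lie near a $d$-geodesic from $x$ to $y$. A link far from the geodesic costs at least $a_t$ times its $d$-length plus $K_t$, and we would try to show this excess beats the possible gain $c_1|t|(x|y)_d$, possibly after enlarging $K_t$ by a further multiple of $|t|+1$. For chains near a geodesic we would use the quasimorphism property of $F(f_t)$ along the coding path: concatenating subpaths of one $\Gc$-path costs only $D_{F(f_t)}=O(|t|+1)$ per junction. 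Since each link costs at least $K_t$, the junction errors are absorbed.

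If the chain-metric route fails, we would instead define $d_t(x,y)=\max\{f_t(x,y),\,a_t d(x,y)\}+K_t$ and verify the triangle inequality directly from Step 1. All constants produced are affine in $|\theta(t)|+|t|$, which is $O(|t|+1)$ by Remark~\ref{rem.asympupper} and Lemma~\ref{lem.post}; this gives $C_t\le\lambda|t|+\lambda$.
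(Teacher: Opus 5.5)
Your reduction is sound up to Step 3. $F(f_t)$ is the point $\psi_{-t}$ of the pressure geodesic for $(\varphi_2,\varphi_1)$. Corollary~\ref{cor.lamb} together with the transfer in Step 1 gives $f_t\ge a_t d-b_t'$ with $b_t'=O(|t|+1)$. The chain metric satisfies $d_t\le f_t+K_t$.

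\textbf{The gap.} The reverse inequality is missing: you need the rough triangle inequality $f_t(x,z)\le f_t(x,y)+f_t(y,z)+C_t$ when $\theta_{d_2/d_1}(t)$ and $t$ have opposite signs. This is the heart of the proposition, and neither of your two suggestions proves it.

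\begin{itemize}
\item The ``excess beats the gain'' comparison discards the cancellation between the two terms of $(x|z)^{f_t}_y$. You bound the bad term by $c_1|t|(x|z)^d_y$, which is unbounded, and try to pay for it with $a_t$ times a detour length. But for $t<0$ any admissible slope $a_t$ is at most a fixed multiple of $\theta_{d_2/d_1}(t)-|t|\,\Dil(\varphi_2,\varphi_1)$. That quantity is $o(|t|)$ by Lemma~\ref{lem.asympgrad}, and it has no reason to exceed $c_1|t|$. Enlarging $K_t$ by an additive amount cannot compensate a loss that is linear in an unbounded Gromov product.
\item The fallback is equivalent to the original problem. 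Since $a_td\le f_t+b_t'$, the function $g=\max\{f_t,a_td\}$ satisfies $g\le f_t+b_t'$. So if $f_t(x,z)>f_t(x,y)+f_t(y,z)+N$, then $g(x,z)>g(x,y)+g(y,z)+N-2b_t'$. Hence $g+K_t$ fails the triangle inequality once $N>2b_t'+K_t$.
\end{itemize}
Two-sided linear bounds as in Step 1 cannot by themselves give a rough triangle inequality; a structural input is needed.

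\textbf{The missing input.} Use the median structure of $\mathrm{Cay}(\Gamma,S)$, which is the $1$-skeleton of the universal cover of the Salvetti complex (resp.\ of the Davis complex), together with the Gromov product condition defining $\mathcal{D}_\Gamma^d$.
\begin{itemize}
\item If $b$ lies on a $d$-geodesic from $a$ to $c$, then $(a|c)^d_b=0$. By $\Gamma$-invariance, $|d_j(a,b)+d_j(b,c)-d_j(a,c)|\le 2c_2$ for $j=1,2$.
\item Hence $f_t$ is additive along \emph{every} $d$-geodesic up to $E_t=2c_2(|\theta_{d_2/d_1}(t)|+|t|)=O(|t|+1)$.
\item For arbitrary $x,y,z$, let $m$ be their median, which lies on $d$-geodesics between each pair. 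Then
\[
f_t(x,y)+f_t(y,z)-f_t(x,z)\ \ge\ f_t(m,y)+f_t(y,m)-3E_t\ \ge\ -2b_t'-3E_t ,
\]
where the last step uses your Step 1 lower bound.
\end{itemize}
With this, set $d_t=f_t+2b_t'+3E_t+1$ off the diagonal. This is a metric in $\mathcal{D}_\Gamma^d$ with $C_t\le\lambda|t|+\lambda$. Your chain metric also works: once $K_t\ge 2b_t'+3E_t$, it coincides with $f_t+K_t$ off the diagonal.

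The paper is organised the same way. It proves a lower bound $(x|y)_{\psi_t}\ge-\varphi_t(w_{x^{-1},y})-2D_{\varphi_t}$ and then adds a constant off the diagonal. It obtains that bound by asserting $\varphi_t(w_{x^{-1}y})<\varphi_t(w_{x^{-1}}w_{x^{-1},y}w_y)$ for a short connecting word. The concatenated word evaluates to a group element other than $x^{-1}y$, so this comparison is, up to bounded error, the rough triangle inequality itself. The median decomposition above is what actually justifies it.
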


\begin{proof}
Let $(\Gc,\lambda)$ be the pair as above and let $\varphi_1,\varphi_2 \in \qm^{++}(\Sigma_{A'})$ be the quasimorphisms associated to $d_1,d_2$ as above. Now consider the quasi-morphism
\[
\varphi_t = t \varphi_2 + \Pr(\varphi_2,\varphi_1)(-t)\varphi_1 = t \varphi_2 + \theta_{d_2/d_1}(t) \varphi_1
\]
which belongs to $\qm^{++}(\Sigma_{A'})$ by Lemma \ref{lem.++}. 

By Corollary \ref{cor.lamb} we have that $\varphi_t \in \qm^{++}(\Sigma_{A'})$ and there exists $R_t > 0$ such that $\varphi_t(w) \ge -R_t$ for all $w\in\W(A')$. Furthermore, we may assume that there is $\lambda > 0$ such that $R_t < \lambda|t| + \lambda$.

Now define, for $x\in \G$,
\[
\p_t(o,x) = \varphi_t(w_x) +R_t
\]
where $w_x = (x_0, \ldots, x_{n-1}) \in \W(A')$ is obtained from unique word $(\ast_j, x_0, \ldots, x_{n-1}) \in \W(A)$ such that $\ev(\ast_j, x_0, \ldots, x_{n-1}) = x$ by deleting the initial state. Then define $\p_t(x,y) = \varphi_t(w_{x^{-1}y}) + R_t$ for all $x,y \in \G$. By construction $\varphi_t(x,y) \ge 0$ for all $x,y\in\G$ and 
\begin{equation}\label{eq.boo}
|\p_t(x,y)  - (\theta_{d_2/d_1}(t)  d_1(x,y) +  td_2(x,y))|  \le  R_t + C(|\Pr(\p_2,\p_1)(t)|  + |t|) 
\end{equation}
for some $C >0$ independent of $x,y,t$.

Note that
\[
(x|y)_{\p_t} = \varphi_t(w_{x^{-1}}) + \varphi_t(w_y) - \varphi_t(w_{x^{-1}y})
\]
for each $x,y \in \G$. Now, since $\varphi_t$ belongs to $\qm^{++}(\Sigma_{A'})$, there exists $M \ge 1$ such that, for any $x,y \in \G$ there is $w_{x^{-1},y} \in \W(A')_{\le M}$ such that $\varphi_t(w_{x^{-1}y}) < \varphi_t(w_{x^{-1}} w_{x^{-1},y} w_y)$. In particular we deduce that 
\[
(x|y)_{\p_t}  \ge \varphi_t(w_{x^{-1}}) + \varphi_t(w_y) - \varphi_t(w_{x^{-1}} w_{x^{-1},y} w_y) \ge -\varphi_t(w_{x^{-1},y}) - 2D_{\varphi_t}
\]
by the quasimorphism property.

In particular if we let $C_t = 2D_{\varphi_t} + \sup_{w \in \W(A')_{\le M}} \varphi_t(w)$ then
\[   
d_t(x,y) := 
     \begin{cases}
      \p_t(x,y) + C_t &\quad\text{if $x \neq y$}\\
      0 &\quad\text{if $x =y$}
     \end{cases} 
\]
determines a positive function that satisfies the triangle inequality (i.e. is a possibly asymmetric metric) and that is quasi-isometric to $d$ since $\varphi_t \in \qm^{++}(\Sigma_{A'})$.

Now, from the definition of $d_t$ and $(\ref{eq.boo})$ we see that 
\begin{equation}\label{eq.end}
|d_t(x,y)  - (\theta_{d_2/d_1}(t)  d_1(x,y) +  td_2(x,y))|  \le  R_t + C(|\Pr(\p_2,\p_1)(t)|  + |t|) + C_t.
\end{equation}
As in the proof of Corollary \ref{cor.lamb} it is easy to check that there exists $\lambda' > 0$ such that
\[
|C(|\Pr(\p_2,\p_1)(t)|  + |t|) + C_t| \le \lambda' |t| + \lambda'
\]
for all $t\in\R$. Since $|R_t| < \lambda |t| + \lambda$, the furthermore statement follows from $(\ref{eq.end})$.

Lastly, since $d_t(x,y)$ is within uniformly bounded distance of $\theta_{d_2/d_1}(t)  d_1(x,y) +  td_2(x,y)$ for each $x,y \in \G$ from which we deduce that there is $L_t, M_t >0$ such that 
\[
(x|y)_{d_t} \le |\theta_{d_2/d_1}(t)|(x|y)_{d_1} + |t|(x|y)_{d_2} + L_t \le M_t (x|y)_d + M_t
\]
for all $x,y \in \G$, where in the last inequality we have used the upper Gromov product bounds for $d_1,d_2$ for the Gromov product for $d$. That is, $d_t$ belongs $\Dc_\G^d$ as required.
\end{proof}

We now prove the following result, from which we can deduce our final results for RRAGs.

\begin{theorem}\label{thm.RAAGembed}
Let $(\mathscr{D}_\G^d, \D)$ be the space of metrics on $\G$ with subshift $\Sigma_{A'}$  and map $F : \Dc_\G^d \to \qm^{++}(\Sigma)$ as above. Then $F$  descends (via quotienting by the equivalence relations) to a well-defined, isometric embedding $\ov{F} : (\mathscr{D}_\G^d, \D) \to (\Qm^{++}(\Sigma), \Qq)$, i.e. $\ov{F}$ is injective and preserves distances. Furthermore $F$ satisfies 
\[
\theta_{d_2/d_1}(t) = \Pr(F(d_2), F(d_1))(-t)
\]
for each $t \in \R$  (where $\theta_{d_2/d_1}$ is the Manhattan curve)
and so
\[
\ov{F}[td_2 + \theta_{d_2/d_1}(t)d_1] = [t F(d_2) + \Pr(F(d_2), F(d_1))(-t) F(d_1)]
\]
for each $t \in \R$.
\end{theorem}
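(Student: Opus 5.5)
The plan has three parts: well-definedness and injectivity of $\ov{F}$, distance preservation, and the Manhattan identity. Almost everything reduces to Lemma \ref{lem.comprag} and Corollary \ref{cor.man}.

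\emph{Well-definedness and injectivity.} Take $d_1 \sim d_2$ in $\Dc_\G^d$, so $|d_1(x,y) - \tau d_2(x,y)| < C$. Then for every $w \in \W(A')$ we get $|F(d_1)(w) - \tau F(d_2)(w)| < C$, simply by evaluating at $x = o$, $y = \ev(w)$. Note that a path in $\Gc'$ is obtained from a path in $\Gc$ after deleting an initial state. Concretely, $F(d_j)(w) = d_j(o,\ev(w))$, where $\ev$ is applied to the word preceded by a suitable initial state. Any change of the prefix alters the value by a uniformly bounded amount, by the quasimorphism property. Hence $F(d_1) \sim F(d_2)$, and $F(d_j) \in \qm^{++}$ was already observed after Definition \ref{def.F}.

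For injectivity, suppose $F(d_1) \sim F(d_2)$. By Corollary \ref{cor.od} we have $\ov{F(d_1)} = \tau\,\ov{F(d_2)}$. Then $\Dil(F(d_1),F(d_2))\Dil(F(d_2),F(d_1)) = 1$. By Lemma \ref{lem.comprag} this gives $\D([d_1],[d_2]) = 0$, and the preceding corollary shows that $\D$ is a metric, so $[d_1] = [d_2]$.

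\emph{Isometry.} This is immediate from Lemma \ref{lem.comprag} applied in both orders: $\Dil(d_1,d_2) = \Dil(F(d_1),F(d_2))$ and $\Dil(d_2,d_1) = \Dil(F(d_2),F(d_1))$. Taking the logarithm of the product gives $\D([d_1],[d_2]) = \Qq(\ov{F}[d_1], \ov{F}[d_2])$. Distance preservation in turn re-proves injectivity.

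The point to check in Lemma \ref{lem.comprag} is that every conjugacy class with $\ell_d > 0$ is realised, up to the right stable length, by some periodic orbit of $\Sigma_{A'}$. For the dilation inequality $\Dil(d_1,d_2)\le \Dil(\varphi_1,\varphi_2)$, the lemma instead proceeds through the coarse comparison \eqref{eq.comp}. Applying that comparison to powers $x^n$, dividing by $n$ and letting $n\to\infty$, gives the inequality at the level of stable lengths. The reverse inequality holds because periodic orbits map to conjugacy classes with matching stable length. I take this lemma as given.

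\emph{Manhattan curve identity.} The identity $\theta_{d_2/d_1}(t) = \Pr(F(d_2),F(d_1))(-t)$ is exactly Corollary \ref{cor.man}. Its domain is all of $\R$, since for metrics in $\Dc_\G^d$ the pressure function is defined everywhere and the Manhattan series converges precisely where the subshift series does.

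For the final displayed formula, use linearity of $F$ on $\Hc_\G^d$:
\[
F(td_2 + \theta_{d_2/d_1}(t) d_1) = tF(d_2) + \theta_{d_2/d_1}(t) F(d_1).
\]
Then substitute the identity above. Here $td_2 + \theta_{d_2/d_1}(t)d_1$ need not be a metric. Its class is interpreted through Proposition \ref{prop.curve}: it lies within bounded distance $C_t$ of $d_t \in \Dc_\G^d$. Since $F$ moves bounded perturbations to bounded perturbations, we get $\ov{F}[d_t] = [tF(d_2) + \Pr(F(d_2),F(d_1))(-t)F(d_1)]$. This class lies in $\Qm^{++}$ by Lemma \ref{lem.++}.

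The main obstacle is bookkeeping rather than anything deep: making precise that $F$ respects rough similarity despite the deletion of initial states, and that classes of non-metric combinations are identified via $d_t$. Both follow from the uniform quasimorphism constants.
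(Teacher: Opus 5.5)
Your proposal is correct and follows the paper's own argument. Distance preservation (and hence injectivity) comes from Lemma~\ref{lem.comprag}, the identity $\theta_{d_2/d_1}(t)=\Pr(F(d_2),F(d_1))(-t)$ is Corollary~\ref{cor.man}, and Proposition~\ref{prop.curve} is what makes $[td_2+\theta_{d_2/d_1}(t)d_1]$ a genuine point of $\mathscr{D}_\Gamma^d$. Beyond the paper, you spell out the well-definedness of $\ov{F}$ and the bounded-perturbation bookkeeping (via linearity of $F$), which the paper leaves implicit.
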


\begin{proof}
To show that $\ov F$ is an isometric embedding it suffices to show that, for any $[d_1], [d_2] \in \qm^{++}(\Sigma_{A'})$, 
\[
\D([d_1],[d_2]) = \Qq([\varphi_1], [\varphi_2]).
\]
where $F(d_1) = \varphi_1$, $F(d_2) = \varphi_2$.  However, this follows immediately from Lemma \ref{lem.comprag}.

The furthermore statement was established in Corollary \ref{cor.man} given that, by Proposition \ref{prop.curve} the combination $td_2 + \theta_{d_2/d_1}(t)d_1$ determines a metric in $\Dc_\G^d$ for each $t \in \R$.
\end{proof}

\begin{proof}[Proof of Theorem \ref{thm.ragsspace}]
This now follows immediately from Proposition \ref{prop.curve} and  Theorem \ref{thm.RAAGembed}.
\end{proof}

We now study the Manhattan boundary $\partial_M\scrD_\G^d$. Given $d_1,d_2 \in \Dc_\G$ we note that
\begin{align*}
\lim_{t\to\infty} \frac{1}{t}( td_2(x,y) + \theta_{d_2/d_1}(t) d_1(x,y)) &= d_2(x,y) + \lim_{t\to\infty} \frac{\Pr(d_2,d_1)(-t)}{t} d_1(x,y)\\
&= d_2(x,y) - \Dil(d_1,d_2)^{-1} d_1(x,y) \\
&= \frac{1}{\Dil(d_1,d_2)}  \left( \Dil(d_1,d_2) d_2(x,y) - d_1(x,y) \right)
\end{align*}
by Lemma \ref{lem.asympgrad}. Similarly
\[
 - \lim_{t\to -\infty} \frac{1}{t}( td_2(x,y) + \theta_{d_2/d_1}(t) d_1(x,y)) = \Dil(d_2,d_1) d_1(x,y) - d_2(x,y).
\]
\begin{proposition}\label{prop.inftymetric}
There exist constants $C_1, C_2 > 0$ such that the functions 
\[
d_\infty(x,y) = \begin{cases}
       \Dil(d_1,d_2) d_2(x,y) - d_1(x,y) + C_1 &\quad\text{if $x \neq y$}\\
      0 &\quad\text{if $x =y$}
     \end{cases} 
     \]
     and
     \[
      d_{-\infty}(x,y) = \begin{cases}
       \Dil(d_2,d_1) d_1(x,y) - d_2(x,y) + C_2 &\quad\text{if $x \neq y$}\\
      0 &\quad\text{if $x =y$}
     \end{cases} 
\]
determine metrics in $\Hc_\G^d$ that are not quasi-isometric to $d$.
\end{proposition}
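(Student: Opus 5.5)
We treat $d_\infty$; the case of $d_{-\infty}$ is symmetric after swapping the roles of $d_1$ and $d_2$. Write $\varphi_j = F(d_j)$ and $\p(x,y) = \Dil(d_1,d_2) d_2(x,y) - d_1(x,y)$. The plan has four steps: $\G$-invariance and Gromov product bounds, a uniform lower bound on $\p$ giving the constant $C_1$, the triangle inequality, and the failure of quasi-isometry.

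\textbf{Invariance and Gromov products.} $\G$-invariance of $\p$ is immediate from that of $d_1$ and $d_2$. Since the Gromov product is linear in the function,
\[
(x|y)_\p = \Dil(d_1,d_2)(x|y)_{d_2} - (x|y)_{d_1}.
\]
Because $d_1,d_2 \in \Hc_\G^d$, this gives $|(x|y)_\p| \le c_1 (x|y)_d + c_2$, so $\p \in \Hc_\G^d$.

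\textbf{Lower bound and the constant $C_1$.} Lemma \ref{lem.comprag} gives $d_1(x,y) \le \Dil(d_1,d_2) d_2(x,y) + C$. Hence $\p \ge -C$, so $\p$ is uniformly bounded below. Alternatively, $F(\p) = \Dil(\varphi_1,\varphi_2)\varphi_2 - \varphi_1$ lies in $\qm^+\setminus\qm^{++}$ by the boundary lemma of \textsection\ref{sec.boundary}.

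\textbf{Triangle inequality.} We copy the argument of Proposition \ref{prop.curve}. For $x,y \in \G$, pick a connecting word $w_{x^{-1},y}$ of length at most $M$, using mixing of $\Sigma_{A'}$. Then the quasimorphism property of $F(\p)$, whose defect is bounded by $\Dil\, D_{\varphi_2} + D_{\varphi_1}$, yields
\[
(x|y)_\p \ge -K
\]
for a uniform constant $K$. This bounds the Gromov product from below; it does not require $\p$ to be nonnegative. Setting $C_1 = 2K + C$ and $d_\infty = \p + C_1$ off the diagonal, the triangle inequality $d_\infty(x,z) \le d_\infty(x,y) + d_\infty(y,z)$ is exactly $(\cdot|\cdot)_\p \ge -C_1/2$, up to the bounded error from the comparison of $d_\infty$ with the quasimorphism. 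The degenerate cases where two of the points coincide are trivial, since $d_\infty \ge 0$. Adding a constant changes Gromov products by a bounded amount, so $d_\infty$ stays in $\Hc_\G^d$.

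\textbf{Not quasi-isometric to $d$.} This is the step I expect to need the most care. Suppose $d_\infty \ge a\, d - b$ with $a > 0$. Passing to stable translation lengths, for every conjugacy class $[x]$ with $\ell_d[x] > 0$ we get
\[
\Dil(d_1,d_2)\ell_{d_2}[x] - \ell_{d_1}[x] \ge a\,\ell_d[x] \ge a'\,\ell_{d_2}[x],
\]
where the last step uses that $d_2$ is quasi-isometric to $d$. Rearranging,
\[
\frac{\ell_{d_1}[x]}{\ell_{d_2}[x]} \le \Dil(d_1,d_2) - a'
\]
for all such $[x]$, which contradicts the definition of $\Dil(d_1,d_2)$ as a supremum. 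Equivalently, by Lemma \ref{lem.comprag}, $F(\p)$ would lie in $\qm^{++}$, contradicting the boundary lemma. The subtle point is the transfer from word-level to orbit-level lengths. For this we use the identity $\ov\varphi_j\{w\} = \ell_{d_j}[\ev(w)]$ from the proof of Lemma \ref{lem.comprag}, together with the fact that the periodic orbits of $\Sigma_{A'}$ realise enough conjugacy classes to attain the dilation supremum. That lemma already establishes this, since it shows $\Dil(d_1,d_2) = \Dil(\varphi_1,\varphi_2)$.
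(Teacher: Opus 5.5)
Your treatment of $\G$-invariance and membership in $\Hc_\G^d$, the lower bound $\p\ge -C$ from Lemma~\ref{lem.comprag}, and the failure of quasi-isometry via stable translation lengths are all sound. The non-quasi-isometry point is not written out in the paper's proof. Your lower bound is also more direct than the paper's, which extracts it from the metrics $d_t$ of Proposition~\ref{prop.curve} as $t\to\infty$.

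The gap is the triangle inequality, i.e.\ the uniform bound $(x|y)_\p\ge -K$. Write $\varphi=F(\p)$. The mixing connector $u$ only gives $\varphi(w_{x^{-1}})+\varphi(w_y)=\varphi(w_{x^{-1}}uw_y)-\varphi(u)+O(1)$. But $w_{x^{-1}}uw_y$ is a geodesic word for $x^{-1}gy$, with $g$ the element spelled by $u$, not for $x^{-1}y$. To finish you would need $\varphi(w_{x^{-1}y})\le\varphi(w_{x^{-1}}uw_y)+K$. The quasimorphism property of $\varphi$ on the language cannot give this, because it only controls concatenations of geodesic words, i.e.\ pairs with $(x|y)_d=0$. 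For such pairs the $\Hc_\G^d$ condition already bounds $|(x|y)_\p|$. All the content of the claim sits in the regime of large cancellation $(x|y)_d\to\infty$, where $\Hc_\G^d$ only yields $(x|y)_\p\ge -c_1(x|y)_d-c_2$. Moreover, Proposition~\ref{prop.curve} attributes the needed inequality to $\varphi_t\in\qm^{++}$, while here $\varphi\in\qm^+\setminus\qm^{++}$, so that argument cannot simply be copied.

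The paper avoids the issue by a limiting argument. The $d_t$ are genuine metrics with $|d_t-(\theta_{d_2/d_1}(t)d_1+td_2)|<\lambda|t|+\lambda$, and $t^{-1}(\theta_{d_2/d_1}(t)d_1+td_2)\to\Dil(d_1,d_2)^{-1}\p$. Hence $\p$ inherits the triangle inequality up to an additive constant of size $3\lambda\Dil(d_1,d_2)$, which $C_1$ absorbs.

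To keep your direct route, the missing idea is a cancellation decomposition. The Cayley graph is a median graph, being the $1$-skeleton of a CAT(0) cube complex. Taking the median of $o$, $x^{-1}$, $x^{-1}y$, write $x^{-1}=g'c$, $y=c^{-1}h'$ and $x^{-1}y=g'h'$, with all three products $d$-geodesic. Apply the $\Hc_\G^d$ bound to pairs with vanishing $d$-Gromov product, such as $(g'^{-1}|c)_d=0$. This makes $\p(o,\cdot)$ additive up to $2c_2$ along each product, so
\[
2(x|y)_\p=\p(o,x^{-1})+\p(o,y)-\p(o,x^{-1}y)\ge\p(o,c)+\p(o,c^{-1})-6c_2\ge-2C-6c_2,
\]
using exactly your lower bound $\p\ge -C$. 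Then $K=C+3c_2$ and any $C_1\ge 2K$ works, with no appeal to Proposition~\ref{prop.curve}. The same argument, applied to the nonnegative $\p_t$, also fills in the Gromov product bound asserted there.
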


\begin{proof}
We prove the assertion from $d_\infty$ and the same proof works for $d_{-\infty}$. By Proposition \ref{prop.curve} there is $\lambda > 0$ and a metric $d_t \in \Dc_\G^d$ such that 
\[
|d_t(x,y) - (\theta_{d_2/d_1}(t)  d_1(x,y) +  td_2(x,y))| < \lambda |t| +\lambda.
\]
In particular, 
\[
\lim_{t\to\infty} \frac{1}{t}( td_2(x,y) + \theta_{d_2/d_1}(t) d_1(x,y)) \ge - \lambda
\]
since $d_t(x,y) \ge 0$ for all $t \in\R$, $x,y \in \G$. It follows that 
\[
 \Dil(d_1,d_2) d_2(x,y) - d_1(x,y)  \ge - \lambda \Dil(d_1,d_2).
\]
We set $C_1 = \lambda \Dil(d_1,d_2)$ in the definition of $d_\infty$. This then gives that $d_\infty$ is a positive function. Note that $d_\infty$ satisfies the triangle inequality since it is obtained (up to a constant) as a limit of the metrics $d_t$ which each satisfy the triangle inequality. Hence $d_\infty$ is a metric and it is easy to check that there are $c_1, c_2 >0$ such that 
\[
(x|y)_{d_\infty} \le c_1(x|y)_d + c_2
\]
since both $d_1, d_2$ satisfy such bounds. This concludes the proof.
\end{proof}

\begin{definition}
Define $\partial_M \scrD_\G^d \subset \mathscr{H}_\G^d$ to be the collection of rough similarity classes contain a metric of the form $d_{\infty}$ or $d_{-\infty}$ as constructed above. We call $\partial_M \scrD_\G^d$ the Manhattan boundary.
\end{definition} 

We now prove Theorem \ref{thm.raagboundary} that precisely describes when a metric is in the Manhattan boundary
\begin{proof}[Proof of Theorem \ref{thm.raagboundary}]
We need to show that any metric $d' \in \mathcal{H}_\G^d$ that satisfies $(1), (2)$ and $(3)$ from the theorem can be realised as the endpoint of a Manhattan geodesic, i.e. there exists $[d_1], [d_2] \in \Dc_\G^d$ such that, up to a uniformly bounded constant, $d'$ is equal to $ \Dil(d_1,d_2) d_2(x,y) - d_1(x,y)$.

To that end, consider the metrics $d_1 = d + d'$ and $d_2 = d$ which it is easy to verify both live in $\Dc_\G^d$. Now, since $\ov{d_1} \ge \ov{d_2}$ we see that $\Dil(d_2,d_1) \le 1$. Then by Lemma  \ref{lem.comprag} there is $C>0$ such that
\[
(1- \Dil(d_2,d_1))d_1(x,y) \le d_1(x,y) - d_2(x,y) + C \le d'(x,y) + C
\]
for all $x,y \in \G$. Since $d'$ is not quasi-isometric to $d$ we must have $\Dil(d_2,d_1) \ge 1$. 

Therefore $\Dil(d_2,d_1) = 1$ and $d' = \Dil(d_2,d_1) d_1 - d_1$ implying that $[d']$ is the endopoint of the Manhattan geodesic connecting $[d_1]$ and $[d_2]$.
\end{proof}

\begin{example}
The function $\#_v : \G \to \R$ that counts the number of occurrences of a fixed vertex $v^{\pm1}$ in the shortest word representation of a group element  (i.e. $\p(x,y) = \#_v(x^{-1}y)$) belongs to $\partial_M \Dc_\G^d$.
\end{example}


To conclude this section we prove a precise large deviation theorem.

\begin{theorem}\label{thm.ldpraags}
Let $\G$ be a RAAG or RACG that does not split as a direct product and write $d$ for the standard vertex generating set. Let $\p \in \Hc_\G^d$, $d_\ast \in \Dc_\G^d$ and for each $T >0$ let $\mathcal{B}_{d_\ast}(T) = \{x \in \G : d_\ast(o,x) <T\}$. Then there exists a function $I : \R \to \R$ that is finite on a closed interval $[a,b] \subset \R$, infinite outside of this interval  and $C^1$ on $(a,b)$ such that the following holds. For any non-empty open set $U \subset \R$ and closed set $V \subset \R$ with $U \subset V$ we have that
    \begin{align*}
        -\inf_{s \in U} I(s) &\le \liminf_{T\to\infty} \frac{1}{T} \log\left( \frac{1}{\left|\mathcal{B}_{d_\ast}(T)\right|} \left|\left\{x \in \mathcal{B}_{d_\ast}(T): \frac{\p(x)}{ T}\in U \right\}\right|\right)\\
        &\le \limsup_{T\to\infty} \frac{1}{T} \log\left( \frac{1}{\left|\mathcal{B}_{d_\ast}(T)\right|} \left|\left\{x \in \mathcal{B}_{d_\ast}(T): \frac{\p(x)}{ T}\in V \right\}\right|\right) \le -\inf_{s \in V} I(s).
    \end{align*}
    \end{theorem}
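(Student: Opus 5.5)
We transfer the problem to the subshift $\Sigma_{A'}$ of Theorem~\ref{thm.graph} and apply Theorem~\ref{thm.ldp} there, following the pattern of the proof of Theorem~\ref{thm.rags2}. Set $\varphi = F(\p)\in\qm(\Sigma_{A'})$ and $\varphi_\ast = F(d_\ast)\in\qm^{++}(\Sigma_{A'})$, using Definition~\ref{def.F}; here we read $\p(x)$ as $\p(o,x)$. The candidate rate function is the Fenchel--Legendre transform
\[
I = \bigl(\Pr(\varphi,\varphi_\ast) - h(\varphi_\ast)\bigr)^\ast .
\]
By Theorem~\ref{thm.ldp}, this $I$ is finite exactly on the closed interval
\[
[a,b]=\Bigl[\inf_{\{u\}\in\Per}\ov\varphi\{u\}/\ov\varphi_\ast\{u\},\ \sup_{\{u\}\in\Per}\ov\varphi\{u\}/\ov\varphi_\ast\{u\}\Bigr],
\]
and it is strictly convex and $C^1$ on $(a,b)$. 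So the properties of $I$ claimed in the statement come for free, and it remains to prove the two large deviation bounds for the group count.

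The key step is to show that the logarithmic moment generating function of $\p(o,x)/T$ over $\mathcal{B}_{d_\ast}(T)$ coincides with the one for the symbolic count. Concretely, for every $t\in\R$ and every sufficiently large $R$, we want
\[
\sum_{x\in\G:\,|d_\ast(o,x)-n|\le R} e^{t\p(o,x)} \asymp e^{n\Pr(\varphi,\varphi_\ast)(t)}
\]
as $n\to\infty$. This is proved as in the proof of Theorem~\ref{thm.rags2}, in two directions.

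\textbf{Lower bound.} Fix a symbol $x_0$ of $\Gc'$ reachable from an initial state. Paths in $\Gc'$ starting at $x_0$ inject into $\G$. Prepending the bounded initial segment changes $\varphi$ and $\varphi_\ast$ only by a bounded amount, by the quasimorphism property. This gives the lower bound via Remark~\ref{rem.expgrow}.

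\textbf{Upper bound.} Every $x\in\G$ corresponds to a unique path starting at some $\ast_j$. Deleting $\ast_j$ gives a word in $\W(A')$, and this again changes $\varphi,\varphi_\ast$ by a bounded amount. Hence the group sum is at most $C_t$ times the symbolic sum over the window $|\varphi_\ast(w)-n|<R'$ for some $R'>R$. Lemma~\ref{lem.expgrow} then finishes the upper bound.

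Summing the windows over $n\le T$, exactly as in the proof of Theorem~\ref{thm.ldp}, gives
\[
\lim_{T\to\infty}\frac1T\log\sum_{x\in\mathcal{B}_{d_\ast}(T)} e^{t\p(o,x)} = \Pr(\varphi,\varphi_\ast)(t)
\]
and $\lim_{T\to\infty}\frac1T\log|\mathcal{B}_{d_\ast}(T)| = h(\varphi_\ast)$. The second identity is the case $t=0$, using Remark~\ref{rem:identifybetazero}. So the normalized logarithmic moment generating function is
\[
\Lambda(t)=\Pr(\varphi,\varphi_\ast)(t)-h(\varphi_\ast).
\]
This is finite everywhere and $C^1$ by Corollary~\ref{cor:derivativeofpressure}. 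The G\"artner--Ellis theorem \cite[Theorem 2.3.6]{DZ} then yields both large deviation bounds with rate function $\Lambda^\ast=I$.

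The main obstacle is the bookkeeping in the comparison step. One must check that the bounded prefix/initial-state corrections are uniform, so that the errors in $\varphi_\ast$ are absorbed by enlarging $R$ to $R'$ and the errors in $\varphi$ only contribute $e^{O(|t|)}$ factors. One must also handle the different initial states $\ast_j$ uniformly; each is treated separately via Remark~\ref{rem.expgrow}. Finally, passing from the strict inequality $d_\ast(o,x)<T$ to windows $|d_\ast(o,x)-n|\le R$ is harmless because the windows cover each ball with bounded overlap. Once the comparison is established, the rest is Theorem~\ref{thm.ldp} verbatim.
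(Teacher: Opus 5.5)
Your transfer to $\Sigma_{A'}$ and the window estimate $\sum_{|d_\ast(o,x)-n|\le R}e^{t\p(o,x)}\asymp e^{nP(t)}$, with $P:=\Pr(\varphi,\varphi_\ast)$, are fine and are what the paper uses. The step that fails is summing the windows over $n\le T$. The sum $\sum_{n\le T}e^{nP(t)}$ grows at rate $\max\{P(t),0\}$, and the ball sum always contains the fixed term $x=o$. So in fact
\[
\lim_{T\to\infty}\frac1T\log\sum_{x\in\mathcal{B}_{d_\ast}(T)}e^{t\p(o,x)}=\max\{P(t),0\},
\]
not $P(t)$. This differs from $P(t)$ whenever $0\notin[\alpha_{\min},\alpha_{\max}]$ (your $[a,b]$). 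That includes every $\p\in\Dc_\G^d$, since then $P(-h(\varphi))=0$ and $P(t)\to-\infty$ as $t\to-\infty$. In that case $\Lambda=\max\{P,0\}-h(\varphi_\ast)$ has a corner at the zero $t_0$ of $P$, so part (c) of \cite[Theorem 2.3.6]{DZ} is unavailable, and your rate function $(P-h(\varphi_\ast))^\ast$ is wrong.

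Concretely, take $\p=d_\ast$. Then $P(t)=t+h(\varphi_\ast)$, so your $I$ vanishes at $s=1$ and is $+\infty$ elsewhere. Your upper bound with $V=[0,1/2]$ would then force the proportion of $x\in\mathcal{B}_{d_\ast}(T)$ with $d_\ast(o,x)\le T/2$ to decay superexponentially. In reality this proportion is $e^{-h(\varphi_\ast)T/2+o(T)}$. The same identification is made in the proof of Theorem~\ref{thm.ldp} (the ``elementary analytic considerations''), which the paper's proof of this statement invokes. So citing it does not close the gap.

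The statement itself survives with $I=\Lambda^\ast$ for the corrected $\Lambda=\max\{P,0\}-h(\varphi_\ast)$.

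\textbf{Shape of $I$.} Its effective domain is $[\min\{0,\alpha_{\min}\},\max\{0,\alpha_{\max}\}]$. Between $0$ and $P'(t_0)$ it is affine with slope $t_0$. Beyond that it coincides with $(P-h(\varphi_\ast))^\ast$, and the derivatives match at the junction. Hence $I$ is $C^1$ on the interior, though no longer strictly convex.

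\textbf{Upper bound.} This follows from parts (a)--(b) of G\"artner--Ellis, which need no differentiability of $\Lambda$.

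\textbf{Lower bound.} On the affine piece the points are not exposed, so the bound must be proved directly. Fix $s$ strictly between $0$ and $P'(t_0)$, and keep only the window $|d_\ast(o,x)-n|\le R$ with $n=sT/P'(t_0)$. On that window the normalised log-moment generating function is exactly $P-h(\varphi_\ast)$, which is $C^1$. So the window LDP at the point $P'(t_0)$ produces
\[
e^{n(P(t_0)-t_0P'(t_0))+o(T)}=e^{-t_0sT+o(T)}
\]
elements with $\p(o,x)/T$ near $s$. This is a proportion $e^{-T\Lambda^\ast(s)+o(T)}$ of the ball, as required.
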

    
    \begin{proof}
In the proof of Theorem \ref{thm.rags2} we saw that for any sufficiently large $R >0$,
\[
\sum_{x \in \G: |d_\ast(o,x) - n| \le R}  e^{t\p(o,x)} \asymp e^{n\Pr(F(\p),F(d_\ast))(t)}
\]
as $n\to\infty$ where $F$ is as in Definition \ref{def.F}. We can then apply the G\"artner-Ellis Theorem as in the proof of Theorem \ref{thm.ldp}.
    \end{proof}
    
\subsection{The action on the contact graph}\label{sec.con}
Let $\G$ be a right-angled Artin group that is not virtually cyclic and does not split as a direct product. Its associated contact graph
$\Cc X$ is a hyperbolic graph encoding the large-scale geometry of the universal cover of the Salvetti complex. The group $\G$ acts naturally on $\Cc X$ by graph automorphisms,
providing a useful perspective on the geometry and dynamics of $\G$ see \cite{hagen} for a comprehensive account of the contact graph.
Fixing a base vertex $v_0\in \Cc X$, this action induces an
orbit metric on $\G$ by
\[
d_{\mathcal{C}}(g,h)
=
d_{\mathcal{C}X}(g \cdot v_0,h \cdot v_0),
\ \text{ for } \  g,h\in \Gamma.
\]
\begin{proposition}\label{prop.conbound}
The metric $d_\Cc$ determines a function in $\Hc_\G^d$, i.e. the map $(g,h) \mapsto d_\Cc(g,h)$ is in $\Hc_\G^d$. In fact, $[d_\Cc]$ belongs to $\partial_M \scrD_\G^d$.
\end{proposition}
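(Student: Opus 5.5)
We have to prove two things: that $d_\Cc$ satisfies the Gromov product bound defining $\Hc_\G^d$, and that it satisfies conditions (1)--(3) of Theorem~\ref{thm.raagboundary}. Once these hold, Theorem~\ref{thm.raagboundary} gives $[d_\Cc]\in\partial_M\scrD_\G^d$ directly. Condition (1), $\G$-invariance, is immediate because $\G$ acts on $\Cc X$ by graph automorphisms. So the work is in the Gromov product bound and in showing that $d_\Cc$ is not quasi-isometric to $d$.

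\textbf{Failure of quasi-isometry.} For condition (3) we use standard facts about contact graphs from \cite{hagen}. Take a standard generator $v$. Its axis in the universal cover $X$ of the Salvetti complex crosses hyperplanes that are all dual to edges labelled $v$. Each such hyperplane contacts the hyperplane carrier containing the axis. Hence the orbit $\{v^n\cdot v_0\}$ has bounded diameter in $\Cc X$, so $d_\Cc(o,v^n)$ is bounded while $d(o,v^n)=n$. Alternatively, $v$ is elliptic on $\Cc X$, which gives $\ell_{d_\Cc}[v]=0<\ell_d[v]$. Either way, $d_\Cc$ is not quasi-isometric to $d$.

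\textbf{Gromov product bound (main step).} We need
\[
|(x|y)_{d_\Cc}|\le c_1 (x|y)_d + c_2 \quad\text{for all } x,y\in\G .
\]
The lower bound $(x|y)_{d_\Cc}\ge 0$ is just the triangle inequality, so only the upper bound needs work. We use two facts:
\begin{enumerate}
\item The orbit map $\G\to\Cc X$ is coarsely Lipschitz: each vertex generator moves $v_0$ a bounded amount.
\item $d$-geodesics in $X$ project, via the map sending a combinatorial path to the sequence of hyperplanes it crosses, to unparametrized quasi-geodesics in the hyperbolic graph $\Cc X$ \cite{hagen}.
\end{enumerate}
Given $x,y$, set $k=(x|y)_d$. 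Using the $d$-geodesic triangle with vertices $x,o,y$, choose a point $p$ on a $d$-geodesic $[x,y]$ with $d(o,p)\le k+c$. For cube complexes this follows from the median structure: let $p$ be the median of $x,o,y$; it lies on $d$-geodesics between each pair, and $d(o,p)=(x|y)_d$ exactly. Since $p$ lies on a $d$-geodesic from $x$ to $y$, fact (2) and stability of quasi-geodesics in the hyperbolic graph $\Cc X$ give
\[
d_\Cc(x,p)+d_\Cc(p,y)\le d_\Cc(x,y)+K,
\]
with $K$ depending only on the hyperbolicity and quasi-geodesic constants. Then the triangle inequality through $p$ gives
\[
2(x|y)_{d_\Cc}\le 2d_\Cc(o,p)+K \le 2Lk+K',
\]
where the last step uses the coarse Lipschitz bound $d_\Cc(o,p)\le L\,d(o,p)+L$. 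This is the desired bound, and we expect this step to be the main obstacle.

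Condition (2) of Theorem~\ref{thm.raagboundary} is the same upper bound, so it is established at the same time. Applying that theorem then concludes the proof.
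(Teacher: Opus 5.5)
Your proof is correct (with one caveat below) and uses the same ingredients as the paper's. Those ingredients are the median $p=m(x,o,y)$ with $d(o,p)=(x|y)_d$, Hagen's control of how $X$-geodesics project to $\mathcal{C}X$, the coarse Lipschitz property of $X\to\mathcal{C}X$, and Theorem~\ref{thm.raagboundary} at the end. You assemble them more economically.

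The paper proceeds in three stages:
\begin{enumerate}
\item It proves that $\pi(m(x,y,z))$ is uniformly close to the coarse median $m'(\pi x,\pi y,\pi z)$, so $\pi(p)$ must lie near all three sides of a $\mathcal{C}X$-triangle.
\item It passes to the coned-off space $X'$ and uses that hyperbolic Gromov products are coarsely $d'(o,m'(o,x,y))$.
\item It transfers back to $\mathcal{C}X$ via quasi-isometry invariance of Gromov products.
\end{enumerate}
You only need $\pi(p)$ near the one side $[\pi x,\pi y]$, and the triangle inequality through $p$ finishes the job.

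Your ``fact (2)'' is also stronger than what you use. It suffices that, for every $d$-geodesic $q$ from $x$ to $y$, $\pi(q)$ lies in a uniform neighbourhood of \emph{some} $\mathcal{C}X$-geodesic from $\pi(x)$ to $\pi(y)$. This is exactly what the first half of the paper's claim extracts from Lemma~4.11 of \cite{hagen}. That lemma gives a $\mathcal{C}X$-geodesic $h_1,\dots,h_k$ together with an $X$-geodesic $g_1\cdots g_k$ satisfying $g_i\subset\mathcal{N}(h_i)$. The paper then observes that every $X$-geodesic from $x$ to $y$ crosses the same hyperplanes. Citing the input in this form removes the step you flagged as the main obstacle. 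The paper's route yields a structural by-product, coarse median compatibility of $\pi$; yours is shorter.

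On condition (3), you supply an argument where the paper only cites \cite{hagen}, but \emph{take a standard generator $v$} is too loose. You need $v$ to commute with some other standard generator $u$. Then the square on $o,u,v,uv$ gives $vH_u=H_u$ for the hyperplane $H_u$ dual to $[o,u]$. With base vertex $H_u$ you get $d_{\mathcal{C}}(o,v^n)=0$; any other base vertex changes $d_{\mathcal{C}}$ by a bounded amount.

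If instead $v$ is an isolated vertex of the defining graph (e.g.\ the free factor in $\mathbb{Z}^2*\mathbb{Z}$), your argument fails for that choice. The carriers of the hyperplanes $v^nH_v$ are single separating edges, no carrier contains the axis, and $v$ is actually loxodromic on $\mathcal{C}X$.

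A suitable $v$ exists exactly when the defining graph has an edge. When $\Gamma$ is free, $X$ is a tree and $\mathcal{C}X$ is its line graph, hence quasi-isometric to $X$. So $d_{\mathcal{C}}$ \emph{is} quasi-isometric to $d$ there, and the assertion $[d_{\mathcal{C}}]\in\partial_M\mathscr{D}_\Gamma^d$ is false. This is a case that both the statement and the paper's proof silently exclude.
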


We thank Mark Hagen for suggesting the following proof.

\begin{proof}
To avoid introducing too much introductory material, we follow the notation established in \cite{hagen} and refer to this reference for definitions and details.

Let $(X,d)$ be the universal cover of the Salvetti complex and $d$ is the piecewise $l_1$ metric. Alternatively, $X$ could be the $0$-skeleton, where $d$ restricts to the combinatorial metric.  This is a median metric space and we write $m : X^3 \to X$ for the median operator.  For any hyperplane $h$ of $X$, $\mathcal{N}(h)$ is the (closed) carrier of $h$ which consists of the closed cubes intersecting $h$.

Let $\Cc X$ be the contact graph as above. We let $\pi$ be the projection that sends each $x \in X$ to the clique in $\Cc X$ corresponding to the set of hyperplanes whose carriers contain $x$.  Let $m'$ be the coarse median operator on $\Cc X$.  Then $m'$ has the property that if $a_1,a_2,a_3$ are in $\Cc X$, then for all distinct pairs $a_i,a_j$, any geodesic from $a_i$ to $a_j$ in $\Cc X$ comes $D$-close to $m'(a_1,a_2,a_3)$. Further, $m'$ is coarsely characterised by this property.\\

Claim: There is $K >0$ such that 
\[
d_{\Cc X}(\pi(m(x,y,z)), m'(\pi(x),\pi(y),\pi(z))) \leq K \ \ \text{ for all $x,y,z$ corresponding to vertices in  $X$}.
\]

Proof:  Let $h_x$ and $h_y$ be hyperplanes in $\pi(x)$ and $\pi(y)$ respectively, so by definition, $x$ is in $\mathcal{N}(h_x)$ and $y$ is in $\mathcal{N}(h_y)$. 
Points $(1)$ and $(2)$ in Lemma 4.11 from \cite{hagen} imply there exists a sequence of hyperplanes $h_x = h_1,...,h_k = h_y$ such that $h_i$ and $h_{i+1}$ are $\Cc X$--adjacent for all $i$, and the resulting edge path in $\Cc X$ is a geodesic of $\Cc X$.  Call this $\Cc X$-geodesic $A$.  Points $(3)$ and $(4)$ from \cite[Lemma 4.11]{hagen} imply that these $h_i$ can be chosen with an additional property, namely that there are paths $g_i$ in $\mathcal{N}(h_i)$ so that the concatenation $g=g_1g_2 \cdots g_k$ is defined and is a path from $x$ to $y$ in $X$, and, moreover, $g$ is a geodesic of $x$.

Now, if $t$ is a point in $g$, then $t$ is in some $\mathcal{N}(h_i)$, so $\pi(t)$ is a clique in $\Cc X$ containing the vertex corresponding to $h_i$.  Hence, in $\Cc X$, the set $\pi(g)$ is contained in the $1$-neighbourhood of $A$.  Now let $q$ be any other $d$-geodesic in $X$ from $x$ to $y$.  Then every hyperplane crossing $q$ crosses $g$, and hence crosses some $h_i$.  Moreover, each point in $q$ is in $\mathcal{N}(h)$ for some $h$ crossing $q$.  So $\pi(q)$ is in, say, the $L$-neighbourhood of $A$ in $\Cc X$.  We have just shown that if $B$ is any $\Cc X$-geodesic from $h_x$ to $h_y$, and $q$ is any $X$ geodesic from $x$ to $y$, then $\pi(q)$ is in the $K_0$-neighbourhood of $B$, where $K_0$ just depends on the hyperbolicity constant for $\Cc X$.

Now let $A'$ be a $\Cc X$ geodesic from $h_y$ to some $h_z$ in $\pi(z)$, and let $A''$ be a $\Cc X$ geodesic from $h_z$ to $h_x$.  Let $q,q',q''$ be geodesics in $\Cc X$ with $q$ joining $x,y$, and $q'$ joining $y,z$, and $q''$ joining $z,x$.  Choose these geodesics so that they all pass through $m=m(x,y,z)$ .  Let $A, A', A''$ be $\Cc X$ geodesics joining $h_x,h_y$, and $h_y,h_z$, and $h_z,h_x$ respectively.  Then $\pi(m)$ is $K_0$-close to $A$ in $\Cc X$, since $m$ is in $q$.  Similarly, $\pi(m)$ is $K_0$ close to $A'$ and $A''$.  Hence the point $\pi(m)$ is uniformly close to $m'(h_x,h_y,h_z)$, as required.\\

Now let $d'$ be the metric on $X$ obtained by coning off the hyperplane carriers; that is, we join two points by a new edge whenever they are contained in a common hyperplane carrier.  Then $X'=(X,d')$ is quasi-isometric to $\Cc X$, via a quasi-isometry that sends each $x$ into $\pi(x)$ \cite[Section 5.2]{hagen.2}.  In particular, it is hyperbolic and by abusing notation, we can also let $m'$ denote the coarse median on $X'$. Note that by the claim (up to enlarging $K$ if necessary), $d'(m(x,y,z), m'(x,y,z)) \leq K$. 
Also, by construction, the identity map $X \to X'$ is $1$-Lipschitz, i.e. $d(x,y) \geq d'(x,y)$ for all $x,y \in X$.

To conclude, suppose that $o,x,y$ are vertices of $X$.  Then the Gromov product in $X$ is $d(o,m(o,x,y))$.  In the hyperbolic space $X'$, the Gromov product is, up to uniform additive error, $d'(o,m',o,x,y))$.  And $d'(o,m'(o,x,y)) \leq d'(o,m(o,x,y))+K \leq d(o,m(o,x,y)) + K$. Since quasi-isometries of hyperbolic spaces passes to the Gromov product, there exists $A,B > 0$ such that
\[
(x|y)_{\Cc X} \le A(x|y)_{X'} + B \le A(x|y)_d + AK + B.
\]
This shows that $d_\Cc$ determines a function in $\Hc_\G^d$ as required.

Since $d_\Cc$ is $\G$-invariant and is not quasi-isometric to $d$, we deduce that, by Theorem \ref{thm.raagboundary}, $d_\Cc$ determines a point in $\partial_M \scrD_\G^d$.
\end{proof}

\begin{corollary}
The Manhattan curve for the pair $d,d_\Cc$ is $C^1$ and globally strictly convex.
\end{corollary}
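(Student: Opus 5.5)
The corollary will follow from results already established for RAAGs. The plan is to show that $d_\Cc$ fits the hypotheses of Theorem \ref{thm.rags1}, which gives $C^1$ regularity directly, and then to upgrade "not a straight line" to global strict convexity using the rigidity statements for pressure curves.

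\textbf{Membership and non-emptiness.} Proposition \ref{prop.conbound} places $d_\Cc$ in $\Hc_\G^d$, and $d\in\Dc_\G^d$ trivially, so both functions lie in $\Hc_\G^d$. Under the map $F$ of Definition \ref{def.F}, $F(d)$ is the word-length quasimorphism, which lies in $\qm^{++}(\Sigma_{A'})$. Since $d_\Cc\ge0$, the function $F(d_\Cc)$ lies in $\qm^+$. Non-emptiness of the Manhattan curve is then straightforward: $\sum_x e^{-a d(o,x)-b d_\Cc(o,x)}$ converges for $b=0$ and $a>h(d)$, and convergence persists for any $b\ge 0$.

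\textbf{$C^1$ regularity.} By Corollary \ref{cor.man}, the curve is $\theta_{d_\Cc/d}(t)=\Pr(F(d_\Cc),F(d))(-t)$. It is therefore $C^1$ by Corollary \ref{cor:derivativeofpressure}, since that result only requires the second quasimorphism to be in $\qm^{++}$. Alternatively, Theorem \ref{thm.rags1} gives the same conclusion directly.

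\textbf{Strict convexity.} The pressure function is convex, being a supremum of affine functions as in \eqref{eq:beta}. By the Proposition preceding Corollary \ref{cor.line}, together with Corollary \ref{cor.line} itself, it fails to be strictly convex everywhere only if it contains a segment, and this happens if and only if $F(d_\Cc)\sim F(d)$. So it suffices to rule out rough similarity of $d_\Cc$ and $d$, which by the argument in the proof of Theorem \ref{thm.rags1} is equivalent to rough similarity of the quasimorphisms. If $|d_\Cc-\tau d|\le C$ held with $\tau>0$, then $d_\Cc$ would be quasi-isometric to $d$, contradicting the fact (used at the end of Proposition \ref{prop.conbound}) that $d_\Cc$ is not quasi-isometric to $d$. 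Here one should note that the contact graph has infinite diameter, yet hyperplane stabilisers such as $\langle v\rangle$ have bounded orbits in it. The only remaining case is $\tau=0$, that is, $d_\Cc$ bounded. This is excluded because $\G$ acts on $\Cc X$ with unbounded orbits, since loxodromic elements exist by \cite{hagen}.

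\textbf{Main obstacle.} The delicate point is applying the rigidity dichotomy when one function is only in $\qm^+$. The Proposition preceding Corollary \ref{cor.line} is stated for general quasimorphisms. Its proof uses only Gibbs measures of the combinations $-u\p+\Pr(u)\phi$, which exist for any quasimorphisms, together with the requirement that $\phi=F(d)\in\qm^{++}$ so that the pressure function is defined. I would check that nothing beyond this is needed. If there were any concern, I would instead invoke Theorem \ref{thm.rigidity} with $\phi=F(d_\Cc)$ unbounded and $\p=F(d)$, whose hypotheses match exactly.
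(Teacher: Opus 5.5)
Your route is the one the paper intends. The corollary is read off from Proposition~\ref{prop.conbound}, Corollary~\ref{cor.man} (or Theorem~\ref{thm.rags1}), and Theorem~\ref{thm.rigidity} applied with $\phi=F(d_\Cc)$ unbounded and $\p=F(d)\in\qm^{++}$. Your explicit exclusion of the degenerate case $\tau=0$ is a point the paper leaves implicit. The gap is in the step everything hinges on: that $d_\Cc$ is not roughly similar to $d$. You derive this, as the paper does via \cite{hagen}, from non-quasi-isometry.

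Your justification of that step is wrong as written, because $\langle v\rangle$ is not a hyperplane stabiliser. In the universal cover $X$ of the Salvetti complex, the hyperplane $H_w$ dual to the edge $[o,w]$ is dual exactly to the edges $[g,gw]$ with $g\in\langle \mathrm{lk}(w)\rangle$. Its stabiliser is therefore $\langle \mathrm{lk}(w)\rangle$, and $v$ itself moves $H_v$. So a generator $v$ has bounded orbits in $\Cc X$ only if $v$ has a neighbour in the defining graph.

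When the defining graph has no edges this fails completely, and the statement itself is false. In that case $\G$ is free and satisfies all the standing hypotheses. Then $X$ is a tree and $\Cc X$ is its line graph, so $d_\Cc(o,g)=d(o,g)+O(1)$ and $d_\Cc\sim d$. The series $\sum_x e^{-a d(o,x)-b d_\Cc(o,x)}$ converges iff $a+b>h(d)$, so the Manhattan curve is a straight line.

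The corollary therefore needs the extra hypothesis that the defining graph has at least one edge. The same applies to the paper's non-quasi-isometry claim and to the second assertion of Proposition~\ref{prop.conbound}. Under that hypothesis your argument closes once the step is repaired:
\begin{itemize}
\item Choose adjacent vertices $v,w$. Then $v^nH_w=H_w$ for all $n$, so $d_\Cc(o,v^n)\le 2\,d_{\Cc X}(v_0,H_w)$ while $d(o,v^n)=n$. This rules out $|d_\Cc-\tau d|\le C$ with $\tau>0$.
\item $\tau<0$ is impossible since $d_\Cc\ge 0$.
\item $\tau=0$ is excluded by your unbounded-orbit argument.
\end{itemize}
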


\begin{remark}
Since $d_\Cc$ is not quasi-isometric to $d$ \cite{hagen}, $F(d_\Cc)$ is a quasimorphism in $\qm^{+}\backslash \qm^{++}$.
\end{remark}

Using our results for quasimorphisms we obtain the following statistical result comparing $d$ and $d_\Cc$. 
\begin{theorem}
Suppose that $\G$ is a right angled Artin group that is not virtually cyclic and does not split as a direct product. Let $d$ be the word metric on $\G$ associated to the canonical vertex generating set. Write $d_{\Cc}$ for the orbit metric on $\G$ coming from the action of $\G$ in the contact graph. Then, there exists $\tau > 0$ such that
\[
\limsup_{T\to\infty} \frac{1}{T} \log \left( \frac{1}{\left|\{ g \in \G: d(o,g) < T \right|} \biggl|\left\{ g \in \G : d(o,g) < T,  \left|\frac{d_\Cc(o,g)}{T} - \tau \right| \geq \epsilon \right\}\biggr|\right) < 0.
\]
\end{theorem}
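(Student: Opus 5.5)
The statement is a counting large deviation bound for $d_\Cc$ over $d$-balls. The plan is to reduce it to the abstract large deviation principle on the subshift $\Sigma_{A'}$ of Theorem~\ref{thm.graph}, and then use positivity of the drift to get $\tau>0$.

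\textbf{Reduction to the subshift.} By Proposition~\ref{prop.conbound}, $d_\Cc\in\Hc_\G^d$, so $\varphi:=F(d_\Cc)$ is a quasimorphism on $\Sigma_{A'}$ (Definition~\ref{def.F}). Since $d_\Cc\ge 0$, $\varphi$ is bounded below, so $\varphi\in\qm^+$. Similarly $\p:=F(d)$ is the length quasimorphism $\p(w)=|w|$, which lies in $\qm^{++}$.

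\textbf{Large deviations.} Apply Theorem~\ref{thm.ldpraags} with $d_\ast=d$ and $\p=d_\Cc$. Its proof gives, for $R$ large,
\[
\sum_{g:|d(o,g)-n|\le R}e^{t d_\Cc(o,g)}\asymp e^{n\Pr(\varphi,\p)(t)}.
\]
Gärtner--Ellis then yields a large deviation principle with rate function $I$, the Fenchel--Legendre transform of $\Pr(\varphi,\p)-h(\p)$. Because $\Pr(\varphi,\p)$ is $C^1$ (Corollary~\ref{cor:derivativeofpressure}), $I$ vanishes only at $\tau:=\Pr(\varphi,\p)'(0)$. Hence on the closed set $V=\{s:|s-\tau|\ge\epsilon\}$ we have $\inf_V I>0$. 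To justify this, note that $I$ is convex, lower semicontinuous and nonnegative with unique zero at $\tau$. Strict positivity away from $\tau$ then follows from differentiability of $\Lambda$ at $0$: if $I(s)=0$, then $\Lambda(t)\ge st$ for all $t$, which forces $s=\Lambda'(0)=\tau$. Inside the effective domain, positivity is uniform on $V$ by convexity. The upper bound of the LDP then gives the stated $\limsup<0$. Counting over $\{d(o,g)<T\}$ rather than annuli is handled exactly as in the proof of Theorem~\ref{thm.rags2}.

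\textbf{Positivity of $\tau$.} By Theorem~\ref{thm.posdrift} (equivalently the last clause of Theorem~\ref{thm.rigidity}), $\tau(\varphi/\p)>0$ unless $\varphi$ is bounded. So it remains to show that $d_\Cc$ is unbounded on $\G$ in a way that makes $\varphi$ unbounded on the words of $\Sigma_{A'}$. This is the main obstacle.

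To handle it, I will pick a loxodromic element $g$ for the action on $\Cc X$, for example a product of non-commuting generators, which acts loxodromically on the contact graph by \cite{hagen}. Then $\ell_{d_\Cc}[g]>0$. Realising a cyclic permutation of a power of $g$ as a periodic orbit $\{w\}$ in $\Sigma_{A'}$ gives $\ov\varphi\{w\}=\ell_{d_\Cc}[g]>0$, exactly as in the proof of Lemma~\ref{lem.comprag}. Thus $\varphi$ is unbounded.

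If realising a periodic word is awkward, I will argue directly instead. Since $\Cc X$ has infinite diameter and the orbit map is coarsely surjective onto an unbounded set, $d_\Cc(o,g_n)\to\infty$ for some sequence $g_n$. Each $g_n$ corresponds to a path in $\Gc$, and removing the initial state changes $\varphi$ only by a bounded amount by the quasimorphism property. Hence $\varphi$ is unbounded, and $\tau>0$.
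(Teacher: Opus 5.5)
Your proposal is correct and follows the route the paper intends. The paper states this theorem without a written proof, but the ingredients match yours: Proposition~\ref{prop.conbound} puts $d_\Cc$ in $\Hc_\G^d$, the G\"artner--Ellis bound of Theorem~\ref{thm.ldpraags} gives $\tau=\Pr(F(d_\Cc),F(d))'(0)$, and the drift results give $\tau>0$ because $F(d_\Cc)\in\qm^+$ is unbounded.

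One caution about positivity. A product of two non-commuting generators need not be loxodromic on $\Cc X$: if they share a neighbour $c$ in the defining graph, the product stabilises the hyperplane dual to $c$ and acts elliptically. So drop that example and rely on your second argument, which uses the unboundedness of $\Cc X$ and coarse surjectivity of the orbit map.
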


This theorem shows that in $\G$ a typical element of $d$ length at most $T$ causes a displacement of approximately $\tau T$ through the action on $\Cc(\G)$. Note that $\tau$ is strictly positive.

\subsection{The space \texorpdfstring{$\Dc_\G$}{DcG} for hyperbolic groups}\label{sec.d_G}
In this section we show how to use our theory to recover the results of \cite{cantrell.tanaka.1}, \cite{cantrell.reyes.man}. We follow the notation used in \cite{cantrell.reyes.man} and direct the reader there for more details on the background material.

Let $\G$ be a non-elementary group. We write $\Dc_\G$ for the collection of metrics on $\G$ that are
\begin{enumerate}
\item quasi-isometric to a word metric;
\item Gromov hyperbolic; and, 
\item left $\G$-invariant.
\end{enumerate}
We write $\conj(\G)$ for the collection of non-torsion conjugacy classes in $\G$. Given $d \in \Dc_\G$ let $\ell_d : \conj(\G) \to \R$ denote the translation length function
\[
\ell_d[x]  = \lim_{n\to\infty} \frac{d(o,x^n)}{n}
\]
where $o \in \G$ denotes the identity element.
As above, we say that two metrics $d,d_\ast \in \Dc_\G$ are roughly similar if there exists $\tau, C >0$ such that
\[
\sup_{x,y \in \G} |d(x,y) - \tau d_\ast(x,y) | < C.
\]
Two metrics $d,d_\ast \in \Dc_\G$ are roughly similar if and only $\ell_d$ and $\ell_{d_\ast}$ are proportional.

Let $\mathscr{D}_\G$ be the quotient of $\Dc_\G$ by the equivalence relation of rough similarity. We write $[d] \in \scrD_\G$ for the equivalence class containing $d \in \Dc_\G$.

We can equip $\mathscr{D}_\G$ with a symmetrised version of the Thurston metric $\D$: given $d,d_\ast \in \Dc_\G$, let
\[
\Dil(d,d_\ast) = \sup_{[x] \in \conj(\G)} \frac{\ell_d[x]}{\ell_{d_\ast}[x]}
\]
and define
\[
\D([d],[d_\ast]) = \log \, (\Dil(d,d_\ast) \, \Dil(d_\ast,d)).
\]
We call $(\mathscr{D}_\G, \D)$ the space of hyperbolic metric structures on $\G$. We will show how to recover the results of \cite{cantrell.tanaka.1} and \cite{cantrell.reyes.man} using our machinery of quasimorphisms. All the results will be a corollary of the following theorem which is an analogue of Theorem \ref{thm.RAAGembed} above.

\begin{theorem}\label{thm.D_Gembed}
Let $(\mathscr{D}_\G, \D)$ be the space of hyperbolic metric structures on a non-elementary hyperbolic group $\G$ as above. Then there exists a subshift of finite type $\Sigma$  and a map $F : \Dc_\G \to \qm^{++}(\Sigma)$ that descends (via quotienting by the equivalence relations) to a well-defined, isometric embedding $\ov{F} : (\mathscr{D}_\G, \D) \to (\Qm^{++}(\Sigma), \Qq)$, i.e. $\ov{F}$ is injective and preserves distances. Furthermore $F$ satisfies 
\[
\theta_{d_\ast/d}(t) = \Pr(F(d_\ast), F(d))(-t)
\]
for each $t \in \R$  (where $\theta_{d_\ast/d}$ is the Manhattan curve)
and so
\[
\ov{F}[td_\ast + \theta_{d_\ast/d}(t)d] = [-t F(d_\ast) + \Pr(F(d_\ast), F(d))(-t) F(d)]
\]
for each $t \in \R$.
\end{theorem}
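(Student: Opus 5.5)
Mirroring the RAAG/RACG argument (Theorem~\ref{thm.RAAGembed}), we replace the Hermiller--Meier graph by Cannon's automatic structure. Fix a finite symmetric generating set $S$ with word metric $d_S$. By Cannon's theorem there is a finite directed graph $\Gc$ with initial state $\ast$ and a labelling by $S$ such that paths from $\ast$ biject with $\G$ and every such path is a $d_S$-geodesic. This structure is typically not mixing, so we cannot use it directly as in Theorem~\ref{thm.graph}. Instead we pass to a maximal component of the transition matrix. Using Coornaert's estimates, or the approach of Calegari--Fujiwara/Gouëzel, such a component can be chosen (after passing to a power to remove periodicity, or by adding a bounded ``mixing'' word) so that the resulting subshift $\Sigma$ is mixing and the following hold: every $x\in\G$ equals $a\,\ev(w)\,b$ with $a,b$ in a fixed finite set and $w\in\W(\Sigma)$, uniformly boundedly-to-one up to sub-exponential multiplicity, and every conjugacy class of $\G$ is represented, up to a bounded set, by $\ev(w)$ for some periodic $\{w\}$, with the converse also holding.

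Given $d_\ast\in\Dc_\G$, we define $F(d_\ast)(w)=d_\ast(o,\ev(w))$. The quasimorphism property comes from hyperbolicity: concatenating two paths gives a $d_S$-geodesic, so $(x|y)_{d_S}=0$ at the joining point. Since $d_\ast$ is hyperbolic and quasi-isometric to $d_S$, Gromov products are comparable up to a multiplicative and additive constant, so $(x|y)_{d_\ast}$ is bounded. Quasi-isometry to $d_S$ then gives $F(d_\ast)\in\qm^{++}$. For periodic words, $\ov{F(d_\ast)}\{w\}=\ell_{d_\ast}[\ev(w)]$, exactly as in the proof of Lemma~\ref{lem.comprag}.

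For the isometry, we show $\Dil(F(d_1),F(d_2))=\Dil(d_1,d_2)$. The inequality ``$\le$'' follows from the identity of stable lengths on periodic orbits. For ``$\ge$'' we use Lemma~\ref{lem.comp} on $\Sigma$: it gives $d_1(o,\ev(w))\le \Dil(F(d_1),F(d_2))\,d_2(o,\ev(w))+C$ for all $w$. Every $x^n$ can be written, up to bounded pieces, as such an $\ev(w)$ (this is where the covering property of the chosen component is needed), so dividing by $n$ and letting $n\to\infty$ yields $\ell_{d_1}[x]\le \Dil(F(d_1),F(d_2))\,\ell_{d_2}[x]$. Injectivity and well-definedness on classes then follow from Corollary~\ref{cor.od} together with the fact, quoted above, that rough similarity in $\Dc_\G$ is equivalent to proportionality of $\ell$.

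The Manhattan curve identity is proved as in Theorem~\ref{thm.rags1}/Corollary~\ref{cor.man}. The Poincaré series $\sum_{x\in\G}e^{-a d(o,x)-b d_\ast(o,x)}$ is compared with $\sum_{w\in\W(\Sigma)}e^{-aF(d)(w)-bF(d_\ast)(w)}$. The upper comparison uses the bounded-to-one decomposition through finitely many components: each non-maximal component contributes strictly smaller growth, so it does not affect the critical line. The lower comparison is the inclusion of $\Sigma$-paths. Abscissae of convergence therefore coincide, which gives $\theta_{d_\ast/d}(t)=\Pr(F(d_\ast),F(d))(-t)$. The formula for $\ov F$ along the curve then follows since $td_\ast+\theta(t)d$ is roughly a metric in $\Dc_\G$, as in Proposition~\ref{prop.curve}, with hyperbolicity preserved by the Gromov product bound.

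The main obstacle is the non-mixing, multi-component nature of Cannon's automaton. We must justify that the other components do not change the exponential growth of the weighted sums for every parameter $(a,b)$, not just for $d_S$. The plan here is to use that maximal components have full growth and that $F(d)$ and $F(d_\ast)$ are comparable to word length, which controls the weighted sums. We must also justify that periodic orbits of the single chosen component capture all conjugacy classes' translation-length ratios, so that the dilations agree.
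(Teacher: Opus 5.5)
Your outline follows the paper's proof: Cannon coding, a maximal component $\mathcal{C}$, $F(d)(w)=d(o,\mathrm{ev}(w))$, the identity $\overline{F(d)}\{w\}=\ell_d[\mathrm{ev}(w)]$, and a comparison of Poincaré series with pressure sums via Lemma~\ref{lem.expgrow}.

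\textbf{The dilation inequality.} For $\Dil(d_1,d_2)\le\Dil(F(d_1),F(d_2))$ you take a different but valid route. You transport Lemma~\ref{lem.comp} to $\Gamma$ through the covering property $\Gamma=B\,\mathrm{ev}(\mathcal{W}(\Sigma))\,B$, with $B$ finite, and apply it to $x^n$, exactly as in Lemma~\ref{lem.comprag}. The paper instead uses that a fixed power $g^{\pm M}$ of every non-torsion class is represented by a periodic orbit of $\mathcal{C}$ \cite[Proposition 3.5]{cantrell2025mixing}. Your version needs only the covering input that the Manhattan step needs anyway. So the second ``obstacle'' in your last paragraph is already settled by your own argument.

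\textbf{The gap: the upper comparison of Poincaré series.} You discard non-maximal components because they have strictly smaller growth, and control the weights by comparability of $F(d)$ and $F(d_\ast)$ with word length. This does not work, for three reasons.
\begin{enumerate}
\item Smaller entropy of a component does not control its growth for the weight $e^{-aF(d)-bF(d_\ast)}$ when $(a,b)=(t,\theta_{d_\ast/d}(t))$. For $t$ outside a compact interval, one of the two coefficients is negative.
\item Comparability only gives $e^{-\alpha|w|-C}\le e^{-aF(d)(w)-bF(d_\ast)(w)}\le e^{-\beta|w|+C}$ with $\alpha\neq\beta$ in general. Such bounds cannot compare critical exponents of components whose entropies differ by less than $\alpha-\beta$.
\item The argument says nothing about other maximal components, whose weighted growth could a priori differ from that of $\mathcal{C}$.
\end{enumerate}

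\textbf{The fix.} Use directly the covering property you already stated (growth quasi-tightness, \cite[Lemma 2.16]{cantrell2025mixing}). Choose $w_x$ with $\mathrm{ev}(w_x)=b_1xb_2$ and $b_i\in B$. The map $x\mapsto w_x$ is at most $|B|^2$-to-one, and left-invariance and symmetry of the metrics give, for every $(a,b)$,
\[
\sum_{x\in\Gamma}e^{-a d(o,x)-b d_\ast(o,x)}\le |B|^2e^{2(|a|+|b|)K}\sum_{w\in\mathcal{W}(\Sigma)}e^{-aF(d)(w)-bF(d_\ast)(w)},\qquad K=\max_{f\in B}\max\{d(o,f),d_\ast(o,f)\}.
\]
No component-by-component analysis is needed; this is exactly the paper's argument. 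Combine it with your lower bound, taken over one cylinder so that $\mathrm{ev}$ is injective, and with Lemma~\ref{lem.expgrow} and Remark~\ref{rem.expgrow}. This gives $\theta_{d_\ast/d}(t)=\Pr(F(d_\ast),F(d))(-t)$.

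Your worry about aperiodicity of $\mathcal{C}$ is fair, and the paper is silent on it. Passing to a power and one cyclic class only costs trimming boundedly many letters at each end, so the covering property survives.
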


\begin{proof}
Given $\G$ we need to exhibit a subshift of finite type $\Sigma$. This subshift will come from the Cannon coding: a finite graph that codes $\G$ equipped with a generating set. To avoid having to introduce too much introductory material here, we refer the reader to Section 2 of~\cite{cantrell2025mixing}.

Fix a finite generating set $S$ for $\G$ and a Cannon coding for $\G,S$. Let $\Cc$ be a maximal component in this coding and write $\Sigma$ for the subshift defined over this component. Let $\W(\Cc)$ be the collection of words defined over the component $\Cc$. As for RAAGs and RACGs, the Cannon coding comes with a labelling map $\ev: \W(\Cc) \to \G$. \\

\textit{Definition:}
We define $F : \Dc_\G \to \Qm^{++}(\Sigma)$ in the following way. Given $d \in \Dc_\G$, let $F(d) = \p$ where given $w \in \W(\Cc)$, 
\[
\p(w) = d(o,g_w) \ \ \text{ where $g_w \in \G$ is the group element corresponding to $g$: $g_w = \ev(w)$.}
\]

By the Morse Lemma, $d$ is roughly additive along $d_S$ geodesic rays. Hence $\p$ is a quasi-morphism and, since $d$ is quasi-isometric to $d_S$, $d \in \qm^{++}(\Sigma)$. The map $\ov{F}$ is then obtained by taking the quotients by the equivalence relation of quasimorphism equivalence and rough similarity.\\

\textit{Injective:}
Suppose that $d,d_\ast \in \Dc_\G$ satisfy $[\p_d] := \ov{F}[d] = \ov{F}[d_\ast]=: [\p_{d_\ast}]$ where $\p_d, \p_{d_\ast} \in \qm^{++}(\Sigma)$. Then $\ov{\p_d}$ and $\ov{\p_{d_\ast}}$ are proportional. But note that, each $\{w\} \in \Per$ corresponds to a conjugacy class $[g_w] \in \conj(\G)$ where $g_w = \ev(w)$. Furthermore, as in the proof of Lemma \ref{lem.comprag} we have that
\[
\ov{\p_d}(w) = \ell_d[g_w] \ \ \text{ and } \ov{\p_{d_\ast}}(w) = \ell_{d_\ast}[g_w].
\]
Conversely, by Proposition 3.5 from \cite{cantrell2025mixing} for any non torsion $[g] \in \conj(\G)$ there exists $w_g \in \Fix$ such that $w_g$ corresponds to an element in $[g^{\pm M}]$.
In particular, the equality $[\p_d] = [\p_{d_\ast}]$ implies that $\ell_d$ and $\ell_{d_\ast}$ are proportional. In particular $[d] = [d_\ast]$ by Theorem 1 in \cite{cantrell.reyes.man} and we have shown that $\ov{F}$ is injective.\\

\textit{Isometric:}
Since there exists an integer $M \ge 1$ such that for any (non torsion) conjugacy class $[g] \in \conj(\G)$ there is  $\{w_g\} \in \Per$ corresponding to $[g^{\pm M}]$, we have that
\[
\Dil(d,d_\ast) = \sup_{[g] \in \conj'(\G)} \frac{\ell_d[g]}{\ell_{d_\ast}[g]} = \sup_{w\in\Fix} \frac{\ov{\p_d}\{w\}}{\ov{\p_{d_\ast}}\{w\}} = \Dil(\p_d,\p_{d_\ast}).
\]
It then follows from the definitions that
\[
\D([d],[d_\ast]) = \Qq([\p_d],[\p_{d_\ast}]).
\]
\\
\textit{Preserves geodesics:} 
It suffices to show that the Manhattan curve $\theta_{d_\ast/d}(t)$ for the pair $d,d_\ast$ agrees with the pressure curve $\Pr(\p_{d_\ast}, \p_d)(-t)$. To do so we follow the method presented in~\cite[Section 3]{cantrell2025mixing}. This uses the fact that hyperbolic groups are growth quasi-tight. In particular  if we let $\G_\Cc$ denote the collection of group elements that correspond to a path in $\Cc$, i.e. the collection of elements in $g \in \G$ for which there exists $w \in \W(\Cc)$ with $\ev(w) = g$. Then there exists a finite set $F \subset \G$ such that $F \G_\Cc F = \G$ \cite[Lemma 2.16]{cantrell2025mixing}.

We now note that $\theta_{d_\ast/d}(t)$ is the abscissa of convergence 
\[
\sum_{g \in \G} e^{-td(o,g) - sd_\ast(o,g)} =: Q(s,t)
\]
as $s$ varies and $t$ is fixed. 

Now, fix $R > 1$. By growth quasi-tightness, there is $N \ge 1$ such that for any $g \in \G$ there exists $w_g \in \W(\Cc)$ such that  $\ev(w_g) = a_1ga_2$ for some $a_1, a_2 \in F$. Furthermore, the association $g \mapsto w$ is at most $|F|^2$ to $1$. The quasimorphism property then implies that there exists a positive real number $C_{s,t} > 0$ (depending only on $s,t$) such that
\begin{align*}
Q(s,t) = \sum_{x \in \G} e^{-td(o,g) - sd_\ast(o,g)} &\le |F|^2 C_{s,t} \sum_{w \in \W(\Cc)} e^{-t\p_d(w) - s\p_{d_\ast}(w)}\\
 &\le |F|^2 C_{s,t}  \sum_{n=1}^\infty \sum_{w \in \W(\Cc): |\p_d(w) - n| < R} e^{-t\p_d(w) - s\p_{d_\ast}(w)} =: P_1(s,t)
\end{align*}
when $P_1$ converges at $(s,t)$.

We now note that, since paths in the $\Cc$ are geodesics for $S$, for any fixed cylinder $[x_0] \subset \Sigma_\Cc$, there exists $C_R >0$ depending only on $R$ such that
\[
P_2(s,t):= \sum_{n=1}^\infty \sum_{w \in [x_0], |\p_d(w) - n| < R} e^{-t\p_d(w) - s\p_{d_\ast}(w)}  \le C_R \, Q(s,t)
\]
when $Q(s,t)$ converges.

Now, by Lemma \ref{lem.expgrow}, if $R$ is sufficiently large, the abscissa of convergence of both $P_1(s,t)$ and $P_2(s,t)$ in $s$ for fixed $t$ is equal to $\Pr(\p_{d_\ast}, \p_d)(-t)$. It follows that the abscissa of convergence of $Q(s,t)$ in $s$ for fixed $t$ is precisely $\Pr(\p_{d_\ast}, \p_d)(-t)$. That is,  $= \Pr(\p_{d_\ast}, \p_d)(-t) = \theta_{d_\ast/d}(t)$ as required. 
\end{proof}

\begin{remark}\label{rem.out}
When $\G$ is a surface group. respectively free group, then Teichm\"uller space, respectively Outer space, is a natural subset of $\Dc_\G$. Then Theorem \ref{thm.D_Gembed} justifies our comments made in the introduction: Techm\"uller space and Outer space naturally embed into a quasimorphism moduli space (for an appropriate choice of subshift).
\end{remark}


\printbibliography

@article{Cuneo,
  title={Additive, Almost Additive and Asymptotically Additive Potential Sequences Are Equivalent: N. Cuneo},
  author={Cuneo, No{\'e}},
  journal={Communications in Mathematical Physics},
  volume={377},
  number={3},
  pages={2579--2595},
  year={2020},
  publisher={Springer}
}

@book {Heinonen,
    AUTHOR = {Heinonen, Juha},
     TITLE = {Lectures on {A}nalysis on {M}etric {S}paces},
    SERIES = {Universitext},
 PUBLISHER = {Springer-Verlag, New York},
      YEAR = {2001},
     PAGES = {x+140},
}

@book {DZ,
    AUTHOR = {Dembo, Amir and Zeitouni, Ofer},
     TITLE = {Large {D}eviations {T}echniques and {A}pplications},
    SERIES = {Applications of Mathematics (New York)},
    VOLUME = {38},
   EDITION = {Second},
 PUBLISHER = {Springer-Verlag, New York},
      YEAR = {1998},
     PAGES = {xvi+396},
}

@article{HM,
title = {Algorithms and Geometry for Graph Products of Groups},
journal = {Journal of Algebra},
volume = {171},
number = {1},
pages = {230-257},
year = {1995},
author = {S. Hermiller and J. Meier}
}

@article{gttgraph,
  title={Counting problems in graph products and relatively hyperbolic groups},
  author={Gekhtman, Ilya and Taylor, Samuel J and Tiozzo, Giulio},
  journal={Israel Journal of Mathematics},
  volume={237},
  number={1},
  pages={311--371},
  year={2020},
  publisher={Springer}
}

@article{hagen,
  title={Large facing tuples and a strengthened sector lemma},
  author={Hagen, Mark},
  journal={Tunisian Journal of Mathematics},
  volume={4},
  number={1},
  pages={55--86},
  year={2022},
  publisher={Mathematical Sciences Publishers}
}

@article{hagen.2,
  title={Weak hyperbolicity of cube complexes and quasi-arboreal groups},
  author={Hagen, Mark},
  journal={Journal of Topology},
  volume={7},
  number={2},
  pages={385--418},
  year={2014},
  publisher={Oxford University Press}
}

@Article{Barreira_nonadditive,
  author  = {L. Barreira},
  title   = {A non-additive thermodynamic formalism and applications to dimension theory of hyperbolic dynamical systems},
  journal = {Ergodic Theory Dynam. Systems},
  year    = {1996},
  volume  = {16},
  number  = {5},
  pages   = {871--927},
}

@article{bray2022counting,
  title={Counting, equidistribution and entropy gaps at infinity with applications to cusped Hitchin representations},
  author={H. Bray and R. Canary and L.-Y. Kao and G. Martone},
  journal={Journal f{\"u}r die reine und angewandte Mathematik (Crelles Journal)},
  volume={2022},
  number={791},
  pages={1--51},
  year={2022},
  publisher={De Gruyter}
}

@Article{Barreira_2009,
  author  = {L. Barreira and P. Doutor},
  title   = {Almost additive multifractal analysis},
  journal = {Journal de Mathéma\-tiques Pures et Appliquées},
  year    = {2009},
  volume  = {92},
  number  = {1},
  pages   = {1--17},
  langid  = {french},
}

@Book{Barreira_book,
  author    = {L. Barreira},
  title     = {Thermodynamic Formalism and Applications to Dimension Theory},
  publisher = {Birkh{\"a}user/Springer Basel AG},
  address   = {Basel},
  series    = {Progress in Mathematics},
  number    = {294},
  year      = {2011},
  note      = {xii+297 pp.},
}

@Online{Bianchi-He,
  author  = {F. Bianchi and Y.M. He},
  journal = {Inst. Hautes {\'E}tudes Sci. Publ. Math.},
  title   = {Manhattan curves in complex dynamics and asymptotic correlation of multiplier spectra},
  year    = {2025},
  eprint      = {2508.12490},
  eprinttype  = {arXiv},
  eprintclass = {math.DS},
}

@Book{EquSta,
  author    = {R. Bowen},
  title     = {Equilibrium States and the Ergodic Theory of Anosov Diffeomorphisms},
  publisher = {Springer-Verlag},
  year      = {2008},
  series    = {Lecture Notes in Mathematics},
  volume    = {470},
  pages     = {1--73},
}

@Article{Bowen-quasicircles,
  author  = {R. Bowen},
  journal = {Inst. Hautes {\'E}tudes Sci. Publ. Math.},
  title   = {Hausdorff dimension of quasicircles},
  year    = {1979},
  number  = {50},
  pages   = {11--25},
}

@Book{Brin_2002,
  author    = {M. Brin and G. Stuck},
  title     = {Introduction to Dynamical Systems},
  publisher = {Cambridge University Press},
  year      = {2002},
}

@article{burger,
	author = {M. Burger},
	fjournal = {International Mathematics Research Notices},
	issn = {1073-7928},
	journal = {Internat. Math. Res. Notices},
	number = {7},
	pages = {217--225},
	title = {Intersection, the {M}anhattan curve, and {P}atterson-{S}ullivan theory in rank {$2$}},
	year = {1993},
	}

@Article{Burger-Ozawa-Thom,
  author  = {M. Burger and N. Ozawa and A. Thom},
  journal = {Israel J. Math.},
  title   = {On {U}lam stability},
  year    = {2013},
  volume  = {193},
  number  = {1},
  pages   = {109--129},
}

@Online{C.D-M.R,
  author      = {S. Cantrell and D. Martínez-Granado and E. Reyes},
  title       = {A geometric correspondence for reparameterizations of geodesic flows},
  year        = {2026},
  eprint      = {2605.02585},
  eprinttype  = {arXiv},
  eprintclass = {math.DS},
}

@article{cantrell.reyes.man,
	author = {S. Cantrell and  E. Reyes},
	journal = {Comment. Math. Helv.},
	title = {Manhattan geodesics and the boundary of the space of metric structures on hyperbolic groups},
	volume={100},
	numer={1},
	pages={11-59},
	year = {2025}}

@article{cantrell.tanaka.2,
author ={S. Cantrell and R. Tanaka},
title = {Invariant measures of the topological flow and measures at infinity on hyperbolic groups},
journal = {Journal of Modern Dynamics},
volume = {20},
number = {0},
pages = {215-274},
year = {2024}
}

@article{cantrell.tanaka.1,
	author = {S. Cantrell and R. Tanaka},
	journal = {Geometry and Topology},
	title = {The Manhattan curve, ergodic theory of topological flows and rigidity},
	volume={29},
	issue={4},
	pages = {1851-1288},
	year = {2025}}

@article{cantrell2025mixing,
  title={Mixing of the Mineyev flow, orbital counting and Poincar{\'e} series for strongly hyperbolic metrics: S. Cantrell},
  author={Cantrell, Stephen},
  journal={Mathematische Annalen},
  volume={392},
  number={1},
  pages={1253--1288},
  year={2025},
  publisher={Springer}
}

@Article{Cao_2008,
  author  = {Y. Cao and D.-J. Feng and W. Huang},
  title   = {The thermodynamic formalism for sub-additive potentials},
  journal = {Discrete and Continuous Dynamical Systems - A},
  year    = {2008},
  volume  = {20},
  number  = {3},
  pages   = {639--657},
}

@Online{TermoEqSt,
  author      = {P.D. Carrasco and F. Rodriguez-Hertz},
  title       = {Thermodynamic formalism for quasimorphisms:
                 bounded cohomology and statistics},
  year        = {2025},
  eprint      = {2504.06054},
  eprinttype  = {arXiv},
  eprintclass = {math.DS},
}

@Online{thermodynamiclattice,
  author      = {P.D. Carrasco and F. Rodriguez-Hertz},
  title       = {Thermodynamic formalism for quasimorphisms:
                 lattices on semisimple groups},
  year        = {2026},
  eprint      = {2603.28535},
  eprinttype  = {arXiv},
  eprintclass = {math.DS},
}

@Article{Chapman-Lubotzky,
  author  = {M. Chapman and A. Lubotzky},
  journal = {Eur. J. Math.},
  title   = {Stability of homomorphisms, coverings and cocycles {I}: equivalence},
  year    = {2025},
  volume  = {11},
  number  = {1},
  note    = {38 pp.},
}

@Article{Culler-Vogtmann,
  author  = {M. Culler and K. Vogtmann},
  title   = {Moduli of graphs and automorphisms of free groups},
  journal = {Invent. Math.},
  year    = {1986},
  volume  = {84},
  number  = {1},
  pages   = {91--119},
}

@Article{Falconer,
  author  = {K. Falconer},
  title   = {A subadditive thermodynamic formalism for mixing repellers},
  journal = {J. Phys. A},
  year    = {1988},
  volume  = {21},
  number  = {14},
  pages   = {L737--L742},
}

@Book{Farb-Margalit,
  author    = {B. Farb and D. Margalit},
  title     = {A Primer on Mapping Class Groups},
  publisher = {Princeton University Press},
  address   = {Princeton, NJ},
  series    = {Princeton Mathematical Series},
  number    = {49},
  year      = {2012},
  note      = {xiv+472 pp.},
}

@Article{Feng_2011,
  author  = {D-J. Feng},
  title   = {Equilibrium states for factor maps between subshifts},
  journal = {Advances in Mathematics},
  year    = {2011},
  volume  = {226},
  number  = {3},
  pages   = {2470--2502},
}

@Book{Frigerio,
  author    = {R. Frigerio},
  title     = {Bounded Cohomology of Discrete Groups},
  publisher = {American Mathematical Society},
  address   = {Providence, RI},
  series    = {Mathematical Surveys and Monographs},
  number    = {227},
  year      = {2017},
  note      = {xvi+193 pp.},
}

@Article{Fujiwara-Kapovich,
  author  = {K. Fujiwara and M. Kapovich},
  journal = {Geom. Funct. Anal.},
  title   = {On quasihomomorphisms with noncommutative targets},
  year    = {2016},
  volume  = {26},
  number  = {2},
  pages   = {478--519},
}

@inproceedings{furman,
	address = {Berlin},
	author = {A. Furman},
	booktitle = {Rigidity in dynamics and geometry (Cambridge, 2000)},
	pages = {149-166},
	publisher = {Springer},
	title = {Coarse-geometric perspective on negatively curved manifolds and groups},
	year = {2002}}

@article{Kao,
	author = {N. Kao},
	fjournal = {Ergodic Theory and Dynamical Systems},
	number = {7},
	pages = {1843--1874},
	title = {Manhattan curves for hyperbolic surfaces with cusps},
	volume = {40},
	year = {2020},
}

@Book{Lind-Marcus,
  author    = {D. Lind and B. Marcus},
  title     = {An Introduction to Symbolic Dynamics and Coding},
  edition   = {2},
  publisher = {Cambridge University Press},
  address   = {Cambridge},
  series    = {Cambridge Mathematical Library},
  year      = {2021},
  note      = {xix+550 pp.},
}

@Article{Livsic,
  author  = {A.N. Liv{\v{s}}ic},
  title   = {Certain properties of the homology of {$Y$}-systems},
  journal = {Mat. Zametki},
  year    = {1971},
  volume  = {10},
  pages   = {555--564},
  note    = {English translation: \emph{Math. Notes Acad. Sci. USSR} 10 (1971), 758--763},
}

@Book{PP,
  author    = {W. Parry and M. Pollicott},
  publisher = {Soci{\'e}t{\'e} Math{\'e}matique de France},
  title     = {Zeta Functions and the Periodic Orbit Structure of Hyperbolic Dynamics},
  year      = {1990},
  series    = {Ast{\'e}risque},
  number    = {187--188},
}

@article{Oregon-Reyes,
	author = {E. Reyes},
	journal = {To appear in Journal of the LMS},
	note = {arXiv:2204.12545},
	title = {The space of metric structures on hyperbolic groups},
	year = {2022}}

@Online{Reyes-Wang,
  author      = {E. Reyes and T. Wang},
  title       = {Regularity of Manhattan manifolds and exact dimensionality for relatively Anosov groups},
  year        = {2026},
  eprint      = {2607.13730},
  eprinttype  = {arXiv},
  eprintclass = {math.GR},
}

@Book{Rockafellar,
  author    = {R.T. Rockafellar},
  publisher = {Princeton University Press},
  title     = {Convex analysis. Reprint of the 1970 original},
  year      = {1997},
  address   = {Princeton, NJ},
  series    = {Princeton Landmarks Math. Princeton Paperbacks.},
  note      = {xviii+451 pp.},
}

@Book{Ruelle-book,
  author    = {D. Ruelle},
  publisher = {Cambridge University Press},
  title     = {Thermodynamic Formalism: The Mathematical Structures of Equilibrium Statistical Mechanics},
  edition   = {2},
  year      = {2004},
  address   = {Cambridge},
  series    = {Cambridge Mathematical Library},
  note      = {xx+174 pp.},
}

@article{SharpManhattan,
	author = {R. Sharp},
	fjournal = {Mathematische Zeitschrift},
	journal = {Math. Z.},
	number = {4},
	pages = {745--750},
	title = {The {M}anhattan curve and the correlation of length spectra on hyperbolic surfaces},
	volume = {228},
	year = {1998},
}

@Article{Sigmund1974,
  author  = {K. Sigmund},
  journal = {Transactions of the American Mathematical Society},
  title   = {On dynamical systems with the specification property},
  year    = {1974},
  number  = {0},
  pages   = {285--299},
  volume  = {190},
}

@Book{Ulam,
  author    = {S.M. Ulam},
  publisher = {Interscience Publishers},
  title     = {A Collection of Mathematical Problems},
  year      = {1960},
  address   = {New York--London},
  number    = {8},
  series    = {Interscience Tracts in Pure and Applied Mathematics},
  note      = {xiii+150 pp.},
}
\end{document}